\documentclass[11pt,oneside,reqno]{amsart}
\usepackage{graphicx}
\usepackage{amsmath}
\usepackage{amssymb}
\usepackage{epstopdf}
\usepackage{xcolor}
\usepackage{subfigure}
\usepackage[pagewise]{lineno}\linenumbers
\usepackage{enumerate}
 \usepackage{amsaddr}
\usepackage{tikz-cd}
\usetikzlibrary{spy}
\usepackage{ctable}
\usepackage{pifont}

\newcommand{\eps}{\varepsilon}
\newcommand{\R}{\mathbb{R}}
\newcommand{\Z}{\mathbb{Z}}
\newcommand{\C}{\mathbb{C}}
\newcommand{\N}{\mathbb{N}}

\newcommand{\Id}{\mathrm{Id}}
\newcommand{\rmi}{\mathrm{i}}
\newcommand{\rme}{\mathrm{e}}
\newcommand{\rmO}{\mathrm{O}}

\newcommand{\rmo}{\mathrm{o}}

\setlength{\oddsidemargin}{-2.5mm}
\setlength{\evensidemargin}{-2.5mm}
\addtolength{\textwidth}{4cm}
\addtolength{\textheight}{2cm}
\setlength{\topmargin}{-1cm}
\setlength{\parskip}{4pt}
\setlength{\parindent}{0.5cm}

\newtheorem{Lemma}{Lemma}[section]
\newtheorem*{Lemma*}{Lemma}
\newtheorem{Theorem}{Theorem}
\newtheorem*{Theorem*}{Theorem}
\newtheorem{Proposition}[Lemma]{Proposition}
\newtheorem*{Proposition*}{Proposition}
\newtheorem{Corollary}[Lemma]{Corollary}
\newtheorem{Remark}[Lemma]{Remark}
\newtheorem{Definition}[Lemma]{Definition}
\newtheorem{Hypothesis}[Lemma]{Hypothesis}
\newtheorem*{Hypothesis*}{Hypothesis}

\setlength{\unitlength}{1in}

\DeclareGraphicsRule{.tif}{png}{.png}{`convert #1 `dirname #1`/`basename #1 .tif`.png}

\title{
Existence of spiral waves in oscillatory media with nonlocal coupling}

\thanks{This work is supported by NSF DMS-1911742.}
\thanks{AMS subject classification: 45K05, 45G15, 46N20, 35Q56, 35Q92} 

\author{Gabriela Jaramillo}
\address{University of Houston, Department of Mathematics, 3551 Cullen Blvd., 
 Houston, TX 77204-3008, USA}

\begin{document}
\nolinenumbers
\maketitle
\begin{abstract}

We prove existence of spiral waves in oscillatory media with nonlocal coupling.
Our starting point is a nonlocal complex Ginzburg-Landau (cGL) equation, rigorously derived
as an amplitude equation for integro-differential equations undergoing a Hopf bifurcation.
Because this reduced equation includes higher order terms that are usually ignored in a formal derivation of the cGL, the solutions we find also correspond to solutions of the original nonlocal system.
To prove existence of these patterns we use perturbation methods together with the implicit function theorem. 
Within appropriate parameter regions, we find that spiral wave patterns
have wavenumbers, $\kappa$, with expansion $\kappa \sim C \rme^{-a/\eps}$, where $a$ is a positive constant, $\eps$ is the small bifurcation parameter, and the positive constant $C$ depends on the strength and spread of the nonlocal coupling.
The main difficulty we face comes from the linear operators appearing in our system of equations.
Due to the symmetries present in the system, and because the equations are posed on the plane, these maps
have a zero eigenvalue embedded in their essential spectrum. 
Therefore, they are not invertible when defined between standard Sobolev spaces and a straightforward application of the implicit function theorem is not possible. We surpass this difficulty by redefining the domain of these operators using doubly weighted Sobolev spaces. These spaces encode algebraic decay/growth properties of functions, near the origin and in the far field, and allow us to recover Fredholm properties for these maps.


\end{abstract}

\vspace*{0.2in}

{\small
{\bf Running head:} {Existence of Spiral Waves }

{\bf Keywords:} pattern formation, nonlocal diffusion, integro-differential equations, Fredholm operators.

{\bf AMS subject classification: 45K05, 45G15, 46N20, 35Q56, 35Q92} 
}


\vspace*{0.2in}

\section{Introduction}

The term oscillatory media describes systems which combine self-sustained time oscillations with mechanisms that allow for spatial interactions, or coupling. 
Examples include  electrochemical systems \cite{koper1998non, Krischer2002},  oscillating chemical reactions \cite{epstein1983, Taylor2002},
colonies of aggregating slime mold \cite{goldbeter2006},
and under certain assumptions, even heart \cite{zebrowski2007} and brain tissue \cite{ermentrout2016}. 
 Interest in these systems stems in part from their ability to generate beautiful spatio-temporal structures like target patterns, traveling waves, and spiral waves. While properties of these patterns have been extensively studied in the case of oscillatory media with local coupling, not many results  address the case of systems involving long-range interactions.
In this paper we take on this challenge, focusing on existence of spiral waves in planar spatially extended oscillatory media with nonlocal coupling.

Our interest in spiral waves comes from numerical experiments done by Kuramoto and coauthors, 
which focused on an abstract FitzHugh-Nagumo system
that incorporates a convolution term in place of the standard Laplacian, \cite{shima2004}.
Their simulations show  that, depending on system parameters, the nonlocal coupling described by this convolution operator can give rise to a new type of pattern known as a spiral chimera. As the name suggest, this novel structure looks very much like a spiral wave in the far field, but has a core which does not act in synchrony with the rest of the pattern,
 (see Figure \ref{f:chimera}). 
 As a first step towards understanding the formation of these new structures, here we address how nonlocal forms of coupling  
 affect the formation and shape of `regular' spiral waves.
 
 \begin{figure}[t] 
    \centering
    \includegraphics[width=2in]{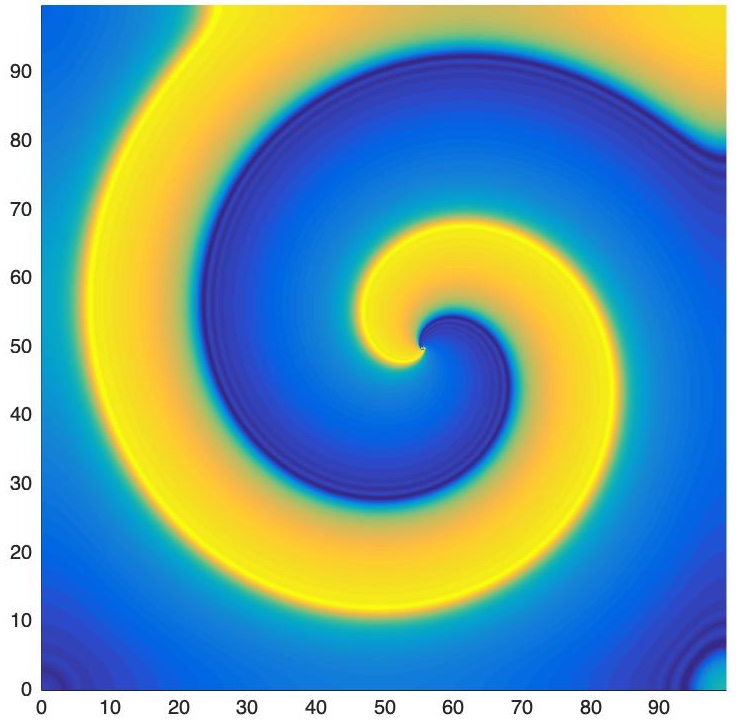} 
    \includegraphics[width=2in]{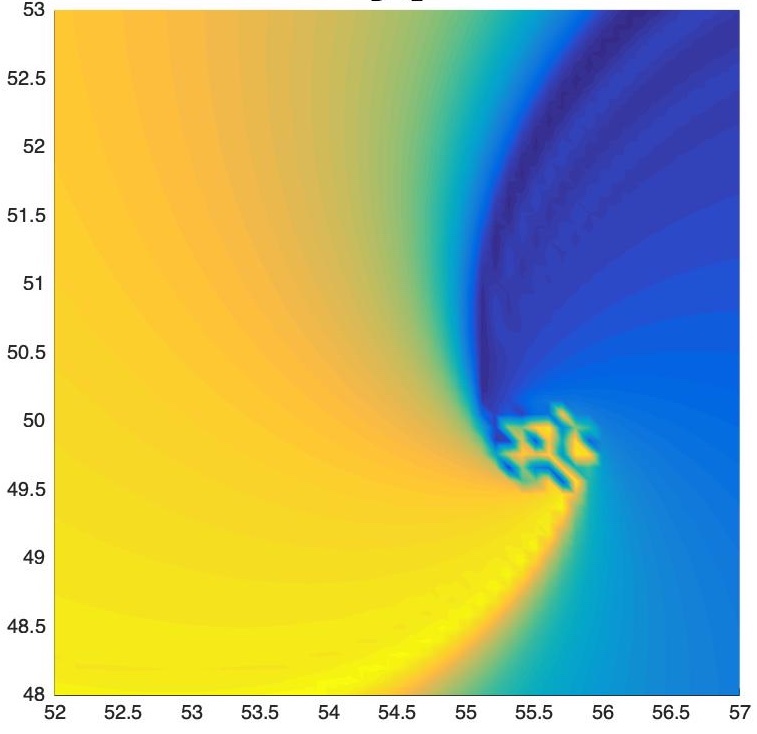} 
    
    \caption{Spiral chimeras.}
    \label{f:chimera}
 \end{figure}

To model nonlocal coupling, we use convolution operators of diffusive type. These operators are described by convolution kernels whose Fourier symbols are radially symmetric, uniformly bounded, analytic, and have a quadratic tangency  near the origin.  This choice of coupling then leads to
 model equations that are nonlocal and that take the form,
 \begin{equation}\label{e:model}
  U_t = D \mathcal{L} \ast U + F(U;\mu) \quad U\in \R^2, \quad x\in \R^2, \qquad \mu \in \R.
  \end{equation}
 Here $D$ is a matrix of diffusion coefficients, while $\mathcal{L} $ represents our choice of convolution operator. The symbol $F$ then describes reaction terms that undergo a Hopf bifurcation as the parameter $\mu$ crosses the origin.

While similar in structure to reaction-diffusion equations modeling oscillatory media with local coupling,  
 integro-differential equations like \eqref{e:model} are in general more challenging to analyze. 
 On the one hand, the convolution operator 
 is easier to handle if the equations are posed on the plane, since in this case one does not have to  
 worry about imposing boundary  constraints that are compatible with the operator, and 
 which in addition respect desired modeling assumptions.
  But this simplification comes at a price. When posed on $\R^2$, the linearization of equation \eqref{e:model} about a steady state is now an operator with real essential spectrum touching the origin. Moreover, due to the translational symmetry of the equation this operator has a non-trivial nullspace, or equivalently, a zero-eigenvalue embedded in the essential spectrum. Consequently, the linearization is not an invertible, nor a Fredholm operator, when viewed as a map between standard Sobolev spaces. As a result, one cannot immediately use perturbation methods together with the implicit function theorem, or Lyapunov-Schmidt reduction, to prove existence of solutions which bifurcate from a steady state.
 
A similar difficulty is encountered  when considering reaction-diffusion equations posed on unbounded domains. 
While in this case it is possible to prove existence of solutions  by reformulating the problem as an ordinary differential equation and employing methods from spatial dynamics \cite{kollar2007, kopell1981, scheel1998},
 this approach is not readily applicable for integro-differential equations (unless one assumes the convolution kernel has a particular form that allows one to write the equations as pde's, see \cite{bartpinto}). 
 An exception is the construction of a center manifold, which can be done without reference to a phase space using fixed point methods, see \cite{faye2018center, cannon2023}. 
However,  this approach implicitly assumes that the system is in a regime where the nonlocal coupling is well approximated by local interactions,  as evidenced by the fact that the resulting equations describing the 'flow' on the center manifold are differential equations. In contrast, here we are interested in the opposite regime, where interactions between oscillating elements are truly nonlocal.

Therefore, our starting point will be a {\it nonlocal} complex Ginzburg-Landau equation, rigorously derived in \cite{jaramillo2022rotating} as
an amplitude equation for rotating wave solutions of  integro-differential equations of the form \eqref{e:model}.
To prove existence of spiral waves we use
 perturbation methods together with the implicit function theorem.
To overcome the course of the zero-eigenvalue, we follow the approach taken in \cite{jaramillo2018, jaramillo2019, jaramillo2022can}, where it is shown that one can recover Fredholm properties (closed range, finite dimensional kernel and cokernel) for convolution operators  of diffusive type, and related maps, using {\it algebraically weighted} Sobolev spaces.
In the rest of this introduction, we briefly describe the derivation of the nonlocal amplitude equation, state our main  theorem, and give a short outline for the paper. We finish this introduction with a discussion of our results.

\subsection{The Nonlocal Amplitude Equation}

Close to the onset of oscillations and under the assumption of weak {\it local} coupling, oscillatory media may be described by the complex Ginzburg-Landau (cGL) equation. This reduced equation describes variation in the amplitude of oscillations which occur over long spatial and time scales, and can thus be formally derived using a multiple-scale analysis, see
 \cite{kuramoto2003, van1995complex}.
This method can also be extended  to account for other forms of coupling, including global and nonlocal coupling \cite{tanaka2003, garcia2008}, and to incorporate feedback mechanisms and forcing terms \cite{garcia2012}.
Since this is a formal approach, it is then necessary to  justify the validity of the equation. That is, one must prove that the approximate solutions obtained using the cGL are close to the actual solutions of the corresponding system in an appropriate metric. See for instance \cite{schneider1992, kuehn2018, schneider1996, vanharten1991} for works that address this question.

Instead, the work presented in \cite{jaramillo2022rotating} takes a different approach. There, the method of multiple-scales is given a rigorous treatment in order to derive, and validate, an amplitude equation for {\it rotating wave solutions} of \eqref{e:model}.
The result is the following {\it nonlocal} complex Ginzburg-Landau equation, 
  \begin{equation}\label{e:gl1}
  0 = K \ast w + (1+ \rmi \lambda) w - (1 + \rmi \beta) |w|^2 w + N(w;\eps), \qquad x = (r, \theta )  \in \R^2,
  \end{equation}
where the symbol $K$ represents a scaled version of the original operator describing the nonlocal coupling, the unknown $w= w(r)$ is a radial complex-valued function, $\lambda$ and $\beta $ are real parameters, and $\eps$ is a small number measuring how close the system is to the Hopf instability.
The term $N(w,\eps) $ then summarizes nonlinear higher order correction terms of size $\rmO(\eps)$ which, as explained below, are needed in order to rigorously prove the existence of spiral waves.

As in the formal derivation of the local cGL, the method used to arrive at equation \eqref{e:gl1}
focuses on  small  amplitude oscillations that emerge close to the Hopf bifurcation. 
In the formal derivation of the amplitude equation this difference in scales then allows one to expand  solutions to equation \eqref{e:model} in powers of $\eps$, i.e 
$$U = \eps U_1 + \eps^2 U_2 + \eps^3 U_3 +\cdots.$$
Gathering similar terms one then obtains a sequence of equations. Using an appropriate ansatz  on a co-rotating frame that moves with the rotational speed, $c$, of the wave (i.e. $U(r,\theta,t)= U(r, \theta - ct)$), one then finds that the order $\eps$ and $\eps^2$ equations can easily be solved, while  the  cGL equation then appears as a solvability condition at order $\eps^3$. As mentioned above, because  all higher order terms are then ignored it is then necessary to justify the validity of the equation.

In contrast, to place the multiple-scales method in a more rigorous setting, 
the approach in \cite{jaramillo2022rotating}
assumes solutions can be written using a finite expansion, i.e.
$U = \eps U_1 + \eps^2 U_2 + \eps^3 U_3.$
Again, one finds that the order $\eps$ and $\eps^2$ equation can readily be solved, while {\it all remaining terms} of order $\rmO(\eps^3)$ are now gathered into one main equation.
The results from \cite{jaramillo2022rotating} show that this main
equation can  be split into an invertible system and a reduced equation. 
Using the implicit function theorem one can then solve the invertible system, thus obtaining a family of solutions parametrized by the first order correction term, $U_1$. That is, one finds $U_i = \Psi_i(U_1;\eps)$ with $i =2,3$, for some $C^1 $ functions, $\Psi_i$.
After inserting this family of solutions into the reduced equation and  projecting onto the angular Fourier modes, $\rme^{ \pm \rmi \theta}$, one arrives at the nonlocal cGL equation \eqref{e:gl1},
where one now sees that the symbol $N(w;\eps)$ encodes all remaining terms of order $\rmO(\eps^4)$. 
As a result, solving the amplitude equation \eqref{e:gl1} is equivalent to solving the original integro-differential equation \eqref{e:model}.
Thus,  to rigorously prove existence of solution to \eqref{e:model}
 representing spiral waves, it is enough to prove this result for  the nonlocal cGL  equation given by \eqref{e:gl1}.

\subsection{Main Result}
Before stating our main result, let us point out a couple of properties of the amplitude equation \eqref{e:gl1} and of the 
solutions we seek.

First, because the reduce equation \eqref{e:gl1} comes from using a suitable projection onto an angular mode, and because it is based on an ansatz that moves in a co-rotating frame,
the unknown $w$ depends only on the radial variable $r$. While it is assumed that the solution, $U$, to the original
integro-differential equation is rotating with speed $c$, the value of this parameter is unknown. 
In the reduced equation \eqref{e:gl1}, the speed of the wave is captured by the parameter $\lambda$ via the relation $c = \omega + \eps^2 \lambda$, where $\omega$ represents the frequency of the time oscillations emerging from the Hopf bifurcation, see \cite{jaramillo2022rotating}.
Thus, using the gauge symmetry of the equation, we have that an equivalent formulation for our problem is to find solutions to
 \begin{equation}\label{e:gl2}
  \tilde{w}_t = K \ast \tilde{w} +  \tilde{w} - (1 + \rmi \beta) |\tilde{w}|^2 \tilde{w} + N(\tilde{w};\eps), \qquad x = (r, \theta )  \in \R^2,
  \end{equation}
  where $\tilde{w}(r,t) = w(r) \rme^{-\rmi \lambda t}$.
  
  Second, because in the co-rotating frame spiral waves look like target patterns, our goal is to find
  constants $\kappa$ and $A$, as well as solutions to \eqref{e:gl2} 
of the form $\tilde{w}(r,t) = \rho(r) \rme^{\rmi (\phi(r) - \lambda t)}$, such that $\phi(r) \to \kappa$ and $\rho(r) \to A$,  as $r$ goes to infinity. This is in essence the content of Theorem \ref{t:main}.

To complete the formulation of our problem, we now state our main assumptions regarding the 
the nonlinear terms $N(w;\eps)$ and convolution operator $K$.

\begin{Hypothesis*}[H1]
The nonlinear function $N(w; \eps)$ is order $\rmO(\eps\; |w|^4w)$, and every term in this expression is of the form $c |w|^{2n} w$, with $c\in \C$, and $n \in \{1,2,3, \cdots\}$.
\end{Hypothesis*}

A justification for Hypothesis [H1] is provided in Appendix \ref{a:nonlinear}.

\begin{Hypothesis*}[H2]
The convolution operator $K\ast$ is defined by a radially symmetric kernel given by
\[ K(|x|) = \frac{\eta}{\eps^2 D} \left ( K_1(|x|/\sqrt{\eps^2 D})-1 \right)\]
where $\eta, D$ are positive constants, $0<\eps<<1$, and $K_1(\xi)$ is the order one modified Bessel function of the second kind.
\end{Hypothesis*}

With these considerations in this paper we prove the following theorem.

\begin{Theorem}\label{t:main}
Let  $\eps, \beta$ be real numbers, with $\beta <0 $. Let $K$ denote a convolution operator of the form described by Hypothesis (H2) and take $N(w;\eps)$ to be higher order terms of the form stated in Hypothesis (H1).
Then, there exists a small positive number, $ \eps_0,$ and a family of solutions, $w(r;\eps)= \rho(r;\eps) \rme^{\rmi (\phi(r;\eps) - \lambda(\eps) t) } $,
 to the nonlocal complex Ginzburg-Landau equation \eqref{e:gl2},
which is valid and $C^1$ in the $\eps$- neighborhood $(-\eps_0,\eps_0)$.
Moreover, with $\delta  \sim \eps^{1/4}$, this family has expansions, 
\begin{align*}
\lambda(\eps) & = \beta + \delta^2 \Omega(\beta) \\
 \rho(r;\eps) & = \rho_0(\tilde{r}) + \delta^2(R_0(\delta \tilde{r}) + \delta R_1(\delta \tilde{r}))\\
\phi(r;\eps) & = \phi_0(\delta \tilde{r}) + \delta \phi_1(\delta \tilde{r}),\\
\tilde{r} &= r/ \sqrt{ \eta -\eps^2 D},
\end{align*}
where $\eta$ and $D$ are coefficients appearing in the definition of the operator $K$, and 
\[ \Omega(\beta) =  4 \tilde{C}(\beta ) \exp(-M/\beta^2),\]
for some constant $M>0$ and a $C^1(\R)$ function $\tilde{C}(\beta)$.
  In addition, the lower order correction terms satisfy,
\begin{itemize}
\setlength \itemsep{1ex} 
\item $\phi_0(\delta \tilde{r}) \sim \frac{1}{\beta}  \log( K_0(\Lambda \delta \tilde{r} ))  + c_1 \quad $ as $S = \delta \tilde{r} \to \infty \quad $, for some $c_1 \in \R$,
\item $R_0(\delta \tilde{r}) = - \frac{1 }{2} \rho_0(\tilde{r}) (\partial_S \phi_0( \delta \tilde{r})) ^2$,
\end{itemize}
while
\begin{itemize}
\setlength \itemsep{1ex} 
\item $\rho(r;\eps)  \longrightarrow  1 + \delta^2 c_2, \quad $  for some $c_2 \in \R$, and
 \item $\partial_r \phi(r;\eps) \longrightarrow \kappa = - \dfrac{\Lambda}{\beta} \dfrac{\delta}{ \sqrt{ \eta - \eps^2 D}} \qquad $ with $\Lambda = \sqrt{ - \beta \Omega(\beta) }$,
\end{itemize}
as $r \to \infty$.
\end{Theorem}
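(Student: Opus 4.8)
The plan is to construct the solution by matched asymptotic expansions, distinguishing a core region where the amplitude forms a vortex and a far field where the phase slowly accumulates, and then to upgrade the formal construction to a rigorous existence statement via the implicit function theorem posed in the doubly weighted Sobolev spaces on which $K\ast$ and its linearization are Fredholm. First I would insert the ansatz $\tilde{w}(r,t) = \rho(r)\,\rme^{\rmi(\phi(r) - \lambda t)}$ into \eqref{e:gl2} and split the result into its modulus and phase equations. Because the Fourier symbol of $K$ has a quadratic tangency at the origin (Hypothesis (H2)), in the long-wavelength regime it expands as $\hat{K}(\xi) \sim -(\eta - \eps^2 D)|\xi|^2 + \rmO(|\xi|^4)$, so at leading order $K\ast$ reduces to the scaled Laplacian $(\eta - \eps^2 D)\Delta$; this motivates the rescaling $\tilde{r} = r/\sqrt{\eta - \eps^2 D}$, together with the slow far-field variable $S = \delta\tilde{r}$ and $\delta \sim \eps^{1/4}$. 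Writing the frequency as $\lambda = \beta + \delta^2\Omega$ isolates the leading balance $\lambda = \beta$, which forces the algebraic part of the wavenumber to vanish, so that $\Omega$, equivalently $\Lambda = \sqrt{-\beta\Omega}$, becomes the quantity that must be selected.

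Next, in the core region I would solve for the leading amplitude $\rho_0(\tilde{r})$, the radial cGL vortex profile with $\rho_0(0) = 0$ and $\rho_0 \to 1$, and record its logarithmic phase contribution. In the far field, linearizing the phase equation about the constant-amplitude state produces a modified Bessel equation whose bounded solution is $\phi_0(S) \sim \tfrac{1}{\beta}\log K_0(\Lambda S) + c_1$; since $K_0'/K_0 \to -1$ at infinity one obtains $\partial_S\phi_0 \to -\Lambda/\beta$, and converting back through $\partial_r = \tfrac{\delta}{\sqrt{\eta-\eps^2 D}}\,\partial_S$ yields exactly the claimed wavenumber $\kappa = -\tfrac{\Lambda}{\beta}\tfrac{\delta}{\sqrt{\eta - \eps^2 D}}$. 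The modulus correction $R_0 = -\tfrac{1}{2}\rho_0(\partial_S\phi_0)^2$ then follows from the real part of the amplitude equation as the quadratic response of the amplitude to the phase gradient, and the next-order terms $R_1,\phi_1$ are obtained by continuing the expansion.

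The crux, and the step I expect to be the main obstacle, is the determination of $\Lambda$ itself. This is a beyond-all-orders selection problem: at every algebraic order in $\delta$ the wavenumber is zero, and a nonzero $\kappa$ emerges only from matching the algebraically growing core phase to the small-argument expansion $K_0(\Lambda S) \sim -\log(\Lambda S/2) - \gamma$ of the far-field solution. Imposing consistency of these two logarithmic behaviors yields a transcendental condition for $\Lambda$ whose solution is exponentially small, producing $\Omega(\beta) = 4\tilde{C}(\beta)\exp(-M/\beta^2)$. Capturing this term rigorously requires exponential asymptotics rather than a naive power series in $\delta$, and the analysis must be arranged so that the transcendentally small contribution survives while the remainder is controlled in the weighted norms.

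Finally, to turn the formal expansions into a genuine $C^1$ family I would write the full problem as $\mathcal{F}(\rho, \phi, \Omega; \eps) = 0$ on the doubly weighted Sobolev spaces, where, by the Fredholm properties recovered for convolution operators of diffusive type, the linearization about the leading-order profile is Fredholm of index zero. After quotienting out the one-dimensional kernel generated by the gauge symmetry, the implicit function theorem solves the invertible part for $(\rho,\phi)$ as $C^1$ functions of $\Omega$ and $\eps$, while the remaining scalar solvability condition fixes $\Omega$, and hence $\Lambda$ and $\kappa$. Validity of the family on $(-\eps_0,\eps_0)$ and the stated limits of $\rho$ and $\partial_r\phi$ as $r \to \infty$ then follow from the algebraic decay and growth encoded in the weights.
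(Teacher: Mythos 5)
Your formal construction (the first three paragraphs) is essentially the paper's: the same scalings $S=\delta\tilde r$, $\eps\sim\delta^4$, $\lambda=\beta+\delta^2\Omega$; the same vortex profile $\rho_0$ with $\rho_0\to 1$; the same viscous eikonal/modified-Bessel structure giving $\phi_0\sim\frac{1}{\beta}\log K_0(\Lambda S)$ and $R_0=-\frac12\rho_0(\partial_S\phi_0)^2$; and the correct identification of the beyond-all-orders selection of $\Omega$ by matching the core's logarithmic phase to the small-argument expansion of $K_0$ (the paper imports exactly this step as Theorem \ref{t:targetpatterns} from \cite{jaramillo2022can}, where the two-dimensional shallow-well eigenvalue is obtained by matching near- and far-field approximations, rather than redoing the exponential asymptotics).

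The gap is in your closing step. You propose a Lyapunov--Schmidt reduction in which the invertible part is solved for $(\rho,\phi)$ as $C^1$ functions of $\Omega$ and $\eps$, after which ``the remaining scalar solvability condition fixes $\Omega$.'' That scalar equation cannot be handled by the implicit function theorem: the dependence of the far-field phase on $\Omega$ enters only through $\Lambda=\sqrt{-\beta\Omega}$ inside $\log K_0(\Lambda S)$, which is not a regular (indeed not even $C^1$ at $\Omega=0$) perturbation, and the selected root is exponentially small, so the solvability condition has no transversal zero accessible to a regular perturbation or IFT argument --- this is precisely the degeneracy you flag in your third paragraph, and deferring it to the last step makes the argument circular. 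The paper resolves this by fixing $\Omega$, $\Lambda$, and $\phi_0$ \emph{before} the rigorous closure, at the order-$\delta^2$ equation, by solving the eikonal equation as a genuine nonlinear eigenvalue problem (Proposition \ref{p:dacayRphi}); the remaining closing equations \eqref{e:ordercube_new1}--\eqref{e:ordercube_new2} then contain no free scalar parameter at all. Relatedly, the functional-analytic obstruction you plan to remove is misidentified: in the radial, co-rotating $(\rho,\phi)$ formulation with $\partial_S\phi_1$ as the unknown, the gauge symmetry is already factored out, and the linearizations are not index zero with a gauge kernel but index $-1$ --- injective with a one-dimensional cokernel of the transport operator $\partial_S+\frac{1}{S}+2\beta\,\partial_S\phi_0$ (Lemmas \ref{l:fredholm2} and \ref{l:fredphi}). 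Invertibility is restored not by quotienting a kernel but by bordering: enlarging domain and codomain by explicit one-dimensional spaces such as $\{1/S\}$, $\{\zeta(S)\}$, and the constants (Lemma \ref{l:bordering}, Proposition \ref{p:invL}). Without fixing $\Omega$ first and without these bordered spaces, the final implicit-function-theorem step as you set it up does not close.
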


Before continuing let us highlight how the results from Theorem \ref{t:main} 
establish a relation between properties of the 
operator $K$ and the shape of the spiral wave. 
From Hypothesis (H2) we see that the parameters $\eta$ and $D$ in the definition of the operator $K$
control the strength and the spread of the convolution kernel, respectively.
These parameters then appear in the far-field approximation of the spiral's wavenumber,
$$\kappa = - \dfrac{\Lambda}{\beta} \dfrac{\delta}{ \sqrt{ \eta - \eps^2 D}}.$$
The above expression shows that as the strength, $\eta$, of the convolution kernel increases,
the spiral's wavenumber, $\kappa$, decreases. On the other hand, if the spread, $D$, of the operator increases past a certain 
threshold, the approximation for $\kappa$ breaks down. 
These results are in good agreement with our simulations, see Figure \ref{f:spirals_eta} and Figure \ref{f:spirals_D}.
In particular, notice how when the parameter $D$ is too large, we no longer obtain spiral waves but rather spiral chimeras.

\begin{figure}[ht] 
   \centering
   \includegraphics[width=0.2\textwidth]{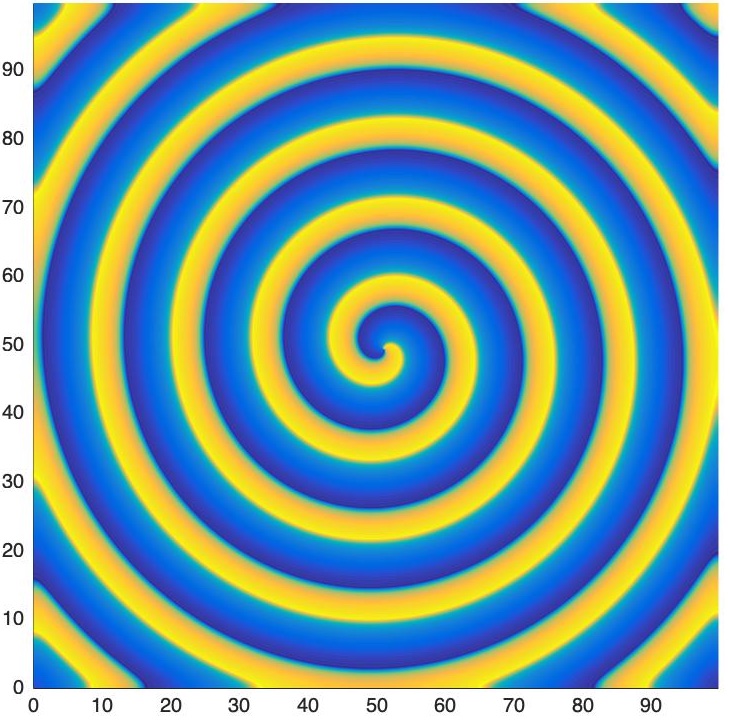} 
       \includegraphics[width=0.2\textwidth]{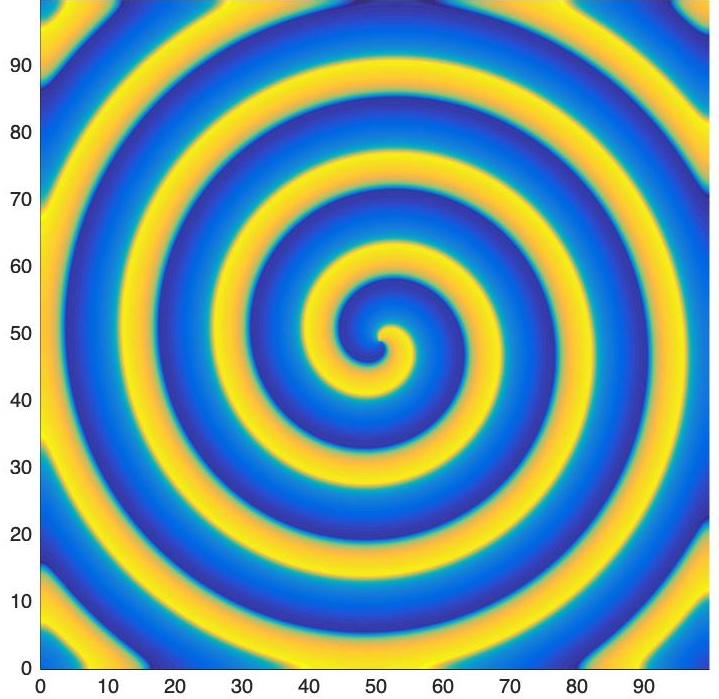} 
       \includegraphics[width=0.2\textwidth]{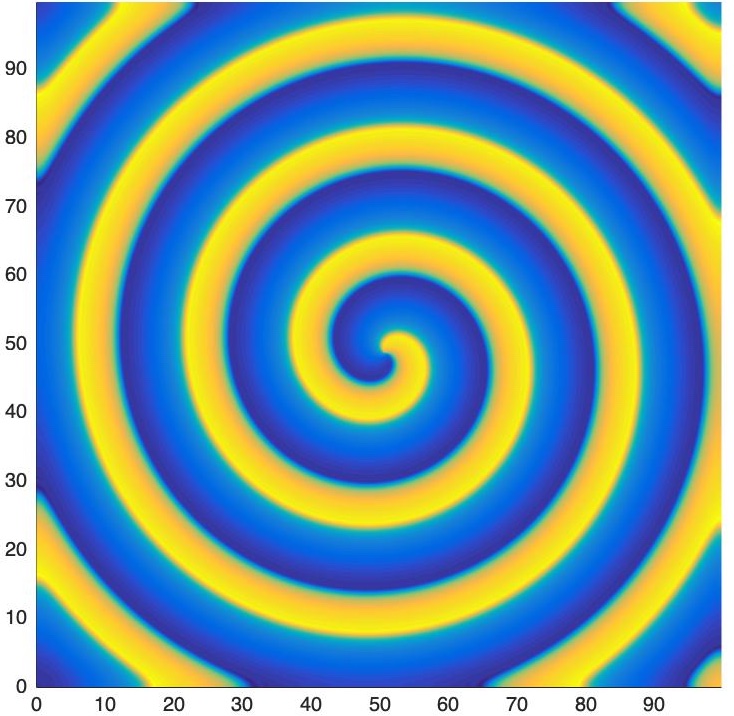} 
       \includegraphics[width=0.2\textwidth]{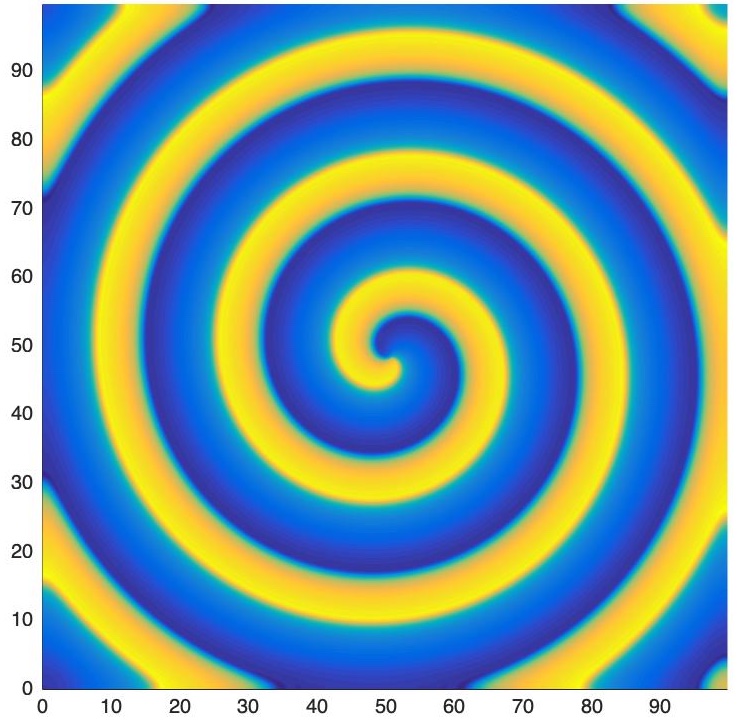} 
   \caption{Simulation of FitzHugh-Nagumo system appearing in \cite{shima2004} on a square domain of length $L =100$, using a cosine spectral method and an implicit Euler time stepping scheme with $N = 1024$ nodes and a time step $h= 0.05$. Convolution kernel used has Fourier symbol $\hat{K}(\xi) = \frac{ -\eta | \xi|^2}{ 1+ \eps^2 D |\xi|^2} $ with fixed $\eps^2 D =0.1 $ and varying $\eta$. From left to right we have $\eta =0.5, \eta= 1.0, \eta=1.5,$ and $\eta= 2.0$.}
   \label{f:spirals_eta}
\end{figure}

\subsection{Outline/ Sketch of Proof for Theorem 1}\label{ss:outline}
Although both formulations for our problem, i.e. equation \eqref{e:gl1} and equation \eqref{e:gl2}, are equivalent,  to prove Theorem \ref{t:main} we use the first equation.
 We now give a short summary of how we arrive at our main result.

First, notice that the form of the convolution operator $K$ stated in Hypothesis [H2] comes from projecting the  map 
\[ \mathcal{L} = \eta ( 1- \eps^2 D \Delta)^{-1} \Delta\]
onto the angular Fourier modes $\rme^{\pm \rmi \theta}$.
As a result we can formally write $K$ as 
\begin{equation}\label{e:kdecomposed}
K \ast w = \eta( 1- \eps^2 D \Delta_1)^{-1} \Delta_1 w, 
\end{equation}
where $\Delta_1 = \partial_{rr} + \frac{1}{r} \partial_r - \frac{1}{r^2}$.
Notice also that the operator $\mathcal{L}$ has a radially symmetric Fourier symbol $\hat{\mathcal{L}}(\xi)= - \eta |\xi|^2/(1 + \eps^2 D |\xi|^2)$.
Because the Fourier Transform commutes with orthogonal transformations, we have that the Fourier symbol of $K \ast \cdot + \Id $ is also radially symmetric \cite{stein1971}, and is thus given by
\[ \mathcal{F} ( K \ast w + w) = \left[\frac{- \eta |\xi|^2}{1 + \eps^2 D |\xi|^2}  + 1\right] \hat{w} = \left[ - (\eta - \eps^2 D) |\xi|^2 + \frac{ \eta \eps^2 D |\xi|^4}{ 1+ \eps^2 D |\xi|^2}\right] \hat{w}.\]
Therefore, equation \eqref{e:gl1} can also be written as
\begin{equation}\label{e:aux1}
0 = ( \eta - \eps^2 D) \Delta_1 w + (1+ \rmi \lambda) w - (1 + \rmi \beta) |w|^2 w + \eps^2 J \ast w + N(w;\eps),
\end{equation}
with
$\hat{J}(\xi)  = \frac{\eta D |\xi|^4}{1 + \eps^2 D |\xi|^2}.$

\begin{figure}[ht] 
   \centering
 \includegraphics[width=0.2\textwidth]{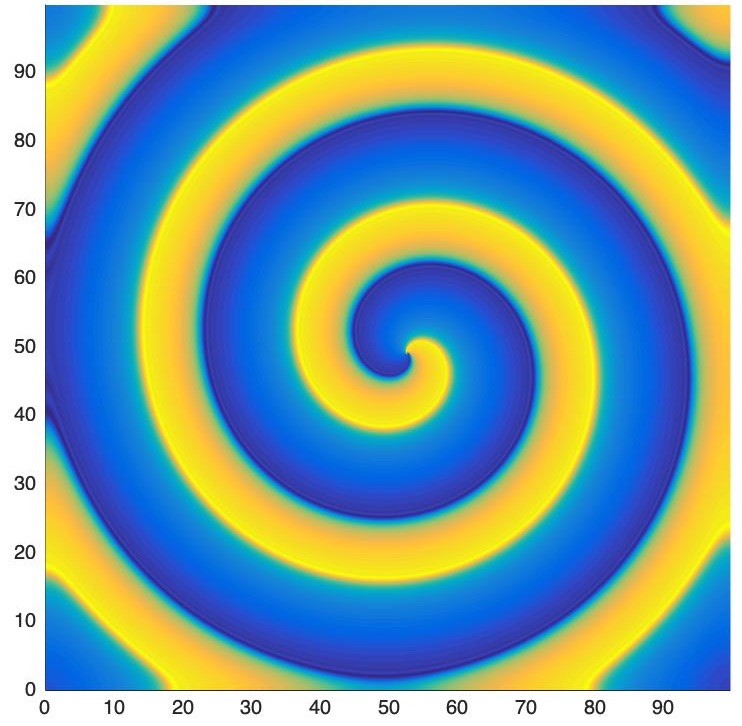} 
       \includegraphics[width=0.2\textwidth]{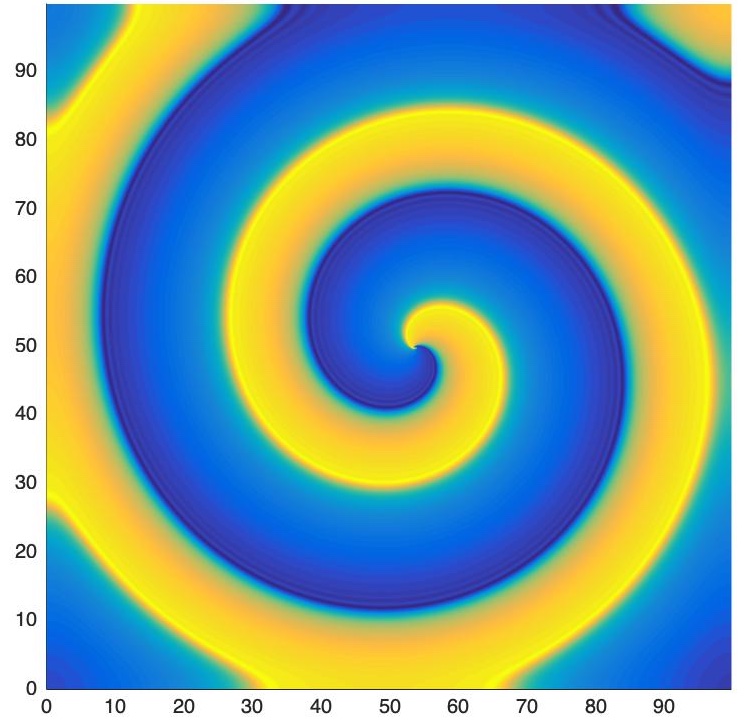}  
       \includegraphics[width=0.2\textwidth]{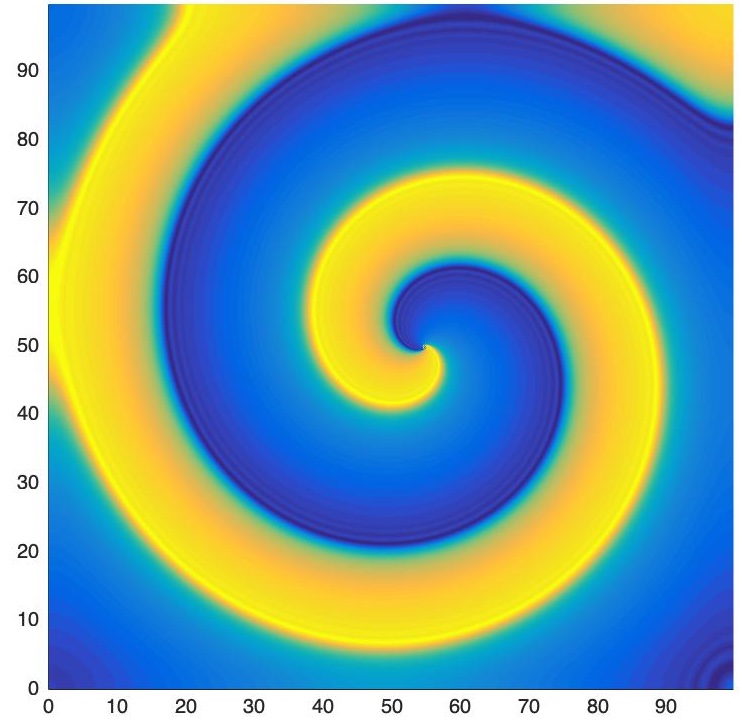} 
       \includegraphics[width=0.2\textwidth]{Figures/SpiralD_2_0.jpg}
      
       \includegraphics[width=0.2\textwidth]{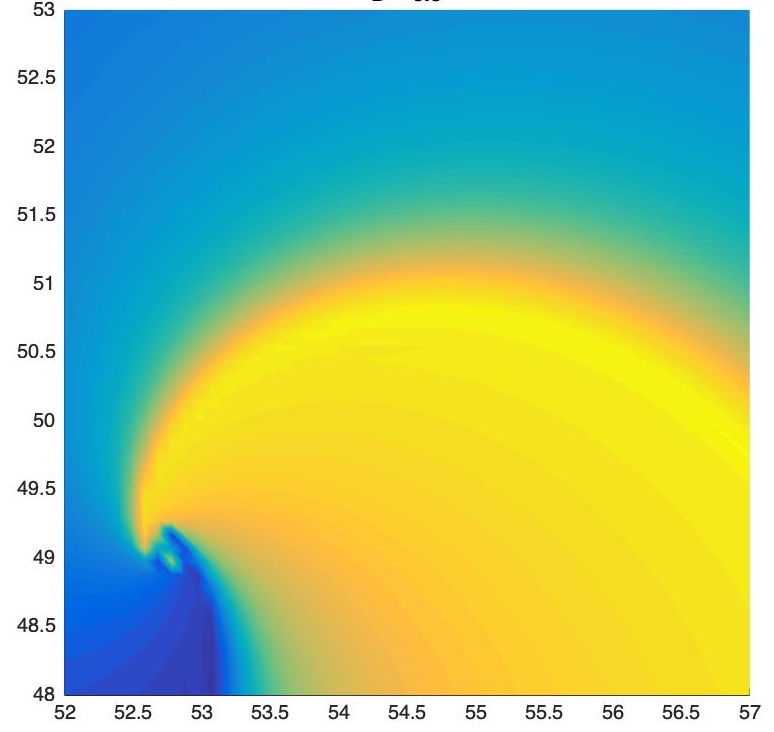} 
        \includegraphics[width=0.2\textwidth]{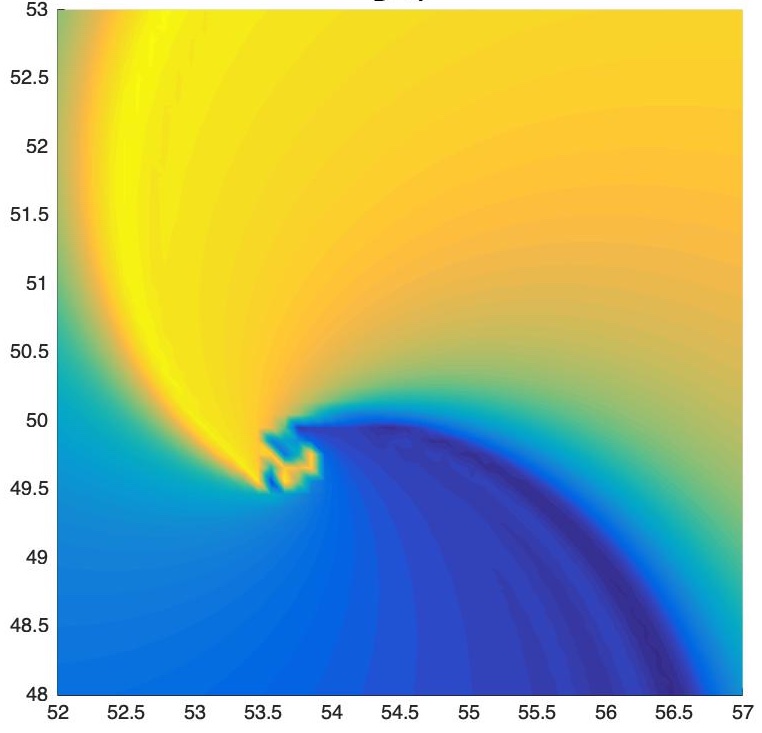} 
       \includegraphics[width=0.2\textwidth]{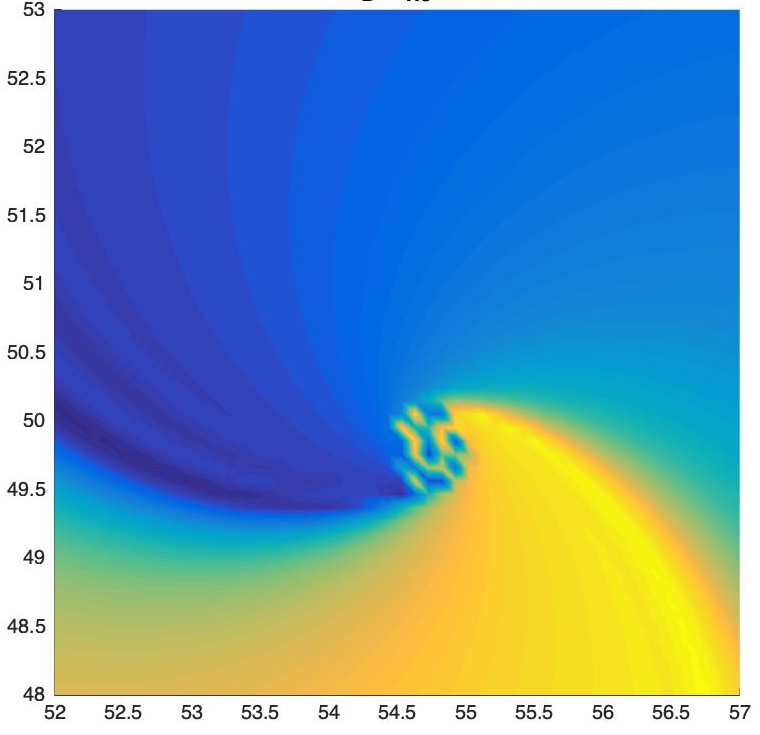} 
      \includegraphics[width=0.2\textwidth]{Figures/SpiralzoomD_2_0.jpg}  
   
     \caption{Simulation of FitzHugh-Nagumo system appearing in \cite{shima2004} on a square domain of length $L =100$, using a cosine spectral method and an implicit Euler time stepping scheme with $N = 1024$ nodes and a time step $h= 0.05$. Convolution kernel used has Fourier symbol $\hat{K}(\xi) = \frac{ -\eta | \xi|^2}{ 1+ \eps^2 D |\xi|^2} $ with fixed $\eta =1 $ and varying $\tilde{D} =\eps^2 D$. From left to right we have $\tilde{D} =0.5, \tilde{D}= 1.0, \tilde{D}=1.5,$ and $\tilde{D}= 2.0$. First row depicts full spiral, second row zooms in into core of spiral.}
   \label{f:spirals_D}
\end{figure}

At the same time, we can take advantage of the formal definition of the operator $K$ given in  \eqref{e:kdecomposed} and precondition equation  \eqref{e:gl1} by $(1 - \eps^2 D \Delta_1)$ to arrive at 
\begin{equation}\label{e:aux2}
0 = ( \eta - \eps^2 D) \Delta_1 w + (1+ \rmi \lambda) w + ( 1 - \eps^2 D \Delta_1) [ - (1 + \rmi \beta) |w|^2 w + N(w;\eps)].
 \end{equation}
 
Since both formulations, equations \eqref{e:aux1} and \eqref{e:aux2}, share similar first order terms, the first order approximations to both systems will be the same.
Thus, we choose to work with the second formulation for ease of exposition. However, we point out  that the ideas used to arrive at equation \eqref{e:aux1} can be extended to more general convolution operators. We plan to tackle this more general framework using this approach in a future paper.

Continuing with the sketch of the proof of Theorem \ref{t:main}, to show existence of solutions to equation \eqref{e:aux2}, we use polar coordinates to represent the unknown variable as $w(r) = \rho(r) \rme^{\rmi \phi(r)}$, and then separate the equation into its real and imaginary parts. 
Following the results from \cite{doelman2005}, in Section \ref{s:normalform} we pick appropriate scalings and, using a regular expansion for both $\rho$ and $\phi$, we obtain a hierarchy of equations at different powers of the small parameter $\delta \sim \eps^{1/4}$. Our goal is to then show that this sequence of equations has solutions representing spiral waves. This is done in Section \ref{s:approximations}.

 In Subsection \ref{ss:one} we work with the order $\rmO(1)$ equation and show that the first order correction to the complex amplitude, $\rho_0$, converges quickly to a constant. Thus, the spiral wave pattern is described by the complex phase, $\phi$. In Subsection \ref{ss:delta2} we show that, not surprisingly, the first order correction term for the phase variable, $\phi_0$, satisfies the following viscous eikonal equation,
\begin{equation}\label{e:visc}
-\Omega = \partial_{rr} \phi_0 + \frac{1}{r} \partial_r \phi_0  + \beta ( \partial_r \phi_0)^2   +\beta g(r).
\end{equation}
which is a known phase dynamics approximation for
target patterns in oscillatory media.
While in the case of target patterns the inhomogeneity $g$ represents the impurity, or defect, that gives rise  these structures,
 in the case of spiral waves this perturbation comes from the first order correction to the amplitude, $\rho_0$.
  We show  that $g$ decays algebraically in the far field and satisfies $g(r) \sim \rmO(1/r^2)$ as $r \to \infty$.
We are then able to use the results from \cite{jaramillo2022can}, 
where it is shown that equation \eqref{e:visc} admits solutions representing target patterns.
That is, solutions satisfying $\nabla \phi_0 \to \kappa$ as $r$ goes to infinity. We show that the constant $\kappa>0$, indicating that the impurity $g$ acts as a pacemaker, producing traveling waves that move away from the core. Because we are working in a co-rotating frame, we conclude that these same solutions correspond to spiral waves patterns in our setting.

To complete our proof, we gather all terms of order $\rmO(\delta^3)$  into one system of equations, which we refer to as the {\it closing equations}. 
Following a similar approach as in \cite{jaramillo2022rotating}, in Section \ref{s:existence} we use the implicit function theorem to prove existence of solutions. As mentioned before, the  difficulty with using this strategy comes from the fact that the linear operators appearing in our equations are not invertible when viewed as  maps between standard Sobolev spaces. We overcome this difficulty by establishing Fredholm properties for these, and related operators, using carefully selected algebraically weighted spaces. This is done in Section \ref{s:fredholm}, where we also give a precise definition for the spaces we will be working with. A bordering lemma then allows us to recover the invertibility of the operator.

\subsection{Discussion}

Spiral wave patterns have been extensively studied experimentally \cite{muller1987,mueller1990, winfree1974, belmonte1997}, analytically \cite{kopell1981, greenberg1980, greenberg1981, hagan1982, scheel1998, kuramoto2003, ermentrout1994, bernoff1991, barkley2002,tyson1988}, and through simulations \cite{barkley1991, laing2005, kilpatrick2010, ermentrout1994} since the 1970s, and have been shown to exist in both  excitable and oscillatory media.
In this paper we focus on the latter case, where one can assume that the intrinsic dynamics of the system allows for the formation of a limit cycle via a Hopf bifurcation. While most analysis of spiral waves concentrate on systems where these intrinsic `oscillators' interact via local forms coupling, 
here we assume that these connections are long-ranged and can be modeled using a convolution operator.
Since the length scale of these patterns is small compared to the experimental set up, we pose the model equations on the whole plane.  

Within this context, meaning spatially extended oscillatory media, past results on existence of spiral waves assume that
coupling 
is well represented by the Laplace operator, even in the case of nonlocal coupling, see \cite{kopell1981, greenberg1980, greenberg1981, hagan1982, scheel1998}.
As mentioned above, this assumption then allows one to use tools from spatial dynamics to analyze these problems.
While these techniques have been extended to study nonlocal neural field models \cite{laing2003, coombes2005, bressloff2012}, these results apply only to nonlocal operators that have 
  fractional Fourier symbols. The key idea is that in this case, the model equations can be transformed into partial differential equations by preconditioning the system with an appropriate differential operator.
Because this assumption is very restrictive, our goal for this paper has been to develop an alternative method based on functional analysis, which can be adapted to more general convolution operators.

Our efforts, summarized in Theorem \ref{t:main}, together with the work presented in reference \cite{jaramillo2022rotating},  provide the first rigorous proof for the existence of small amplitude spiral waves in oscillatory media with nonlocal coupling. Theorem  \ref{t:main} also provides  first order approximations for the amplitude and wavenumber of the pattern, and is the first work to rigorously establish a connection between
 the wavenumber, $\kappa$, and  properties of the nonlocal coupling. In addition, notice that our expansion for  $\kappa$  matches  the results obtained by Hagan \cite{hagan1982} and Aguareles et al \cite{aguareles2023rigorous} in the limiting case when the nonlocal operator reduces to the Laplacian. More precisely, we find that the wavenumber is small beyond all orders of the bifurcation parameter, a result that, due to the connection between the wavenumer and speed of the wave, also applies to the parameter $\Omega$.

Regarding our first order approximations, notice that because we are working in the weak coupling regime, spiral waves solutions
to our nonlocal cGL equation \eqref{e:gl1} are characterized mainly by the complex phase $\phi$. Thus, the viscous eikonal equation \eqref{e:visc}, which describes first order corrections for this variable, plays a central role in our proof. 
Because the  parameter $\Omega$ is an unknown, this equation can be interpreted as a nonlinear eigenvalue problem.  The difficulty in solving this equation then comes from the fact that $\Omega$, as pointed out above, is small beyond all orders of the
bifurcation parameter. As a result one cannot use regular perturbation techniques to find solutions, and instead one has to find approximations to the pattern, both near the core  and in the far field.
This analysis was done in \cite{jaramillo2022can}, where it is shown that one can find the value of the parameter $\Omega$, and thus solve the nonlinear eigenvalue problem,  by matching these two approximations.

This split between near and far field  approximations is also necessary at the level of the {\it closing equations} (i.e. the  order $\rmO(\delta^3)$ system analyzed in Section \ref{s:existence}). However, instead of finding and then matching
these approximations,
 as was the strategy in references \cite{aguareles2023rigorous} and \cite{jaramillo2022can}, here we use a special class of doubly weighted Sobolev spaces, which are able to capture the behavior of solutions in these two regimes.
In particular, we use spaces that encode algebraic decay or growth properties of functions, both at infinity and near the origin. With these spaces we are then able to 1) describe the higher order correction terms to our spiral wave solution, 2) show that the closing equations define a bounded map, and 3) prove that the corresponding linear operators are invertible. As a result, we are then able to prove existence of solutions using the implicit function theorem.

Although the main goal of this paper was to show existence of spiral waves,
 a large portion of this article is devoted to establishing Fredholm properties for elliptic operators using weighted Sobolev spaces. These results are new and interesting in and on themselves and, as shown in this article and in references \cite{jaramillo2016, jaramillo2019, jaramillo2018, jaramillo2022can}, they are the key ingredient to proving existence of patterned solutions in spatially extended systems via perturbation methods.

Finally, let us point out that even though our assumptions on $K$ narrow down the type of nonlocal coupling covered by our proof,  this choice of convolution map  does fall under the broader family of operators that are of diffusive type.
We concentrate on kernels like those defined by Hypothesis [H2] for simplicity of exposition.
Using the approach given in the derivation of equation \eqref{e:aux2}, the methods developed here can be adapted to more general convolution operators, with Fourier symbols that are uniformly bounded, analytic, and that have a quadratic (or even higher order) tangency near the origin. These operators naturally appear in neural field models, so it is possible to modify the techniques presented here to these systems, provided one can write the modeling equations in the form \eqref{e:model}.
In particular this means assuming that the firing rate function is smooth (i.e. sigmoidal) and that the space-clamped system admits a Hopf bifurcation, see for instance \cite{kilpatrick2010} for an example of such a model. 
However, notice that although these assumptions are reasonable mathematically, it is not clear if the resulting system is physically relevant.

\section{Preliminaries}\label{s:fredholm}
In this section we first introduce weighted and doubly-weighted Sobolev spaces.
We then consider the main linear operators appearing in our proofs of existence (Section \ref{s:existence}) and establish their Fredholm properties.
In particular, in Subsection \ref{ss:fredholm} we show the connection between their Fredholm index and
the type of weighted Sobolev spaces used to define their domain and range.
On a first reading, one may skip  Subsection \ref{ss:fredholm} and refer back to it when needed.

\subsection{Sobolev Spaces}\label{ss:spaces}
Throughout this section the letters $d$ and $s$ represent non-negative integers, while $\gamma$ and $\sigma$ are real numbers. To define the norm of our weighted Sobolev spaces we use the symbols
\[\langle x \rangle = (1+ |x|^2)^{1/2}\quad \mbox{and} \quad m(|x|)=|x| (1- \chi(x)),\]
where $\chi(x) \in C^\infty(\R^d)$
denotes a smooth radial cut-off function satisfying $\chi(x) = 0$ for $|x|<1$ and $\chi(x) = 1$ for $|x|>2$.

\begin{Definition}
We define the weighted space $H^{s}_{\gamma}(\R^d)$ as the completion of $C_0^\infty(\R^d, \C)$ with respect to the norm
\[ \| u\|_{H^s_\gamma(\R^d)} = \sum_{|\alpha| \leq s} \|  D^\alpha u(x) \; \langle x \rangle^\gamma\|_{L^2(\R^d)}. \]
\end{Definition}

It is clear from this definition that $H^{s}_{\gamma}(\R^d)$ is a Hilbert space. Its inner product is given by
\[ \langle f, g\rangle = \sum_{|\alpha|<s} \int_{\R^d} D^\alpha f(x) D^\alpha \bar{g}(x) \langle x \rangle^{2\gamma} \;dx,\]
where the over-bar denotes complex conjugation. Since $H^{s}_{\gamma}(\R^d)$ is a reflexive space, one may also  identify its dual with  $ H^{-s}_{-\gamma}(\R^d)$.

Notice  that depending on the weight $\gamma$, the functions $H^{s}_{\gamma}(\R^d)$ are either allowed to grow ($\gamma< -1$) or decay ($\gamma>-1$) at infinity, see Figure \ref{f:decay}. Moreover, the embedding $H^s_{\gamma_1}(\R^d) \subset H^s_{\gamma_2} (\R^d)$ holds, provided $\gamma_1>\gamma_2$, while $H^s_\gamma(\R^d) \subset H^k_\gamma (\R^d)$ is valid whenever $s>k$.
In terms of notation, when $s=0$ we write $L^2_\gamma(\R^d)$ instead of $H^0_\gamma(\R^d)$.

\begin{Definition}\label{d:doubly-weighted}
We define the doubly-weighted space $ H^{s}_{\gamma, \sigma}(\R^d)$  as the completion of $C^\infty_0(\R^d,\C)$ with respect to the norm
\[ \| u(x) \|_{H^s_{\gamma,\sigma}(\R^d)} = \sum_{|\alpha|<s} \| m(x)^{\sigma+|\alpha| }  D^\alpha u(x) \;\langle x \rangle^\gamma  \|_{L^2(\R^d)}.\]
\end{Definition}
As in the previous case, the space $ H^{s}_{\gamma, \sigma}(\R^d)$ is a Hilbert space with inner product
\[ \langle f, g\rangle = \sum_{|\alpha|<s}  \int_{\R^d} m(x)^{2(\sigma + |\alpha|) } \; D^\alpha f(x) D^\alpha \bar{g}(x)\; \langle x \rangle^{2 \gamma} \;dx.\]
Its dual space satisfies $( H^{s}_{\gamma, \sigma}(\R^d))^* =  H^{-s}_{-\gamma, -\sigma}(\R^d)$, and the 
 embeddings  $H^s_{\gamma_1,\sigma} (\R^d) \subset H^s_{\gamma_2,\sigma} (\R^d)$ and $H^s_{\gamma,\sigma} (\R^d) \subset H^k_{\gamma,\sigma} (\R^d)$  hold, provided $\gamma_1>\gamma_2$ and $s>k$, respectively, while $H^s_{\gamma,\sigma_1} (\R^d) \subset H^s_{\gamma,\sigma_2} (\R^d)$ is valid whenever $\sigma_1< \sigma_2$.

As before, functions in these doubly-weighted spaces have a level of growth or decay  at infinity that is controlled by the weight $\gamma$.
However, in contrast to $H^s_\gamma(\R^d)$, functions in $ H^{s}_{\gamma, \sigma}(\R^d)$  are also allowed to grow near the origin, see Figure \ref{f:decay}. In particular,  elements in $L^2_{\gamma, \sigma}(\R^d)$ are allowed to have a singularity at the origin of order $\rmO(1/|x|^\alpha)$, with $ \alpha \in (0,\sigma + d/2)$, see Lemma \ref{l:decay_zero}. %

In addition to the above weighted Sobolev spaces, in this article we also use their restriction  to radially symmetric functions. This is summarized in the following definition.
\begin{Definition}
We denote by $H^s_{r,\gamma}(\R^d)$ and $H^s_{r, \gamma, \sigma}(\R^d)$ the subspaces of $H^s_{\gamma}(\R^d)$ and $H^s_{ \gamma, \sigma}(\R^d)$, respectively, consisting of radially symmetric functions.
\end{Definition}

\begin{Remark}
To avoid confusion, throughout the paper we always use $\gamma$ to denote the strength of the weight $\langle x \rangle$, while the symbol $\sigma$ will always represent the strength of the weight $m(|x|)$.
As a result, we will use $\gamma$ to encode growth or decay properties of functions at infinity, while the value of $\sigma$ will indicate 
the behavior  of functions near the origin.
 In addition, in what follows we will  often approximate $m(|x|) \sim |x|=$ when defining the norm $\| \cdot \|_{L^2_{\gamma, \sigma}(B_1)}$. Here $B_1$ represents the unit ball in $\R^2$.
\end{Remark}

Next, we present four lemmas summarizing  properties of functions in $H^k_{\sigma}(\R^2)$ and in $H^k_{\gamma, \sigma}(\R^2)$.

\begin{Lemma}\label{l:decay}
Let $\gamma \in \R$ and $d \in \N$. A function $f$ is in $L^2_\gamma(\R^d)$ if and only if there is a number $\alpha < - \gamma - d/2$ and a positive constant $C$, such that for a.e. $ x \in \R^d$
 $$|f(x)|\leq C |x|^{\alpha} \quad \mbox{as} \quad x \to \infty.$$
\end{Lemma}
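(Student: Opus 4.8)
The claim is an equivalence, and the two implications are of quite different natures, so I would prove them separately.

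For the sufficiency direction, assume $|f(x)| \le C|x|^{\alpha}$ for $|x| \ge R_0$ (with $R_0 \ge 1$) and $\alpha < -\gamma - d/2$, together with the tacit requirement that $f$ be locally square integrable, which is needed because the far-field bound says nothing near the origin. I would split the defining integral as
\[
\int_{\R^d} |f(x)|^2 \langle x \rangle^{2\gamma}\, dx
= \int_{|x| < R_0} |f|^2 \langle x \rangle^{2\gamma}\, dx
+ \int_{|x| \ge R_0} |f|^2 \langle x \rangle^{2\gamma}\, dx .
\]
The first term is finite since $f \in L^2_{\mathrm{loc}}$ and $\langle x \rangle^{2\gamma}$ is bounded on the ball $\{|x| < R_0\}$. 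For the second term I would use the elementary comparison $\langle x \rangle^2 \le 2|x|^2$ valid for $|x|\ge 1$ to replace $\langle x\rangle^{2\gamma}$ by a constant multiple of $|x|^{2\gamma}$ (the constant depending on the sign of $\gamma$), insert the pointwise bound on $f$, and pass to polar coordinates. This reduces the tail to the radial integral $\int_{R_0}^{\infty} r^{2\alpha + 2\gamma + d - 1}\, dr$, which converges exactly when $2\alpha + 2\gamma + d - 1 < -1$, i.e. when $\alpha < -\gamma - d/2$. This is precisely the hypothesis, so $f \in L^2_\gamma(\R^d)$.

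The necessity direction is where the real content, and the main obstacle, lies. Because $L^2_\gamma$ is an $s=0$ space with no derivative control, membership does not by itself force any pointwise bound: a function assembled from tall, thin bumps whose heights grow while their supports shrink as they march off to infinity can sit in $L^2_\gamma$ while violating every estimate of the form $|f(x)| \le C|x|^{\alpha}$. I would therefore read and prove this implication as a sharpness statement for the threshold exponent $-\gamma - d/2$: running the same polar-coordinate computation shows that the model function $x \mapsto |x|^{\alpha}(1-\chi(x))$ belongs to $L^2_\gamma$ if and only if $\alpha < -\gamma - d/2$, so no slower algebraic rate is admissible. For the functions actually arising in the paper, which are radial and eventually monotone, I would upgrade this to a genuine pointwise statement by comparing $|f|$ on a dyadic annulus $\{r \le |x| \le 2r\}$ with its value at the outer edge: monotonicity gives $|f(2r)|^2 \int_{r \le |x| \le 2r} \langle x \rangle^{2\gamma}\, dx \le \int_{r \le |x| \le 2r} |f|^2 \langle x\rangle^{2\gamma}\, dx$, and since the annular volume scales like $r^{d}$ and the weight like $r^{2\gamma}$ while the right-hand side is a tail of a convergent integral tending to zero, one extracts decay at the critical rate.

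In summary, the two polar-coordinate estimates are routine; the genuine difficulty is the necessity direction, where one must recognize that an unconditional pointwise bound cannot follow from $L^2_\gamma$ membership for arbitrary measurable $f$. The honest reading is that the lemma identifies $-\gamma - d/2$ as the sharp algebraic decay threshold for the space, with the clean pointwise equivalence valid on the class of monotone radial functions relevant to the remainder of the argument.
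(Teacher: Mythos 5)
Your sufficiency argument is essentially the paper's: both insert the pointwise bound and reduce, in polar coordinates, to the convergence of $\int r^{2\alpha+2\gamma+d-1}\,dr$, although you are more careful in two places where the paper is loose — you make explicit the (genuinely needed) local square-integrability of $f$ and the split at a radius $R_0$, whereas the paper simply integrates from $1$ outward. Where you genuinely diverge is the necessity direction, and your diagnosis is correct. The paper proves that half by contrapositive, assuming $|f(x)| > |x|^{\beta}$ with $\beta = -\gamma - d/2 + \eps$ pointwise for every $\eps>0$ and concluding that the weighted norm diverges; but that hypothesis is not the negation of the pointwise upper bound in the statement — it only excludes functions that are \emph{everywhere} large, and says nothing about the tall-thin-bump functions you describe, which belong to $L^2_\gamma(\R^d)$ yet satisfy no algebraic pointwise bound at all. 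So the paper's proof of the ``only if'' direction has exactly the gap you identified, and your reading — that the honest content of the lemma is the identification of $-\gamma - d/2$ as the sharp algebraic threshold, with a genuine pointwise converse only on the monotone radial class used elsewhere in the paper — is the defensible version. One caveat on your dyadic-annulus upgrade: for monotone radial $f$ it yields $|f(r)| = \rmo(r^{-\gamma-d/2})$, i.e.\ decay \emph{at} the critical rate, which is still strictly weaker than the lemma's literal conclusion that $|f|\le C|x|^{\alpha}$ for some $\alpha$ strictly below the threshold. Indeed $f(r) = r^{-\gamma-d/2}/\log r$ (for $r$ large) is radial, monotone, and lies in $L^2_\gamma(\R^d)$ since $\int^\infty \frac{dr}{r(\log r)^2}<\infty$, yet it admits no bound $Cr^{\alpha}$ with $\alpha < -\gamma-d/2$. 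So even on the restricted class the equivalence holds only in your threshold sense, not literally — a limitation worth recording, since the paper invokes this lemma repeatedly to pass between membership in weighted spaces and pointwise decay rates.
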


\begin{proof}
Let $f: \R^d \longrightarrow \C$, and suppose there is a constant $C$, such that $|f(x)| < C |x|^{\alpha}$, with $\alpha $ as stated in the Lemma.  We may then write
\[
\| f\|_{L^2_\gamma(\R^d)}  = \int_{\R^d} |f(x)|^2 \langle x \rangle^{2 \gamma} \;dx
\leq C \int_1^\infty  r^{2\alpha} r^{ 2\gamma}\; r^{ d-1} \;dr.\]
Since this last integral is finite when $\alpha < - \gamma -d/2$, it then follows that $f \in L^2_\gamma(\R^2)$.

To prove the second implication we use its contrapositive. To that end, suppose now that $f(x)$ is such that $|f(x)|> |x|^{\beta}$ with $\beta = - \gamma -d/2+\eps$, and that this holds for all $\eps>0$. Then, there is a constant $c>0$ such that
\[ \infty = \int_1^\infty  r^{2\beta} r^{ 2\gamma}\; r^{ d-1} \;dr < c \int_{\R^d} |f(x)|^2 \langle x \rangle^{2 \gamma} \;dx= c \| f\|_{L^2_\gamma(\R^d)},\]
Consequently, $f$ is not in $L^2_\gamma(\R^d)$, and the results of the lemma then follow.
\end{proof}

A similar argument as in the proof of the last lemma gives us decay properties near the origin for elements in the space $L^2_{\gamma, \sigma}(\R^2)$.  This result is then summarized in Lemma \ref{l:decay_zero}. Examples illustrating the results of Lemma \ref{l:decay} and Lemma \ref{l:decay_zero}, for the case of functions defined in $\R^2$, are also summarized in Figure \ref{f:decay}.

\begin{figure}[t] 
   \centering
   \includegraphics[width=5in]{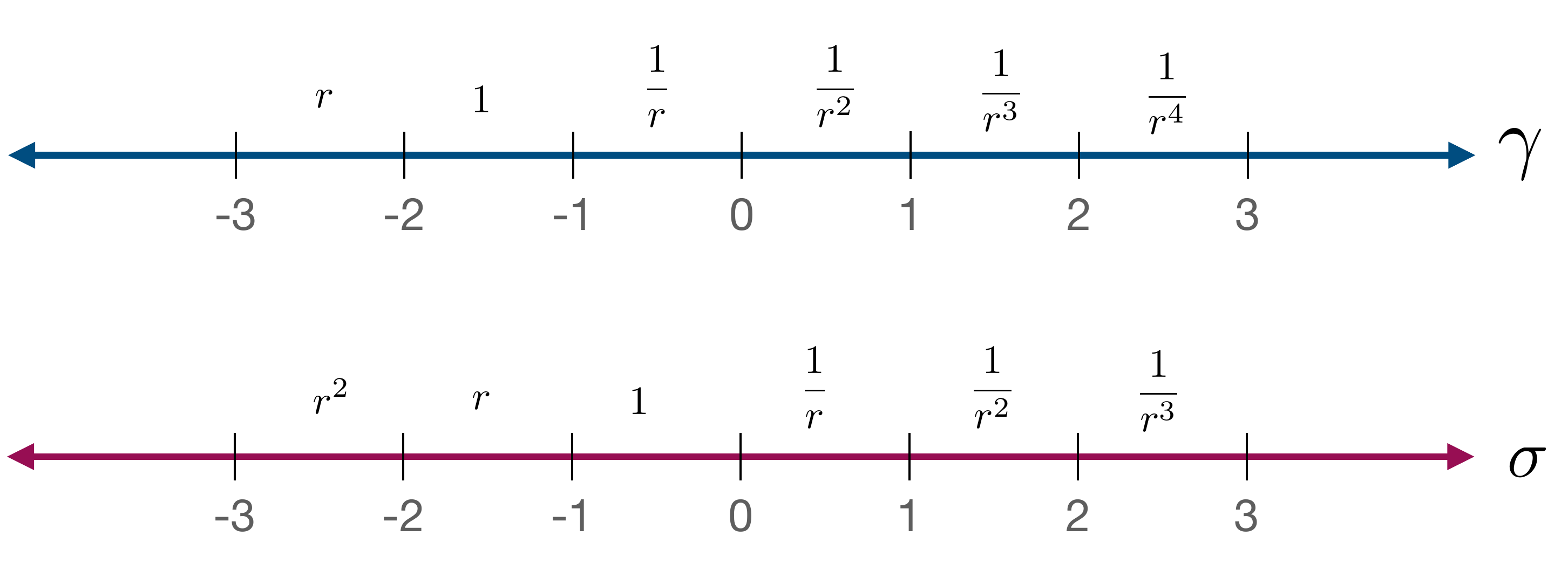} 
   \caption{Examples of algebraic decay/growth for weighted, $L^2_{\gamma}(\R^2)$, and doubly-weighted, $L^2_{\gamma, \sigma}(\R^2)$, Sobolev spaces. Parameter $\gamma$ encodes decay/growth properties of functions at infinity, while parameter $\sigma$ encodes decay/growth rates of functions near the origin. }
   \label{f:decay}
\end{figure}

\begin{Lemma}\label{l:decay_zero}
Let $\gamma, \sigma \in \R$, and $d \in \N$. A function $f$ is in $L^2_{\gamma,\sigma} (\R^2)$
 if and only if there is a number $\alpha > - \sigma - d/2$ and a positive constant $C$, such that for a.e. $x \in \R^d$,
 $$|f(x)|\leq C |x|^{\alpha} \quad \mbox{as} \quad |x| \to 0.$$

\end{Lemma}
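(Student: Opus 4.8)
The plan is to follow the proof of Lemma \ref{l:decay} verbatim in structure, with the far-field integral replaced by an integral over a small ball around the origin. Writing the doubly-weighted norm explicitly,
\[ \|f\|_{L^2_{\gamma,\sigma}(\R^d)}^2 = \int_{\R^d} m(|x|)^{2\sigma}\,|f(x)|^2\,\langle x \rangle^{2\gamma}\,dx, \]
I would first record that the only factor in this weight that is not uniformly comparable to a positive constant as $|x|\to 0$ is $m(|x|)^{2\sigma}$. Indeed, on the unit ball $B_1$ the cut-off satisfies $\chi\equiv 0$, so $m(|x|)=|x|$, while $\langle x \rangle=(1+|x|^2)^{1/2}\in[1,\sqrt{2})$. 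Consequently there are constants $0<c_1\le c_2$ with $c_1|x|^{2\sigma}\le m(|x|)^{2\sigma}\langle x \rangle^{2\gamma}\le c_2|x|^{2\sigma}$ on $B_1$, and the question of membership near the origin reduces to the convergence of $\int_{B_1}|x|^{2\sigma}|f(x)|^2\,dx$ at its lower limit.

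For the sufficiency direction I would assume $|f(x)|\le C|x|^\alpha$ with $\alpha>-\sigma-d/2$, valid for $|x|<r_0$ with some $r_0\in(0,1)$, pass to polar coordinates, and bound
\[ \int_{|x|<r_0} m(|x|)^{2\sigma}\,|f(x)|^2\,\langle x \rangle^{2\gamma}\,dx \le c_2\,C^2\,\omega_{d-1}\int_0^{r_0} r^{2\sigma+2\alpha+d-1}\,dr, \]
where $\omega_{d-1}$ denotes the surface area of the unit sphere. The radial integral converges at $r=0$ precisely when the exponent exceeds $-1$, that is, when $2\sigma+2\alpha+d-1>-1$, which is exactly the condition $\alpha>-\sigma-d/2$. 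As in Lemma \ref{l:decay}, the contribution from the region $|x|\ge r_0$ is treated as harmless, since there the weight is bounded and $f$ is assumed locally square integrable.

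For the converse I would argue by contrapositive, exactly as in Lemma \ref{l:decay}. Supposing $f$ is too singular at the origin, say $|f(x)|>|x|^\beta$ as $|x|\to 0$ with $\beta=-\sigma-d/2-\eps$ for every $\eps>0$, the same change of variables gives
\[ \|f\|_{L^2_{\gamma,\sigma}(\R^d)}^2 \ge c_1\,\omega_{d-1}\int_0^{r_0} r^{2\sigma+2\beta+d-1}\,dr = c_1\,\omega_{d-1}\int_0^{r_0} r^{-2\eps-1}\,dr = \infty, \]
so that $f\notin L^2_{\gamma,\sigma}(\R^d)$ and the stated equivalence follows. The only place requiring genuine care---and the analogue of the sole nontrivial point in Lemma \ref{l:decay}---is the bookkeeping of the two powers of $r$ that accumulate at the origin: the $r^{2\sigma}$ coming from $m(|x|)^{2\sigma}$ and the $r^{d-1}$ coming from the Jacobian, whose sum with $2\alpha$ must be compared against $-1$. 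Beyond this the argument is routine, and matches the far-field computation of Lemma \ref{l:decay} with the roles of the origin and infinity, and of the weights $m(|x|)$ and $\langle x \rangle$, interchanged.
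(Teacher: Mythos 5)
Your proposal is correct and is exactly the argument the paper intends: the paper's proof of Lemma \ref{l:decay_zero} consists of the single remark that ``a similar argument as in the proof of the last lemma'' applies, and you have carried out precisely that argument, exchanging the far field for the origin, using $m(|x|)=|x|$ and $\langle x\rangle\sim 1$ on $B_1$, and proving the converse by the same contrapositive device used for Lemma \ref{l:decay}. The exponent bookkeeping ($2\sigma+2\alpha+d-1>-1 \iff \alpha>-\sigma-d/2$) matches the stated threshold, so nothing further is needed.
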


The next lemma gives assumptions under which  the space $H^k_{\gamma, \sigma}(\R^2)$ is a Banach algebra.

\begin{Lemma}\label{l:banach_alg}
Let $k$ be an integer, and take $\gamma, \sigma \in \R$ with $\gamma>-1$ and $\sigma <-1$. Then $H^k_{\gamma, \sigma}(\R^2)$ is a Banach algebra.
\end{Lemma}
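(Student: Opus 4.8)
The statement has two parts: that $H^k_{\gamma,\sigma}(\R^2)$ is a Banach space and that pointwise multiplication is continuous. The first is immediate, since $H^k_{\gamma,\sigma}(\R^2)$ is already known to be a Hilbert space, so the entire content is the multiplicative estimate $\|fg\|_{H^k_{\gamma,\sigma}}\le C\|f\|_{H^k_{\gamma,\sigma}}\|g\|_{H^k_{\gamma,\sigma}}$. By density of $C_0^\infty$ in the space it suffices to prove this for smooth compactly supported $f,g$. The plan is to expand $D^\alpha(fg)$ with the Leibniz rule and estimate each resulting term $D^\beta f\,D^{\alpha-\beta}g$ against the weight $m^{\sigma+|\alpha|}\langle x\rangle^\gamma$ attached to the order-$\alpha$ piece of the norm. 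The arithmetic of the weight exponents is what makes the space well adapted to this: since
$$\sigma+|\alpha| = (\sigma+|\beta|) + |\alpha-\beta|, \qquad m^{\sigma+|\alpha|} = m^{\sigma+|\beta|}\,m^{|\alpha-\beta|},$$
the $m$-weight splits so that one factor can be assigned the full Sobolev weight $m^{\sigma+|\beta|}$ matching its own derivative order, while the remaining factor carries only the homogeneous gain $m^{|\alpha-\beta|}$.

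For each Leibniz term I would apply Hölder's inequality, placing in a weighted $L^\infty$ the factor carrying the fewer derivatives and in the weighted $L^2$ the factor carrying more. Concretely, if $|\beta|\le|\alpha-\beta|$ I write the integrand as $\big(m^{|\beta|}D^\beta f\big)\big(m^{\sigma+|\alpha-\beta|}\langle x\rangle^\gamma D^{\alpha-\beta}g\big)$, bound the second factor in $L^2$ by $\|g\|_{H^k_{\gamma,\sigma}}$ (its weight is exactly the order-$(\alpha-\beta)$ norm weight, and $|\alpha-\beta|\le k$), and bound the first factor in $L^\infty$; the roles of $f$ and $g$ are reversed when $|\alpha-\beta|<|\beta|$. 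This reduces the generic term to the weighted Sobolev embedding $\|m^{|\mu|}D^\mu u\|_{L^\infty}\lesssim\|u\|_{H^k_{\gamma,\sigma}}$ for derivative orders $|\mu|<k-1$. The exceptional contributions are the balanced middle terms, where both factors carry near $k/2$ derivatives and no such $L^\infty$ bound is available (for instance $\nabla f\cdot\nabla g$ when $k=2$); in two dimensions these are closed instead by a Gagliardo--Nirenberg/weighted $L^4$ interpolation, exactly as in the proof that $H^2(\R^2)$ is an algebra.

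The key analytic input is therefore the weighted $L^\infty$ embedding, which I would prove by a dyadic decomposition together with a scaling argument. Near the origin, on the annulus $A_n=\{2^{-n-1}\le|x|\le 2^{-n}\}$ the dilation $x=2^{-n}y$ is tailored to the homogeneous weight: because each derivative gains one power of $m$, the $A_n$-piece of the norm scales like $2^{-n(\sigma+1)}\|v\|_{H^k(A_0)}$ with $v(y)=u(2^{-n}y)$, whereas the target quantity $\|m^{|\mu|}D^\mu u\|_{L^\infty(A_n)}=\|D^\mu v\|_{L^\infty(A_0)}$ is scale invariant. Combining with the unweighted embedding $H^k(A_0)\hookrightarrow C^{|\mu|}(A_0)$ (valid for $|\mu|\le k-2$ in $\R^2$) gives $\|m^{|\mu|}D^\mu u\|_{L^\infty(A_n)}\lesssim 2^{n(\sigma+1)}\|u\|_{H^k_{\gamma,\sigma}}$; the factor $2^{n(\sigma+1)}$ stays bounded as $n\to\infty$ precisely because $\sigma<-1$, which is where that hypothesis is used (and which also forces the functions to vanish at the origin). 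The same argument on the far-field annuli $\{2^n\le|x|\le 2^{n+1}\}$, now weighted by $\langle x\rangle^\gamma$, produces a factor $2^{-n(\gamma+1)}$ that decays as $n\to\infty$ exactly when $\gamma>-1$, controlling $\|u\|_{L^\infty}$ at infinity.

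The step I expect to be most delicate is this weighted embedding near the origin: the derivative-dependent exponent $\sigma+|\alpha|$ must be tracked carefully through both the Leibniz splitting and the rescaling so that the two geometric series (over the dyadic scales at $0$ and at $\infty$) converge, and the borderline two-dimensional interpolation for the top-order terms has to be carried out in the weighted setting rather than borrowed verbatim from the flat case. Once the embedding is in hand, summing the Leibniz estimates over $|\alpha|\le k$ and $\beta\le\alpha$ yields the multiplicative bound, and hence the Banach algebra property.
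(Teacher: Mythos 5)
Your analysis near the origin is correct, and it is essentially a sharpened version of what the paper itself does there: the paper also expands $D^s(fg)$ by Leibniz and controls each factor pointwise near $x=0$ (via Lemma \ref{l:decay_zero}, which gives $|D^\ell f|\le C|x|^{\alpha_1}$ with $\alpha_1>-(\sigma+\ell+1)$), and then integrates, with $\sigma<-1$ entering through exactly the same exponent count that makes your dyadic series converge. Your scaling proof of the weighted sup bound $\|m^{|\mu|}D^\mu u\|_{L^\infty(B_1)}\lesssim\|u\|_{H^k_{\gamma,\sigma}(\R^2)}$ is sound precisely because near the origin the norm attaches the homogeneous factor $m^{\sigma+|\alpha|}$ to the order-$\alpha$ derivative, so dilations interact with the norm in a scale-covariant way; and your interpolation for the balanced middle terms (e.g.\ $\nabla f\cdot\nabla g$ when $k=2$) fills in a case that the paper's write-up passes over.

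The genuine gap is your far-field step. Outside $B_1$ the weight $m$ is inactive (the paper itself notes $\|\cdot\|_{L^2_{\gamma,\sigma}(\R^2\setminus B_1)}=\|\cdot\|_{L^2_{\gamma}(\R^2\setminus B_1)}$), so the norm there is inhomogeneous: every derivative order carries the same weight $\langle x\rangle^{\gamma}$. Rescaling a far-field annulus $A_n=\{2^n\le|x|\le2^{n+1}\}$ by $x=2^ny$, $v(y)=u(2^ny)$, gives $\|D^\alpha v\|_{L^2(A_0)}\approx 2^{n(|\alpha|-1-\gamma)}\,\|\langle\cdot\rangle^{\gamma}D^\alpha u\|_{L^2(A_n)}$; only the $|\alpha|=0$ piece produces your factor $2^{-n(\gamma+1)}$, while for $|\alpha|\ge1$ the factor grows with $n$, so the Sobolev embedding on $A_0$ yields no uniform bound. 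Moreover, this is not a repairable technicality: for $-1<\gamma<0$ the embedding you want is false. Take $f=\sum_n n\,\psi(x-x_n)$ with $\psi\in C_0^\infty(B(0,1/2))$ and centers chosen so that $\langle x_n\rangle^{2\gamma}=n^{-4}$ (possible since $\gamma<0$); then $\sum_n n^2\langle x_n\rangle^{2\gamma}<\infty$, so $f\in H^k_{\gamma,\sigma}(\R^2)$ for every $k$ (and every $\sigma$, as $f$ vanishes near the origin), yet $f$ is unbounded, and in fact $\|\langle\cdot\rangle^{\gamma}f^2\|_{L^2}^2\approx\sum_n n^4\langle x_n\rangle^{2\gamma}=\infty$. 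The paper avoids scaling altogether in the far field and instead invokes the pointwise decay statement of Lemma \ref{l:decay} (asserted for all $\gamma>-1$) to obtain $|fg|\le C|f|$ and $|D^\beta f\,D^{\alpha-\beta}g|\le C|D^\beta f|$ outside $B_1$. To close your argument you must do the same---quote Lemma \ref{l:decay} for the lower-order factor---or else restrict to $\gamma\ge0$ (or to the radial subspaces the paper actually uses later), where a unit-scale covering argument with no dilation gives $|D^\mu u(x)|\lesssim\langle x\rangle^{-\gamma}\|u\|_{H^k_{\gamma,\sigma}(\R^2)}$ for $|\mu|\le k-2$. The bump example above shows such decay cannot be extracted from membership in the space alone when $\gamma<0$, so the far-field half of the multiplicative estimate is where the real difficulty sits, and it is not reachable by the scaling argument you propose.
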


\begin{proof}
Suppose that $f$ and $g$ are in $H^k_{\gamma, \sigma}(\R^2)$. Since $\gamma> -1$, from Lemma \ref{l:decay} we know that functions in this space decay at infinity. Therefore, the inequality $|f g| < C |f|$ holds for large values of $|x|$ and some $C>0$. As a result, the product $fg \in L^2_{\gamma, \sigma}(\R^2\setminus B_1)$, where $B_1\subset \R^2$ represents the unit ball. A similar argument then shows that $D^\alpha(fg)$ is also in $L^2_{\gamma, \sigma +|\alpha|}(\R^2\setminus B_1)$, for all  indices $\alpha $ with $| \alpha| \leq k$.

On the other hand, to show that expression $fg$ satisfies the desired properties near the origin, we look at the
derivative $D^s(fg)$. Taking $s \in \Z \cap [0,k]$ and letting $D^sf $ any partial derivative of order $s$, we obtain
\[ D^s (fg) = \sum_{\ell =0}^s { s \choose \ell} D^\ell f D^{s -\ell} g.\]
From the definition of the spaces $H^k_{\gamma, \sigma}(\R^2)$ and Lemma \ref{l:decay_zero} we know that
near the origin,
\begin{align*}
| D^\ell f| \leq C |x|^{\alpha_1} & \qquad \alpha_1> -(\sigma+\ell +1),\\
| D^{s-\ell} g| \leq C |x|^{\alpha_2} & \qquad \alpha_2> -(\sigma+s -\ell +1),
\end{align*}
and as a result,
\[ \| D^\ell f D^{s -\ell} g\|_{L^2_{\gamma, \sigma+s}(B_1)}\leq \int_0^1 r^{2(\alpha_1+\alpha_2)} r^{2(\sigma+s)} r\;dr.\]
Since $\sigma<-1$, the inequality $2(\alpha_1+ \alpha_2) + 2(\sigma+s) +2 >0$ is satisfied. Consequently, this last integral is finite and we obtain $D^s(fg) \in L^2_{\gamma,\sigma+s}(\R^2)$, as desired. The results of the Lemma then follow.
\end{proof}

\begin{Lemma}\label{l:product_rule}
Let $k,s \in \N \cup \{0\}$ satisfying $s\leq k$,  take $\gamma_1, \gamma_2>-1$ and pick $\sigma_1, \sigma_2 >-1$. Suppose that $f \in H^k_{\gamma_1, \sigma_1}(\R^2)$ and $g \in H^s_{\gamma_2,\sigma_2}(\R^2)$, then the product
\[ fg \in H^s_{\gamma,\sigma}(\R^2)\]
where $\gamma = \min(\gamma_1,\gamma_2)$ and $\sigma = \sigma_1+ \sigma_2+1$.
\end{Lemma}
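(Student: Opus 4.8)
The plan is to mirror the structure of the proof of Lemma~\ref{l:banach_alg}: I would split $\R^2$ into the unit ball $B_1$ and its complement $\R^2 \setminus B_1$, treat the behavior of $fg$ near the origin and in the far field separately, and control each region with the pointwise characterizations of Lemma~\ref{l:decay} (decay at infinity) and Lemma~\ref{l:decay_zero} (growth near the origin). Since $\gamma_1,\gamma_2 > -1$ and $\sigma_1,\sigma_2 > -1$, every derivative of $f$ up to order $k$ and of $g$ up to order $s$ satisfies such bounds, the weight attached to a derivative of order $j$ being shifted to $\sigma_i + j$ (and the far-field weight staying $\gamma_i$), as dictated by Definition~\ref{d:doubly-weighted}.

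First I would fix a multi-index $\alpha$ with $|\alpha| \le s$ appearing in the $H^s_{\gamma,\sigma}$ norm and expand, via the Leibniz rule,
\[
D^\alpha(fg) = \sum_{\beta \le \alpha} \binom{\alpha}{\beta}\, D^\beta f \; D^{\alpha - \beta} g .
\]
Because $|\beta| \le |\alpha| \le s \le k$ and $|\alpha - \beta| \le |\alpha| \le s$, each factor $D^\beta f$ is a controlled derivative of $f \in H^k_{\gamma_1,\sigma_1}(\R^2)$ and each $D^{\alpha-\beta} g$ a controlled derivative of $g \in H^s_{\gamma_2,\sigma_2}(\R^2)$; it therefore suffices to bound a single product $D^\beta f\, D^{\alpha-\beta} g$ in $L^2_{\gamma,\sigma+|\alpha|}(\R^2)$ and then sum over the finitely many terms.

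In the far field I would argue exactly as in Lemma~\ref{l:banach_alg}: since $\gamma_2 > -1$, Lemma~\ref{l:decay} shows $D^{\alpha-\beta}g$ decays and is in particular bounded for $|x|$ large, so $|D^\beta f\, D^{\alpha-\beta}g| \le C\,|D^\beta f|$ there. As $\gamma = \min(\gamma_1,\gamma_2) \le \gamma_1$, the embedding $L^2_{\gamma_1} \subset L^2_{\gamma}$ then places this product in $L^2_\gamma(\R^2 \setminus B_1)$; here the weight $m$ plays no role, consistent with the convention that $m$ only encodes origin behavior.

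The heart of the argument, and the step I expect to require the most care, is the near-origin estimate on $B_1$, where $m(|x|) = |x|$. Here Lemma~\ref{l:decay_zero} gives $|D^\beta f| \le C|x|^{a_1}$ with $a_1 > -(\sigma_1 + |\beta|) - 1$ and $|D^{\alpha-\beta}g| \le C|x|^{a_2}$ with $a_2 > -(\sigma_2 + |\alpha - \beta|) - 1$, so the product obeys $|D^\beta f\, D^{\alpha-\beta}g| \le C|x|^{a_1+a_2}$ with
\[
a_1 + a_2 > -(\sigma_1 + \sigma_2 + |\alpha|) - 2 = -(\sigma + |\alpha|) - 1,
\]
using $\sigma = \sigma_1+\sigma_2+1$ and $|\beta| + |\alpha - \beta| = |\alpha|$. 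This is precisely the threshold making $\int_0^1 r^{2(\sigma+|\alpha|)+2(a_1+a_2)}\,r\,dr$ converge, whence $D^\beta f\, D^{\alpha-\beta}g \in L^2_{\gamma,\sigma+|\alpha|}(B_1)$. The delicate points are tracking how the derivative shifts $\sigma_1 + |\beta|$ and $\sigma_2 + |\alpha-\beta|$ recombine, verifying that the single extra $-1$ coming from the planar dimension $d=2$ is exactly absorbed by the $+1$ in $\sigma = \sigma_1+\sigma_2+1$, and checking that the inequality holds simultaneously for every admissible $\alpha$ and every Leibniz splitting $\beta \le \alpha$. Summing the Leibniz terms and combining the two regions then yields $fg \in H^s_{\gamma,\sigma}(\R^2)$, which completes the proof.
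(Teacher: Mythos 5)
Your proposal is correct and follows essentially the same route as the paper's proof: split $\R^2$ into $B_1$ and its complement, handle the far field by bounding the product by one factor (via Lemma \ref{l:decay} and $\gamma_i>-1$) and embedding into $L^2_\gamma$, and handle the origin via the Leibniz rule, Lemma \ref{l:decay_zero}, and the convergence of the radial integral, with exponent arithmetic that matches the paper's condition $2(\alpha_1+\alpha_2)+2(\sigma+s)+2>0$. The only cosmetic difference is that you avoid the paper's WLOG assumption $\gamma_1<\gamma_2$ by always bounding with the $f$-factor and then using $L^2_{\gamma_1}\subset L^2_{\gamma}$, which is a slightly cleaner bookkeeping of the same idea.
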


\begin{proof}
As in the previous lemma, since for $i \in \{1,2\}$ we have that $\gamma_i <-1$, both $f$ and $g$ decay at infinity. If, without loss of generality, we assume $\gamma_1< \gamma_2$, then the inequality $|fg|< |f|$ holds, and we conclude that the product
$fg \in L^2_{\gamma_1,\sigma}(\R^2\setminus B_1)$, where $\sigma$ is arbitrary for now. A similar reasoning also shows that $D^\alpha (fg) \in L^2_{\gamma_1,\sigma+|\alpha|}(\R^2 \setminus B_1)$.

On the other hand, if we now take $p \in \Z \cap [0,s]$ and let $D^p f $ denote any partial derivative of order $p$, 
we may write
\[ D^p (fg) = \sum_{\ell =0}^p { p \choose \ell} D^\ell f D^{p -\ell} g.\]
Using Lemma \ref{l:decay_zero} we again find that for $|x| \sim 0$,
\begin{align*}
| D^\ell f| \leq C |x|^{\alpha_1} & \qquad \alpha_1> -(\sigma_1+\ell +1),\\
| D^{s-\ell} g| \leq C |x|^{\alpha_2} & \qquad \alpha_2> -(\sigma_2+s -\ell +1).
\end{align*}
Letting $\sigma = \sigma_1 + \sigma_2 +1$, the above expressions, along with a similar reasoning as in the proof of Lemma \ref{l:decay}, then imply that the norm $| D^s(fg)\|_{L^2_{\gamma, \sigma+s}(B_1)}$ is finite. This follows since 
the inequality $2(\alpha_1 + \alpha_2) + 2(\sigma +s) +2 >0$ is satisfied.
\end{proof}

\subsection{Fredholm Operators}\label{ss:fredholm}

In this subsection we prove Fredholm properties for the
various differential operators that appear in this article.
Recall that a linear operator, $L$, is Fredholm if it has closed range, finite dimensional kernel, $\mathrm{Ker}(L)$, and finite dimensional co-kernel, $\mathrm{coKer}(L)$. Its index is then given by 
$i = $ dim $(\mathrm{Ker}(L)) -$ dim $ ( \mathrm{coKer}(L) )$.

We start with the operator $\Delta -\mathrm{Id}$, which is known to be invertible from $H^s(\R^2)$ into $H^{s-2}(\R^2)$. Lemma \ref{l:Delta-c}, shown below, extends this result to the weighted Sobolev spaces defined in Subsection \ref{ss:spaces}. A proof of this result can be found in \cite{jaramillo2018}, but for convenience we reproduce it in Appendix \ref{a:Fredholm}.

\begin{Lemma}\label{l:Delta-c}
Let $s\geq 2$ and suppose $\gamma \in \R$. Then, the operator
\[
  \Delta-\mathrm{Id}:  H^s_\gamma(\R^2) \longrightarrow H^{s-2}_\gamma(\R^2)\]
 is invertible.
\end{Lemma}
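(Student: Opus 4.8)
The plan is to remove the weight by conjugation and reduce to the unweighted statement, which is already known. The first step is to record the norm equivalence
$$\|u\|_{H^s_\gamma(\R^2)}\ \sim\ \|\langle x\rangle^\gamma u\|_{H^s(\R^2)},$$
which follows from Leibniz's rule together with the bound $|D^\beta\langle x\rangle^{\pm\gamma}|\le C\langle x\rangle^{\pm\gamma-|\beta|}$ for every multi-index $\beta$. Consequently, multiplication by $\langle x\rangle^\gamma$ is an isomorphism $H^s_\gamma(\R^2)\to H^s(\R^2)$, and likewise on the range spaces, so it suffices to prove that the conjugated operator
$$L_\gamma:=\langle x\rangle^\gamma(\Delta-\Id)\langle x\rangle^{-\gamma}:H^s(\R^2)\longrightarrow H^{s-2}(\R^2)$$
is invertible.

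Next I would compute $L_\gamma$ explicitly. Using $\nabla\langle x\rangle^{-\gamma}=-\gamma\langle x\rangle^{-\gamma-2}x$ and a short calculation, one finds $L_\gamma=(\Delta-\Id)+B$, where $B=b(x)\cdot\nabla+c(x)$ is a lower-order operator whose coefficients decay, namely $|b(x)|\le C\langle x\rangle^{-1}$ and $|c(x)|\le C\langle x\rangle^{-2}$. Since $\Delta-\Id:H^s(\R^2)\to H^{s-2}(\R^2)$ is invertible, I would factor
$$L_\gamma=\bigl(\Id+K\bigr)(\Delta-\Id),\qquad K:=B(\Delta-\Id)^{-1},$$
so that the problem reduces to showing that $\Id+K$ is invertible on $H^{s-2}(\R^2)$.

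The key technical step is that $K$ is compact on $H^{s-2}(\R^2)$. Indeed, $(\Delta-\Id)^{-1}$ maps $H^{s-2}$ into $H^s$, and $B$ sends $H^s$ into $H^{s-2}$ through one derivative followed by multiplication by the decaying coefficients $b,c$. Multiplication by a function that vanishes at infinity is compact as a map $H^{s-1}(\R^2)\to H^{s-2}(\R^2)$: splitting the coefficient into a compactly supported piece and a small tail, the first gives a compact operator by the Rellich--Kondrachov theorem, while the tail is small in operator norm, so the multiplier is a norm limit of compact operators. Hence $K$ is compact and, by the Fredholm alternative, $\Id+K$—and therefore $L_\gamma$—is Fredholm of index zero.

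It then remains only to check injectivity. If $L_\gamma u=0$ for some $u\in H^s(\R^2)$, set $v=\langle x\rangle^{-\gamma}u$; then $v$ is a tempered distribution solving $(\Delta-\Id)v=0$, so its Fourier transform satisfies $-(|\xi|^2+1)\hat v=0$, which forces $\hat v=0$ and hence $v=u=0$. A trivial kernel together with index zero yields invertibility of $L_\gamma$, and undoing the conjugation gives the claim. I expect the compactness of $K$ to be the main obstacle: the obvious Fourier-multiplier inverse on the unweighted space does not commute cleanly with the polynomial weight, and it is precisely the decay of the commutator coefficients $b,c$ that makes the perturbation relatively compact and lets invertibility transfer to the weighted setting.
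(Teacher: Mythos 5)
Your proof is correct, and while the first half follows the same strategy as the paper, your injectivity argument is genuinely different and worth comparing. Like the paper, you conjugate by the weight $\langle x\rangle^\gamma$ to reduce to an operator on unweighted spaces of the form $(\Delta-\Id)+B$ with lower-order coefficients decaying like $\langle x\rangle^{-1}$ and $\langle x\rangle^{-2}$, and you conclude Fredholmness of index zero by exhibiting the perturbation as relatively compact (your factorization $L_\gamma=(\Id+K)(\Delta-\Id)$ with $K$ compact is just a repackaging of the paper's statement that $T(\gamma)$ is a compact perturbation of the invertible unweighted operator). Where you diverge is the kernel. The paper rules out a nontrivial kernel by contradiction using Kato's analytic Fredholm theorem: if $\mathcal{L}(\gamma^*)$ had a kernel for some $\gamma^*$, the embeddings $H^s_{\gamma^*}\subset H^s_\gamma$ for $\gamma<\gamma^*$ would force the compact analytic family $(\Delta-\Id)^{-1}T(\gamma)$ to have eigenvalue $1$ on a whole interval, hence for all $\gamma\in\C$, contradicting invertibility at the anchor point $\gamma=0$. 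You instead argue directly: any kernel element, multiplied back by $\langle x\rangle^{-\gamma}$, is a tempered distribution (because the weight is polynomial) annihilated by $\Delta-\Id$, and since the symbol $-(|\xi|^2+1)$ never vanishes and its reciprocal is a slowly increasing smooth multiplier on $\mathcal{S}'$, the Fourier transform must vanish. Your route is more elementary and self-contained—it avoids the analytic Fredholm machinery of the paper's Appendix B entirely—but it leans on two specific features of this operator: the nonvanishing symbol and the fact that polynomial weights keep kernel elements inside $\mathcal{S}'$. The paper's continuation-in-the-weight argument needs only invertibility at a single reference weight and nothing about the symbol, so it transfers to situations where a direct Fourier argument is unavailable; for this particular lemma, however, your argument is the shorter and cleaner of the two.
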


In the following corollary we view the Hilbert spaces $H^s_\gamma(\R^2)$  as  direct sums given by,
\[ H^s_\gamma(\R^2) = \oplus_n h^s_{n,\gamma},\]
where $n \in \Z$ and the spaces $h^s_{n,\gamma}$  are defined as
\[
 h^s_{n,\gamma} =  \;  \{ u \in H^s_\gamma(\R^2) : u(r, \theta) = u_n(r) \rme^{ \rmi n \theta}, \quad u_n(r) \in  H^s_{r,\gamma} (\R^2)\},\]

Heuristically, the above decomposition comes from a change of coordinates into polar coordinates and
a Fourier series decomposition of $u$ in the $\theta$ variable. For a complete description of this formulation,
see \cite{stein1971}. Recall also that we have assumed our weighted Sobolev spaces are composed of complex-valued functions.

\begin{Corollary}\label{c:Delta-c}
Let $n \in \N \cup\{0\}$, and take $\gamma \in \R$. Then, the operator
\[
 \Delta_n- \Id : H^s_{r,\gamma}(\R^2) \longrightarrow H^{s-2}_{r,\gamma}(\R^2)\]
 with $\Delta_n = \partial_{rr} + \frac{1}{r} \partial_r -\frac{n^2}{r^2} $
 is invertible.
\end{Corollary}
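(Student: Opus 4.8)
The plan is to read the corollary off Lemma \ref{l:Delta-c} by exploiting the rotational invariance of the Laplacian together with the angular decomposition $H^s_\gamma(\R^2)=\oplus_n h^s_{n,\gamma}$ recorded above. The guiding principle is that $\Delta-\Id$ is \emph{block diagonal} with respect to this decomposition, and that on the $n$-th block it acts exactly as $\Delta_n-\Id$ on the radial profile. Invertibility of the full operator, restricted to a single block, should then yield invertibility of $\Delta_n-\Id$.

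First I would establish the block structure. In polar coordinates $\Delta=\partial_{rr}+\frac1r\partial_r+\frac1{r^2}\partial_{\theta\theta}$, so applying $\Delta-\Id$ to an element $u_n(r)\rme^{\rmi n\theta}\in h^s_{n,\gamma}$ gives
\[(\Delta-\Id)\big(u_n(r)\rme^{\rmi n\theta}\big)=\big((\Delta_n-\Id)u_n\big)\rme^{\rmi n\theta},\qquad \Delta_n=\partial_{rr}+\tfrac1r\partial_r-\tfrac{n^2}{r^2}.\]
Hence $\Delta-\Id$ maps $h^s_{n,\gamma}$ into $h^{s-2}_{n,\gamma}$, and on this block it is nothing but $\Delta_n-\Id$ acting on the profile $u_n$. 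The same fact can be phrased intrinsically: $\Delta$ commutes with the rotation action $(R_\phi u)(x)=u(R_{-\phi}x)$, which is unitary on every $H^k_\gamma(\R^2)$ because the weight $\langle x\rangle$ is radial, and the subspaces $h^k_{n,\gamma}$ are precisely the eigenspaces of this action (with character $\rme^{-\rmi n\phi}$). Thus $\Delta-\Id$ preserves each eigenspace.

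Next I would transfer invertibility from the whole space to a single block. By Lemma \ref{l:Delta-c} the operator $A:=\Delta-\Id$ is invertible as a map $H^s_\gamma(\R^2)\to H^{s-2}_\gamma(\R^2)$; since $A$ commutes with every $R_\phi$, so does $A^{-1}$, and therefore $A^{-1}$ also preserves the eigenspaces, mapping $h^{s-2}_{n,\gamma}$ into $h^s_{n,\gamma}$. Consequently the restriction $A|_{h^s_{n,\gamma}}:h^s_{n,\gamma}\to h^{s-2}_{n,\gamma}$ is a bounded bijection whose bounded inverse is $A^{-1}|_{h^{s-2}_{n,\gamma}}$. Identifying $h^s_{n,\gamma}$ with the space of its radial profiles via $u_n\rme^{\rmi n\theta}\mapsto u_n$, and using the first display, this restriction becomes exactly $\Delta_n-\Id$, which is therefore invertible, establishing the corollary for every $n\geq 0$.

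The step I expect to require the most care — the genuine obstacle hidden behind the clean block argument — is the identification of the radial domain near the origin. For $n\geq 1$ the term $-n^2 u_n/r^2$ in $\Delta_n u_n$ is only square integrable near the origin when $u_n$ carries extra vanishing there (concretely $u_n/r^2\in L^2_\gamma$), a condition that is \emph{not} automatic for an arbitrary radial element of $H^s_\gamma(\R^2)$ but which is exactly what the membership $u_n\rme^{\rmi n\theta}\in H^s_\gamma(\R^2)$ in the definition of $h^s_{n,\gamma}$ enforces. I would therefore make precise that the radial space $H^s_{r,\gamma}(\R^2)$ serving as domain for the mode $n$ is the one inherited through the modulation $u_n\mapsto u_n\rme^{\rmi n\theta}$, verify that this modulation is a topological isomorphism onto the block $h^s_{n,\gamma}$, and check that the induced radial norm is the one under which $\Delta_n-\Id$ is bounded. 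Expanding Cartesian derivatives in polar coordinates reduces this to comparing the radial integrals $\int |\partial_r^j u_n|^2\, r^{-2(k-j)}\langle r\rangle^{2\gamma}\, r\,dr$ for $j\leq k\leq s$; the only delicate contributions are those with $k-j>0$, i.e. precisely the near-origin terms, and this is where the argument must be carried out explicitly rather than by appeal to the abstract decomposition.
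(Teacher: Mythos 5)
Your proposal follows essentially the same route as the paper: decompose $H^s_\gamma(\R^2)=\oplus_n h^s_{n,\gamma}$, use that $\Delta-\Id$ commutes with rotations so that it is block diagonal in this decomposition, and read off the invertibility of each block $\Delta_n-\Id$ from Lemma \ref{l:Delta-c}. You are in fact more careful than the paper on two points it leaves implicit: that $(\Delta-\Id)^{-1}$ also commutes with rotations and hence preserves the blocks (which is what yields surjectivity mode by mode), and that for $n\geq 1$ the radial domain must be understood as the profile space inherited from $h^s_{n,\gamma}$, since the condition $u_n/r^2\in L^2_\gamma$ near the origin is not automatic for an arbitrary radial element of $H^s_\gamma(\R^2)$.
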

\begin{proof}
The result follows from the fact that the Laplacian commutes with rotations, so that if we view $H^s_\gamma(\R^2) = \oplus_n h^s_{n,\gamma}$, then the invertible operator $\Delta- \Id : H^s_\gamma(\R^2)\longrightarrow H^{s-2}_{\gamma}(\R^2)$ is a diagonal operator in this last space. That is, 
\begin{align*}
 (\Delta- \Id) u = & f\\
\sum_n (\Delta_n - \Id) u_n \rme^{\rmi n \theta} = &\sum_n f_n  \rme^{\rmi n \theta},
\end{align*}
where  by the definition of $h^s_{n, \gamma}$, the functions $u_n(r) \in H^s_{r,\gamma}(\R^2)$ and $f_n (r) \in H^{s-2}_{r,
\gamma}(\R^2)$.
It then follows that each operator $\Delta_n- \Id: H^s_{r,\gamma}(\R^2) \longrightarrow H^{s-2}_{r,\gamma}(\R^2)$ is invertible.
\end{proof}

In the next two lemmas we establish Fredholm properties for the radial operator $\mathcal{L} = \partial_r + \frac{1}{r} -\lambda$ with $\lambda>0$. At this stage it is convenient to restrict attention to real-valued functions. So in the rest of this section we assume that $H^s_{r,\gamma}(\R^2) = H^s_{r,\gamma}(\R^2,\R)$ and $H^s_{r,\gamma,\sigma}(\R^2) = H^s_{r,\gamma,\sigma}(\R^2,\R)$.

\begin{Lemma}\label{l:fredholm1}
Let $\gamma$ be a real number, $s$ an integer satisfying $s\geq 1$, and take $\lambda>0$.
Then, the operator $\mathcal{L}: H^s_{r,\gamma}(\R^2) \longrightarrow H^{s-1}_{r,\gamma}(\R^2)$, defined by
\[ \mathcal{L} u = \partial_r u + \frac{1}{r} u - \lambda u,\]
is Fredholm with index $i =-1$ and cokernel spanned by $\rme^{-\lambda r}$.
\end{Lemma}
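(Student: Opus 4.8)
The plan is to treat $\mathcal{L}$ as a first-order linear ODE, solve it explicitly, and read off the kernel, range, and cokernel from the solution formula. First I would rewrite the operator as $\mathcal{L}u = \frac{1}{r}\partial_r(ru) - \lambda u$ and substitute $v = ru$, which turns the equation $\mathcal{L}u = f$ into the constant-coefficient problem $\partial_r v - \lambda v = rf$. The homogeneous equation $\mathcal{L}u = 0$ then has the one-parameter family of solutions $u(r) = C\,\rme^{\lambda r}/r$. Since $\lambda > 0$ these grow exponentially and so violate the algebraic decay/growth bound of Lemma \ref{l:decay}; hence none lies in $H^s_{r,\gamma}(\R^2)$ unless $C = 0$, and $\mathrm{Ker}(\mathcal{L}) = \{0\}$.

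For the range, variation of parameters produces the candidate solution $u(r) = -\frac{\rme^{\lambda r}}{r}\int_r^\infty \rme^{-\lambda s}\,s\,f(s)\,ds$; every other solution differs from it by the homogeneous term $C\rme^{\lambda r}/r$, so this is the unique candidate avoiding exponential growth at infinity and thus the only one that can lie in the space. Expanding near $r = 0$ gives the leading behavior $u(r)\sim -\frac{1}{r}\int_0^\infty \rme^{-\lambda s}\,s\,f(s)\,ds$. Because $1/r\notin L^2(\R^2)$, membership of $u$ in $H^s_{r,\gamma}(\R^2)$ forces the solvability condition $\int_0^\infty \rme^{-\lambda s}\,s\,f(s)\,ds = 0$, which is exactly the orthogonality relation $\langle f,\rme^{-\lambda r}\rangle_{L^2(\R^2)} = 0$ coming from the measure $r\,dr$.

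Conversely, I would show this condition is sufficient: when it holds one rewrites $u(r) = \frac{\rme^{\lambda r}}{r}\int_0^r \rme^{-\lambda s}\,s\,f(s)\,ds$, so the singular part cancels and $u$ vanishes linearly at the origin. A Laplace-type estimate $\int_r^\infty \rme^{-\lambda s}\,s^{1+\alpha}\,ds \sim \lambda^{-1}\rme^{-\lambda r}\,r^{1+\alpha}$ shows that if $f$ decays (or grows) like $r^\alpha$ then so does $u$, so by Lemma \ref{l:decay} the weight $\gamma$ is preserved at infinity for every $\gamma\in\R$; the derivatives up to order $s$ are then controlled by bootstrapping through $\partial_r u = f - \frac{1}{r}u + \lambda u$ together with $f\in H^{s-1}_{r,\gamma}(\R^2)$. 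This identifies the range as $\{f : \langle f,\rme^{-\lambda r}\rangle = 0\}$. Since $\rme^{-\lambda r}$ decays super-algebraically at infinity and is bounded at the origin, it lies in $L^2_{-\gamma}(\R^2)\subset H^{-(s-1)}_{r,-\gamma}(\R^2) = (H^{s-1}_{r,\gamma}(\R^2))^*$, so it defines a bounded functional whose kernel is closed of codimension one. As $\rme^{-\lambda r}$ also solves the formal adjoint equation $-\partial_r v - \lambda v = 0$, it spans $\mathrm{coKer}(\mathcal{L})$, and $\mathcal{L}$ is Fredholm with index $0 - 1 = -1$.

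The main obstacle I expect is the sufficiency/closed-range step: verifying that the explicit integral solution genuinely lies in $H^s_{r,\gamma}(\R^2)$ once the orthogonality condition holds. This requires simultaneously controlling the cancellation of the $1/r$ singularity and the regularity of all derivatives up to order $s$ at the origin, as well as the preservation of the algebraic rate at infinity, uniformly in $f$, so as to obtain an estimate $\|u\|_{H^s_\gamma}\lesssim \|f\|_{H^{s-1}_\gamma}$ on the orthogonal complement that makes the range closed. The kernel and cokernel computations are comparatively routine once the ODE has been solved.
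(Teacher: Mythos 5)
Your proposal is correct and follows essentially the same route as the paper's proof: trivial kernel from the exponentially growing homogeneous solution $\rme^{\lambda r}/r$, the explicit variation-of-parameters inverse, the solvability condition $\int_0^\infty f(s)\rme^{-\lambda s}\,s\,ds=0$ with the rewritten integral from $0$ to $r$ controlling the origin, the cokernel spanned by $\rme^{-\lambda r}$ via the adjoint, and bootstrapping/induction for the higher-order case. The estimate you flag as the main obstacle is indeed where the paper spends its effort, using Minkowski's inequality for integrals in the far field and the Lebesgue differentiation theorem to bound $|u(r)|/r$ by $|f(r)|$ near the origin.
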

\begin{proof}
We first show the results of the lemma for the case when $s =1$. 

We start by proving that the operator $\mathcal{L}: H^1_{r,\gamma}(\R^2) \longrightarrow L^2_{r,\gamma}(\R^2)$ has a trivial nullspace.
Indeed, one can check that the function $ u_0 = \frac{1}{r} \rme^{\lambda r}$ is the only function that satisfies $\mathcal{L} u_0 =0$.  However, because of its singularity at the origin and its exponential growth, this function does not belong to the space $H^1_{r,\gamma}(\R^2)$ for any weight $\gamma \in \R$.

A short calculation using integration by parts and the pairing between elements in $L^2_{r,\gamma}(\R^2)$ and its dual, $L^2_{r,-\gamma}(\R^2)$, shows that the adjoint of $\mathcal{L}$ is the operator $\mathcal{L^*}: L^2_{r,-\gamma}(\R^2) \longrightarrow H^{-1}_{r,-\gamma}(\R^2) $, given by $\mathcal{L}^* u = -( \partial_r u + \lambda u)$. The kernel of this operator is then spanned by $\rme^{-\lambda r}$, which is an element of the space $L^2_{r,-\gamma}(\R^2)$ for all $\gamma \in \R$. As a result the cokernel of $\mathcal{L}$ is spanned by $\rme^{-\lambda r}$ and therefore has dimension one.

To prove the results of the lemma, we are left with showing that the range of the operator $\mathcal{L} :H^1_{r,\gamma}(\R^2) \longrightarrow L^2_{r,\gamma}(\R^2)$ is given by 
\[ R = \{ u \in L^2_{r,\gamma}(\R^2) \mid \int_0^\infty u(r)\rme^{-\lambda r}  \; r \;dr = 0\}.\] 
Notice that because the function $\rme^{-\lambda r}$ is an element of  $L^2_{r,-\gamma}(\R^2) $, it defines a bounded linear functional, $\ell : L^2_{r, \gamma}(\R^2) \longrightarrow \R$, via the integral
\[ \ell(u) = \int_0^\infty u(r)\rme^{-\lambda r}  \; r \;dr.\]
Since the space $R$ corresponds to the nullspace of $\ell$, we immediately obtain that it is a closed subspace of $L^2_{r,\gamma}(\R^2) $.

To prove the above claim, we show that the inverse of $\mathcal{L}$, defined by the operator
\[\begin{array}{c c c}
R & \longrightarrow & H^1_{r, \gamma}(\R^2)\\
f(r) & \mapsto & u(r) =\frac{1}{r} \int_\infty^r  \rme^{-\lambda( s-r)} f(s) s \;ds
\end{array}
\]
 is bounded. We start by proving the bound $\|u \|_{L^2_{r,\gamma}(\R^2)} \leq \| f\|_{L^2_{r,\gamma}(\R^2)}$ using the inequality
\[ \|u \|_{L^2_{r,\gamma}(\R^2)} \leq \|u \|_{L^2_{r,\gamma}(B_1) } + \|u \|_{L^2_{r,\gamma}(\R^2\backslash B_1)}, \]
where $B_1$ is the unit ball in $\R^2$. First, notice that
\begin{align*}  
\|u \|_{L^2_{r,\gamma}(\R^2\backslash B_1)}  
= & \left [  \int_1^\infty \left| \int_\infty ^r f(s) \rme^{-\lambda(s-r)} \frac{s}{r} \;ds   \right|^2  \langle r \rangle^{2 \gamma } r\;dr \right]^{1/2}  \\
\leq &  2^{|\gamma -1 |}  \left [  \int_1^\infty \left| \int_\infty^r f(s)  \langle s \rangle^{\gamma  -1 } s \quad   \rme^{-\lambda(s -r)}  \langle s -r \rangle^{| \gamma -1  |}  r^{1/2} \;ds \right|^2  \;dr \right]^{1/2},\\
\leq &   2^{|\gamma -1 |}   \left [  \int_1^\infty \left|  \int_\infty^0 |f(z+r )| \langle z+r \rangle^{\gamma-1} (z+r )\;
  \rme^{-\lambda z}  \langle z \rangle^{|\gamma-1|} r^{1/2}
\;dz \right|^2\;dr \right]^{1/2}\\
\leq &  2^{|\gamma -1|} \int_0^\infty e^{-\lambda z} \langle z \rangle^{|\gamma-1|}
\left[   \int_1^\infty |f(z+r )|^2 \langle z+r \rangle^{2\gamma} r \;dr \right]^{1/2}  \;dz   \\
\leq & C_1(\lambda,\gamma)\| f\|_{L^2_{r,\gamma}(\R^2)},
\end{align*}
where the second line follows from 
the relation $\langle s \rangle^{-\eta} \langle r \rangle^ \eta \leq 2^{|\eta|} \langle s-r\rangle^{|\eta|}$,
and the approximation $\langle r\rangle^{2\gamma}/r^2 \sim \langle r\rangle^{2(\gamma-1)} $, which is valid given that $ r>1$.
The inequality on the third line comes from the change of variables $ z= s-r$, while the fourth line follows from Minkowski's inequality for integrals \cite[Theorem 6.19]{folland1999}. In the last line, we let $C_1(\gamma, \lambda) = 2^{|\gamma -1|} \int_0^\infty e^{-\lambda z} \langle z \rangle^{|\gamma-1|} \;dz$.

Next, we use the solvability condition $\int_0^\infty f(r) \rme^{-\lambda r} \;r\;dr =0$ to write the inverse as
\[ u(r) = \frac{1}{r} \int_0^r \rme^{-\lambda(s-r)} f(s) \;s \;ds\]
and bound the norm in $B_1$ as follows:
\begin{align*}
\|u \|_{L^2_{r,\gamma}( B_1)} 
= & \left [ \int_0^1 \left | \frac{1}{r} \int_0^r \rme^{-\lambda(s-r)} f(s) \;s \;ds \right|^2 \langle r \rangle^{2 \gamma} \;r \;dr \right]^{1/2}\\
\leq &  \left [ \int_0^1 \left | \frac{1}{r} \int_0^r \rme^{\lambda r} f(s) \;s \;ds \right|^2 \langle r \rangle^{2 \gamma} \;r \;dr \right]^{1/2}\\
\leq & \int_0^1 \left [  \int_s^1    |f(s)|^2 s^2 \; \rme^{2\lambda r} \langle r \rangle^{2\gamma} \frac{1}{r} \; dr    \right]^{1/2} \;ds\\
\leq &   \int_0^1 \rme^{ \lambda}  2^\gamma \left [  \int_s^1   \frac{1}{r} \;dr \right]^{1/2} |f(s)| \;s  \; ds\\
\leq &  2^\gamma \rme^{ \lambda}   \int_0^1 \left [  -\log(s) \right]^{1/2} |f(s)| \;s  \; ds\\
\leq &  2^\gamma \rme^{ \lambda}   \| f\|_{L^2_r(B_1)}\left( \int_0^1 |\log(s)| s\;ds\right)^{1/2}\\
\leq & C_2(\lambda, \gamma) \| f\|_{L^2_{r,\gamma}(B_1)},
\end{align*}
In this case, the second line comes from the inequality $0< -(s-r) < r$, the third line follows from applying Minkowski's inequality for integrals, and the second to last line from H\"older's inequality. In the last line, we also let $C_2(\lambda,\gamma) = 2^{\gamma -1}\rme^\lambda $. The result is,
\[ \|u \|_{L^2_{r,\gamma}(\R^2)} \leq ( C_1(\lambda, \gamma) + C_2(\lambda, \gamma) ) \| f\|_{L^2_{r,\gamma}(\R^2)}.\]

Our next step is to show that $\|\partial_r u \|_{L^2_{r,\gamma}(\R^2)} \leq \| f\|_{L^2_{r,\gamma}(\R^2)}$ using again the inequality
\[ \|\partial_r u \|_{L^2_{r,\gamma}(\R^2)} \leq \|\partial_r u \|_{L^2_{r,\gamma}(B_1) } + \| \partial_r u \|_{L^2_{r,\gamma}(\R^2\backslash B_1)}. \]
From  the relation $\partial_r u + \frac{1}{r} u - \lambda u = f$, it is easy to see that the second term on the right hand side of the inequality satisfies,
\[ \| \partial_r u \|_{L^2_{r,\gamma}(\R^2\backslash B_1)} \leq [(1+\lambda)( C_1(\lambda, \gamma) + C_2(\lambda, \gamma) +1 ]
\| f\|_{L^2_{r,\gamma}(\R^2)}.\]
To bound the term $\| \partial_r u \|_{L^2_{r,\gamma}(B_1) } $, we first observe that
\[  \frac{|u(r)|}{r}  \leq \frac{1}{r^2} \int_0^r |f(s)| s\;ds \leq  \frac{1}{r^2} \int_0^r |f(s)- f(y)|s \;ds + \frac{1}{2} |f(y)|,\]
where $y \in [0,r]$. 
Letting $B(y,2r)$ denote the ball of radius $2r$ centered at $y$, and  defining $\tilde{B} (y,2r)  = B(y,2r) \cap B(0,r) \subset \R^2$, the above inequality can also be written as,
\begin{equation}\label{e:boundu}
 \frac{|u(r)|}{r} \leq \frac{|\tilde{B}(y,2r)|}{2 \pi r^2} \left( \frac{1}{|\tilde{B}(y,2r)|} \int_{\tilde{B}(y,2r)} |f(s) -f(y)| s \;ds\;d \theta \right) + \frac{1}{2} |f(y)|.
 \end{equation}
with $|\tilde{B}(y,2r)|$ representing the measure of the set $\tilde{B}(y,2r)$.
Because of the embedding $L^2(B(0,M)) \subset L^1(B(0,M))$, which holds  for any bounded ball $B(0,M) \subset \R^2$, the function $f$ is in $L^1_{loc}(B(0,M))$.
As a result, by the Lebesgue Differentiation Theorem \cite[Theorem 3.21]{folland1999}, 
the expression in parenthesis approaches zero as $r\to 0$, while the fraction in front remains bounded,
since $|\tilde{B}(y,2r)|\geq 2\pi r^2$. 
Thus, near  the origin, the function $|u(r)|/r$ is bounded above by $|f(r)|$. From the equation $\partial_r u = f - \lambda u - u/r$, it then follows that
 \[ \| \partial_r u \|_{L^2_{r,\gamma}(B_1)} \leq (2 +\lambda C_1(\lambda, \gamma))\; \| f\|_{L^2_{r,\gamma}(\R^2)}.\]
Consequently, for $f \in R$, the solution to $(\partial_r + \frac{1}{r} - \lambda) u = f$ satisfies,
\[ \| \partial_r u\|_{L^2_{r,\gamma}(\R^2)} \leq  (3 + 2(1+\lambda)(C_1(\gamma, \lambda) + C_2(\gamma))) \| f\|_{L^2_{r,\gamma}(\R^2)}.\]

To prove that the general operator $\partial_r + \frac{1}{r} -\lambda : H^s_{r,\gamma}(\R^2) \longrightarrow H^{s-1}_{r,\gamma}(\R^2)$ is Fredholm index $i =-1$, one can proceed by induction:
 Assuming that $f$ and $u$ are in $ H^{s-1}_{r,\gamma}(\R^2)$  one shows that $\partial_r^s u $ is in $L^2_{r,\gamma}(\R^2)$ using the relation $\partial^s_r u = \partial_r^{s-1} (f +\lambda u - \frac{u}{r})$. The fact that $\partial_r^{s-1} \left(\dfrac{u}{r}\right)$ is in the correct space follows by a similar argument as the one done above to prove $u/r \in L^2_{r,\gamma}(\R^2)$.
\end{proof}

\begin{Lemma}\label{l:fredholm2}
Let $\gamma , \sigma$ be real numbers, $s$ and integer satisfying $s \geq 1$, and take $\lambda >0$. Then, the operator $\mathcal{L}: H^s_{r,\gamma,\sigma}(\R^2) \longrightarrow H^{s-1}_{r,\gamma,\sigma+1}(\R^2)$, defined by
\[ \mathcal{L} u = \partial_r u + \frac{1}{r} u - \lambda u,\]
is Fredholm. Moreover,
\begin{enumerate}[i)]
\item if $\sigma <0$, it has index $i =-1$. It is injective and its cokernel is spanned by $\rme^{-\lambda r}$;
\item if $\sigma >0$, it has index $i =0$ and it is invertible.
\end{enumerate}
\end{Lemma}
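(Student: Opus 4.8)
The plan is to follow the strategy of Lemma \ref{l:fredholm1}: write down an explicit candidate inverse, analyze it separately near the origin and in the far field, and read off the Fredholm data from the space into which this candidate falls. The homogeneous equation $\mathcal{L}u=0$ is again solved only by $u_h(r)=\tfrac1r\rme^{\lambda r}$, whose exponential growth excludes it from $H^s_{r,\gamma,\sigma}(\R^2)$ for every $\gamma,\sigma$, so $\mathcal{L}$ is injective in both cases and the whole content of the lemma is the description of the range. Moreover, the far-field estimates are governed by the weight $\langle x\rangle^\gamma$ exactly as in Lemma \ref{l:fredholm1} and carry over with only cosmetic changes, since $m(|x|)\sim|x|$ matters only near the origin; it is there that the new behaviour appears.

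The key observation is that the two natural antiderivatives
\begin{equation*}
v_\infty(r)=\frac1r\int_\infty^r \rme^{-\lambda(s-r)}f(s)\,s\,ds,
\qquad
v_0(r)=\frac1r\int_0^r \rme^{-\lambda(s-r)}f(s)\,s\,ds
\end{equation*}
both solve $\mathcal{L}u=f$ and differ exactly by $\ell(f)\,u_h$, a multiple of the homogeneous solution, where $\ell(f)=\int_0^\infty\rme^{-\lambda s}f(s)\,s\,ds$. The profile $v_\infty$ decays in the far field but behaves like $-\ell(f)/r$ near the origin, whereas $v_0$ is regular at the origin but grows exponentially at infinity. Everything then hinges on whether a $1/r$ singularity is admissible in the domain: by Lemma \ref{l:decay_zero} with $d=2$, the function $1/r$ belongs to $L^2_{\gamma,\sigma}$ near the origin if and only if $-1>-\sigma-1$, that is, if and only if $\sigma>0$.

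For $\sigma>0$ I would take $\mathcal{L}^{-1}f=v_\infty$. A computation using Lemma \ref{l:decay_zero} shows that if $f\in H^{s-1}_{r,\gamma,\sigma+1}(\R^2)$ has $|f|\lesssim|x|^\alpha$ near the origin then $v_\infty$ inherits the profile $|x|^{\alpha+1}$, which lands in $L^2_{r,\gamma,\sigma}(\R^2)$; the identity $\partial_r v_\infty=f-\tfrac1r v_\infty+\lambda v_\infty$ together with the embedding $L^2_{r,\gamma,\sigma}\subset L^2_{r,\gamma,\sigma+1}$ then puts $\partial_r v_\infty$ in $L^2_{r,\gamma,\sigma+1}(\R^2)$, and the higher derivatives follow by induction. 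The required bounds are of the same Minkowski/H\"older type as in Lemma \ref{l:fredholm1}, now taken against $m(|x|)^{\sigma+|\alpha|}$ on $B_1$. This produces a bounded two-sided inverse, so $\mathcal{L}$ is invertible, hence Fredholm of index $0$. For $\sigma<0$ the singularity $1/r$ is no longer admissible, so $v_\infty\in H^s_{r,\gamma,\sigma}(\R^2)$ forces $\ell(f)=0$. Here $\rme^{-\lambda r}$ lies in the dual $L^2_{r,-\gamma,-\sigma-1}(\R^2)$ precisely because $\sigma<0$ (exponential decay controls infinity, while near the origin the constant profile is admissible as $-\sigma-1<0$), so $\ell$ is a bounded functional and $\ker\ell$ is closed of codimension one. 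Integrating $\mathcal{L}u=f$ against $\rme^{-\lambda s}s$—with boundary terms vanishing because $u(r)\,r\to0$ near the origin when $\sigma<0$—shows the range lies in $\ker\ell$; conversely, when $\ell(f)=0$ the two antiderivatives coincide, yielding a solution that is both regular at the origin and decaying at infinity, hence bounded. Thus the range equals $\ker\ell$, so $\mathcal{L}$ is injective with one-dimensional cokernel spanned by $\rme^{-\lambda r}$, the generator of $\ker\mathcal{L}^*$ from Lemma \ref{l:fredholm1}, and has index $-1$.

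The step I expect to be most delicate is the near-origin bookkeeping. Unlike Lemma \ref{l:fredholm1}, where the admissible singularity was essentially fixed, here $f$ itself may be singular—of order up to $|x|^{-\sigma-2}$—and one must verify that $v_\infty$ gains exactly one power, producing the profile $|x|^{\alpha+1}$ matched cleanly to the output weight $m(|x|)^\sigma$, and that this matching degrades precisely at $\sigma=0$. Confirming this correspondence, and turning the heuristic singularity count into genuine weighted $L^2$ bounds via the Minkowski inequality against $m(|x|)^{\sigma+|\alpha|}$, is the crux that separates the two regimes.
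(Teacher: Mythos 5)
Your proposal is correct and follows essentially the same route as the paper's proof: trivial kernel from the exponentially growing homogeneous solution $\rme^{\lambda r}/r$, cokernel spanned by $\rme^{-\lambda r}$ exactly when $\sigma<0$ (i.e.\ when the constant profile near the origin is admissible in the dual space), the inverse given by integration from infinity for $\sigma>0$ and by integration from zero under the solvability condition $\ell(f)=0$ for $\sigma<0$, with far-field estimates recycled from Lemma \ref{l:fredholm1}. The only divergence is in the execution of the near-origin bounds — where you invoke pointwise profiles via Lemma \ref{l:decay_zero} together with Minkowski-type estimates, the paper handles the $\sigma<0$ case via the Lebesgue Differentiation Theorem and the $\sigma>0$ case via the logarithmic change of variables $\tau=\ln r$ combined with Minkowski's inequality — but these devices serve the identical purpose within the same overall architecture.
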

\begin{proof}
We first concentrate in the case when $s =1$. As in  Lemma \ref{l:fredholm1}, the kernel and cokernel of the operator are spanned by $e^{\lambda r}/r$ and $e^{-\lambda r}$, respectively. 
Because the function 
$e^{\lambda r}/r$ grows exponentially, it does not belong to the domain of the operator no matter what the values of $\sigma$ and $\gamma$ are. Therefore the kernel of the operator is trivial. On the other hand, the function $e^{-\lambda r}$ belongs to  $L^2_{r,-\gamma,-(\sigma +1)}(\R^2)$ and  thus, it is  in the co-kernel of the operator provided $\sigma <0$, see Lemma \ref{l:decay_zero} and Figure \ref{f:decay}.

In case $i)$ the results of this lemma follow a similar argument as the ones used to prove Lemma \ref{l:fredholm1} above. 
The main difference comes from bounding the solution $u$ near the origin. Therefore, we concentrate only on proving this result.

Suppose then that  $f \in L^2_{r,\gamma,\sigma+1}(\R^2)$ with $\sigma<0$. We first want to establish the bound $\| u\|_{L^2_{r,\gamma,\sigma}(B1)} \leq \| f\|_{L^2_{r,\gamma,\sigma+1}(\R^2)}$. Using the solvability condition, we may  write the inverse as
\[ u(r) = \frac{1}{r} \int_0^r \rme^{-\lambda(s-r)} f(s) \;s \;ds.\]
The expression in \eqref{e:boundu} then shows that the function $|u(r)|/r$ is bounded above by $|f(r)|$ provided $f(r)  \in L^1(B_1)$. One can then show that these last condition is satisfied if $\sigma <0$. 

Next, we show that the derivative, $\partial_r u$, satisfies the bound $\| \partial_r u\|_{L^2_{r,\gamma,\sigma+1}(B_1)} \leq \| f\|_{L^2_{r,\gamma,\sigma+1}(\R^2)}$. Using the equation we may write $\partial_r u = f - \frac{1}{r} u + \lambda u$, to obtain,
\begin{align*} 
| \partial_r u| & \leq | f|  +  \frac{1}{r} | u|  + \lambda | u|\\
\| \partial_r u\|_{L^2_{r,\gamma,\sigma+1}(B_1)}  
& \leq  \| f \|_{L^2_{r,\gamma,\sigma+1}(B_1)}   + \|  u\|_{L^2_{r,\gamma,\sigma }(B_1)} + \|  u\|_{L^2_{r,\gamma,\sigma+1 }(B_1)}\\
 & \leq (2+\lambda) \| f \|_{L^2_{r,\gamma,\sigma+1}(\R^2)},
\end{align*}
where in the last line we used the embedding $L^2_{r,\gamma,\sigma }(B_1) \subset L^2_{r,\gamma,\sigma+1}(B_1)$ and the previous bound for the norm of $u$.

In case $ii)$ we no longer have a solvability condition, and the inverse is given by
\[ u(r) = \frac{1}{r} \int_\infty^r \rme^{-\lambda(s-r)} f(s) \;s \;ds,\]
As in the previous case, the same arguments as in Lemma \ref{l:fredholm1} give us the bounds for 
$\| u\|_{L^2_{r, \gamma, \sigma}(\R^2\setminus B_1)}$. 
To bound the solution near the origin we take a different approach. Letting
$g(r) = \rme^{-\lambda s} f(s)s$ and writing $\partial_r v = g$, 
our goal is to show that $\| v\|_{L^2_{r,\gamma, \sigma-1}(B_1) } \leq \| g\|_{L^2_{r,\gamma,\sigma}(\R^2)}$. Then,
\[\| u\|_{L^2_{r,\gamma, \sigma}(B_1) } \leq  \| v/r\|_{L^2_{r,\gamma, \sigma}(B_1) } =   \| v\|_{L^2_{r,\gamma, \sigma-1 }(B_1) } \leq \| g\|_{L^2_{r,\gamma,\sigma}(\R^2)} \leq 
 \| f\|_{L^2_{r,\gamma,\sigma+1}(\R^2)},\]
as desired.

Consider then the change of variables $\tau = \ln r$ for $r \in (0,1)$ and define 
\[ w(\tau) = v(e^\tau) e^{-\beta \tau}\qquad h(\tau) = g(e^\tau) e^{-\beta \tau} e^\tau,\]
with $\beta = -\sigma <0$. Then, $\partial_\tau( w(\tau) e^{\beta \tau} ) = h(\tau) e^{\beta \tau}$, and a straightforward computation using the above change of variables shows that  $\| v\|_{L^2_{r,\gamma,\sigma-1}(B_1)} = \| w\|_{L^2(-\infty,0)}$. In addition,
\[w(\tau) = \int_{\infty}^\tau h(s) e^{\beta(s -\tau)} \;ds.\]
To bound the $L^2$ norm of $w$, we can then write
\begin{align*}
\| w\|^2_{L^2(-\infty,0)} & =
\int_{-\infty}^0 \left[   \int_{\infty}^\tau h(s) e^{\beta(s -\tau)} \;ds  \right]^2 \;d\tau\\
& \leq
\int_{-\infty}^0 \left[   \int_{\infty}^0 h(z+\tau) e^{\beta z } \;dz   \right]^2 \;d\tau\\
\| w\|_{L^2(-\infty,0)} & \leq
 \int^{\infty}_0 \left[   \int_{-\infty}^0 |h(z+\tau)|^2 e^{2\beta z } \;d\tau   \right]^{1/2} \;dz\\
& \leq  \int^{\infty}_0 e^{\beta z} \| h\|_{L^2(\R^2)} \;dz \\
& \leq C(\beta)  \| h\|_{L^2(\R^2)}.
\end{align*}
Here, the second line comes from the change of coordinates $ z = s+\tau$, the third line follows from Minkowski's inequality for integrals  \cite[Theorem 6.19]{folland1999}, and the fourth line is a consequence $\beta = - \sigma <0$.

To bound $\| h\|_{L^2(\R)}$ notice that
\[ \| h\|_{L^2(\R)}  = \int_{-\infty}^\infty |g(e^\tau)|^2 e^{-2\beta \tau} e^{2 \tau} \;d\tau\\
\leq  \int_{0}^\infty e^{-2\lambda r} |f(r)|^2 r^{2\sigma+2} r \;dr 
\leq C(\lambda,\gamma) \| f\|_{L^2_{r,\gamma,\sigma+1}(\R^2)},\]
and we may indeed conclude that $\| u\|_{L^2_{r,\gamma, \sigma}(B_1) } \leq \| f\|_{L^2_{r,\gamma,\sigma+1}(\R^2)}$. 
To bound the derivative 
 $\partial_r u$, one follows the approach used  in case $i)$.

Finally, as in Lemma \ref{l:fredholm1}, to prove that the general operator $\partial_r + \frac{1}{r} -\lambda : H^s_{r,\gamma,\sigma}(\R^2) \longrightarrow H^{s-1}_{r,\gamma,\sigma+1}(\R^2)$ is Fredholm index $i =-1$, one can proceed by induction.
\end{proof}

 
In the Section \ref{s:normalform} we find that the normal form describing the evolution of one-armed  spiral waves involves the linear radial operator
$(\Delta_1 - \Id) u= \partial_{rr} u + \frac{1}{r} \partial_ru - \frac{1}{r^2} u - u$.
 Corollary \ref{c:Delta-c} showed that the operator is invertible when defined over the weighted spaces $H^s_{r,\gamma}(\R^2)$.  The following proposition shows that this operator is also invertible over the double weighted Sobolev spaces $H^s_{r,\gamma, \sigma}(\R^2)$, provided the weights $\sigma $ are chosen appropriately.

\begin{Proposition}\label{p:fredholm1}
Let $\gamma \in \R$, $\sigma \in (-2,0)$ and take $s\geq 2$ to be an integer. Then, the operator 
$ \Delta_1 - \Id : H^s_{r, \gamma, \sigma}(\R^2) \longrightarrow H^{s-2}_{r,\gamma, \sigma +2}(\R^2)$, defined by
\[ (\Delta_1 - \Id) u = \partial_{rr} u + \frac{1}{r} \partial_ru - \frac{1}{r^2} u - u,\]
is invertible.
\end{Proposition}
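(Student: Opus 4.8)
The plan is to regard $\Delta_1-\Id$ as a second order radial ODE and build its inverse by variation of parameters, reading off invertibility from how the two homogeneous solutions sit inside the doubly-weighted spaces. Written out, $(\Delta_1-\Id)u=0$ is $r^2u''+ru'-(r^2+1)u=0$, the modified Bessel equation of order one, with fundamental solutions $I_1(r)$ and $K_1(r)$. The asymptotics $I_1(r)\sim r/2$, $K_1(r)\sim 1/r$ as $r\to0$, and $I_1(r)\sim \rme^{r}/\sqrt{2\pi r}$, $K_1(r)\sim\sqrt{\pi/2r}\,\rme^{-r}$ as $r\to\infty$, drive the whole argument. One is tempted to exploit the factorization $\Delta_1=\partial_r(\partial_r+\tfrac1r)$ and reduce to the first order operator of Lemma \ref{l:fredholm2}; however the residual zeroth order terms obstruct an exact factorization into operators of that form, so I would instead work with the ODE directly.

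Injectivity comes first and is immediate. The function $I_1$ grows exponentially, so it lies in no $L^2_{r,\gamma}(\R^2)$, while $K_1\sim1/r$ near the origin belongs to $L^2_{r,\gamma,\sigma}(\R^2)$ only when $\sigma>0$, by Lemma \ref{l:decay_zero}. Hence for $\sigma\in(-2,0)$ the kernel is trivial.

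For surjectivity and the bounded inverse I would use the Green's function. Since $rW\{I_1,K_1\}(r)=-1$, the solution of $(\Delta_1-\Id)u=f$ that decays in the far field and carries no $K_1$-singularity at the origin is, up to this normalization,
\[ u(r)=K_1(r)\int_0^r I_1(s)f(s)\,s\,ds+I_1(r)\int_r^\infty K_1(s)f(s)\,s\,ds. \]
Following the template of Lemma \ref{l:fredholm1} and Lemma \ref{l:fredholm2}, I would bound $\|u\|_{H^s_{r,\gamma,\sigma}}$ by $\|f\|_{H^{s-2}_{r,\gamma,\sigma+2}}$ after splitting the norm over $B_1$ and $\R^2\setminus B_1$. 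On $\R^2\setminus B_1$ the term $-1/r^2$ is a lower order perturbation of $\partial_{rr}-1$, the Bessel factors contribute $\rme^{\pm r}$, and the estimate closes via Minkowski's inequality for integrals with the weight $\langle x\rangle^\gamma$ tracked exactly as in Corollary \ref{c:Delta-c}. Near the origin a short computation gives $u\sim C r^{\alpha+2}$ whenever $|f|\lesssim r^{\alpha}$, so the two-weight gain matches the shift from $\sigma+2$ to $\sigma$ and Lemma \ref{l:decay_zero} places $u$ in $L^2_{r,\gamma,\sigma}(B_1)$; the most restrictive case is the generic profile $u\sim C r$, which lies in the space precisely when $\sigma>-2$.

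Higher regularity $s\ge2$ then follows by bootstrapping from $\partial_{rr}u=f+u-\tfrac1r\partial_r u+\tfrac1{r^2}u$, which gains two powers of $m$ per order, by an induction identical to the closing step of Lemma \ref{l:fredholm2}. I expect the main obstacle to be the near-origin estimate: the singular potential $-1/r^2$ together with the $1/r$ blow-up of $K_1$ makes the inner integrals delicate, and one must verify that the constructed $u$ carries exactly the admissible singularity and no more. This is also where the window $\sigma\in(-2,0)$ is forced, since the indicial roots of $\Delta_1$ at the origin are $m=\pm1$: the value $\sigma=0$ is the threshold below which $K_1$ leaves the domain (killing the kernel), while $\sigma=-2$ is the threshold above which the generic profile $u\sim r$ remains in the range (killing the cokernel). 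On the intermediate strip $(-2,0)$ both vanish and the operator is invertible.
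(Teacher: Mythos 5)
Your proposal follows essentially the same route as the paper's proof: the kernel and cokernel are read off from the asymptotics of the modified Bessel functions $I_1$ and $K_1$ on the window $\sigma\in(-2,0)$, the inverse is represented by the identical Green's function, the estimates are split over $B_1$ and $\R^2\setminus B_1$ with Minkowski's inequality for integrals in the far field, and higher regularity follows by the same bootstrap in $s$. The only cosmetic difference is that near the origin you argue via pointwise decay rates and Lemma \ref{l:decay_zero}, where the paper runs a logarithmic change of variables plus a Lebesgue-differentiation argument, but the thresholds you identify ($\sigma>-2$ from the generic $\rmO(r)$ profile, $\sigma<0$ from $K_1$ leaving the domain) are exactly those in the paper.
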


\begin{proof}
The proof of this proposition is carried out in a number of steps. 
In Step 1 we first determine the elements in the kernel and cokernel of the operator.
In Step 2 we find the Green's function for the operator.
Finally, in Steps 3 and 4 we show that for values of $ \sigma \in (-2,0) $, and for $s =2$, the operator has a bounded inverse from $H^{s-2}_{r,\gamma, \sigma}(\R^2)$ into $H^s_{r,\gamma, \sigma}(\R^2)$. In Step 5 we use an induction argument to prove the results for integer values $s >2$.

{\bf Step 1:} 
Notice first that the equation $(\Delta_1 -\Id)u =0$, represents the modified Bessel equation of order one. It has as solutions the Modified Bessel functions $K_1(r), I_1(r)$, which satisfy the decay/growth properties summarized in Table \ref{t:bessel2}.
Since $I_1(r)$ grows exponentially, this function is not in any of the spaces $H^2_{r,\gamma, \sigma}(\R^2)$. On the other hand, since $K_1(r) \sim \rmO(1/r)$ near the origin, a short calculations shows that this function belongs to $H^2_{r,\gamma, \sigma}(\R^2)$
provided $\sigma >0$. See also Lemma \ref{l:decay_zero} and Figure \ref{f:decay}.

\begin{table}[t]
\begin{center}
\begin{tabular}{ m{2cm} m{5cm} m{4.5cm}  } 
\specialrule{.1em}{.05em}{.05em} 
  & $z \to 0$ & $z \to \infty $\\
  \hline
  \vspace{1ex}
$ K_\nu(z) $ &$ \sim \frac{1}{2}\Gamma(\nu)\left[\frac{z}{2} \right]^{-\nu}$ &$ \sim \sqrt{ \frac{\pi}{2 z} } \rme^{-z} $\\ 
$ I_\nu (z) $ & $ \sim \frac{1}{\Gamma(\nu+1)} \left[ \frac{z}{2} \right]^\nu $ &$\sim \sqrt{ \frac{1}{2 z \pi} } \rme^{z}  $\\      
\specialrule{.1em}{.05em}{.05em}
\vspace{1ex} 
  $\frac{d}{dz} \mathcal{Z}_\nu(z) = $ &$ \mathcal{Z}_{\nu+1}(z) +   \frac{\nu}{z} \mathcal{Z}_\nu(z)$&  \\
\specialrule{.1em}{.05em}{.05em} 

\end{tabular}
\end{center}
\caption{ 
Asymptotic behavior for the first-order Modified Bessel functions of the first and second kind, and their derivatives, taken from \cite[(9.6.8), (9.6.9), (9.7.2)]{abramowitz}. Here, the integer $\nu\geq1$, $\Gamma(\nu)$ is the Euler-Gamma function, and $\mathcal{Z}_\nu =\{ I_\nu(z), e^{i \pi \nu} K_\nu(z)\}$.}
\label{t:bessel2}
\end{table}

Using integration by parts, one can check that the adjoint of the operator
 is given by 
 \[
 \begin{array}{ r c l}
 \Delta_1 - \Id : L^{2}_{r,-\gamma, -(\sigma +2)}(\R^2)& \longrightarrow & H^{-2}_{r, -\gamma, -\sigma}(\R^2)\\
 \end{array}
 \]
It then follows that only $K_1(r)$ is an element of the co-kernel, and this holds for values
of $\sigma< -2$.

{\bf Step 2:}
The Green's function, $G(r, \rho)$, for the operator $\Delta_1 -\Id$
satisfying $G(r, \rho) \to 0$ as $r \to \infty$, and $G(0,\rho) =0$  on the interval $[0,\infty)$ is 
\[ G(r, \rho) = \left \{
\begin{array}{c c c}
\frac{I_1(r) K_1(\rho)}{W(r)} & \mbox{for} & 0< r< \rho,\\
\frac{K_1(r) I_1(\rho)}{W(r)} & \mbox{for} & \rho< r< \infty.
\end{array}
\right.
\]
Here, the function $W$ represents the Wronskian $W(r) = I'_1(r) K_1(r)- I_1(r) K'_1(r) = \dfrac{1}{r}$.
It then follows that the formal inverse of the operator $\Delta_1 -\Id$ is given by
\[
 \begin{array}{ c c c}
 L^{2}_{r,\gamma, (\sigma +2)}(\R^2)& \longrightarrow & H^{2}_{r, \gamma, \sigma}(\R^2)\\
 f(r) & \mapsto & u(r) = \int_0^\infty G(r; \rho) f(\rho) \;d\rho .
 \end{array}
 \]
More precisely,
\begin{equation}\label{e:inverseu}
 u(r) = I_1(r) \int_r^\infty K_1(\rho) f(\rho) \rho \;d\rho +
  K_1(r) \int_0^r I_1(\rho) f(\rho) \rho \;d\rho.
  \end{equation}

In Steps 3 and 4, we use expression \eqref{e:inverseu} to show that the bound $\| u\|_{H^2_{r,\gamma, \sigma}(\R^2)} \leq \|f \|_{L^2_{r,\gamma,\sigma +2}(\R^2)}$ holds for $-2< \sigma <0 $. Throughout, we use the inequality
\[ \| \cdot \|_{L^2_{\gamma, \sigma}(\R^2)} \leq \| \cdot \|_{L^2_{\gamma, \sigma}(B_1)}  + \| \cdot \|_{L^2_{\gamma, \sigma}(\R^2 \setminus B_1)}.\]
where $B_1$ represents the unit ball centered at the origin.
In this case,
$ \| \cdot \|_{L^2_{\gamma, \sigma}(\R^2 \setminus B_1)}  =  \| \cdot \|_{L^2_{\gamma}(\R^2 \setminus B_1)}.$ 

{\bf Step 3:} We start by bounding the $L^2_{r,\gamma,\sigma} (\R^2\setminus B_1)$ norm of $ u, \partial_r u,$ and $\partial_{rr} u$. Using expression \eqref{e:inverseu}
we have that
\[ \| u \|^2_{L^2_{\gamma, \sigma}(\R^2 \setminus B_1)} \leq A + B\]
where
\begin{align*}
A = & 
 \int_1^\infty \left[ I_1(r) \int_r^\infty K_1(\rho) f(\rho) \rho \;d\rho \right]^2 \langle r \rangle^{2\gamma} \;r dr  \\
B = &   \int_1^\infty \left[  K_1(r) \int_0^r I_1(\rho) f(\rho) \rho \;d\rho \right]^2 \langle r \rangle^{2\gamma} \;r dr 
\end{align*}
 Since $1<r<\rho$, we can use the decay/growth properties of $I_1(r)$ and $K_1(\rho)$, as $r, \rho \to \infty$,
and write
\begin{align*}
A & \leq C \int_1^\infty \left[ \int_r^\infty \frac{ \rme^{-( \rho - r)} }{\sqrt{r} \sqrt{\rho}}  f(\rho) \rho \;d\rho \right]^2 \langle r \rangle^{2\gamma} \;r dr  \\
&\leq C \int_1^\infty \left[ \int_r^\infty  \rme^{-( \rho - r)}   f(\rho)  \langle r -\rho \rangle^{|\gamma|} \langle \rho \rangle^\gamma \rho^{1/2} \;d \rho \right]^2  dr  \\
&\leq C \int_1^\infty \left[ \int_0^\infty  \rme^{-z}   f( z+r)  \langle z \rangle^{|\gamma|} \langle z+ r \rangle^\gamma (z +r) ^{1/2} \;d z \right]^2  dr,
\end{align*}
where the second line follows from the inequality $\langle r \rangle^{\gamma} \langle \rho \rangle^{-\gamma} \leq \langle r- \rho \rangle^{|\gamma|}$, and the third line comes from
the change of coordinates $z = \rho -r$.
Applying Minkowski's inequality for integrals, we then obtain
\begin{align*}
A^{1/2} &\leq C \int_0^\infty \left[ \int_1^\infty  \rme^{-2z}  | f( z+r)|^2  \langle z \rangle^{2 |\gamma|} \langle z+ r \rangle^{2\gamma} (z +r)  \;d r \right]^{1/2}  dz  \\
& \leq  C \int_0^\infty e^{-z} \langle z \rangle^{|z|} \| f\|_{L^2_{r, \gamma}(\R^2\setminus B_1)} \;d z\\
& \leq C(\gamma)  \| f\|_{L^2_{r, \gamma, \sigma+2}(\R^2)}.
\end{align*}

A similar analysis also shows that
$ B^{1/2} \leq C(\gamma) \| f\|_{L^2_{r, \gamma, \sigma+2}(\R^2)}.$
It therefore follows that 
$$\| u\|_{L^2_{r,\gamma, \sigma}(\R^2 \setminus B_1)} \leq C(\gamma) \| f\|_{L^2_{r,\gamma,\sigma+2}(\R^2)}.$$

To bound $\|\partial_r u\|_{L^2_{r,\gamma, \sigma +1}(\R^2\setminus B_1)}$,
we first compute this derivative using expression \eqref{e:inverseu}:
\begin{equation}\label{e:u_der}
 \partial_r u = 
 I'_1(r) \int_r^\infty K_1(\rho) f(\rho) \rho \;d\rho   +  K'_1(r) \int_0^r I_1(\rho) f(\rho) \rho \;d\rho. 
 \end{equation}
Since $I'_1(r) $ and $K'_1(r)$ have the same growth/decay properties as $I_1(r)$ and $K_1(r)$, respectively, the same computations as above show that 
$$\| \partial_r u\|_{L^2_{r,\gamma, \sigma}(\R^2 \setminus B_1)} \leq C(\gamma) \| f\|_{L^2_{r,\gamma,\sigma+2}(\R^2)}.$$

To bound the second derivative $\partial_{rr} u$ in the same norm, we use the equation and write
\[ \partial_{rr} u = f - \frac{1}{r} \partial_r u + \frac{1}{r^2} u + u,\]
arriving at the desired result,
$$\| \partial_{rr} u\|_{L^2_{r,\gamma, \sigma}(\R^2 \setminus B_1)} \leq 4 C(\gamma) \| f\|_{L^2_{r,\gamma,\sigma+2}(\R^2)}.$$

{\bf Step 4:} To complete the proof of the proposition, we need to now bound the norms $\| u\|_{L^2_{r,\gamma, \sigma}(B_1)} $, $\|\partial_r u\|_{L^2_{r,\gamma, \sigma+1}(B_1)} $, and $\| \partial_{rr} u\|_{L^2_{r,\gamma, \sigma+2}(B_1)} $.

We start with the bound $\| u\|_{L^2_{r,\gamma, \sigma}(B_1)} \leq \|f\|_{L^2_{r,\gamma, \sigma+2}(\R^2)}$.
Recalling the expression for $u$, along with the decay properties for $I_1(r)$ and $K_1(r)$ near the origin, one can approximate the first integral in \eqref{e:inverseu}
by
\[ A(r)  =  I_1(r) \int_r^\infty K_1(\rho) f(\rho) \rho \;d\rho  \leq C r \int_r^\infty  K_1(\rho) f(\rho) \rho \;d\rho . \]
With $g(\rho) = K_1(\rho) f(\rho) \rho$,  in what follows we show that $v(r) = \int^{\infty}_r g(\rho) \;d\rho$ satisfies $\| v\|_{L^2_{r,\gamma, \sigma+1}(B_1)} \leq C \| f\|_{L^2_{r,\gamma, \sigma+2}(\R^2)}$ for some generic positive constant $C$. 
As a result
\begin{equation}\label{e:boundu_origin}
 \| A \|_{L^2_{r,\gamma,\sigma}(B_1) } \leq C \|  r v(r) \|_{L^2_{r,\gamma,\sigma}(B_1)} \leq  C \|  v(r) \|_{L^2_{r,\gamma,\sigma+1}(B_1)} \leq C  \|  f(r) \|_{L^2_{r,\gamma,\sigma+2}(\R^2)}.
 \end{equation}

Recalling that $\partial_r v = g$, letting $ \tau = \ln r$ with $r \in(0,1)$, and defining
\[ w(\tau)= v(e^\tau) e^{-\beta \tau} \qquad h(\tau) = g(e^\tau)e^{-\beta \tau} e^\tau\]
we find that $\partial_\tau (w(\tau)e^{\beta \tau} ) = h(\tau) e^{\beta \tau} $. Taking $\beta = -(\sigma +2)<0$, a short calculation shows that
\[ w(\tau ) = \int_\infty^\tau h(s) e^{\beta(s-\tau)}\;ds\]
and that $\| w\|_{L^2(-\infty,0)} = \| v\|_{L^2_{r,\gamma,\sigma+1}(B_1)}$.
We can then bound
\begin{align*}
\| w\|^2_{L^2(-\infty,0)} & =
\int_{-\infty}^0 \left[ \int_\infty^\tau h(s) e^{\beta(s-\tau)}\;ds \right]^2 \; d\tau\\
& \leq
\int_{-\infty}^0 \left[ - \int^\infty_0 h(z+\tau) e^{\beta z}\;dz \right]^2 \; d\tau\\
\| w\|_{L^2(-\infty,0)} & \leq
\int^{\infty}_0 e^{\beta z} \left[ \int_{-\infty}^0 |h(z+\tau)|^2 \;d\tau \right]^{1/2} \; dz\\
&\leq
C(\beta) \| h\|_{L^2(\R)}
\end{align*}
where the second inequality comes from the change of coordinates $z = s- \tau$,
the third line is a consequence of Minkowski inequality for integrals \cite{}, and
the last line follows from our assumption $\beta = -(\sigma +2) <0$.

Lastly, notice that
\begin{align*}
 \| h\|_{L^2(\R)}  & = 
 \int_{-\infty}^\infty |h(\tau)|^2 \; d\tau\\
 & =
 \int_{-\infty}^\infty |g(e^\tau)|^2 e^{-2\beta \tau} e^{2 \tau} \; d\tau\\
 & \leq
 \int_0^1 |f(r)|^2 r^{2(\sigma+2)} r\;dr + \int_1^\infty |g(r)|^2 \langle r \rangle ^{2(\sigma+2)} r\;dr\\
 & \leq
 C \| f\|_{L^2_{r,\gamma, \sigma+2}(\R^2)}.
  \end{align*}
Here the third line follows from the change of variables $ r = e^\tau$ and the fact
that $g(r) \sim \rmO(f(r))$ for $r \sim 0$, while 
the last line follows from the definition of $g(r)$ and the decay properties of the
Bessel function $K_1(r)$ as $r \to \infty$.

On the other hand, for the second integral on expression \eqref{e:inverseu} we may write
\begin{equation}\label{e:B}
 B = K_1(r) \int_0^r I_1(\rho) f(\rho) \rho \;d\rho \leq C \frac{1}{r} \int_0^r  f(\rho) \rho^2 \;d\rho. 
 \end{equation}
As in the proof of Lemma \ref{l:fredholm1} we have that for $s \in [0,r]$
\[ |B|/ r \leq \frac{1}{r^2} \int_0^r | f(\rho) \rho - f(s) s| \rho \;d\rho + \frac{1}{2} |f(s) s|. \]
Letting $B(s,2r)$ denote the ball  of radius $2r$ centered at $s$, and writing 
$\tilde{B}(s, 2r) =  B(s,2r) \cap B(0,r)$, we then obtain
\begin{equation}\label{e:inequ_origin}
 |B|/ r \leq \frac{| \tilde{B}(s, 2r) |}{2 \pi r^2} \left( \frac{1}{| \tilde{B}(s, 2r)|} \int_{\tilde{B}(s, 2r)} | f(\rho) \rho - f(s) s| \rho \;d\rho \right) + \frac{1}{2} |f(s) s|. 
 \end{equation}
Since $f \in L^2_{r,\gamma,\sigma+2}(\R^2)$ with $\sigma<0 $, 
it follows $f(\rho)  \in L^1(B_1)$.
We may therefore apply the Lebesgue Differentiation Theorem to conclude that as $r$ goes to the origin the
term inside the parenthesis goes to zero, while the fraction in front of it remains bounded. 
Consequently $|B(r)|< r^2 f(r)$ and it follows that
$ \| B \|_{L^2_{r,\gamma,\sigma}(B_1) } \leq C  \|  f(r) \|_{L^2_{r,\gamma,\sigma+2}(\R^2)}.$
This bound together with \eqref{e:boundu_origin} leads to
\[ \| u\|_{L^2_{r,\gamma, \sigma}(B_1)} \leq \|f\|_{L^2_{r,\gamma, \sigma+2}(\R^2)}.\]

To obtain the bound $\| \partial_r u\|_{L^2_{r,\gamma, \sigma}(B_1)} \leq \|f\|_{L^2_{r,\gamma, \sigma+2}(\R^2)},$ recall first the expression for $\partial_r u$ written in \eqref{e:u_der}. Given that 
  $$\frac{d}{dz} \mathcal{Z}_\nu(z) =  \mathcal{Z}_{\nu+1}(z) +   \frac{\nu}{z} \mathcal{Z}_\nu(z)$$ 
where $\mathcal{Z}_\nu(z)$ represents either Bessel function, $I_\nu(z)$ or $K_\nu(z)$, it follows from 
the decay properties summarized in Table \ref{t:bessel2} that
$I'_1(r) \sim \rmO(1)$ and $K'_1(r) \sim \rmO(1/r^2)$ near the origin.
Therefore, we may bound
\[ | \partial_r u|  \leq C \int_r^\infty g(\rho) \;d \rho + \frac{1}{r^2} \int_0^r f(\rho) \rho^2 \; d\rho,
\]
where again $g(\rho) = K_1(\rho) f(\rho) \rho$.

To complete our argument, we define
\[ E(r) = C \int_r^\infty g(\rho) \;d \rho, \qquad F(r)=  \frac{1}{r^2} \int_0^r f(\rho) \rho^2 \; d\rho.\]
To bound $\| E\|_{L^2_{r,\gamma,\sigma+1}(\R^2)}$ one can follow the same approach as was
done in the case of $A(r)$, see inequality \eqref{e:boundu_origin}. While the bound for $\| F\|_{L^2_{r,\gamma,\sigma+1}(\R^2)}$
comes from a similar approach as inthe case of $B(r)$, see inequalities \eqref{e:B} and \eqref{e:inequ_origin}.
It then follows that
\[  \| \partial_r u \|_{L^2_{r,\gamma, \sigma +1}(B_1) } \leq C  \| f\|_{L^2_{r,\gamma, \sigma +2}(R^2) }.\]

Finally, to derive the bounds for the second derivative $\partial_{rr}u$, we use the equation 
 to write
 \begin{align*}
| \partial_{rr} u | & \leq |f| + \frac{1}{r} | \partial_r u| + \frac{1}{r^2}|u| + |u|\\ 
 \| \partial_{rr} u \|_{L^2_{r,\gamma,\sigma+2}(B_1) } & \leq 
\| f \|_{L^2_{r,\gamma,\sigma+2}(B_1) } + \| \partial_r u \|_{L^2_{r,\gamma,\sigma+1}(B_1) } 
+ \|  u \|_{L^2_{r,\gamma,\sigma }(B_1) } + \| u \|_{L^2_{r,\gamma,\sigma+2}(B_1) } \\
& \leq 4 C \| f \|_{L^2_{r,\gamma,\sigma+2}(R^2) }.
\end{align*}
Here the last inequality follows from our previous bounds for $u$ and $\partial_r u$, and the embedding $L^2_{r,\gamma, \sigma }(B_1) \subset L^2_{r, \gamma, \sigma+2}(B_1)$.

{\bf Step 5:} Lastly, to prove that the general operator $(\Delta_1 - \Id) : H^{s}_{r,\gamma, \sigma}(\R^2) \longrightarrow H^{s-2}_{r,\gamma, \sigma+2}(\R^2)$ is invertible one can use induction:
Suppose that $f \in H^{s-2}_{r,\gamma, \sigma+2}(\R^2)$ and that $u \in H^{s-1}_{r,\gamma, \sigma}(\R^2)$. Then,  the result that $\partial^s_r u \in L^2_{r,\gamma, \sigma+s}(\R^2)$, follows from the equation
\[ \partial^s_r u = \partial^{s-2}_r( f + u + \frac{1}{r^2} u - \frac{1}{r} \partial_r u),\]
and the embedding $H^{s-1}_{r,\gamma, \sigma}(\R^2) \subset H^{s-1}_{r,\gamma, \sigma+k}(\R^2)$, which hold for $k>0$.
\end{proof}


\section{Normal Form}\label{s:normalform}

As mentioned in the introduction, the following nonlocal version of the complex Ginzburg-Landau equation was rigorously 
derived  in reference \cite{jaramillo2022rotating} as an amplitude equation for spiral waves patterns in 
oscillatory media with nonlocal coupling:
\begin{equation}\label{e:main}
0 = K \ast w + (1+ \rmi \lambda) w - (1 + \rmi \beta) |w|^2 w + N(w;\eps), \qquad x = (r, \theta )  \in \R^2
\end{equation}
Notice that when compared to the original complex Ginzburg-Landau (cGL) equation, the above expression uses a convolution operator, $K \ast$, in place of the Laplacian operator. In addition, although the equation 
is posed on the plane, $\R^2$, the unknown function, $w$, depends only on the radial variable 
$r$, that is $w=w(r)$. Another difference with the standard cGL equation is the term $N(w,\eps) \sim \rmO(\eps \; |w|^2w )$, which captures all higher order terms that are usually ignored in a formal derivation of this equation, see Appendix \ref{a:nonlinear}. As shown in \cite{jaramillo2022rotating}, these terms need to be taken into account in order to conclude that our results are valid approximations to the solutions of the original system.

In this Section we prepare our normal form so that the analysis can go more smoothly. 
First, recalling Hypothesis (H2) and  the discussion from Subsection \ref{ss:outline}, we  write the convolution $K \ast $ as 
\[ K \ast u =\eta  (1 - \eps^2 D \Delta_1)^{-1} \Delta_1 u,\]
where, as already stated, we use the symbol $\Delta_n$ to denote the map
\[ \Delta_n u = \partial_{rr} u + \frac{1}{r} \partial_r u - \frac{n}{r^2} u.\]
We may then precondition equation \eqref{e:main} with the operator $(1 - \eps^2 D \Delta_1)$. 
The result is the following expression,
\[ 0=d \Delta_1 w + ( 1+ \rmi \lambda) w - (1 + \rmi \beta) |w|^2w + \tilde{N}(w;\eps),\]
where the parameter $d \in \C$ is given by
$$d = d_R + \rmi d_I = (\eta -\eps^2 D ) - \rmi \eps^2 D \lambda.$$
Next, we rescale the radial variable $r = \sqrt{d_R} y$, let $\alpha = d_I/d_R$, and write
\[ 0=(1+ i \alpha) \Delta_{1,y }w + ( 1+ \rmi \lambda) w - (1 + \rmi \beta) |w|^2w + \tilde{N}(w;\eps),\]
where the nonlinearities are now given by
\[ \tilde{N}(w;\eps) =   \frac{ \eps^2 D}{d_R} (1 + \rmi \beta) \Delta_{1,y} |w|^2 w + (1- \frac{\eps^2 D}{d_R} \Delta_{1,y}) N(w;\eps).\]

Finally, we write the complex valued function, $w$, using polar coordinates $w = \rho(y) \rme^{\rmi \phi(y)}$, and separate the equation into its real and imaginary parts. This gives the following system of equations, 
where for convenience we revert back to the original radial variable, $r$, 
\begin{eqnarray}\label{e:real}
0 = &  \left[ \Delta_1 \rho - (\partial_r \phi)^2 \rho \right] - \alpha \left[ \rho \Delta_0 \phi  + 2\partial_r \phi \partial_r \rho \right] +  \rho -  \rho^3 + \mathrm{Re}\left[ \tilde{N}(w; \eps) \rme^{-\rmi \phi} \right]\\ \label{e:imag}
0 = &   \left[ \rho \Delta_0 \phi  + 2\partial_r \phi \partial_r \rho \right] + \alpha \left[ \Delta_1 \rho - (\partial_r \phi)^2 \rho \right] + \lambda \rho - \beta \rho^3 + \mathrm{Im}\left[ \tilde{N}(w; \eps) \rme^{-\rmi \phi} \right].
\end{eqnarray}

To prove the existence of solutions to the above system, we proceed via a perturbation analysis. We rescale the variable $r$ by defining $S = \delta r$, where $\delta$ is assumed to be a small positive parameter such that $\eps \sim \delta^4$. We also use the following expressions for the unknown functions:
\[\begin{array}{r l c l}
\rho = & \rho_0 + \delta^2 (R_0 + \delta R_1), &  \qquad \rho_0= \rho_0(r),& \quad R_i = R_i(\delta r) \quad i=0,1\\
\phi = & \phi_0 + \delta \phi_1, & & \quad  \phi_i = \phi_i(\delta r) \quad i =0,1.
\end{array}\]
Notice that these are the same type of scalings used in \cite{doelman2005} to derive a phase dynamics approximation 
for the local cGL equation.
For the free parameter, $\lambda$, representing the rotational speed of the wave, we choose
$ \lambda = \beta + \delta^2 \Omega,$ with $\beta$ as above and $\Omega$ to be determined.

The result is a hierarchy of equations at different powers of $\delta$ that we present next. In terms of notation, we use the subscript $S$ to distinguish operators that are applied to functions that depend on this variable, i.e. $\Delta_{0,S}$. The absence of this subscript, indicates that the operator is applied to a function of the original variable $r$.

Recalling  that $\alpha \sim \rmO(\eps^2)$, we find the following system of equations.
\begin{itemize}
\item  At order $\rmO(1)$:
\begin{align*}
0 = & \Delta_1\rho_0 +  \rho_0 -  \rho_0^3,\\
0 = &  \beta \rho_0 - \beta \rho_0^3.
\end{align*}
\item At order $\rmO(\delta)$:
\[ 0 = -2 \partial_r \rho_0 \partial_S \phi_0.\]
\item At order $\rmO(\delta^2)$:
\begin{align*}
0 = &  -  \rho_0 (\partial_S \phi_0)^2   +  R_0( 1- 3 \rho_0^2),\\
0 = &  \rho_0 \Delta_{0,S} \phi_0  -2 \partial_r \rho_0 \partial_s \phi_1 +
\beta R_0 ( 1- 3 \rho_0^2) + \Omega \rho_0.
\end{align*}
\item Remaining higher order terms:
\begin{align*}
 0 = &  \delta^2 \Delta_{1,S} R_1 + R_1( 1- 3 \rho_0^2)   -2  \rho_0 \partial_S \phi_0 \partial_S \phi_1 + N_1(R_1, \phi_1;\delta,\eps),\\
 0 = & \rho_0 \Delta_{0,S} \phi_1 + \beta R_1( 1 - 3 \rho_0^2) +N_2(R_1, \phi_1; \delta,\eps),
 \end{align*}
where
\begin{equation}\label{e:nonlinear1}
\begin{split}
N_1(R_1, \phi_1; \delta,\eps) = & \quad
\delta \Big ( \Delta_{1,S} R_0 + ( R_0 + \delta R_1) ( \partial_S \phi_0 + \delta \partial_S \phi_1)^2 -  \rho_0 (\partial_S \phi_1)^2 \Big )\\
& - \delta \Big(  3 \rho_0 ( R_0 + \delta R_1)^2 + \delta^2 (R_0 + \delta R_1)^3 \Big) \\
& - \delta^2 \alpha \Big(  \rho_0 + \delta^2 ( R_0 + \delta R_1\Big ) (\Delta_{0,S} \phi_0 + \delta \Delta_{0,S} \phi_1)\\
& - 2 \delta  \alpha \Big(  \partial_r \rho_0 + \delta^3( \partial_S R_0 + \delta \partial_S R_1 \Big) ( \partial_S \phi_0 + \delta \partial_S \phi_1) \\
& + \mathrm{Re}\left[ \tilde{N}(w; \eps) \rme^{-\rmi \phi} \right],
\end{split}
\end{equation}

\begin{equation}\label{e:nonlinear2}
\begin{split}
N_2(R_1, \phi_1; \delta,\eps) = & \quad
\delta \Big( (R_0 + \delta R_1) ( \Delta_{0,S} \phi_0 + \delta \Delta_{0,S} \phi_1) - 2 (\partial_S R_0 + \delta \partial_S R_1)
(\partial_S \phi_0 + \delta \partial_S \phi_1) \Big) \\
& + \delta \Big( \Omega(R_0 + \delta R_1) - 3 \beta\rho_0 ( R_0 + \delta R_1)^2 - \beta \delta^2 (R_0 + \delta R_1)^3 \Big)\\
& + \alpha \Big( \Delta_1 \rho_0 + \delta^4 ( \Delta_{1,S} R_0 + \delta \Delta_{1,S} R_1)  \Big)\\
& - \delta^2 \alpha \Big( \rho_0 + \delta^2( R_0 + \delta R_1) \Big) ( \partial_S \phi_0 + \delta \partial_S \phi_1)^2\\
& + \mathrm{Im}\left[ \tilde{N}(w; \eps) \rme^{-\rmi \phi} \right].
\end{split}
\end{equation}

\end{itemize}

In the next section we solve the $\rmO(1)$ and $\rmO(\delta^2)$ equations explicitly. 
We then use these results, together with all remaining higher order terms, to define operators $F_i(R_1,\phi_1;\delta)$ with $i=1,2$, noting that the zeros of these operators will then correspond to the solutions we seek. In Section \ref{s:existence} we show that these maps satisfy the conditions of the implicit function theorem, and thus prove the existence of solutions to the nonlocal CGL equation representing spiral waves.

\section{The approximations}\label{s:approximations}

In this section we look in more detail at the  $\rmO(1)$, $\rmO(\delta)$, and $\rmO(\delta^2)$ equations.
In addition, we elaborate on the connection between our approximations and well known results
regarding the existence of spiral waves in reaction diffusion equations,  \cite{kopell1981,greenberg1980, greenberg1981}.

\subsection{The order one equation}\label{ss:one}
We study the system,
\begin{equation}\label{e:order1_syst}
\begin{split}
0 = &  \Delta_1\rho_0 +  \rho_0 -  \rho_0^3,\\
0 = &   \beta \rho_0 - \beta \rho_0^3,
\end{split}
\end{equation}
and to start we concentrate on the first expression, which we view as a  boundary value problem:
\begin{equation}\label{e:rhozero}
 0 =  \Delta_1\rho+ (1  - \rho^2) \rho, \qquad \rho \rightarrow 1\quad \mbox{as}\quad r \rightarrow \infty.
 \end{equation}
Here, we prove the existence of solutions to this equation  and show that they 
 converge to $1$ at a rate of $\rmO(1/r^2)$, a result which will be important for us in later sections. We then use this information to show that the second equation in the above system is really of order $\rmO(\delta^2)$. 
 These results are summarized in the following proposition.
\begin{Proposition}\label{p:decayg}
Consider the system \eqref{e:order1_syst} defined by the order $\rmO(1)$ terms.
There exists an approximation, $\rho_0(r)$, which
solves the first equation and that satisfies:
\begin{enumerate}[i.)]
\item $\rho_0(r) \in C^2((0,\infty]) \cap C^1([0,\infty])$ is positive and increasing for $r>0$.
\item $\rho_0(r) \sim \rmO(r)$ as $r \to 0$, while $\rho_0(r) \to 1$ as $ r \to \infty$.
\item $(1- \rho_0(r)^2) \in H^3_{r,\gamma}(\R^2)$ with $\gamma \in (0,1)$ and satisfies $(1- \rho_0(r)^2) \sim \rmO(1/r^2)$ as $r \to \infty$.

\end{enumerate}
\end{Proposition}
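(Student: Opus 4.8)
The plan is to treat the first equation of \eqref{e:order1_syst} as the radial ODE for the degree-one Ginzburg--Landau vortex profile,
\[ \rho'' + \frac{1}{r}\rho' - \frac{1}{r^2}\rho + \rho - \rho^3 = 0, \qquad \rho(0)=0,\quad \rho \to 1 \ \text{as } r\to\infty, \]
and to establish existence, monotonicity, and the two asymptotic regimes separately. For existence I would use a shooting argument. The indicial analysis at $r=0$ shows that the regular solutions behave like $\rho(r)=ar+\rmO(r^3)$ (the singular branch $\sim 1/r$ being discarded), so I would parametrize the one-parameter family of regular solutions $\rho_a$ by the shooting slope $a>0$ and split the parameter interval into the set where $\rho_a$ overshoots above $1$ and the set where it turns back toward $0$. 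Both sets are open, disjoint, and (for $a$ large, respectively $a$ small) nonempty, so a connectedness argument produces a critical slope $a^\ast$ whose orbit stays in $(0,1)$ for all $r>0$; an energy/comparison argument then forces this orbit to be strictly increasing and to converge to the only admissible rest state $\rho=1$. (Alternatively one may invoke the classical existence and uniqueness results for this profile.) This yields (i) and the near-origin part of (ii): interior smoothness comes from standard ODE regularity on $(0,\infty)$, while the Frobenius expansion $\rho_0 = ar+\rmO(r^3)$ gives $\rho_0\in C^1([0,\infty])$ with $\rho_0(r)\sim\rmO(r)$ as $r\to 0$.

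The heart of the argument, and the step I expect to be the main obstacle, is the precise far-field rate in (iii). Writing $u=1-\rho_0>0$, the equation becomes
\[ \Delta_0 u - 2u = -\frac{1}{r^2} + \frac{u}{r^2} - 3u^2 + u^3, \qquad \Delta_0 = \partial_{rr}+\tfrac1r\partial_r, \]
so that for large $r$ the dominant balance is $-2u\approx -1/r^2$, suggesting $u\sim 1/(2r^2)$. To make this rigorous I would observe that the homogeneous operator $\Delta_0-2$ admits only the exponentially growing and decaying modified Bessel solutions $I_0(\sqrt2\,r),K_0(\sqrt2\,r)$ (see Table \ref{t:bessel2}); the growing mode is excluded by $u\to 0$ and the decaying one contributes only beyond-all-orders corrections, so the algebraic part of $u$ must come from the inhomogeneity. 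I would capture this by constructing ordered super- and sub-solutions of the form $(1\pm\eps)/(2r^2)$, valid for $r$ large, and invoking a comparison principle to trap $u$, giving $u(r)\sim 1/(2r^2)$ and hence $1-\rho_0^2 = u(2-u)\sim 1/r^2$. Bootstrapping through the ODE then shows $\partial_r^k(1-\rho_0^2)=\rmO(r^{-2-k})$ for $k=1,2,3$.

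Finally, for the weighted-space membership I would apply Lemma \ref{l:decay} with $d=2$: since $1-\rho_0^2$ is smooth and bounded near the origin (its Frobenius expansion is even in $r$) and decays like $r^{-2}$ at infinity, while every derivative decays strictly faster, the admissible exponent $\alpha=-2$ satisfies $\alpha<-\gamma-1$ precisely when $\gamma<1$. Together with the faster decay of the derivatives this places $1-\rho_0^2$ in $H^3_{r,\gamma}(\R^2)$ for every $\gamma\in(0,1)$, completing (iii).
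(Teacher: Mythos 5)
Your proposal proves the right statement, but it takes a genuinely different route from the paper, so a comparison is in order. For items (i)--(ii) the paper does not reprove existence at all: it quotes Kopell and Howard (Proposition \ref{p:kopell}), which is precisely the classical fact your shooting/connectedness argument re-derives; either is fine. The substantive divergence is in item (iii). The paper writes $\rho_0 = 1 + u$ and splits $u = u_\ell + u_n$, where $u_\ell$ solves the linear problem $(\Delta_1 - 2)u_\ell = 1/r^2$ and $u_n$ the remaining nonlinear equation; it obtains $u_\ell = \rmO(1/r^2)$ from the Green's function of $\Delta_1 - \Id$ (Lemma \ref{l:decayu_p}, which rests on the doubly-weighted Fredholm result, Proposition \ref{p:fredholm1}), and then solves for $u_n$ by the implicit function theorem in $H^2_{r,\gamma^*}(\R^2)$ using the invertibility of $\Delta_1 - 2$ (Corollary \ref{c:Delta-c}); since the nonlinearity is quadratic, $u_n = \rmO(u_\ell^2) = \rmO(1/r^4)$, and Lemma \ref{l:decay} converts decay into weighted-space membership. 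You replace this functional-analytic machinery with sub/supersolutions and the maximum principle. Both routes work: the paper's exercises exactly the weighted-space tools the rest of the paper needs anyway and does not rely on a maximum principle, while yours is more elementary and gives sharper pointwise information --- indeed your constant, $1-\rho_0^2 \sim 1/r^2$, is the classical degree-one vortex asymptotics $\rho_0 \approx 1 - \frac{1}{2r^2}$, whereas the constant $2/r^2$ asserted in Lemma \ref{l:rhodecay} appears to be off by a factor of two (immaterial here, since only the order enters Proposition \ref{p:decayg}).

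Two steps of your sketch need shoring up before the argument closes. First, the comparison with the barriers $(1\pm\eps)/(2r^2)$ on $\{r\ge R\}$ requires ordered data at $r=R$: nothing forces $u(R) \le (1+\eps)/(2R^2)$, so you must add a multiple $M K_0(\sqrt{2}\,r)$ of the decaying homogeneous solution, and then $M$ --- and hence the half-line on which your differential inequality holds --- depends on the size of $u(R)$, a potential circularity. The standard repair is a two-stage bootstrap: first absorb all $u$-dependent terms into the potential, writing the equation as $(\Delta_0 - c(r))u = -1/r^2$ with $c(r) = 2 + r^{-2} - 3u + u^2 \ge 3/2$ for $r$ large; then $A/r^2 + BK_0(r)$ is a supersolution for every $A,B>0$ on a fixed half-line, so matching $B$ to $u(R)$ costs nothing and yields the crude bound $u=\rmO(1/r^2)$ without circularity; only after that do your $\eps$-barriers (with their exponentially small corrections) legitimately pin down the constant. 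Second, the claim $\partial_r^k(1-\rho_0^2) = \rmO(r^{-2-k})$ does not follow from ``bootstrapping through the ODE'' alone: the ODE expresses $u''$ in terms of $u$ and $u'$, so knowing $u=\rmO(1/r^2)$ gives no decay of $u'$ by itself; one needs interior elliptic/ODE estimates on unit annuli or the Green's-function representation. Fortunately those standard estimates give the uniform rate $\partial_r^k(1-\rho_0^2)=\rmO(1/r^2)$ for $k\le 3$, which is already enough for membership in $H^3_{r,\gamma}(\R^2)$ with $\gamma\in(0,1)$, since $\int_1^\infty r^{-4}\, r^{2\gamma}\, r\,dr < \infty$ exactly when $\gamma<1$; the stronger rates you assert are not needed.
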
 
 
 To prove Proposition \ref{p:decayg} we begin by recalling previous work by Kopell and Howard \cite{kopell1981}, which looks at a more general version of the above boundary value problem, equation \eqref{e:rhozero}, but does not provide the rate at which the solutions converge to one.
 The precise equation and results from \cite{kopell1981} are summarized next.

\begin{Proposition}\label{p:kopell}[ Theorem 3.1 in \cite{kopell1981}] Let $m \in \N$ and consider the function $f(\rho)$, with $f(1) =0$ and $f'<0$. Then, the boundary value problem,
\[ \Delta_m \rho + f(\rho) \rho =0, \qquad \rho(r) \sim b r^m \quad \mbox{as} \quad r \longrightarrow 0,\]
has a unique solution, $\rho_m(r)$, satisfying 
\begin{enumerate}[i)]
\item $0< \rho_m(r) <1 $ for all $r>0$,
\item $\rho'_m(r) >0$ for all $r>0$, and
\item $\rho_m(r) \rightarrow 1$ as $r \rightarrow \infty$.
\end{enumerate}
\end{Proposition}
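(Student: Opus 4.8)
The plan is to treat this as a one-parameter shooting problem. Writing the equation as $\rho'' + \frac{1}{r}\rho' - \frac{m^2}{r^2}\rho + f(\rho)\rho = 0$, I first observe that the linear part has indicial roots $\pm m$ at $r=0$, so the only solutions compatible with the boundary condition $\rho \sim b r^m$ are the \emph{regular} ones, parametrized by the leading coefficient $b>0$. I would establish local existence and uniqueness of this regular solution, along with its smooth dependence on $b$, by recasting the problem as a Volterra integral equation (with kernel built from the fundamental solutions $r^m$ and $r^{-m}$) and applying the contraction mapping principle on a short interval $[0,r_1]$. Standard continuation then extends $\rho(\cdot;b)$ as long as it remains bounded.

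Next I would set up the shooting dichotomy. Let $g(\rho)=f(\rho)\rho$; the hypotheses $f(1)=0$ and $f'<0$ give $g(\rho)>0$ on $(0,1)$ and $g(\rho)<0$ for $\rho>1$, so $\rho\equiv 1$ is the natural target state. I partition the admissible shooting values into
\[ A = \{\, b>0 : \rho(r;b) \text{ attains the value } 1 \text{ at some finite } r \,\}, \]
\[ B = \{\, b>0 : \rho(r;b) \text{ returns to } 0 \text{ at some finite } r \text{ while staying below } 1 \,\}. \]
Both sets are open by continuous dependence on $b$, and they are disjoint since a solution in $B$ never reaches $1$. For large $b$ the solution grows like $b r^m$ and crosses $1$ quickly, so $A\neq\emptyset$; for small $b$ the solution stays near $0$, where linearizing yields the Bessel-type equation $\rho'' + \frac1r\rho' + (f(0)-\frac{m^2}{r^2})\rho \approx 0$ (note $f(0)>0$), whose solutions oscillate like $J_m(\sqrt{f(0)}\,r)$ and hence cross zero, so $B\neq\emptyset$. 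Since $(0,\infty)$ is connected and cannot equal the union of two disjoint nonempty open sets, the complement is nonempty; any $b^\ast$ in it gives a solution $\rho^\ast:=\rho(\cdot;b^\ast)$ with $0<\rho^\ast<1$ for all $r>0$, hence bounded and global, which is item (i).

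With $\rho^\ast$ trapped in $(0,1)$, monotonicity (ii) and the limit (iii) follow from the sign structure of $g$ together with the observation that at any critical point $r_0$ the equation forces $(\rho^\ast)''(r_0) = \rho^\ast(r_0)\big(\frac{m^2}{r_0^2}-f(\rho^\ast(r_0))\big)$, so a critical point is a maximum only where $\frac{m^2}{r_0^2}<f(\rho^\ast)$. Since $m^2/r_0^2$ is strictly decreasing in $r_0$, this classifies the possible critical points and rules out interior minima below $1$ at large $r$, forcing $(\rho^\ast)'>0$. Being increasing and bounded above by $1$, $\rho^\ast$ has a limit $L\in(0,1]$; passing to the limit in the equation (where the $\frac1r\rho'$ and $m^2/r^2$ terms drop out) gives $g(L)=0$, and as the only zero of $g$ in $(0,1]$ is $\rho=1$, we conclude $L=1$. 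Uniqueness of $b^\ast$ I would obtain from a Sturm/comparison argument: using $f'<0$, the difference of two distinct connecting solutions satisfies a linear equation with a sign-definite zeroth-order coefficient, which is incompatible with both orbits remaining in $(0,1)$ and tending to $1$.

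The step I expect to be the main obstacle is the large-$r$ analysis underlying monotonicity and the convergence to exactly $1$. Because the asymptotic system is only autonomous in the limit (the damping $\frac1r\rho'$ and the long-range coefficient $-m^2/r^2$ vanish merely as $r\to\infty$), ruling out oscillation about $1$ or slow undershoot toward some $L<1$ cannot be done by a clean phase-plane picture and instead requires a careful Lyapunov-type estimate that tracks the decaying coefficients; this, rather than the shooting dichotomy itself, is the crux of the argument.
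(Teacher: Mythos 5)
First, a point of comparison: the paper never proves this proposition. It is quoted verbatim as Theorem 3.1 of Kopell--Howard \cite{kopell1981}, and the paper's own contribution in this direction, Lemma \ref{l:rhodecay}, treats only the case $f(\rho)=1-\rho^2$, $m=1$, by a genuinely different route: writing $\rho=1+u$, solving a linear problem for $\Delta_1-2$ on (doubly) weighted Sobolev spaces, and closing the nonlinear terms with the implicit function theorem, which yields in addition the $\rmO(1/r^2)$ convergence rate that the later sections need. Your shooting construction is therefore a reconstruction of the cited classical result rather than of anything argued in the paper, and its skeleton is sound: the regular solution parametrized by $b$, the openness and disjointness of the sets $A$ and $B$, the nonemptiness of $A$ for large $b$ and of $B$ for small $b$ (via comparison with $J_m(\sqrt{f(0)}\,r)$, using $f(0)>0$), and the connectedness argument producing a solution trapped in $(0,1)$ are all correct and essentially standard.

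However, two steps have genuine gaps. The uniqueness argument would fail as written: with $g(\rho)=f(\rho)\rho$, the difference $w=\rho_1-\rho_2$ of two connecting solutions satisfies $\Delta_m w+g'(\xi(r))\,w=0$ with $g'(\rho)=f(\rho)+\rho f'(\rho)$, and $g'(0)=f(0)>0$ while $g'(1)=f'(1)<0$; since the solutions sweep all of $(0,1)$, this zeroth-order coefficient changes sign and is \emph{not} sign-definite, so no direct Sturm argument on $w$ applies. A correct use of $f'<0$ is a Wronskian separation argument: for $b_1>b_2$ set $W=r(\rho_1'\rho_2-\rho_2'\rho_1)$; the equations give $W'=r\rho_1\rho_2\big(f(\rho_2)-f(\rho_1)\big)>0$ as long as $\rho_1>\rho_2$, and $W(0^+)=0$, so the solutions never cross; then $\big(\log(\rho_1/\rho_2)\big)'=W/(r\rho_1\rho_2)\ge W(1)/r$ for $r\ge1$ forces $\log(\rho_1/\rho_2)\ge W(1)\log r\to\infty$, contradicting $\rho_1/\rho_2\to1$. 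Second, the monotonicity and convergence step, which you flag as an unresolved ``crux'' requiring a Lyapunov-type estimate, actually closes with elementary tools that complete the critical-point count you began: a local maximum at $r_1$ followed by a local minimum at $r_2>r_1$ is impossible, because the maximum requires $m^2/r_1^2\le f(\rho(r_1))$ while the minimum requires $m^2/r_2^2\ge f(\rho(r_2))>f(\rho(r_1))\ge m^2/r_1^2$ (using $\rho(r_2)<\rho(r_1)$ and $f'<0$), i.e.\ $r_2<r_1$; hence after any maximum the solution decreases forever and stays below some $\rho_{\max}<1$, so that eventually $f(\rho)-m^2/r^2\ge\tfrac12 f(\rho_{\max})>0$, and the substitution $\rho=v/\sqrt{r}$ together with Sturm comparison of $v''+\big(f(\rho)-m^2/r^2+\tfrac{1}{4r^2}\big)v=0$ against $v''+cv=0$ forces $\rho$ to vanish at finite $r$, a contradiction. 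The same oscillation argument rules out $\rho\nearrow L$ with $L<1$, so $\rho'>0$ and $\rho\to1$, with no delicate tracking of the decaying coefficients needed.
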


The above results by Kopell and Howard prove the existence of solutions to our boundary value problem and also give us items i) and ii) in Proposition \ref{p:decayg}. Notice that similar results can be found in Greenberg's papers \cite{greenberg1980, greenberg1981}, where it is assumed that the function $f(\rho)$ in the above proposition is $f(\rho) = 1-\rho$. 

In what follows we present an alternative proof for the existence of solutions to the boundary value problem \eqref{e:rhozero}. Our results not only give us existence, but also the level of decay with which these solutions approach 1 at infinity, proving item iii) in Proposition \ref{p:decayg}.
More precisely, we show the following result.
\begin{Lemma}\label{l:rhodecay}
The boundary value problem 
\[  0 =   \Delta_1\rho+  (1  - \rho^2) \rho, \qquad \rho \rightarrow 1 \quad \mbox{as} \quad r \rightarrow \infty,\]
has a unique solution. Moreover, the difference $( 1- \rho(r) )$ is of order $\rmO(1/r^2)$, and
\[( 1- \rho(r)^2 ) \sim \dfrac{2}{r^2} + \rmo(1/r^2)\]
as $r$ goes to infinity.
\end{Lemma}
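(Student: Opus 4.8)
The plan is to treat the two assertions separately: first existence and uniqueness, then the sharp algebraic rate, which is where the real work lies.

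For existence and uniqueness I would invoke Proposition \ref{p:kopell} directly. Taking $m=1$ and $f(\rho)=1-\rho^2$, one checks the hypotheses $f(1)=0$ and $f'(\rho)=-2\rho<0$ for $\rho>0$, so Kopell and Howard's theorem produces a monotone profile $\rho_0$ with $0<\rho_0<1$, $\rho_0'>0$, and $\rho_0\to1$. To see that this is the \emph{unique} solution of the stated boundary value problem, I would observe that $r=0$ is a regular singular point of $\Delta_1$ with indicial roots $\pm1$ (the Euler part $u''+u'/r-u/r^2$ forces $u\sim r^{\pm1}$), so any solution that stays bounded at the origin must behave like $b\,r$ there; this is exactly the normalization in Proposition \ref{p:kopell}, and uniqueness among regular solutions follows. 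These remarks also recover items i--ii of Proposition \ref{p:decayg}.

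For the decay I would substitute $\rho=1-w$, with $w>0$ and $w\to0$, and record the resulting equation $(\Delta_1-2)w=-r^{-2}-3w^2+w^3$. The structural point is that the curvature term $-\rho/r^2$ prevents $\rho\equiv1$ from solving the equation, leaving the residual forcing $\Delta_1 1=-r^{-2}$, and it is this geometric term that drives $w$. I would first fix the \emph{order} of decay by a barrier argument: the scalar radial operator $\Delta_1-2$ has nonpositive zeroth-order coefficient $-r^{-2}-2$, so the comparison principle applies on an exterior region $\{r\ge r_0\}$; since $(\Delta_1-2)(A r^{-2})=3A r^{-4}-2A r^{-2}$, the functions $A r^{-2}$ with $A$ slightly above, respectively below, $\tfrac12$ serve as super- and subsolutions of the dominant linear balance, the nonlinear terms $-3w^2+w^3=\rmO(w^2)=\rmO(r^{-4})$ being of higher order and absorbed. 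Comparison then traps $w$ between constant multiples of $r^{-2}$, giving $1-\rho=\rmO(r^{-2})$.

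With the a priori bound in hand I would extract the precise constant by variation of parameters in the far field, in the spirit of the Green's-function estimates of Proposition \ref{p:fredholm1}. The homogeneous solutions of $(\Delta_1-2)v=0$ are $I_1(\sqrt{2}\,r)$ and $K_1(\sqrt{2}\,r)$; the exponentially growing $I_1$ is excluded by $w\to0$, while the surviving $K_1(\sqrt{2}\,r)$ contributes only exponentially small corrections. Hence the algebraic tail of $w$ is governed entirely by the particular solution generated by the $r^{-2}$ forcing: substituting $w\sim a\,r^{-2}$ and using $\Delta_1(r^{-2})=3r^{-4}=\rmo(r^{-2})$, the leading balance reduces to $-2a=-1$, so $a=\tfrac12$ and $1-\rho(r)\sim a\,r^{-2}$. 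Through $1-\rho^2=(1-\rho)(1+\rho)$ this fixes the leading-order coefficient in the expansion of $1-\rho^2$, and the $\rmo(r^{-2})$ remainder follows by feeding the bound back into the representation formula and iterating; membership of $1-\rho_0^2$ in $H^3_{r,\gamma}(\R^2)$ with $\gamma\in(0,1)$ is then read off from Lemma \ref{l:decay}. The genuinely delicate step is this last one: upgrading the qualitative statement $\rho\to1$ to a sharp rate \emph{and} coefficient, which requires cleanly separating the algebraic particular solution driven by the curvature term from the exponentially small $K_1$ mode, propagating control of the quadratic term $w^2$ through the bootstrap, and matching the far-field expansion to the regular behavior $\rho\sim b\,r$ near the origin supplied by Proposition \ref{p:kopell}; the barrier argument above is precisely what makes this matching rigorous rather than formal.
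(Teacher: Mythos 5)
Your route is sound but genuinely different from the paper's. The paper proves Lemma \ref{l:rhodecay} by writing $\rho = 1+u$, splitting $u = u_\ell + u_n$, where $u_\ell$ solves the linear inhomogeneous problem $(\Delta_1-2)u_\ell = 1/r^2$ (whose $\rmO(1/r^2)$ decay is Lemma \ref{l:decayu_p}, itself a consequence of the doubly-weighted invertibility of $\Delta_1-\Id$ in Proposition \ref{p:fredholm1}), and then obtaining the remainder $u_n$ by the implicit function theorem in $H^2_{r,\gamma^*}(\R^2)$, $\gamma^*<3$, exploiting that the nonlinearity $3(u_n+u_\ell)^2+(u_n+u_\ell)^3$ decays like $u_\ell^2 = \rmO(1/r^4)$; existence is thus re-proved inside the weighted-space framework rather than imported from Kopell--Howard. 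You instead take existence and uniqueness from Proposition \ref{p:kopell} (with the indicial-root remark normalizing the behavior at $r=0$), get the rate by exterior barriers, and extract the constant by far-field variation of parameters against $I_1(\sqrt{2}\,r)$, $K_1(\sqrt{2}\,r)$. Both are viable: the paper's version delivers directly the membership $(1-\rho_0^2)\in H^3_{r,\gamma}(\R^2)$ used later, while yours is more elementary and makes the leading coefficient transparent. One technical point in your barrier step: at that stage you only know $w\to 0$, not $w=\rmO(r^{-2})$, so the absorption of $-3w^2+w^3$ must first be done by smallness ($|3w^2-w^3|\le \epsilon w$ on $r\ge r_0(\epsilon)$, shifting the zeroth-order coefficient from $-2$ to $-2+\epsilon$) before the $\rmO(r^{-4})$ bookkeeping applies; this is standard but should be said.

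The place where you must be explicit is the constant, because it is exactly where your proof and the statement part ways. Your leading balance gives $a=\tfrac12$, i.e. $1-\rho\sim \tfrac{1}{2r^2}$, hence $1-\rho^2=(1-\rho)(1+\rho)\sim \tfrac{1}{r^2}+\rmo(1/r^2)$ --- not the $\tfrac{2}{r^2}$ asserted in the lemma. You write that the factorization ``fixes the leading-order coefficient'' but never state it; had you done so, the conflict would have surfaced. In fact your constant is the correct one (it agrees with the classical Ginzburg--Landau vortex asymptotics $\rho = 1-\tfrac{1}{2r^2}+\rmO(r^{-4})$). The paper's factor $2$ appears to be a rescaling slip: $u_\ell$ solves $(\Delta_1-2)u_\ell = 1/r^2$, and under the substitution $\xi=\sqrt{2}\,r$ that reduces this to the equation of Lemma \ref{l:decayu_p}, the leading behavior is $u_\ell\approx -1/\xi^2 = -1/(2r^2)$, so that $-2u_\ell\approx 1/r^2$; the paper instead reads off $u_\ell\approx -1/r^2$ as if no rescaling were involved (and Lemma \ref{l:decayu_p} in any case only asserts the order, not the constant). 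So your approach, carried out correctly, proves the lemma with the constant $1$ in place of $2$; this discrepancy does not propagate far, since downstream (Definition \ref{d:g}, Proposition \ref{p:dacayRphi}) only the order, positivity, and integrability of $g$ are used, but a complete write-up should flag it rather than leave the coefficient implicit.
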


\begin{proof}
First, to simplify the analysis,  we consider the ansatz $\rho(\xi) = 1+ u(r)$. The corresponding equation for $u$, is then
\[  \Delta_1 u - 2  u = \frac{1}{r^2} + ( 3 u^2 +u^3).\]
Let $u= u_n + u_\ell$, and notice that this is a solution to the above equation provided $u_\ell$ solves the linear problem,
\[  \Delta_{1} u_\ell - 2  u_\ell - \frac{1}{r^2} =0 , \qquad u_\ell \rightarrow 0 \quad \mbox{as} \quad r \rightarrow \infty,  \]
and $u_n$ solves the nonlinear equation
\begin{equation}\label{e:un}  
\Delta_{1} u_n - 2  u_n -   \Big ( 3 (u_n+u_\ell)^2 + (u_n+u_\ell)^3\Big)=0.
\end{equation}
We will show that the term $1+ u_\ell$ captures the main behavior of the solution, $\rho(r)$, while $u_n$ corresponds to a small correction.

For ease of exposition we prove the existence of solutions, $u_\ell$, to the linear problem in 
Lemma \ref{l:decayu_p}, shown below.
Lemma \ref{l:decayu_p} also shows that the solution $u_\ell$ 
decays at $\rmO(1/r^2)$ as $r$ approaches infinity.

Next, having obtained the asymptotic decay of $u_\ell$, we move on to proving the existence of solutions to equation \eqref{e:un} which bifurcate from  zero using the implicit function theorem. We assume that $u_n$ has a regular expansion $u_n = \eps u_1 + \eps^2 u_2 + \cdots$, and consider the left hand side of equation \eqref{e:un} as an operator $F: H^s_{r, \gamma}(\R^2) \times \R \longrightarrow H^{s-2}_{r, \gamma}(\R^2)$. 

From Corollary  \ref{c:Delta-c} we know that the linearization about the origin $D_uF(0;0): H^s_{r,\gamma}(\R^2) \longrightarrow H^{s-2}_{r,\gamma}(\R^2)$, given by the expression
\[ D_uF(0;0) = \Delta_1  - 2,\]
 defines an invertible operator for all values of $\gamma$.
To complete our argument, we need to pick the correct value of the weight $\gamma$ that guarantees that the operator $F$ is well defined. In particular, we are concerned with showing that the nonlinear terms,
\[\left( 3 (u_n+u_\ell)^2 + (u_n+u_\ell)^3\right)=0.\]
are in $H^s_{r,\gamma}(\R^2)$. 

Notice that if we let $u_n \in H^2_{r,\gamma}(\R^2)$ with $\gamma>0$, then by Sobolev embeddings $u_n$ is a bounded and continuous function. As a result any product $u_n^p$  is in the  space $L^2_{r,\gamma}(\R^2)$. Similarly, because $u_\ell$ is bounded near the origin and decays like $1/r^2$ at infinity, any term of the form $u_n^pu_\ell^q$ is also in $L^2_{r,\gamma}(\R^2)$, for $\gamma>0$. 
Therefore, the level of algebraic localization of the nonlinear terms is controlled by the term $u_\ell^2$. Since we have that $u^2_\ell \sim \rmO(1/r^4)$ in the far field, then the nonlinearities are in the space $L^2_{r,\gamma^*}$ for values of $\gamma^*< 3$.

We may conclude then that the solution $u_n$ is in the space $H^2_{r,\gamma^*}(\R^2)$ and that it has the same level of decay at infinity as $u_\ell^2$. Consequently, the solution, $\rho$, to the original boundary value problem \eqref{e:rhozero} satisfies $\rho(r) -1 \sim \rmO(u_\ell ) = \rmO(1/r^2)$.

Finally, a short computation, together with our result that $u_n \sim \rmO(u_\ell^2)$, shows that
$ (1- \rho^2(r)) = -2 u_\ell (r) + \rmo(1/r^2)$.
It then follows from Lemma \ref{l:decayu_p}  that 
\[  (1- \rho^2(r)) = \frac{2 }{ r^2 } + \rmo(1/r^2).\]
\end{proof}

The following Lemma captures the asymptotic behavior of the solution to the linear inhomogeneous problem mentioned in the above analysis.

\begin{Lemma}\label{l:decayu_p}
There exists solutions to the ordinary differential equation,
\[  \partial_{\xi \xi}u +\frac{1}{\xi}\partial_\xi u -\frac{1}{\xi^2} u -  u = \frac{1}{\xi^2}\]
satisfying $u \sim \rmO(1/\xi^2)$ as $\xi \to \infty$, while $u \sim \rmO(1)$ as $\xi \to 0$.
\end{Lemma}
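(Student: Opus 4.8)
The plan is to recognize the left-hand side as the operator $\Delta_1-\Id$ written in the variable $\xi$, that is $(\partial_{\xi\xi}+\tfrac1\xi\partial_\xi-\tfrac1{\xi^2}-1)u$, whose homogeneous equation is the modified Bessel equation of order one. Its two linearly independent solutions are $I_1(\xi)$ and $K_1(\xi)$, with the asymptotics recorded in Table \ref{t:bessel2}: $I_1$ grows exponentially at infinity and vanishes like $\xi/2$ at the origin, while $K_1$ decays exponentially at infinity and blows up like $1/\xi$ at the origin. Since we want a solution that is $\rmO(1)$ at the origin and decaying at infinity, neither homogeneous solution survives, so I would produce a particular solution with both boundary behaviors built in.

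For existence I would invoke Proposition \ref{p:fredholm1}. First check that the forcing $f(\xi)=1/\xi^2$ lands in the range space: by Lemma \ref{l:decay_zero} the singularity $\xi^{-2}$ is in $L^2_{r,\gamma,\sigma+2}(\R^2)$ near the origin precisely when $\sigma>-1$, and by Lemma \ref{l:decay} the decay $\xi^{-2}$ is square integrable against $\langle\xi\rangle^{2\gamma}$ at infinity when $\gamma<1$. Choosing $\sigma\in(-1,0)$ and $\gamma<1$ places $f$ in $L^2_{r,\gamma,\sigma+2}(\R^2)$, and since $\Delta_1-\Id:H^2_{r,\gamma,\sigma}(\R^2)\to L^2_{r,\gamma,\sigma+2}(\R^2)$ is invertible for $\sigma\in(-2,0)$, we obtain a unique $u\in H^2_{r,\gamma,\sigma}(\R^2)$ solving the equation. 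Equivalently, one may write the solution explicitly by variation of parameters using the Green's function of Step 2 in Proposition \ref{p:fredholm1}, namely (up to the overall sign fixed by the convention in \eqref{e:inverseu})
\[
u(\xi)=-I_1(\xi)\int_\xi^\infty K_1(\rho)\,\frac{d\rho}{\rho}\;-\;K_1(\xi)\int_0^\xi I_1(\rho)\,\frac{d\rho}{\rho},
\]
both integrals converging because $K_1(\rho)/\rho$ is exponentially small at infinity and $I_1(\rho)/\rho\to\tfrac12$ as $\rho\to0$. The behavior at the origin then follows by a direct estimate of this representation: as $\xi\to0$ one has $I_1(\xi)\sim\xi/2$ while $\int_\xi^\infty K_1(\rho)\rho^{-1}\,d\rho\sim\xi^{-1}$ (the integrand is $\sim\rho^{-2}$ near zero), so the first term is $\rmO(1)$; similarly $K_1(\xi)\sim1/\xi$ while $\int_0^\xi I_1(\rho)\rho^{-1}\,d\rho\sim\xi/2$, so the second term is also $\rmO(1)$. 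Hence $u\sim\rmO(1)$ near the origin, with no $1/\xi$ singularity, precisely because the Green's function selects the solution regular there.

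The main obstacle is the sharp far-field rate $\rmO(1/\xi^2)$, since each term in the representation is a product of an exponentially large factor with an exponentially small integral, and one must track the surviving algebraic part. Rather than push through the Watson--Laplace asymptotics of the two integrals, I would extract the rate by a subtraction-and-bootstrap argument that stays within the functional-analytic framework already developed. Let $\chi$ be a smooth far-field cutoff ($\chi\equiv1$ for $\xi\geq2$, $\chi\equiv0$ for $\xi\leq1$) and set $\tilde u=u+\chi(\xi)\,\xi^{-2}$. A short computation gives $(\Delta_1-\Id)(\xi^{-2})=3\xi^{-4}-\xi^{-2}$, so on the region where $\chi\equiv1$ the forcing for $\tilde u$ becomes $1/\xi^2+(3\xi^{-4}-\xi^{-2})=3\xi^{-4}$, while the cutoff derivatives contribute only compactly supported terms; the new right-hand side therefore lies in $L^2_{r,\gamma',\sigma+2}(\R^2)$ for any $\gamma'<3$. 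Applying Proposition \ref{p:fredholm1} (or Corollary \ref{c:Delta-c}) again yields $\tilde u\in H^2_{r,\gamma',\sigma}(\R^2)$, so by Lemma \ref{l:decay} the remainder decays faster than $\xi^{-2}$. Consequently $u=-\xi^{-2}+\rmo(1/\xi^2)$ in the far field, which gives $u\sim\rmO(1/\xi^2)$ and even identifies the leading coefficient. Uniqueness of the decaying, regular solution is automatic: the only homogeneous solutions are $I_1$ (excluded by decay) and $K_1$ (excluded by regularity at the origin), and adding a multiple of $K_1$ would in any case perturb $u$ only by an exponentially small amount, leaving the algebraic rate unchanged.
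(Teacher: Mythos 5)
Your proposal is correct, and its first two thirds coincide with the paper's own proof: existence comes from the invertibility of $\Delta_1-\Id$ on the doubly weighted spaces (Proposition \ref{p:fredholm1}, with $f=1/\xi^2\in L^2_{r,\gamma,\sigma+2}(\R^2)$ for $\gamma<1$, $\sigma\in(-1,0)$), and the $\rmO(1)$ behavior at the origin is obtained from exactly the same two Green's-function estimates ($I_1(\xi)\sim\xi/2$ against $\int_\xi^\infty K_1(\rho)\rho^{-1}\,d\rho\sim\xi^{-1}$, and $K_1(\xi)\sim1/\xi$ against $\int_0^\xi I_1(\rho)\rho^{-1}\,d\rho\sim\xi/2$) that the paper carries out for its terms $A(\xi)$ and $B(\xi)$. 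Where you genuinely depart from the paper is the far-field rate. The paper reads the decay directly off the weighted-space membership (``$u$ has the same level of decay at infinity as $f$''); strictly speaking, Lemma \ref{l:decay} applied to $u\in H^2_{r,\gamma,\sigma}(\R^2)$ with $\gamma<1$ only yields $u=\rmO(\xi^{-2+\eps})$ for every $\eps>0$, so this is a (harmless but real) shortcut. Your subtraction-and-bootstrap argument --- using $(\Delta_1-\Id)\xi^{-2}=3\xi^{-4}-\xi^{-2}$, so that $\tilde u=u+\chi\xi^{-2}$ solves an equation whose forcing lies in $L^2_{r,\gamma',\sigma+2}(\R^2)$ for every $\gamma'<3$ --- closes this gap, gives the sharp $\rmO(1/\xi^2)$ rate, and even identifies the leading term $u\sim-\xi^{-2}$, which is precisely the normalization the paper uses later in Lemma \ref{l:rhodecay}, where $(1-\rho^2)=-2u_\ell+\rmo(1/r^2)\sim 2/r^2$. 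Two small remarks. First, in the bootstrap you should add one line to justify $\tilde u\in H^2_{r,\gamma',\sigma}(\R^2)$: the solution produced by Proposition \ref{p:fredholm1} in the more strongly weighted space coincides with $\tilde u$ because both solve the same equation in the larger space $H^2_{r,\gamma,\sigma}(\R^2)$, on which the operator is injective. Second, your sign convention is the correct one: applying $\Delta_1-\Id$ to the paper's formula \eqref{e:inverseu} returns $-f$, so the representation of the solution of $(\Delta_1-\Id)u=f$ must carry the minus signs you wrote; this affects none of the estimates, which are all in absolute value.
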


\begin{proof}
The proof of this lemma relies on Proposition \ref{p:fredholm1} where it is shown that the operator
$(\Delta_1 -\Id) :H^s_{r,\gamma,\sigma}(\R^2) \longrightarrow H^{s-2}_{r,\gamma,\sigma+2}(\R^2)$ is invertible for values of $\sigma \in (-2,0)$ and $\gamma \in \R$.
A short calculation then shows that $f = 1/\xi^2$ is in the space $H^{s-2}_{r,\gamma,\sigma}(\R^2)$, for any  integer $s\geq2$, and for values of $\gamma \in (0,1)$ and $\sigma +2 \in (1,2)$. It then follows that the solution $u$ to
\[ (\Delta_1 -\Id) u  =  \partial_{\xi \xi}u +\frac{1}{\xi}\partial_\xi u -\frac{1}{\xi^2} u -  u = \frac{1}{\xi^2}\]
is in the space $ H^s_{r,\gamma,\sigma}(\R^2) $. In particular, $u$ has the 
same level of decay at infinity as $f$, so $u \sim \rmO(1/\xi^2)$ as $\xi \to \infty$.

To obtain the growth rate of $u$ near the origin, we use the Green's function for the operator $(\Delta_1 -\Id)$ and write
\[ u(\xi) = I_1(\xi) \int_\xi^\infty \frac{ K_1(\rho)}{ \rho} \;d\rho +
  K_1(\xi) \int_0^\xi\frac{ I_1(\rho) }{ \rho} \;d\rho.
  \]
Using the asymptotic approximation for $I_1(\xi)$ near the origin, we may bound the first term by
\[A(\xi)  =  I_1(\xi) \int_\xi^\infty \frac{ K_1(\rho)}{ \rho} \;d\rho \leq C\xi \int_\xi^\infty \frac{ K_1(\rho)}{ \rho} \;d\rho.\]
Because $K_1(\rho)/\rho $ decays at infinity, there is a constant $M$ such that
\[ A(\xi) \leq  C\xi \left[  \int_\xi^1 \frac{ K_1(\rho)}{ \rho} \;d\rho + \int_1^\infty \frac{ K_1(\rho)}{ \rho} \;d\rho  \right] \leq C\xi \left[  \int_\xi^1 \frac{1}{\rho^2} \;d\rho +M\right], \]
and it then follows that $A(\xi) \sim \rmO(1)$ as $\xi \to 0$.

Similarly, using the asymptotic expansions for $K_1$ and $I_1$ for values of $\xi \sim 0$, we may write
\[ B(\xi) = K_1(\xi) \int_0^\xi\frac{ I_1(\rho) }{ \rho} \;d\rho \leq \frac{C}{\xi} \int_0^\xi 1 \;d\rho <C\]
It then follows that $B(\xi) \sim \rmO(1)$  as $\xi \to 0$. The results of the lemma then follow.
\end{proof}

\begin{Remark}
Notice that because $\rho_0$ solves the second order ode \eqref{e:rhozero}, it is an element of $C^2((0,\infty)) \cap C^1([0,\infty))$.
The decay rates for $(1-\rho(r)^2)$ presented in Lemma \ref{l:rhodecay} then imply that this  function is in 
$ C^2([0,\infty)) \subset H^3_{r,\gamma}(\R^2)$ with $0<\gamma <1$, see also Figure \ref{f:decay}. This completes the proof of item iii) in Proposition \ref{p:decayg}.
\end{Remark}

We close this section by showing  that the expression defining the second equation in our order $\rmO(1)$ approximation, that is
\[ \beta \rho_0 - \beta \rho_0^3  = \beta\rho_0 ( 1+ \rho_0)( 1- \rho_0),\]
is of order $\rmO(\delta^2)$. To do this we use the results of the previous lemma.

Indeed, because $\rho_0$ solves equation \eqref{e:rhozero}, 
and the function $\rho_0$ satisfies $(\rho_0 -1) \sim \rmO(1/r^2)$ in the far field, one sees that the above expression is also of order $\rmO(1/r^2)$.
Letting  $S = \delta r$ and recalling that $\rho_0$ is bounded near the origin, one finds that, in terms of the variable $S$,  
\[ \beta  ( 1- \rho_0)(1 + \rho_0) \rho_0 \sim \rmO(\delta^2). \]
This `left over' term will be included in the $\rmO(\delta^2)$ equation in the next subsection. For notational convenience we will also use the following definition.
\begin{Definition}\label{d:g}
We let $\tilde{\rho}_0 = \rho_0(S/\delta)$ and define
\[ g(S) = \frac{1}{\delta^2}  (1- \tilde{\rho}_0^2),\]
which satisfies $g(S) \sim \rmO(1/S^2)$ as $S \to \infty$.
\end{Definition}

\subsection{The order $\delta$ equation}\label{s:delta}
In this short subsection we look at the terms appearing in the order $\rmO(\delta)$ equation, 
\[  -2 \partial_r \rho_0 \partial_S \phi_0.\]
and show that these are really of order $\rmO(\delta^4)$. This result follows from this next lemma.

\begin{Lemma}\label{l:order_delta}
Let $\rho_0$ satisfy the boundary value problem
\[0 = \Delta_1 \rho_0 + (1-\rho_0^2) \rho_0, \quad \rho_0 \to 1 \quad \mbox{as} \quad r \to \infty.\]
Then, given $f(r,S) = \partial_r \rho_0 \partial_S \phi_0$ and  the rescaling  $S = \delta r$, the term
\[ \tilde{f}(S) = f(S/\delta, S)   \sim \rmO(\delta^3) \quad \mbox{as} \quad S \to \infty.\]
\end{Lemma}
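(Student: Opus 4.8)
The plan is to exploit the fact that $\partial_r\rho_0$ is a derivative in the \emph{unscaled} variable, so that its far-field algebraic decay is converted into a power of $\delta$ once we substitute $r=S/\delta$. Concretely, the only factor in $\tilde f(S)=\partial_r\rho_0(S/\delta)\,\partial_S\phi_0(S)$ that carries powers of $\delta$ is $\partial_r\rho_0(S/\delta)$; the phase derivative $\partial_S\phi_0$ is $\rmO(1)$ in $\delta$ and bounded in $S$, since $\phi_0$ is the leading-order phase solving the viscous eikonal equation \eqref{e:visc}, whose gradient tends to the finite wavenumber $\kappa$ as $S\to\infty$ (cf. Theorem \ref{t:main}).

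The main step is therefore to sharpen the far-field information on $\rho_0$ into a \emph{derivative} estimate, namely $\partial_r\rho_0(r)\sim\rmO(1/r^3)$ as $r\to\infty$. I would start from the decomposition $\rho_0=1+u_\ell+u_n$ introduced in the proof of Lemma \ref{l:rhodecay}, where $u_\ell$ solves the linear inhomogeneous Bessel-type problem of Lemma \ref{l:decayu_p} and $u_n$ is the nonlinear correction with $u_n\sim\rmO(1/r^4)$. Differentiating the Green's-function representation \eqref{e:u_der} for $u_\ell$ and inserting the far-field asymptotics of $I_1,K_1$ and their derivatives $I_1',K_1'$ from Table \ref{t:bessel2} (which share the same exponential growth/decay as $I_1,K_1$), a Watson-type estimate of the two integrals shows that each term of $\partial_r u_\ell$ decays like $1/r^3$, giving $\partial_r u_\ell\sim\rmO(1/r^3)$. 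The correction satisfies $\partial_r u_n\sim\rmO(1/r^5)$, which is subdominant, so $\partial_r\rho_0=\partial_r u_\ell+\partial_r u_n\sim\rmO(1/r^3)$.

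With this estimate in hand the conclusion is immediate: substituting $r=S/\delta$ gives $\partial_r\rho_0(S/\delta)\sim\rmO\!\big((S/\delta)^{-3}\big)=\rmO(\delta^3/S^3)$, and multiplying by the bounded factor $\partial_S\phi_0(S)$ yields $\tilde f(S)\sim\rmO(\delta^3/S^3)=\rmO(\delta^3)$ as $S\to\infty$, as claimed.

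I expect the derivative estimate of the second paragraph to be the only genuine obstacle. The pointwise decay $\rho_0-1\sim\rmO(1/r^2)$ from Lemma \ref{l:rhodecay} does not transfer to the derivative on its own: membership of $1-\rho_0^2$ in $H^3_{r,\gamma}(\R^2)$ with $\gamma\in(0,1)$ only yields, via Lemma \ref{l:decay}, decay of $\partial_r\rho_0$ faster than $r^{-1-\gamma}$, which is weaker than the sharp $r^{-3}$ rate needed to produce exactly $\delta^3$. This is why I would argue through the explicit Green's-function representation and Bessel asymptotics rather than through weighted-space membership alone; the care needed in the asymptotic evaluation of $\int_r^\infty K_1(\rho)\rho^{-1}\,d\rho$ and $\int_0^r I_1(\rho)\rho^{-1}\,d\rho$ near $r=\infty$ is where the computation concentrates.
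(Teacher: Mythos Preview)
Your proposal is correct and follows the same outline as the paper's proof: obtain $\partial_r\rho_0\sim\rmO(1/r^3)$, then substitute $r=S/\delta$ and use boundedness of $\partial_S\phi_0$. The paper simply asserts the derivative decay as a consequence of $(\rho_0-1)\sim\rmO(1/r^2)$ from Lemma~\ref{l:rhodecay}, without the Green's-function argument you outline; your added care on that step is well placed, since the implication from decay of a function to decay of its derivative is not automatic in general.
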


\begin{proof}
From Lemma \ref{l:rhodecay} we know that the solution, $\rho_0$, to the given boundary value problem satisfies $(\rho_0 -1) \sim \rmO(1/r^2)$ as $r \to \infty$. As a result, we may conclude that $\partial_r \rho_0 \sim \rmO(1/r^3)$ as $r \to \infty$. Using the rescaling, $r = S/\delta$ we arrive at the desired result.
\end{proof}

\subsection{The order $\delta^2$ equation}\label{ss:delta2}
We now move on to the next set of equations
\begin{align*}
0 = & - \rho_0 (\partial_S \phi_0)^2 + R_0 ( 1- 3\rho_0^2)\\
0 = &\; \rho_0 \Delta_{0,S} \phi_0 - 2 \partial_r \rho_0 \partial_S \phi_1 + \beta R_0(1 - 3 \rho_0^2)
+ \Omega \rho_0 + \rho_0 \beta g(S)
\end{align*}
where the function $g(S)$ is given as in Definition \ref{d:g}.

To simplify the analysis of the above system, we use the information  presented in Proposition \ref{p:decayg}. In particular:
\begin{itemize}
\item 
A similar proof as in Lemma \ref{l:order_delta}, 
shows that the expression $-2 \partial_r \rho_0 \partial_S \phi_1$ is 
of order $\rmO(\delta^3)$.  Therefore, this term can be moved to the next set of higher order equations.
\item Similarly,  because $1- \rho_0^2 \sim \rmO(1/r^2)$ when $r$ is large,
we can write
\[ 1 - 3\rho_0^2 = -2 + 3( 1- \rho_0^2) = -2 + h(S)\]
and conclude that the function $h(S) = 3( 1- \rho_0^2)$ is of oder $\rmO(\delta^2)$.
Thus, we also move these terms to the next set of equations.
\end{itemize}

As a result, in this section we concentrate on 
solving the system
\begin{align}\label{e:first}
0 = & - \rho_0( \partial_S \phi_0)^2 -2 R_0\\
 \label{e:second}
0 = & \rho_0 \Delta_{0,S} \phi_0  -2 \beta R_0 + \Omega \rho_0 + \rho_0 \beta g(S).
\end{align}
More precisely, we prove the following proposition, which also
states smoothness and decay properties of the solutions $R_0$ and $\phi_0$.
In the proposition, we use 
$\chi^D  \in C^{\infty}$ to denote a radial cut-off function, with 
$\chi^D(|x|) =0$ for $|x|<D$, for some positive number $D$, and $\chi^D(|x|) =1$ for $|x|>2D$.
The symbol $\gamma_e$ is also used here to represent the Euler-Mascheroni constant, while $K_0$ denotes the modified Bessel function of the second kind.

\begin{Proposition}\label{p:dacayRphi}
Let $g(S)$ be as in Definition \ref{d:g}, pick $D>0$ and assume $\beta<0$. Then, 
the solutions to the equations \eqref{e:first} and \eqref{e:second},
are given by,
\begin{align*}
 \phi_0(S) = & \underbrace{ \frac{1}{\beta} \chi(\Lambda S) \log( K_0(\Lambda S))  + a c }_{\tilde{\phi}_1} + \tilde{\phi_2}(S ), \qquad \Lambda^2 = -\beta \Omega(a),  \quad c \in \R,\\
R_0(S) = & - \frac{1 }{2} \rho_0 (\partial_S \phi_0)^2,
\end{align*}
where $\Omega(a) = 4 C(a) \rme^{-2 \gamma_e} \exp(2/a)$, with  $C(z)$  a $C^1(\R)$ function in $z$ and
\[ a= - \beta^2 \int_0^\infty g_c(S) S\;dS; \qquad g_c = (1- \chi^D)g.\]
Moreover, we have that
\begin{enumerate}[i.)]
\item $\partial_S \tilde{\phi}_2 \in H^3_{r, \gamma}(\R^2)$  for $\gamma \in (0,1)$, 
\item $ \partial_S \phi_0 \to \kappa$ and $R_0 \to \frac{1}{2} \kappa ^2$ as $S \to \infty$, with $\kappa = -\dfrac{\Lambda}{\beta} >0$, and
\item  $ (\partial_S \phi_0 - \kappa), ( R_0 - \frac{1}{2} \kappa^2) \in H^3_{r,\bar{\gamma}}$ for $\bar{\gamma} \in (-1,0)$
\end{enumerate}

\end{Proposition}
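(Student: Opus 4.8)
The plan is to eliminate $R_0$ algebraically and collapse the $\rmO(\delta^2)$ system to a single scalar equation for the phase. Equation \eqref{e:first} gives $R_0 = -\tfrac12\rho_0(\partial_S\phi_0)^2$ at once, which is the claimed formula for $R_0$. Substituting this into \eqref{e:second} and dividing by the function $\rho_0$, which is strictly positive for $r>0$ by Proposition \ref{p:decayg}, reduces the system to the viscous eikonal equation
\[ -\Omega = \Delta_{0,S}\phi_0 + \beta(\partial_S\phi_0)^2 + \beta g(S), \]
which is precisely \eqref{e:visc}. Thus everything reduces to solving this nonlinear eigenvalue problem for the pair $(\phi_0,\Omega)$.

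Next I would linearize by the Cole--Hopf substitution $\psi = \rme^{\beta\phi_0}$. A direct computation gives $\Delta_{0,S}\phi_0 + \beta(\partial_S\phi_0)^2 = \tfrac1\beta\,\Delta_{0,S}\psi/\psi$, so the eikonal equation becomes the modified Helmholtz problem
\[ \Delta_{0,S}\psi - \Lambda^2\psi + \beta^2 g(S)\,\psi = 0, \qquad \Lambda^2 = -\beta\Omega, \]
whose homogeneous part is the order-zero modified Bessel equation with decaying solution $K_0(\Lambda S)$. This is exactly the problem treated in \cite{jaramillo2022can}, so I would invoke those results for existence of a bounded, positive solution $\psi$ behaving like $K_0(\Lambda S)$ in the far field. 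The eigenvalue $\Lambda$ (equivalently $\Omega$) is beyond all algebraic orders and is fixed by matching the small-argument expansion $K_0(z)\sim -\log(z/2)-\gamma_e$ of the outer solution to a core solution whose logarithmic coefficient is controlled by the flux integral $a = -\beta^2\int_0^\infty g_c(S)S\,dS$, with $g_c=(1-\chi^D)g$. This matching yields $\Omega(a) = 4C(a)\rme^{-2\gamma_e}\exp(2/a)$ with $C\in C^1(\R)$, and $\phi_0 = \tfrac1\beta\log\psi$ up to an additive constant; inserting the cut-off $\chi(\Lambda S)$ to remove the logarithmic singularity of $K_0$ at the origin produces the decomposition $\phi_0 = \tilde\phi_1 + \tilde\phi_2$.

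The remaining work is to extract the asymptotics from the Bessel structure. Using $\partial_S\log K_0(\Lambda S) = -\Lambda\,K_1(\Lambda S)/K_0(\Lambda S)$ together with the large-argument asymptotics in Table \ref{t:bessel2}, I would show $\partial_S\tilde\phi_1\to -\Lambda/\beta=\kappa$ with $\kappa>0$ (since $\beta<0$ and $\Lambda>0$), and that the deviation from this limit is $\rmO(1/S)$. Then $\partial_S\phi_0\to\kappa$, and the explicit formula $R_0=-\tfrac12\rho_0(\partial_S\phi_0)^2$ together with $\rho_0\to1$ determines the constant limit of $R_0$; since each factor reaches its limit at the algebraic rate $\rmO(1/S)$, the difference between $R_0$ and that limit decays likewise, so by Lemma \ref{l:decay} both $(\partial_S\phi_0-\kappa)$ and $R_0$ minus its limit lie in $H^3_{r,\bar\gamma}$ for $\bar\gamma\in(-1,0)$, giving properties (ii) and (iii). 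For property (i) the point is that $\tilde\phi_1$ already carries the full $\rmO(1/S)$ tail, so $\partial_S\tilde\phi_2$ must decay faster; I would make this quantitative by treating $\beta^2 g\psi$ as a perturbation of the $K_0$-solution in the variation-of-parameters formula built from $I_0$ and $K_0$, using $g\sim\rmO(1/S^2)$ from Definition \ref{d:g}, to obtain $\partial_S\tilde\phi_2\sim\rmO(1/S^2)$ and hence membership in $H^3_{r,\gamma}$ for $\gamma\in(0,1)$.

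The main obstacle is the eigenvalue determination: because $\Omega$ is exponentially small in $\beta$, it is invisible to regular perturbation theory, and the only way to pin it down is the matched-asymptotics computation of \cite{jaramillo2022can}, into which the Euler--Mascheroni constant and the core flux $a$ enter. Within the present argument the delicate estimate is property (i): one must show that subtracting the explicit outer solution $\tfrac1\beta\log K_0(\Lambda S)$ genuinely gains a power of decay in the derivative, i.e.\ that the $g$-induced correction to $\psi/K_0(\Lambda S)$ and its derivative decay like $\rmO(1/S^2)$ rather than $\rmO(1/S)$. This requires care because $\Lambda S$ is small throughout the region where $g$ is appreciable, so the core and far-field scales must be reconciled in the error estimate.
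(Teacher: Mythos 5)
Your proposal follows the same skeleton as the paper's own proof: solve \eqref{e:first} for $R_0$, divide \eqref{e:second} by the positive function $\rho_0$ to reduce the system to the viscous eikonal equation, appeal to the target-pattern results of \cite{jaramillo2022can}, and read items ii)--iii) off the asymptotics of $K_1(\Lambda S)/K_0(\Lambda S)$. The difference lies in how the citation is used, and the work is redistributed in an almost complementary way. The paper treats Theorem \ref{t:targetpatterns} as a black box: the substance of its proof is the verification of Hypothesis \ref{h:g} for $\bar g = -\beta g/\epsilon$ --- positivity (which is precisely where $\beta<0$ together with the Kopell--Howard bound $0<\rho_0<1$ of Proposition \ref{p:kopell} enters), membership in $H^3_{r,\gamma}(\R^2)$ with $\gamma\in(0,1)$ from Definition \ref{d:g}, and the core/far-field splitting $\bar g=\bar g_c+\bar g_f$ via the cut-off $\chi^D$ --- after which the theorem directly supplies the decomposition $\phi_0=\tilde\phi_1+\tilde\phi_2$, the formula $\Omega(a)=4C(a)\rme^{-2\gamma_e}\exp(2/a)$, \emph{and} item i), i.e.\ $\partial_S\tilde\phi_2\in H^3_{r,\gamma}(\R^2)$. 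You instead unpack part of that black box (the Cole--Hopf reduction to $\Delta_{0,S}\psi-\Lambda^2\psi+\beta^2 g\psi=0$ and the matched-asymptotics determination of $\Omega$, which are internal to \cite{jaramillo2022can}), and you propose to re-derive item i) by a variation-of-parameters estimate around $K_0$; that duplicates what the citation already buys you, and it is exactly the step you flag as delicate because $\Lambda S$ is small on the support of $g$. Conversely, you gloss over the hypothesis verification, which is not optional: without checking positivity, regularity, and the splitting of the inhomogeneity, the appeal to \cite{jaramillo2022can} is unjustified, and positivity is the one place the standing assumption $\beta<0$ does real work. Adding that (routine) check makes your argument correct; with the perturbation estimate for $\partial_S\tilde\phi_2$ carried out it would even be more self-contained than the paper's, at the cost of reproving part of the cited theorem.
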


Before proving Proposition \ref{p:dacayRphi}, notice first that we can easily solve 
for the unknown $R_0$ using equation \eqref{e:first},
\[R_0 = - \frac{1}{2}  \rho_0( \partial_S \phi_0)^2.\]
Inserting this result into the expression \eqref{e:second} then leads to the  viscous eikonal equation,
\[0 = \Delta_{0,S} \phi_0  + \beta ( \partial_S \phi_0)^2 + \Omega  +\beta g(S).\]
For convenience, in the analysis that follows we let $b = -\beta >0$ (see Hypothesis (H1) ) and define $\bar{g}(S) = b g(S)/\epsilon$, for some arbitrarily small number $\epsilon>0$.
With this notation, the above viscous eikonal equation then reads
\begin{equation}\label{e:eikonal}
0 = \Delta_{0,S} \phi_0  - b ( \partial_S \phi_0)^2 + \Omega  -\epsilon \bar{g} (S).
\end{equation}

Notice that, not surprisingly, we have arrived at the same phase dynamics approximation that can be formally derived for systems undergoing a Hopf bifurcation, see  for example \cite{kollar2007, jaramillo2018}.
Our goal in this subsection is to solve this nonlinear equation.
It is important to note that this equation constitutes a nonlinear eigenvalue problem, since we need to find the solution, $\phi_0$, together with the corresponding value of $\Omega$.
This is because $\Omega $ is related to the unknown parameter $\lambda$, which represents the rotational speed of  the spiral pattern, through the expression $\lambda = \beta + \delta^2 \Omega$. 
 In particular, we are interested in solutions, $\phi_0(S)$, which in the far field behave like planar waves, since as mentioned in the introdcution these solutions would then represent spiral waves. That is, we require that $\partial_S \phi_0 \rightarrow k $ as $S$ goes to infinity, for some constant $k$. We also ask that the group velocity of these solutions be positive, i.e. $\partial_S \phi_0 \cdot S >0$, indicating that these planar waves move outwards, away from the spiral's core.

 To prove the existence of solutions to equation \eqref{e:eikonal}, we  use  the results from reference \cite{jaramillo2022can}, where the same viscous eikonal equation 
 is used to model target patterns in 2-d oscillatory media.
 In this context, the function $g(S)$ that appears in equation \eqref{e:eikonal} represents a defect, or impurity, that
is present in the medium. This defect acts as a pacemaker entraining the rest of the system and generating a
series of concentric waves that propagate away from it.
n contrast to previous results that also deal with the existence of target pattern solutions (see for example \cite{kuramoto1976, kopell1981, hagan1981, kollar2007, jaramillo2016, jaramillo2018}), in reference \cite{jaramillo2022can} the function $g(S)$ is assumed to represent a large defect, in the sense that it does not have finite mass. More precisely, it is assumed that $g(S) \sim \rmO(1/S^m)$ as $S \to \infty$, with $1<m\leq 2$.
 Since the function $g(S)$ considered here is of order $\rmO(1/S^2)$, and 
  because in the far field our spiral wave solutions, as well as target patterns,  
satisfy $\nabla \phi \cdot \frac{{\bf x}}{| {\bf x}|}  \rightarrow k $ as $|{\bf x}| \rightarrow \infty$,
the results from \cite{jaramillo2022can} are precisely the ones we need in this section. 
We therefore summarize them in the next theorem using the notation already introduced.
 We start by stating the hypothesis placed on $g$ in reference \cite{jaramillo2022can}.

\begin{Hypothesis}\label{h:g}
The inhomogeneity, $g$, lives in $H_{ \gamma} ^k(\R^2)$, with $k\geq2$ and $\gamma \in (0,1)$,
is radially symmetric, and positive. In addition, this function can be split into the sum of  two positive functions, $g_c, g_f$, satisfying
\begin{itemize}
\setlength \itemsep{2ex}
\item The function $g_f$ is in $ H^k_\gamma(\R^2)$ for $0<\gamma<1$. In particular,
 $g_f \sim \rmO(1/r^m)$ as $r \to \infty$, with $1< m \leq 2$, while near the origin  $g_f(|x|) =0$ for $|x|<1$.
\item The function $g_c$ is in $H^k_{\tilde{\gamma}}(\R^2)$ for $\tilde{\gamma}>1$. In particular,
$g_c \sim \rmO(1/r^{d})$ with $d>2$ as $r \to \infty$.
\end{itemize}
\end{Hypothesis}

\begin{Theorem}\label{t:targetpatterns}
Let $k \geq 2$ and $\gamma \in (0, 1)$ and consider a function $g \in H^k_{r,\gamma}(\R^2)$ satisfying Hypothesis \ref{h:g}. Then, there exists a constant $\epsilon_0>0$ and a $C^1 ([0,\epsilon_0))$  family of eigenfunctions $\phi = \phi(S; \epsilon)$ and eigenvalues $\Omega= \Omega(\epsilon)$ that bifurcate from zero and solve
the equation
\[\Delta_0 \phi - b(\partial_S \phi) ^2 + \Omega - \epsilon g(S)  =0 \qquad S= |x| \in [0,\infty), \quad b>0.\]
Moreover, this family has the form
\begin{equation}\label{e:target-pattern}
\phi(S;\epsilon) = -\frac{1}{b} \chi(\Lambda S) \log( K_0(\Lambda S)) + \phi_2(S; \epsilon ) + \epsilon c , \qquad \Lambda^2 = b \Omega(\epsilon) 
\end{equation}
where
\begin{enumerate}[i)]
\item $c$ is a constant that depends on the initial conditions of the problem,
\item $K_0(z)$ represents the zeroth-order Modified Bessel function of the second kind,
\item $ \partial_S \phi_2 \in H^{k}_{r,\gamma}(\R^2) $, and
\item $\Omega= \Omega(\epsilon) = C(\epsilon) 4 e^{-2 \gamma_\epsilon}\exp[ 2/ a ]$, with
\[a = -\epsilon b \int_0^\infty g_c(S) \;S \;dS,\]
 and $C(\epsilon)$ a $C^1$ constant that depends on $\epsilon$.
\end{enumerate}
\end{Theorem}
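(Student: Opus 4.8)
The plan is to eliminate the quadratic nonlinearity by a Cole--Hopf substitution, which turns this nonlinear eigenvalue problem into a linear one, and then to pin down the exponentially small eigenvalue $\Omega$ by a two-scale matching argument. Setting $\psi = e^{-b\phi}$, a direct computation gives $\Delta_0 \phi - b(\partial_S\phi)^2 = -\tfrac{1}{b}\,\Delta_0\psi/\psi$, so the viscous eikonal equation of Theorem \ref{t:targetpatterns} becomes the linear Helmholtz-type problem
\[ \Delta_0 \psi - \Lambda^2 \psi + b\epsilon\, g\,\psi = 0, \qquad \Lambda^2 = b\,\Omega. \]
I would seek a positive $\psi$ that is regular at the origin and decays at infinity, so that $\phi = -\tfrac{1}{b}\log\psi$ is well defined and, upon matching its decay against $K_0(\Lambda S)$, satisfies $\partial_S\phi \to \Lambda/b =: k > 0$. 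Equivalently, $-\Lambda^2$ must be a negative eigenvalue of the two-dimensional Schr\"odinger operator $-\Delta_0 - b\epsilon g$; since $g>0$ this weakly attractive potential binds for every small $\epsilon>0$, with $\Lambda^2$ exponentially small. This is the analytic source of the ``beyond all orders'' behaviour of $\Omega$.

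Guided by the fact that $K_0(\Lambda S)$ is the decaying homogeneous solution of $\Delta_0 - \Lambda^2$, I would impose the ansatz \eqref{e:target-pattern}, namely $\phi = -\tfrac{1}{b}\chi(\Lambda S)\log K_0(\Lambda S) + \phi_2 + \epsilon c$, and derive the residual equation for the correction $\phi_2$. The cut-off $\chi(\Lambda S)$ confines the logarithmic singularity of $K_0$ to the far field, where the asymptotics of Table \ref{t:bessel2} control it. Here the splitting $g = g_c + g_f$ from Hypothesis \ref{h:g} is essential: $g_c$ has finite mass ($\int_0^\infty g_c\, S\,dS < \infty$) and lives near the core, whereas $g_f \sim \rmO(1/S^m)$ with $1 < m \le 2$ has \emph{infinite} mass and cannot be treated as a localized perturbation.

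The eigenvalue is then fixed by matching. In the core region $\Lambda S \ll 1$ the leading problem is $\Delta_0 \psi_{in} + b\epsilon\, g_c\,\psi_{in} = 0$ with $\psi_{in}$ regular at the origin; integrating $\tfrac{1}{S}(S\psi_{in}')' = -b\epsilon\, g_c\,\psi_{in}$ gives $\psi_{in} \sim A - B\log S$ with $B = b\epsilon\int_0^\infty g_c\,\psi_{in}\,S'\,dS'$. Matching the $\log S$ terms and the constants against the small-argument expansion $K_0(\Lambda S) \sim -\log S + \log(2/\Lambda) - \gamma_e$ forces $\log(2/\Lambda) = A/B + \gamma_e$, hence $\Lambda = 2e^{-\gamma_e}\exp(-A/B)$. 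Since $A = 1 + \rmO(\epsilon)$ and $B = b\epsilon\int_0^\infty g_c\,S'\,dS' + \rmO(\epsilon^2)$, one reads off $-A/B = 1/a + \rmo(1)$ with $a = -\epsilon b\int_0^\infty g_c\,S\,dS < 0$, and therefore $\Omega = \Lambda^2/b = C(\epsilon)\,4\,e^{-2\gamma_e}\exp(2/a)$, which is the claimed form, with $1/b$ and the higher corrections absorbed into the slowly varying prefactor $C(\epsilon)$.

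To make the matching rigorous and obtain the $C^1$ dependence, I would recast the residual equation for the pair $(\phi_2,\Omega)$ --- equivalently $(\psi_2,\Lambda)$ --- as $F(\phi_2,\Omega;\epsilon)=0$ in the doubly weighted spaces of Section \ref{ss:spaces}, requiring $\partial_S\phi_2 \in H^k_{r,\gamma}(\R^2)$ with $\gamma\in(0,1)$. The linearization of $F$ is assembled from the radial operators treated in Lemmas \ref{l:fredholm1} and \ref{l:fredholm2} and in Proposition \ref{p:fredholm1}, which supply the required Fredholm properties and, crucially, a one-dimensional cokernel; the single scalar solvability condition obtained by pairing against the cokernel element is precisely the relation that determines $\Lambda$, and hence $\Omega$. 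After the exponentially small eigenvalue has been isolated in this way, the implicit function theorem yields the $C^1([0,\epsilon_0))$ family and the $C^1$ factor $C(\epsilon)$. The main obstacle is exactly this exponential smallness: any regular expansion of $\Omega$ in powers of $\epsilon$ vanishes identically, so $\Omega$ cannot be captured perturbatively and the two-scale matching is unavoidable. This difficulty is compounded by the infinite mass of $g_f$, whose slowly decaying tail is only cut off by the decay of $\psi$ itself at scale $S\sim 1/\Lambda$; the far-field logarithmic coefficient must therefore be determined self-consistently and the outer solution corrected to absorb $g_f$, the remainder being controlled in the weighted norm.
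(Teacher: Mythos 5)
The first thing to note is that the paper contains no proof of Theorem \ref{t:targetpatterns} to compare against: the statement is imported from \cite{jaramillo2022can}, the text immediately after it defers the ``complete proof'' to that reference, and in the present paper the theorem is only used as a black box in the proof of Proposition \ref{p:dacayRphi}. Measured against the strategy the paper attributes to the cited work --- approximations near the core and in the far field, matched to produce an eigenvalue that is small beyond all orders --- your outline is the right circle of ideas: the Cole--Hopf substitution $\psi = e^{-b\phi}$ is computed correctly, the identification of $-\Lambda^2$ with a negative eigenvalue of the radial Schr\"odinger operator $-\Delta_0 - b\epsilon g$ is correct, and the mechanism by which the small-argument expansion of $K_0$ produces a relation of the form $\Lambda = 2e^{-\gamma_e}\exp(-A/B)$ is the standard and correct one.

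The genuine gap is your treatment of the infinite-mass tail $g_f$, and it cannot be deferred to ``the remainder being controlled in the weighted norm'': it enters at leading order \emph{in the exponent}. Your inner profile $\psi_{in}\sim A - B\log S$ is computed from $g_c$ alone and is then matched to $K_0(\Lambda S)$ at the scale $\log S \sim \log(1/\Lambda)\sim 1/(b\epsilon M_c)$, with $M_c=\int_0^\infty g_c\,S\,dS$. But on the overlap region the equation is governed by the tail, $\Delta_0\psi + b\epsilon g_f\psi\approx 0$, and for a genuine tail $g_f\sim d/S^2$ (the case $m=2$, which is exactly what occurs in this paper's application, since $1-\rho_0^2\sim 2/r^2$ by Lemma \ref{l:rhodecay}) this is an Euler equation whose solutions oscillate in $\log S$, namely $\psi\approx A\cos\bigl(\sqrt{b\epsilon d}\,\log S+\theta\bigr)$. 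The profile $A-B\log S$ is only the Taylor expansion of this oscillation, valid while $\sqrt{b\epsilon d}\,\log S\ll 1$, whereas your matching requires it to persist out to $\log S\sim 1/(b\epsilon M_c)\gg 1/\sqrt{b\epsilon d}$. A positive solution must in fact turn over before the first zero of the cosine, which forces $\log(1/\Lambda)\lesssim \pi/\sqrt{b\epsilon d}$, i.e.\ $\Lambda\gtrsim \exp\bigl(-\pi/\sqrt{b\epsilon d}\bigr)$, exponentially larger than the value $\exp(-1/(b\epsilon M_c))$ that your matching produces; for $1<m<2$ the discrepancy is worse, since test functions concentrated at $|x|\sim \epsilon^{-1/(2-m)}$ show the ground-state energy is then only \emph{algebraically} small in $\epsilon$. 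And since, as you yourself observe, positivity of $\psi=e^{-b\phi}$ makes it the ground state, the eigenvalue is uniquely determined; your core-only matching therefore computes the wrong one whenever the tail allowed by Hypothesis \ref{h:g} is genuinely present.

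Put differently, what your sketch re-derives is the classical finite-mass pacemaker asymptotics \cite{kopell1981,hagan1981}, while the entire content of Theorem \ref{t:targetpatterns} and of Hypothesis \ref{h:g} is the claim that a formula of this type survives when $\int_0^\infty g\,S\,dS=\infty$. Whatever device \cite{jaramillo2022can} uses to handle this --- for instance building the $1/S^m$ potential into the far-field operator, so that the outer solution is no longer $K_0$, under its own precise hypotheses on $g_f$ --- is exactly what is absent from your proposal: your closing paragraph names the difficulty (``the far-field logarithmic coefficient must be determined self-consistently'') but supplies no argument for it. I would add that the elementary spectral considerations above sit uneasily with the statement as transcribed here (a bounded $C^1$ prefactor $C(\epsilon)$ cannot reconcile $\exp(2/a)$ with the tail-induced lower bounds on the ground-state energy), so before investing in the functional-analytic closure you should pin down the exact hypotheses under which the cited formula is actually proved.
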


As already mentioned, a complete proof of the above theorem can be found in \cite{jaramillo2022can}.
In the theorem, the function $\chi$ is a radial $C^\infty$ cut-off function
satisfying $\chi(x) = 0$ for $|x|<1$ and $\chi(x) =1$ for $|x|>2$.

\begin{table}[t]
\begin{center}
\begin{tabular}{ m{2cm} m{5cm} m{4.5cm}  } 
\specialrule{.1em}{.05em}{.05em} 
  & $z \to 0$ & $z \to \infty $\\
  \hline
 $K_0(z) $ &$ - \log(z/2) - \gamma_e + \rmO(z^2) $ &$ \sqrt{ \frac{\pi}{2 z} } \rme^{-z} \Big( 1 + \rmO(1/z) \Big)  $ \\ [3ex]
$ K_1(z) $ &$ \frac{1}{z} + \rmO(z)$ &$ \sqrt{ \frac{\pi}{2 z} } \rme^{-z} \Big( 1 + \rmO(1/z) \Big)  $\\      
\specialrule{.1em}{.05em}{.05em} 
\end{tabular}
\end{center}
\caption{ 
Asymptotic behavior for the Modified Bessel functions of the second kind of zero-th and first-order, taken from \cite[(9.6.8), (9.6.9), (9.7.2)]{abramowitz} }
\label{t:bessel}
\end{table}

{\bf Proof of Proposition \ref{p:dacayRphi}}
\begin{proof}
We first confirm that the function $ \bar{g}(S) = b g(S)/\epsilon $ in \eqref{e:eikonal} satisfies 
Hypothesis \ref{h:g}. From the Definition \ref{d:g} and Proposition \ref{p:decayg}, it follows that $\bar{g}(S) \sim (1-\rho_0^2) \sim \rmO(1/S^2)$ as $S$ approaches infinity. 
Similarly, from Proposition \ref{p:kopell} we know that the term  $(1- \rho^2_0)$ is always positive,
so that $\bar{g}(S)$ is a positive function provided $b = -\beta >0$.
In addition, Definition \ref{d:g} specifies that $g$ is in $H^3_{r,\gamma}(\R^2)$ with $\gamma \in (0,1)$, as desired.
Recalling that   $\chi^D  \in C^{\infty}$ denotes a radial cut-off function, with 
$\chi^D(|x|) =0$ for $|x|<D$, for some positive number $D$, and $\chi^D(|x|) =1$ for $|x|>2D$,
we can then define
\begin{equation}\label{e:g}
 \bar{g}_c = (1-\chi^D) \bar{g} \qquad \bar{g}_f= \chi^D \bar{g},
 \end{equation} 
with both functions, $\bar{g}_f$ and  $\bar{g}_c$ satisfying the rest of Hypothesis \ref{h:g}.

We can therefore invoke Theorem \ref{t:targetpatterns} so that for  $\epsilon \in (0,\epsilon_0)$, target-pattern solutions, $\phi(S; \epsilon)$, given as in expression \eqref{e:target-pattern} to
equation \eqref{e:eikonal}, exist.   
The value of $\epsilon$ is then related to the constant $a$ by
\[ a = -\epsilon b \int_0^\infty \bar{g}_c(S)\;S \;dS = -\beta^2 \int_0^\infty g_c(S)\;S \;dS,\]
which is fixed once the value of $D$ is picked. 
As a result, we arrive at the form  of the solutions $\phi_0$ and $R_0$ stated in Proposition \ref{p:dacayRphi}.

We are left with showing the decay properties of these functions, which we restate here for convenience:
\begin{enumerate}[i.)]
\item $\partial_S \tilde{\phi}_2 \in H^3_{r, \gamma}(\R^2)$  for $\gamma \in (0,1)$, 
\item $ \partial_S \phi_0 \to \kappa$ and $R_0 \to \frac{1}{2} \kappa ^2$ as $S \to \infty$, with $\kappa = -\dfrac{\Lambda}{\beta}$, and
\item  $ (\partial_S \phi_0 - \kappa), ( R_0 - \frac{1}{2} \kappa^2) \in H^3_{r,\bar{\gamma}}$ for $\bar{\gamma} \in (0,1)$. \end{enumerate}
Item i) follows from Theorem \ref{t:targetpatterns}. To prove items ii) and iii) for $\phi_0$,  we recall the form of this solution  shown in Proposition \ref{p:dacayRphi}, and calculate its derivative,
\[ \partial_S \phi_0(S) = \partial_S \tilde{\phi}_1 + \partial_S \tilde{\phi}_2 = \frac{\Lambda}{b}\frac{K_1(\Lambda S)}{K_0(\Lambda S)} + \partial_S \tilde{\phi}_2.\]
Since $\partial_S \tilde{\phi}_2 \in H^3_{r,\gamma}$ with $\gamma \in (0,1)$, it then follows from Lemma \ref{l:decay} that as $S$ approaches infinity, the function $\partial_S \tilde{\phi}_2 \sim \rmO(S^\alpha)$ with $\alpha < -\gamma -1$.
Similarly, using the decay properties of the Modified Bessel functions (or Mathematica), we find that
\begin{equation}\label{e:bessel}
 \frac{K_1(\xi)}{K_0(\xi)} \sim 1 +  \frac{1}{2\xi} - \frac{1}{8\xi^2} + \frac{1}{8\xi^3} + \rmO(1/\xi^4), \quad \mbox{as} \; \xi \to \infty.
 \end{equation}
Consequently, $\partial_S \phi_0 \to \kappa$ as $S$ approaches infinity, with $\kappa = \dfrac{\Lambda}{b} = - \dfrac{\Lambda}{\beta}$. In addition, the term $(\partial_S \phi_0 - \kappa) \sim \rmO(1/S)$ in the far field, and we may conclude that this function is in $H^3_{r,\bar{\gamma}}(\R^2)$ with $\bar{\gamma} \in (-1,0)$, see Lemma \ref{l:decay} and Figure \ref{f:decay}.

Next, recall that $R_0(S) = \frac{1}{2} \rho_0 (\partial_S \phi_0)^2$. 
Expanding the quadratic term, we can write
\[ ( \partial_S \phi_0)^2 = ( \partial_S \tilde{\phi}_1)^2 + 2 \partial_S \tilde{\phi}_1 \partial_S \tilde{\phi}_2  +  ( \partial_S \tilde{\phi}_2)^2.  \]
Since elements in $H^3_{r,\gamma}(\R^2)$, with $\gamma \in (0,1)$, are bounded it then follows that $(\partial_S \tilde{\phi}_2)^2 \in H^3_{r,\gamma}(\R^2) $.
In addition, because the cut-off function $\chi$ removes the singularity at the origin, the function $\partial_S  \tilde{\phi}_1$ is  smooth and uniformly bounded. As a result, the product $\partial_S \tilde{\phi}_1 \partial_S \tilde{\phi}_2 $ is also in the space $H^3_{r,\gamma}(\R^2)$. 
Thus, the far field behavior of $R_0$ is determined by $(\partial_S  \tilde{\phi}_1)^2$. Items ii) and iii) for $R_0$ then follow from the definition of $\partial_S  \tilde{\phi}_1$ , the expansion \eqref{e:bessel},
and the fact that $(R_0 - \frac{1}{2} \kappa^2) \sim \rmO(1/S)$ for large $S$.
\end{proof}

We end this section with some remarks regarding the value of $\Omega$ and the behavior of $\partial_S \Phi_0$ near the origin.

\begin{Remark}
Notice that the results of Theorem \ref{t:targetpatterns}, and consequently Proposition \ref{p:dacayRphi}, imply that the frequency, $\Omega$, of our spiral waves only 
depends on $g_c$, the core of the function $g$. Unfortunately, the theorem does not provide us with a way of determining
the bounds for what is consider the core.  This is why one can pick any cut-off function $\chi^D$,
as defined in the proof above, to determine $g_f$ and $g_c$. 
This choice affects the accuracy of our approximation for $\Omega$ and $\kappa$, but Theorem 1 makes it clear that no matter what  choice we make for $\chi^D$, the form of the phase solution, $\phi_0$, does not change.
\end{Remark}

\begin{Remark}
Notice that we do not have precise details regarding the asymptotic behavior for $\partial_S \phi_0$ near the origin. Nonetheless, Theorem \ref{t:targetpatterns} and Proposition \ref{p:dacayRphi} allow us to conclude at leas that $\partial_S \phi_0 \sim \rmO(1)$ near the origin. This follows from the fact that $\tilde{\phi}_1$ involves the cut off function $\chi$, which removes the singularity near the origin for this logarithmic term, while the fact that $\partial_S\tilde{\phi}_2 \in H^3_{r,\gamma}(\R^2) \subset C(\R)$ implies that this function is bounded near the origin, therefore at least of order $\rmO(1)$.
\end{Remark}

\section{Existence of spiral waves}\label{s:existence}

In this section we gather all terms of order $\rmO(\delta^3)$ and higher,  and write them as a system of equations.
Rearranging and separating linear and nonlinear terms we arrive at,
 \begin{align}\label{e:ordercube1}
 0 =  &(\delta^2 \Delta_{1,S} -2)R_1  -2 \partial_S \phi_0 \partial_S \phi_1 + \tilde{N}_1(R_1,  \partial_S \phi_1;\delta),\\
 \label{e:ordercube2}
 0 =  & -2\beta R_1 + \Delta_{0,S} \phi_1 + \tilde{N}_2(R_1, \partial_S \phi_1; \delta).
\end{align}
This is the same system appearing in Section \ref{s:normalform}, except that now the nonlinear terms
 are given by
 \begin{align*}
 \tilde{N}_1(R_1,\partial_S \phi_1;\delta) &= N_1(R_1, \phi_1;\delta,\delta^4) +(1-\rho_0)[ 2 \partial_S \phi_0 \partial_S \phi_1]+ 3(1- \rho_0^2)(R_0 + \delta R_1) / \delta\\
 \tilde{N}_2(R_1, \partial_S \phi_1; \delta) &=N_2(R_1, \phi_1; \delta,\delta^4) -(1-\rho_0)[   \Delta_{0,S} \phi_1 ] + 3 \beta (1- \rho_0^2)(R_0 + \delta R_1)/ \delta,
 \end{align*}
 where $N_1$ and $N_2$ are as in expressions  \eqref{e:nonlinear1} and \eqref{e:nonlinear2}, respectively, with $\eps =\delta^4$,
 and the last terms come from  the order $\rmO(\delta^2)$ equations after rewriting
 \begin{equation*}
  (1 - 3 \rho_0^2) = -2 + 3( 1- \rho_0^2),
  \end{equation*}
(see  Subsection \ref{ss:delta2}). Because the nonlinearities in the expressions $\tilde{N}_i$, $i=1,2$, depend on $\phi_1$ through its derivative, $\partial_S \phi_1$, we write them as functions of this last variable. Also, notice from the scaling $S = \delta r$ and  Lemma \ref{l:rhodecay} that 
the terms $(1- \rho_0)$ and $(1- \rho_0^2)$ are order $\rmO(\delta^2)$.

Our goal is to prove the existence of solutions  to the above system which
 bifurcate from zero.
Using the results from Section \ref{s:approximations} we assume  the functions $\rho_0$ and $R_0, \phi_0$ are given by the expressions
stated in Propositions \ref{p:decayg} and \ref{p:dacayRphi}, respectively.
 Because $\rho_0, R_0, \phi_0$ then represent the first order approximations to spiral wave solutions,  this immediately implies the existence of solutions to the nonlocal complex Ginzburg-Landau equation 
\eqref{e:main} representing these patterns. 
Thus, Theorem \ref{t:existence}, shown below, together with Propositions  \ref{p:decayg} and \ref{p:dacayRphi}, then imply the main results of this paper, Theorem  \ref{t:main}.

\begin{Theorem}\label{t:existence}
There exists a real number $\delta_0>0$, such that equations \eqref{e:ordercube1} and \eqref{e:ordercube2}
have a family of solutions $(R_1(S; \delta), \partial_S \phi_1(S; \delta))$ valid for $\delta \in [0, \delta_0)$. Moreover, this family is $C^1$ in the parameter $\delta$  and satisfies,
\begin{itemize}
\item $R_1(S; 0) =0 $ and $\partial_S \phi_1(S; 0) =0$, as well as having
\item both $\partial_S \Phi_1 (S;\delta)$ and $R_1(S;\delta )$ of order $\rmO(1)$ as $S \to \infty$, 
\end{itemize}

\end{Theorem}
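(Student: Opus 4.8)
The plan is to recast the closing equations \eqref{e:ordercube1}--\eqref{e:ordercube2} as a single nonlinear map $\mathcal{F}(U;\delta)=0$ and solve it with the implicit function theorem, working throughout in the doubly-weighted radial spaces of Subsection \ref{ss:spaces}. I take the unknown to be the pair $U=(R_1,\psi_1)$ with $\psi_1=\partial_S\phi_1$, so that the second equation is first order in $\psi_1$ via $\Delta_{0,S}\phi_1=\partial_S\psi_1+\tfrac1S\psi_1$. Since the sources built from $\rho_0,R_0,\phi_0$ carry nonzero constant limits at infinity (for instance $\delta R_0(\partial_S\phi_0)^2\to\tfrac12\delta\kappa^4$ by Proposition \ref{p:dacayRphi}), I expect $R_1,\psi_1$ to tend to finite nonzero limits. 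To keep the nonlinear estimates inside a Banach algebra I peel these limits off, writing $\psi_1=a\,\chi+\tilde\psi_1$ and $R_1=b\,\chi+\tilde R_1$ with $a,b\in\R$, a far-field cutoff $\chi$, and $\tilde\psi_1,\tilde R_1$ in decaying spaces $H^s_{r,\gamma,\sigma}(\R^2)$ with $\gamma>-1$ and $\sigma$ as in Lemma \ref{l:fredholm2}; the scalars $(a,b)$ are the bordering variables that carry the $\rmO(1)$ behavior. A first, routine step is to check $\mathcal{F}(0;0)=0$: this holds because every term of $\tilde N_1,\tilde N_2$ carries an explicit factor of $\delta$ or the factor $(1-\rho_0^2)=\delta^2 g$ (Definition \ref{d:g}), as one reads off from \eqref{e:nonlinear1}--\eqref{e:nonlinear2}.

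Next I would show that $\mathcal{F}$ is $C^1$ jointly in $(U,\delta)$ on these spaces. The products and nonlinear monomials are handled by the Banach-algebra property (Lemma \ref{l:banach_alg}) and the weighted product rule (Lemma \ref{l:product_rule}), while the $\delta$-dependent coefficients are controlled by the decay estimates of Section \ref{s:approximations}: $(1-\rho_0)$ and $(1-\rho_0^2)$ are $\rmO(\delta^2)$ with the correct localization (Proposition \ref{p:decayg}), the singular-looking factor $(1-\rho_0^2)/\delta=\delta g(S)$ is smooth in $\delta$, and $(\partial_S\phi_0-\kappa)$, $(R_0-\tfrac12\kappa^2)$ lie in $H^3_{r,\bar\gamma}(\R^2)$ with $\bar\gamma\in(-1,0)$ (Proposition \ref{p:dacayRphi}). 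The term $\delta^2\Delta_{1,S}R_1$ is linear in $R_1$ and smooth in $\delta$, so it causes no regularity trouble provided the $R_1$-component of the domain carries two derivatives.

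The crux is invertibility of the linearization $L=D_U\mathcal{F}(0;0)$. Because the nonlinearities vanish to order $\delta$, at $\delta=0$ the map $L$ collapses to the block system $R_1\mapsto -2R_1-2\,\partial_S\phi_0\,\psi_1$ and $\psi_1\mapsto -2\beta R_1+\partial_S\psi_1+\tfrac1S\psi_1$. Eliminating $R_1$ algebraically from the first block (invertible multiplication by $-2$, the $\delta^2\Delta_{1,S}$ term being absent at $\delta=0$) reduces $L$ to the scalar radial operator $\psi_1\mapsto \partial_S\psi_1+\tfrac1S\psi_1+2\beta\,\partial_S\phi_0\,\psi_1$. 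Since $2\beta\,\partial_S\phi_0\to 2\beta\kappa=-2\Lambda$ at infinity, this is precisely the operator $\mathcal{L}=\partial_S+\tfrac1S-2\Lambda$ of Lemmas \ref{l:fredholm1}--\ref{l:fredholm2} (with $\lambda=2\Lambda>0$) plus multiplication by $2\beta(\partial_S\phi_0-\kappa)$; by the localization just quoted the latter is a relatively compact perturbation, so the index is unchanged. The homogeneous solution is the explicit first-order ODE solution $\psi_1\sim S^{-1}\rme^{-2\beta\phi_0(S)}$, which grows exponentially at infinity and is singular at the origin; hence the kernel is trivial and the cokernel is one dimensional, exactly as in Lemma \ref{l:fredholm2}. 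The single scalar bordering variable (the far-field amplitude $a$) then covers this cokernel: a bordering lemma augments the reduced operator by $\mathrm{span}(\chi)$ to produce an invertible bordered operator between the augmented spaces, recovering invertibility of $L$.

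With $L$ invertible, the implicit function theorem yields a $C^1$ family $(R_1(\cdot;\delta),\psi_1(\cdot;\delta))$ on a neighborhood $[0,\delta_0)$ with $(R_1(\cdot;0),\psi_1(\cdot;0))=(0,0)$, and the decomposition $\psi_1=a(\delta)\chi+\tilde\psi_1$, $R_1=b(\delta)\chi+\tilde R_1$ with $\tilde\psi_1,\tilde R_1$ decaying delivers the claimed $\rmO(1)$ limits as $S\to\infty$. The main obstacle is this invertibility step: it forces the careful choice of doubly-weighted spaces so that the origin singularity of $\tfrac1S$ and the far-field constant limits are simultaneously accommodated, the verification that $2\beta(\partial_S\phi_0-\kappa)$ is genuinely relatively compact, and the bordering construction that turns the index-$(-1)$ operator into an invertible one -- all while keeping the nonlinear map inside a Banach algebra so the hypotheses of the implicit function theorem remain valid.
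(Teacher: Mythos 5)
Your skeleton matches the paper's in several respects: the reduction of the linearization at $(0,0)$ to the scalar operator $\mathcal{L}_\phi = \partial_S + \tfrac{1}{S} + 2\beta\,\partial_S\phi_0$, the identification of its Fredholm index $-1$ as a relatively compact perturbation of $\partial_S + \tfrac1S - 2\Lambda$ (this is exactly Lemma \ref{l:fredphi}), and the idea of restoring invertibility by bordering (Lemma \ref{l:bordering}). However, two of your key steps fail as stated.

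First, your bordering function does not cover the cokernel. You augment the domain by $\mathrm{span}(\chi)$, with $\chi$ a far-field cutoff, and claim this kills the cokernel $\mathrm{span}\{\rme^{2\beta\phi_0}\}$. Compute the pairing: since $\tfrac{d}{dS}\bigl[S\,\chi\,\rme^{2\beta\phi_0}\bigr] = S\,\rme^{2\beta\phi_0}\,\mathcal{L}_\phi\chi$, one gets
\begin{equation*}
\langle \mathcal{L}_\phi \chi,\; \rme^{2\beta\phi_0}\rangle \;=\; \Bigl[\,S\,\chi\,\rme^{2\beta\phi_0}\Bigr]_0^\infty \;=\; 0,
\end{equation*}
because $\chi$ vanishes near the origin and $\rme^{2\beta\phi_0}$ decays exponentially at infinity ($\beta<0$, $\phi_0\sim\kappa S$). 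So $\mathcal{L}_\phi\chi$ lies in the (closed) range of the unbordered operator, the bordered operator acquires a one-dimensional kernel of the form $\chi + \text{(decaying part)}$, and the cokernel persists: you get index $0$ but \emph{not} invertibility. The index bookkeeping also does not close: $\mathcal{L}_\phi\chi \to 2\beta\kappa = -2\Lambda \neq 0$ at infinity, so you must augment the range by $\R$ as well, and then Lemma \ref{l:bordering} gives index $-1+1-1=-1$. This is precisely why the paper's space $Y$ in \eqref{e:spaces} carries \emph{two} extra summands, $\{\tfrac1S\}\oplus\R$: the function $\tfrac1S$ is singular at the origin and produces the nonzero boundary term $\langle\mathcal{L}_\phi\tfrac1S,\rme^{2\beta\phi_0}\rangle = -\rme^{2\beta\phi_0(0)}$ (Proposition \ref{p:invL}), while the constant summand absorbs the $\R$-component of the range. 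An exponentially decaying cokernel element can only be hit through a core singularity, never through far-field behavior, so no choice of smooth cutoff can replace $\tfrac1S$.

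Second, your one-shot implicit function theorem collides with the singular perturbation $\delta^2\Delta_{1,S}R_1$, and your dismissal of it ("no regularity trouble provided the $R_1$-component carries two derivatives") is exactly where the argument breaks. For $\mathcal{F}(\cdot\,;\delta)$ to map into a \emph{fixed} range space for all $\delta>0$, the first component of that space must be of $L^2$ type. But at $\delta=0$ the first equation's linearization is zeroth order in $R_1$: solving $L(R,\psi)=(f,g)$ forces $R = -\tfrac12 f - \partial_S\phi_0\,\psi$, which has only the regularity of $f$. Hence $L: X\times Y \to Z_1\times Z_2$ with $Z_1$ of $L^2$ type is not surjective (its image consists of data whose relevant combination has $H^2$ regularity), and the invertibility hypothesis of the implicit function theorem fails at $\delta=0$; choosing $Z_1$ of $H^2$ type instead makes $\mathcal{F}$ ill-defined for $\delta>0$. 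The paper resolves this with a nested construction: it first solves the combined equation \eqref{e:ordercube_new2} (the $-\beta$-multiple trick, which performs your elimination at the level of the equations) for $\partial_S\phi_1(R_1,\delta)$ (Proposition \ref{p:phi}, including the Riccati substitution $\partial_S\phi_1 = \partial_S\tilde\phi_1 + q$ of \eqref{e:q_eq} and Lemma \ref{l:qinY} to absorb the $1/S^2$ terms generated by $(\partial_S\phi_1)^2$); it then \emph{preconditions} \eqref{e:ordercube_new1} by $(\delta^2\Delta_{1,S}-2)^{-1}$, whose boundedness and $C^1$ dependence on $\delta$ is the content of Lemma \ref{l:bounded_inverse}, so that the fixed-point map $F(R_1;\delta) = R_1 + \mathcal{L}^{-1}(\delta)[\cdots]$ has derivative $\Id$ at $(0,0)$ (Proposition \ref{p:R_existence}). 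Without this preconditioning, or an equivalent $\delta$-uniform device, your linearization argument does not close.
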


To prove Theorem \ref{t:existence} we work with the equivalent system,
 \begin{align}\label{e:ordercube_new1}
 0 = & (\delta^2 \Delta_{1,S} -2)R_1  -2 \partial_S \phi_0 \partial_S \phi_1 + \tilde{N}_1(R_1, \partial_S \phi_1;\delta),\\
 \label{e:ordercube_new2}
 0 = & -2\beta \delta^2 \Delta_{1,S} R_1 + \Delta_{0,S} \phi_1 + 2 \beta \partial_S \phi_0 \partial_S \phi_1 + \tilde{N}_2(R_1, \partial_S \phi_1; \delta) - \beta \tilde{N}_1(R_1, \partial_S \phi_1;\delta).
\end{align}
which can be obtained by adding a $-\beta$ multiple of equation \eqref{e:ordercube1} to equation \eqref{e:ordercube2} to arrive at the second equation, \eqref{e:ordercube_new2}.
We carry out the proof as a series of steps.
In Step 1, we use Fredholm properties of the operator $\mathcal{L} \psi = (\partial_S + \frac{1}{S} - k ) \psi$ (Lemma \ref{l:fredholm2}), a result that is known as a  Bordering Lemma (Lemma \ref{l:bordering}), and the implicit function theorem to obtain a family of solutions $\partial_S\phi_1 = \partial_S\phi_1(R_1, \delta) $ to equation \eqref{e:ordercube_new2}.
This result is then summarized in Proposition \ref{p:phi}.
In Step 2,  we use Proposition \ref{p:fredholm1}, which implies that the operator $(\delta^2 \Delta_{1,S} - 2)$ is invertible, together with our family of solutions $\partial_S\phi_1$ and the implicit function theorem, to prove existence of solutions, $R_1=R_1(\delta)$, to equation  \eqref{e:ordercube_new1}. This in turn proves Theorem \ref{t:existence}.
In the course of the analysis we find that it is necessary to show that the nonlinear terms $\tilde{N}_1, \tilde{N}_2$ define
 bounded operators between appropriate weighted Sobolev spaces, and that they depend continuously on the variables $R_1, \partial_S \phi_1$ and the parameter $\delta$. This is shown in Step 3.

\begin{Remark}
Notice that all terms appearing in equations \eqref{e:ordercube_new1} and \eqref{e:ordercube_new2} involve the derivative $\partial_S \phi_1$ and not variable $\phi_1$.
We therefore view $\partial_S \phi_1 $, and not $\phi_1$, as our unknown. 
\end{Remark}

Before presenting our result, let us properly define the spaces that we will be using.
Here, and in the rest of this section, we let $\gamma_1 \in (-1,0)$, $\sigma_1 \in (-2,-1)$ and consider
\begin{equation}\label{e:spaces}
\begin{array}{ c c}
X = & H^2_{r,\gamma_1,\sigma_1}(\R^2) \oplus \left\{\zeta(S) \right \} \oplus \R\\[2ex]
Y = & H^1_{r,\gamma_1,\sigma_1+1}(\R^2) \oplus \{ \frac{1}{S} \}  \oplus \R\\[2ex]
Z = & L^2_{r,\gamma_1, \sigma_1 +2} (\R^2) \oplus \left\{ \frac{1}{S^2} \right \} \oplus \R
\end{array}
\end{equation}
where $\{ u \}$ denotes the span of  the function $u$ over the reals, and the function $\zeta(S)$ satisfies
\[ (\delta^2 \Delta_{1,S} -2) \zeta = -1.\]
These spaces are then endowed with the norms
\[ \begin{array}{r l l}
\| x \|_X = & \| R \|_{H^2_{r,\gamma_1, \sigma_1}(\R^2)}  + |x_1| + |x_2| \quad & \forall x  := (R,x_1, x_2) \in X\\
\| y \|_Y = & \| \phi \|_{H^1_{r,\gamma_1, \sigma_1+1}(\R^2)}  + |y_1| + |y_2| \quad & \forall  y  := (\phi ,y_1, y_2) \in Y\\
\| z \|_Z = & \|  N \|_{L^2_{r,\gamma_1, \sigma_1+2}(\R^2)}  + |z_1| + |z_2| \quad & \forall z := (N ,z_1, z_2) \in Z.
\end{array}
\]
The decay rates for $\partial_S\phi_1 $ and $R_1$ follow from this choice of Banach spaces.

In this section we will also use the following result, which establishes decay properties for the function $\zeta(S)$.

\begin{Lemma}\label{l:zeta}
Let $\zeta$ be the solution to the equation
\[ (\delta^2 \Delta_{1,S} - 2) \zeta = -1\]
Then, $\zeta(S) = \zeta_1(S) + \zeta_f$ where $\zeta_1 \in H^3_{r,\gamma}(\R^2)$, with $\gamma \in (0,1)$, and $\zeta_f \in \R$.
\end{Lemma}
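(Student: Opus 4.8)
The plan is to peel off the constant part of $\zeta$ from the behavior of the equation at infinity and to show that what remains decays algebraically, so that the claim reduces to the inhomogeneous Bessel estimate already established in Lemma~\ref{l:decayu_p}. First I would read $\zeta_f$ off the far-field balance: as $S\to\infty$ the terms carrying $\delta^2\Delta_{1,S}$ become negligible and the equation collapses to $-2\zeta=-1$, which singles out the constant $\zeta_f=\tfrac12$. Setting $\zeta_1=\zeta-\zeta_f$ and using that the order-one radial operator acting on a constant leaves only its $-1/S^2$ term, namely $\delta^2\Delta_{1,S}(\tfrac12)=-\tfrac{\delta^2}{2S^2}$, a direct subtraction gives
\[(\delta^2\Delta_{1,S}-2)\zeta_1=\frac{\delta^2}{2S^2}.\]

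Second, I would remove the $\delta$-dependence by the rescaling $y=\sqrt2\,S/\delta$, under which the operator identity $\delta^2\Delta_{1,S}-2=2(\Delta_{1,y}-1)$ holds and $\tfrac{\delta^2}{2S^2}=1/y^2$, so that $\zeta_1$ solves $(\Delta_{1,y}-1)\zeta_1=\tfrac1{2y^2}$. This is, up to the factor $\tfrac12$, exactly the equation treated in Lemma~\ref{l:decayu_p}, whose Green's-function analysis rests on the invertibility of $\Delta_1-\Id$ from Proposition~\ref{p:fredholm1}. By linearity I may therefore invoke Lemma~\ref{l:decayu_p} to conclude that $\zeta_1\sim\rmO(1/y^2)\sim\rmO(\delta^2/S^2)$ as $y\to\infty$, while $\zeta_1\sim\rmO(1)$ as $y\to0$.

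Third, I would convert these rates into the stated membership. The $1/S^2$ decay at infinity places $\zeta_1$, together with its derivatives (which decay faster), in $L^2_{r,\gamma}(\R^2)$ for every $\gamma<1$ by Lemma~\ref{l:decay}, so the far-field contribution to the $H^3_{r,\gamma}$ norm is finite precisely for $\gamma\in(0,1)$. For the near-field I would use the explicit Green's-function representation of Lemma~\ref{l:decayu_p}, whose two integral pieces each remain $\rmO(1)$ as the radius tends to zero; transporting this boundedness back through the rescaling yields the near-origin control needed for $\zeta_1$, recorded if necessary in the doubly-weighted norm of Definition~\ref{d:doubly-weighted}, to which Proposition~\ref{p:fredholm1} applies verbatim. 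Since the rescaling only alters norms by $\delta$-dependent constants and changes neither the decay exponents nor the value of $\gamma$, I obtain $\zeta=\zeta_1+\zeta_f$ with $\zeta_1\in H^3_{r,\gamma}(\R^2)$, $\gamma\in(0,1)$, and $\zeta_f=\tfrac12\in\R$.

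The step I expect to be the main obstacle is precisely the regularity of $\zeta_1$ near the origin. Subtracting the constant $\zeta_f$ from $\zeta$ manufactures the genuinely singular forcing $\tfrac{\delta^2}{2S^2}\notin L^2$ near $S=0$, so one cannot simply invert into an unweighted space; this singularity is an artifact, produced by the $-1/S^2$ term of $\Delta_{1,S}$ acting on $\zeta_f$, and it must be shown to be cancelled by the behavior of $\zeta_1$ itself. I would make this explicit through the Green's-function formula of Lemma~\ref{l:decayu_p}, which exhibits the remainder as a sum of two pieces that stay bounded as $S\to0$, supplemented by a short Frobenius expansion near the origin to verify that no singular part survives; this is the delicate point, and it is exactly where the doubly-weighted framework of Section~\ref{ss:spaces} (via Proposition~\ref{p:fredholm1}) does the work that the single-weighted spaces cannot.
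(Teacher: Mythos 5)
Your proposal is correct and takes essentially the same route as the paper: both identify $\zeta_f=\tfrac12$, rescale by $\sqrt{2}S/\delta$ to reduce the remainder equation to $(\Delta_{1}-\Id)\zeta_1=\tfrac{1}{2\xi^2}$, and then invoke Lemma~\ref{l:decayu_p} (and hence Proposition~\ref{p:fredholm1}) to obtain the $\rmO(1/\xi^2)$ far-field decay and $\rmO(1)$ behavior at the origin, from which membership in $H^3_{r,\gamma}(\R^2)$ for $\gamma\in(0,1)$ follows via Lemma~\ref{l:decay}. The only difference, subtracting the constant before rescaling instead of after, is immaterial since the two operations commute.
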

\begin{proof}
Notice that with the change of variables $\xi = \sqrt{2}S/\delta$ we are able to write the 
equation $(\delta^2 \Delta_{1,S} - 2) \zeta = -1$ as
\[ \partial_{\xi \xi} \zeta + \frac{1}{\xi} \partial_\xi \zeta  - \frac{1}{\xi^2} \zeta - \zeta = -1/2\]
Letting $\zeta_f = 1/2$ so that $\zeta = \zeta_1(\xi) + 1/2$, we find that $\zeta_1(\xi)$ solves
\[ \partial_{\xi \xi} \zeta_1 + \frac{1}{\xi}  \partial_\xi \zeta_1  - \frac{1}{\xi^2} \zeta_1 - \zeta_1 = \frac{1}{2} \frac{1}{\xi^2}\]
The proof of Lemma \ref{l:decayu_p}  then shows that solutions to the above equation
satisfy $\zeta_1 \sim \rmO(1/\xi^2)$ as $\xi$ goes to infinity. It then follows from Lemma \ref{l:decay} that $\zeta_1 \in L^2_{r,\gamma}(\R^2)$, with $\gamma \in (0,1)$, see also Figure \ref{f:decay}.
Lemma \ref{l:decayu_p} also shows that $\zeta_1 \sim \rmO(1)$ as $\xi \to 0$.
Because, $\zeta_1$ satisfies a second order ODE and it is bounded near the origin, it follows that $\zeta_1 \in C^2([0,\infty)) \cap C^1([0,\infty))$ and is consequently an element in $
H^3_{r,\gamma}(\R^2)$.

\end{proof}

\subsection{Step 1}

In this section we use the implicit function theorem to find solutions to equation \eqref{e:ordercube_new2} that bifurcate from zero and are of the form $\partial_S\phi_1 = \partial_S\phi_1(R_1, \delta)$. 
To do this, we first look at the nonlinear terms in this equation,
\begin{align*}
\mathcal{N}(R_1,  \partial_S \phi_1; \delta) = & -2 \beta \delta^2 \Delta_{1,S} R_1 + \tilde{N}_2(R_1, \phi_1; \delta) - \beta \tilde{N}_1(R_1, \phi_1;\delta ),\\
 = &  -2 \beta \delta^2 \Delta_{1,S} R_1  -(1-\rho_0)[ \Delta_{0,S} \phi_1 + 2\beta \partial_S \phi_0 \partial_S \phi_1)\\
&  +  N_2(R_1, \phi_1; \delta, \delta^4) - \beta N_1(R_1, \phi_1; \delta, \delta^4),
\end{align*}
where $N_1$ and $N_2$ are given as in expressions \eqref{e:nonlinear1} and \eqref{e:nonlinear2}, respectively, with $\eps = \delta^4$.
In Step 3, we prove that  these nonlinear terms give us a well defined map $\mathcal{N}: X \times Y \times \R^2 \longrightarrow Z$. This result is summarized in Proposition \ref{p:nonlinear}. We can therefore write
\begin{equation}\label{e:newnon}
 \mathcal{N} (R_1, \partial_S \phi_1; \delta) = C(R_1)( \partial_S \phi_1)^2 + D(R_1, \partial_S \phi_1) \frac{1}{S^2} + M(R_1, \partial_S \phi_1; \delta),
 \end{equation}
where, using the notation $R_1 = R_n + \zeta(S) + R_f\in X $ and $\partial_S \phi_1 = \partial_S \phi_n + \frac{a}{S} + \partial_S \phi_f \in Y $, we have defined
\begin{itemize}
\setlength \itemsep{2ex}
\item $C(R_1) \in \R$ with
\begin{equation}\label{e:C}
C(R_1) = \beta \delta - \beta \delta^3 ( \frac{1}{2} k^2 + \delta \zeta_f + \delta R_f) -\delta^4 \alpha( 1 + \delta^2( \frac{1}{2}k^2 + \delta \zeta_f + \delta R_f)) + \rmO(\eps = \delta^4) \quad \mbox{as} \quad \delta \to 0.
\end{equation}
\item $D(R_1,\partial_S  \phi_1) \in \R$ with,
\begin{equation}\label{e:D}
D(R_1,\partial_S  \phi_1) = (- \beta \delta - \beta \delta^2 a^2 + \alpha \delta^2 - \delta^6 \alpha a^2 ) [ \delta R_f +  \delta \zeta_f + \frac{1}{2}k^2] + \rmO(\eps= \delta^4) \quad \mbox{as} \quad \delta \to 0.
\end{equation}
\item $M: X \times Y \times \R \longrightarrow \tilde{Z} = L^2_{r,\gamma_1, \sigma_1 + 2}(\R^2) \oplus \R$,
\end{itemize}
Essentially, equation \eqref{e:newnon} comes from extracting all constant multiples of $(\partial_S \phi_1)^2 $ and $\frac{1}{S^2}$ from the nonlinearities $\mathcal{N}$. This calculation is done in Appendix \ref{a:nonlinear_split}. The notation $C(R_1)$ and $D(R_1, \partial_S \phi_1)$ is meant to indicate that these numbers
depend on the constant elements, $\zeta_f, R_f$ and $a,\phi_f$, which appear in the definition of the unknowns, $R_1$ and $\partial_S \phi_1$, respectively.
Here and in the rest of the paper, we also define the space
$\tilde{Z} = L^2_{r,\gamma_1,\sigma_1+2}(\R^2) \oplus \R$, with $\gamma_1 \in (-1,0)$, $\sigma_1 \in (-2,-1)$, and  norm
\begin{equation}\label{e:tildeZ1}
 \| z \|_{\tilde{Z}} = \|N \|_{L^2_{r,\gamma_1,\sigma_1+2}(\R^2)} + |z_1| \quad \forall z := (N,z_1) \in \tilde{Z}.
 \end{equation}

 Next, to prove the results of Step 1, we let $ \partial_S  \phi_1 = \partial_S  \tilde{\phi}_1  +  q$ with 
 $$ \partial_S \tilde{\phi}_1:= \partial_S \phi_n + \frac{a}{S} + \partial_S \phi_f \in Y \qquad a, \partial_S \phi_f \in \R,$$
  and $ q$ satisfying the equation,
  \begin{equation}\label{e:q_eq}
\partial_Sq + \frac{1}{S} q + 2 a  \frac{1}{S} q + 2 \beta \partial_S \phi_0 q  + C q^2 + ( a^2 C + D ) \frac{1}{S^2} =0.
\end{equation}
Here,  the constants $C$ and $ D$ are as in \eqref{e:C} and \eqref{e:D}, respectively.
Inserting the anstaz $\partial_S  \phi_1 = \partial_S  \tilde{\phi}_1  +  q$     into expression \eqref{e:ordercube_new2} we obtain
\begin{equation}\label{e:F_ift}
\begin{split}
0  = &  \Delta_{0,S} \tilde{\phi}_1  + 2\beta \partial_S \phi_0 \partial_S \tilde{\phi}_1 + M(R_1, \partial_S \phi_1; \delta) \\
& + C(R_1) \left[ 2 (q +\frac{a}{S} )   ( \partial_S \phi_n + \partial_S \phi_f) + (\partial_S \phi_n + \partial_S \phi_f)^2 \right].
\end{split}
\end{equation}

The right hand side of this equation then defines an operator $F: Y \times X \times \R \longrightarrow \tilde{Z}$.
Our goal is  to show that the map $F$ satisfies the assumptions of the implicit function theorem. The result is the following proposition.

\begin{Proposition}\label{p:phi}
Let $X$ and $Y$ be Banach spaces defined as in \eqref{e:spaces}, let $\tilde{Z} $ be defined as in \eqref{e:tildeZ1}, and consider the map $F: Y \times X \times \R \longrightarrow \tilde{Z}$ given by \eqref{e:F_ift}. Then, there exist a constant $\delta_0>0$, a neighborhood $X_0 \subset X$ containing 0, and a $C^1$ function $\partial_S \phi: X_0 \times (-\delta_0,\delta_0)  \longrightarrow Y$ satisfying: $\partial_S\phi(0,0) =0$ and  $F(\partial_S \phi(R, \delta) ; R, \delta)=0$ whenever $(R,\delta) \in X_0 \times  (-\delta_0,\delta_0) $. Moreover, if  $q$  satisfies equation \eqref{e:q_eq}, then 
the vector $(q+ \partial_S\phi(R, \delta), R, \delta)$ solves equation \eqref{e:ordercube_new2} 
in the same neighborhood of zero, i.e. $(R,\delta) \in X_0 \times  (-\delta_0,\delta_0)  $.
\end{Proposition}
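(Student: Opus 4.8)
The plan is to verify the hypotheses of the implicit function theorem for the map $F:Y\times X\times\R\to\tilde Z$ of \eqref{e:F_ift}, with $\partial_S\tilde\phi_1\in Y$ as the unknown and $(R,\delta)\in X\times\R$ as parameters. First I would check that $F$ vanishes at the base point. At $\partial_S\tilde\phi_1=0$, $R=0$, $\delta=0$ we have $R_1=0$ and $\partial_S\phi_1=0$ (with $a=0$ and $\delta=0$ the constants $C,D$ of \eqref{e:C}--\eqref{e:D} vanish, so \eqref{e:q_eq} reduces to $\mathcal{L}q=0$ and injectivity of $\mathcal{L}$ forces $q=0$); hence the linear terms $\Delta_{0,S}\tilde\phi_1+2\beta\,\partial_S\phi_0\,\partial_S\tilde\phi_1$ vanish, the factor $C(R_1)$ multiplying the bracket vanishes, and $M(0,0;0)=0$. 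Thus $F(0;0,0)=0$, which is the correct normalization for $\partial_S\phi(0,0)=0$.

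Next I would identify $L:=D_{\partial_S\tilde\phi_1}F(0;0,0):Y\to\tilde Z$. Writing $\psi=\partial_S\tilde\phi_1$ and $\Delta_{0,S}\tilde\phi_1=\partial_S\psi+\tfrac1S\psi$, every nonlinear contribution in \eqref{e:F_ift} either carries a factor $C(R_1)=\rmO(\delta)$ (which vanishes at $\delta=0$) or sits inside $M$, whose linearization at the base point is zero by Proposition \ref{p:nonlinear} together with the $\rmO(\delta^2)$ smallness of $(1-\rho_0)$. Consequently $L\psi=\partial_S\psi+\tfrac1S\psi+2\beta\,\partial_S\phi_0\,\psi$ on the $H^1_{r,\gamma_1,\sigma_1+1}$ component, together with its explicitly computable action on the finite-dimensional summands $\{1/S\}$ and $\R$ of $Y$. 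Using $\partial_S\phi_0\to\kappa=-\Lambda/\beta$ from Proposition \ref{p:dacayRphi}, I would split $L=L_0+V$, where $L_0\psi=\partial_S\psi+\tfrac1S\psi-2\Lambda\psi$ is exactly the model operator of Lemma \ref{l:fredholm2} with $\lambda=2\Lambda>0$, and $V$ is multiplication by $2\beta(\partial_S\phi_0-\kappa)\in H^3_{r,\bar\gamma}$, $\bar\gamma\in(-1,0)$, a coefficient that decays like $\rmO(1/S)$.

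The heart of the argument is the invertibility of $L$. Since $\sigma_1+1\in(-1,0)$, Lemma \ref{l:fredholm2}(i) shows that $L_0:H^1_{r,\gamma_1,\sigma_1+1}\to L^2_{r,\gamma_1,\sigma_1+2}$ is Fredholm of index $-1$, injective, with one-dimensional cokernel spanned by $\rme^{-2\Lambda S}$. Because $V$ decays, multiplication by $V$ is relatively compact with respect to $L_0$ on these weighted spaces, so $L_0+V$ remains Fredholm of index $-1$; triviality of its kernel follows from the explicit solution $\psi\sim S^{-1}\exp(-2\beta\phi_0)$ of $L\psi=0$, which grows exponentially and is excluded, while its cokernel stays one-dimensional. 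I would then invoke the Bordering Lemma (Lemma \ref{l:bordering}): the summands $\{1/S\}\oplus\R$ of $Y$ and the $\R$ summand of $\tilde Z$ border the injective, index $-1$ operator $L_0+V$ precisely so that $L:Y\to\tilde Z$ becomes a Banach-space isomorphism. Here the weights $\gamma_1\in(-1,0)$ and $\sigma_1\in(-2,-1)$ are exactly what guarantee that the images $L(1/S)$ and $L(1)$, together with the direction $\rme^{-2\Lambda S}$, land in and span $\tilde Z=L^2_{r,\gamma_1,\sigma_1+2}\oplus\R$.

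With $F(0;0,0)=0$, $L$ an isomorphism, and $F$ of class $C^1$ (the linear and $C(R_1)[\cdots]$ terms are manifestly smooth, and the $C^1$-dependence of $M$ is supplied by Proposition \ref{p:nonlinear}), the implicit function theorem yields $\delta_0>0$, a neighborhood $X_0\ni 0$, and a $C^1$ map $\partial_S\phi:X_0\times(-\delta_0,\delta_0)\to Y$ with $\partial_S\phi(0,0)=0$ and $F(\partial_S\phi(R,\delta);R,\delta)=0$. For the final claim I would undo the splitting $\partial_S\phi_1=\partial_S\tilde\phi_1+q$: expanding $C(R_1)(\partial_S\phi_1)^2$ shows that the terms $Cq^2$, $(a^2C+D)/S^2$ and the singular cross term in $q/S$ are exactly those collected in \eqref{e:q_eq}, while the remaining cross and square terms are precisely the bracket of \eqref{e:F_ift}; hence $F=0$ together with \eqref{e:q_eq} is equivalent to \eqref{e:ordercube_new2}, so $(q+\partial_S\phi(R,\delta),R,\delta)$ solves the original equation on $X_0\times(-\delta_0,\delta_0)$. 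The main obstacle is the invertibility step, since $L$ is genuinely non-Fredholm of the wrong index on the naive Sobolev spaces; it is only through the relatively-compact perturbation analysis and the correctly matched bordering that $L$ is turned into an isomorphism between $Y$ and $\tilde Z$.
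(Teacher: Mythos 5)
Your proposal follows the same route as the paper: verify the implicit function theorem hypotheses for $F$ in \eqref{e:F_ift}, identify the linearization at the origin as $\mathcal{L}_\phi u = \partial_S u + \tfrac{1}{S}u + 2\beta\,\partial_S\phi_0\,u$, establish its Fredholm properties as a relatively compact (decaying) perturbation of the constant-coefficient operator of Lemma \ref{l:fredholm2} (this is precisely the content of Lemma \ref{l:fredphi}), and then combine the Bordering Lemma with the splitting $\partial_S\phi_1 = \partial_S\tilde\phi_1 + q$. However, there is a genuine gap at the step you yourself call the heart of the argument. The Bordering Lemma only gives that $L\colon Y\to\tilde Z$ is Fredholm of index $-1+2-1=0$; it cannot by itself give invertibility. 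Your assertion that the weights ``guarantee that the images $L(1/S)$ and $L(1)$, together with $\rme^{-2\Lambda S}$, land in and span $\tilde Z$'' is exactly the statement that still has to be proved: a priori the bordered operator could have kernel elements of the form $u_1 + a/S + c$ with $a,c\neq 0$. The paper (Proposition \ref{p:invL}) closes this with two concrete computations that your proposal never performs: (i) an integration by parts giving $\langle \mathcal{L}_\phi \tfrac{1}{S}, \rme^{2\beta\phi_0}\rangle = -\rme^{2\beta\phi_0(0)}\neq 0$, so adjoining $\{1/S\}$ removes the one-dimensional cokernel and yields injectivity, hence invertibility, of $H^1_{r,\gamma_1,\sigma_1+1}(\R^2)\oplus\{\tfrac{1}{S}\}\to L^2_{r,\gamma_1,\sigma_1+2}(\R^2)$; and (ii) the identity $\mathcal{L}_\phi c = \tfrac{c}{S} + 2\beta c(\partial_S\phi_0-\kappa) + 2\beta c\kappa$ together with the explicit solution of $\mathcal{L}_\phi u = 1$ in the form $u = u_1 + \tfrac{1}{2\beta\kappa}$, which handles the two $\R$ directions. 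Some version of this nondegeneracy argument is indispensable; without it the index count proves nothing about invertibility, even though your structural setup (model operator, compact perturbation, injectivity of $L_0+V$, bordering) is all correct.

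A secondary omission: for $F$ to be well defined and $C^1$ on a full neighborhood $X_0\times(-\delta_0,\delta_0)$, and not only at the base point where you correctly note $q=0$, one must know that the solution $q$ of \eqref{e:q_eq} belongs to $Y$ for general parameter values; in the paper this is Lemma \ref{l:qinY}, proved via the Hopf--Cole transform and a Frobenius analysis at $S=0$ and $S=\infty$. This fact then feeds into the term-by-term verification that the bracket $C(R_1)\left[2(q+\tfrac{a}{S})(\partial_S\phi_n+\partial_S\phi_f) + (\partial_S\phi_n+\partial_S\phi_f)^2\right]$ actually lands in $\tilde Z$, using Lemma \ref{l:product_rule} and the embedding $L^2_{r,\gamma_1,2\sigma_1+3}(\R^2)\subset L^2_{r,\gamma_1,\sigma_1+2}(\R^2)$. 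Your proposal leaves both of these verifications implicit, appealing only to Proposition \ref{p:nonlinear}, which covers $M$ but not the $q$-dependent terms.
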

\begin{proof}
We need to check that the operator $F: Y \times X  \times \R \longrightarrow \tilde{Z}$,

1) is well defined and $C^1$ with respect to all its variables,

2) that $F(0; 0,0) =0$, and

3) that its derivative $D_{\partial_S  \phi_1} F\mid_{(0;0,0)} : Y \longrightarrow \tilde{Z}$ is invertible.

Because both $M(R_1, \partial_S  \phi_; \delta)$ and $C(R_1)$ are order $\rmO(\delta)$ as $\delta \to 0$, it is easy to check that item 2) holds. Given that the derivative map is
\[ D_{\partial_S  \phi_1} F\mid_{(0;0,0)} u = \mathcal{L}_\phi u = \partial_S u + \frac{1}{S} u + 2 \beta \partial_S \phi_0 u, \]
 item 3) then follows from Proposition \ref{p:invL} given at the end of this subection, which shows that the operator $\mathcal{L}_\phi: Y \longrightarrow \tilde{Z}$ has a bounded inverse. 
 We are left with showing that $F$ is well defined and continuously differentiable with respect to all its variables. We begin by showing that $F$ is well defined. 
 
 First, Proposition \ref{p:invL}, shows that the linear part of the operator,  $\mathcal{L}_\phi$, maps to $\tilde{Z}$. Next, from the definition of the map $M$ and Proposition \ref{p:nonlinear} we may conclude that these nonlinearities are well defined in $\tilde{Z}$, provided $q$ is also an element in $Y$. This last point is shown in Lemma \ref{l:qinY} given below.
  Thus, we only need to check that the remaining terms,
\[  C(R_1) \left[ 2 (q +\frac{a}{S} )   ( \partial_S \phi_n + \partial_S \phi_f) + (\partial_S \phi_n + \partial_S \phi_f)^2 \right] \]
 are elements in the space $\tilde{Z}$.
 
 Notice that because the terms in $C(R_1)$ belong to $\R$, we can concentrate on showing that the elements in the brackets 
are in $\tilde{Z}$.
 We start by noticing that the expression $(q +\frac{a}{S} )$ is an element in $Y$. This follows from the definition of this space and Lemma \ref{l:qinY}, which shows $q$ is also in $Y$.
 As a result, we may write
\begin{align*}
  (q +\frac{a}{S} )   ( \partial_S \phi_n + \partial_S \phi_f) =  &( h + \frac{\tilde{a}}{S} + c)( \partial_S \phi_n + \partial_S \phi_f)\\
=&  h \partial_S \phi_n + \frac{\tilde{a}}{S} \partial_S \phi_n + c\partial_S \phi_n + h \partial_S \phi_f+ \frac{\tilde{a}}{S} \partial_S \phi_f + c\partial_S \phi_f,
\end{align*}
 where $h, \partial_S \phi_n \in H^1_{r,\gamma_1, \sigma_1 +1}(\R^2)$ while $c, \tilde{a}, \partial_S \phi_f$ are in $\R$. This observation then allows to look at each term individually, and conclude that
 \begin{align*}
  c\partial_S \phi_n + h \partial_S \phi_f & \in H^1_{r,\gamma_1, \sigma_1+1}(\R^2)  \subset  L^2_{r,\gamma_1, \sigma_1+2}(\R^2)\\
  \frac{\tilde{a}}{S} \partial_S \phi_f & \in L^2_{r,\gamma_1,\sigma_1+2}(\R^2)\\
  \frac{\tilde{a}}{S} \partial_S \phi_n & \in H^1_{r,\gamma_1,\sigma_1+2}(\R^2)\\
  c \partial_S \phi_f & \in \R\\
  h \partial_S \phi_n &\in L^2_{r,\gamma_1,2 \sigma_1 +3} (\R^2) \subset  L^2_{r,\gamma_1,\sigma_1+2}(\R^2)
  \end{align*}
 where the first embedding of the last line follows from Lemma \ref{l:product_rule}, while the second embedding is a consequence of Definition \ref{d:doubly-weighted} and our assumption that  $\sigma \in (-2,-1)$.
More precisely, this last condition shows that  $2 \sigma_1 +3< \sigma_1 + 2$ so that the embedding,  $L^2_{r,\gamma_1,2 \sigma_1 +3} (\R^2) \subset  L^2_{r,\gamma_1,\sigma_1+2}(\R^2)$, holds. It then follows that the term $(q +\frac{a}{S} )   ( \partial_S \phi_n + \partial_S \phi_f) $ is in $\tilde{Z}$.
 
 A similar reasoning allows us to show that $(\partial_S \phi_n + \partial_S \phi_f)^2 \in \tilde{Z}$, so we omit the details.
 
Because all terms in the definition of the operator $F$ involve linear or polynomial functions of the variables $R, \delta, \eps,$ and $\partial_s \phi_1 $, and because $\partial_S \phi_1= \partial_S\tilde{\phi} + q$ with $q$ a $C^2$ function of $\eps, \delta$, the map $F$ is continuously differentiable. This concludes the proof of this proposition.
\end{proof}

Our next task is to show that the function $q$ satisfying equation \eqref{e:q_eq} lives in the space $Y$.
This follows from Lemma \ref{l:qinY} shown below, which establishes  properties for solutions to this type of equations.
One can check that the coefficients in \eqref{e:q_eq} satisfy the assumptions presented in this lemma.

\begin{Lemma}\label{l:qinY}
Let $a, b_\infty, c,d $ be positive constants and take $b(x)$ to be a function in $C^1([0,\infty),\R)$ such that
$b(x) \sim -b_\infty + \rmO(1/x)$ as $x \to \infty$. Then, there exists a solution, $q(x)$, to the ordinary differential equation,
\[q' + \frac{a}{x} q + b(x) q - c q^2 + \frac{d}{x^2} =0  \quad x\geq 0\]
satisfying,
\[ q(x) \sim \rmO(1/x) \quad \mbox{as} \quad x \to 0, \qquad q(x) \sim q_\infty + \rmO(1/x) \quad \mbox{as} \quad x \to \infty \]
for some $q_\infty \in \R$. Moreover, there is a constant $ \nu $ and a function $q_2 \in H^1_{r,\gamma_1, \sigma_1+1}(\R^2)$, with $\gamma_1\in (-1,0)$ and $\sigma_1 \in (-2,-1)$, such that
\[ q(x) = q_2(x) + \frac{\nu }{x} +q_\infty \in Y.\]
\end{Lemma}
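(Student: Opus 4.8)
The plan is to treat the scalar Riccati equation of the lemma by first peeling off its leading asymptotic behavior at the two singular ends $x\to0$ and $x\to\infty$, and then solving for a remainder that lives in the doubly-weighted space $H^1_{r,\gamma_1,\sigma_1+1}(\R^2)$, using the Fredholm theory of Section \ref{ss:fredholm} together with a contraction argument for the quadratic term.

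First I would carry out the asymptotic bookkeeping. Near the origin the bounded term $b(x)q$ is subdominant, so inserting the ansatz $q\sim\nu/x$ into $q'+\frac{a}{x}q-cq^2+\frac{d}{x^2}$ and collecting the coefficient of $1/x^2$ yields the indicial relation $c\nu^2-(a-1)\nu-d=0$. Since $c,d>0$ the product of its two roots is $-d/c<0$, so there is a real root $\nu$ of the required sign, and this choice is exactly what cancels the critically singular $\rmO(1/x^2)$ contribution at the origin. In the far field the terms $\frac{a}{x}q$ and $\frac{d}{x^2}$ are negligible and $b(x)\to-b_\infty$, so the reduced equation $q'=b_\infty q+cq^2$ has equilibria $q=0$ and $q=-b_\infty/c$; I record the corresponding constant $q_\infty$ together with the $\rmO(1/x)$ relaxation toward it forced by the remaining $\frac{a}{x}$ and $(b(x)+b_\infty)$ source terms.

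Next I would set $q=\nu\,\chi(x)/x+q_\infty+q_2$, where $\chi$ is the smooth radial cut-off already used in the paper (removing the singularity of $1/x$ away from the origin), and substitute to obtain a fixed-point problem $\mathcal{L}q_2=\mathcal{N}(q_2)$ for the remainder. The linear part is $\mathcal{L}=\partial_x+\frac{a}{x}-b_\infty$, which is precisely the operator treated in Lemma \ref{l:fredholm2} (with the coefficient $1/x$ replaced by $a/x$, a modification that leaves the argument unchanged); for $\gamma_1\in(-1,0)$ and $\sigma=\sigma_1+1\in(-1,0)$ it is a Fredholm map $H^1_{r,\gamma_1,\sigma_1+1}(\R^2)\to L^2_{r,\gamma_1,\sigma_1+2}(\R^2)$ whose bounded partial inverse on the range selects the solution that decays at infinity and is subordinate to $1/x$ at the origin, rather than the exponentially growing homogeneous mode $x^{-a}\rme^{b_\infty x}$. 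The crucial point is that, thanks to the indicial choice of $\nu$ and of the equilibrium value $q_\infty$, the residual forcing $\mathcal{N}(0)$ no longer contains the borderline $\rmO(1/x^2)$ part at the origin nor the constant part at infinity, and therefore lands in $L^2_{r,\gamma_1,\sigma_1+2}(\R^2)$; here the open interval $\sigma_1\in(-2,-1)$ is exactly what makes both this residual forcing and the cross terms $\tfrac{\nu}{x}q_2$ admissible in that space, via Lemma \ref{l:decay_zero} and Lemma \ref{l:product_rule}.

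Finally I would close the argument by a contraction/implicit-function step. The only nonlinearity is the quadratic Riccati term together with the cross terms arising from $(\nu\chi/x+q_\infty+q_2)^2$; since $H^1_{r,\gamma_1,\sigma_1+1}(\R^2)$ behaves well under products in the relevant range of weights, $\mathcal{N}$ is a smooth map that is at least quadratic in $q_2$, so $\mathcal{L}^{-1}\mathcal{N}$ is a contraction on a small ball and yields a unique remainder $q_2\in H^1_{r,\gamma_1,\sigma_1+1}(\R^2)$. Unwinding the substitution gives $q=q_2+\nu/x+q_\infty\in Y$ with $q\sim\nu/x$ near the origin and $q\sim q_\infty+\rmO(1/x)$ at infinity, as claimed. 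I expect the main obstacle to be the treatment of the critically singular $\rmO(1/x^2)$ forcing at the origin: on its own it does not belong to the target space, and only the exact cancellation produced by choosing $\nu$ to be an indicial root, combined with the precise range $\sigma_1\in(-2,-1)$, keeps the remainder equation solvable; a secondary difficulty is verifying that the partial inverse of the index $-1$ operator $\mathcal{L}$ genuinely isolates the far-field mode with the prescribed constant $q_\infty$.
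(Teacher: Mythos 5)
Your route is genuinely different from the paper's: the paper applies the Hopf--Cole substitution $q=-\tfrac{1}{c}\,y'/y$, which converts the Riccati equation into the linear equation $y''+\tfrac{a}{x}y'+b(x)y'-\tfrac{cd}{x^2}y=0$, and then reads off both asymptotics from Frobenius expansions at the regular singular point $x=0$ and the irregular singular point at infinity; existence is automatic because the linear equation has global solutions, so no Fredholm theory or fixed point argument is needed at all. Your preliminary bookkeeping is correct and consistent with this (your indicial relation $c\nu^2-(a-1)\nu-d=0$ is the paper's $r^2+(a-1)r-cd=0$ under $r=-c\nu$, and your far-field equilibria $0$ and $-b_\infty/c$ are the right ones). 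The problem is that the functional-analytic core of your plan does not close.

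The gap is the linear operator. The operator you name, $\mathcal{L}=\partial_x+\tfrac{a}{x}-b_\infty$, is the linearization about $q\equiv 0$, and by the analogue of Lemma \ref{l:fredholm2}(i) (here $\sigma=\sigma_1+1\in(-1,0)<0$) it is Fredholm of index $-1$: injective with a one-dimensional cokernel. Consequently $\mathcal{L}^{-1}\mathcal{N}(q_2)$ is defined only when $\mathcal{N}(q_2)$ satisfies a scalar solvability condition, and nothing in your setup enforces it; you flag this as a ``secondary difficulty,'' but it is the crux, and there is no free parameter available to absorb it. In particular $q_\infty$ cannot serve as that parameter: the residual forcing contains the far-field constant $-b_\infty q_\infty-cq_\infty^2$, which lies in $L^2_{r,\gamma_1,\sigma_1+2}(\R^2)$ (where $\gamma_1>-1$) only if it vanishes identically, i.e.\ only for the two isolated values $q_\infty\in\{0,-b_\infty/c\}$, so it cannot vary continuously to kill the cokernel. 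This is exactly why the paper, in the analogous situation of Proposition \ref{p:invL}, borders the operator with $\{1/S\}$ and $\R$ before inverting; your plan contains no bordering. A second, related error: the cross terms from $(\nu\chi/x+q_\infty+q_2)^2$ are \emph{linear} in $q_2$, so your claim that $\mathcal{N}$ is ``at least quadratic'' is false. If you leave them in $\mathcal{N}$ they are $\rmO(1)$ perturbations and the contraction fails; if you absorb them into $\mathcal{L}$, the operator becomes $\partial_x+\tfrac{a-2c\nu}{x}+(b(x)-2cq_\infty)$, whose indicial coefficient at the origin is $a-2c\nu$ rather than $a$, and whose far-field coefficient flips sign to $+b_\infty$ for the stable choice $q_\infty=-b_\infty/c$; neither operator is covered by Lemma \ref{l:fredholm2}, whose proof uses the structure $\partial_r+\tfrac{1}{r}-\lambda$ with $\lambda>0$, so its Fredholm properties would have to be established from scratch. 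Unless you supply the bordering (or restructure around the correct equilibrium and prove invertibility of the resulting operator), the argument does not go through; the Hopf--Cole reduction is precisely what lets the paper avoid all of this.
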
 

The proof of this lemma is presented in Appendix \ref{a:ode}. The idea is to use the Hopf-Cole transform, $q = \alpha \frac{y'}{y}$, with $\alpha = -1/c$, to arrive at the linear ordinary differential equation,
\[ y'' + \frac{a}{x} y' + b(x) y' - \frac{cd}{x^2} y=0,\]
One can then use the Frobenious method to determine the asymptotic behavior of solutions, $F$, to this linear equation as $x \to 0$ and as $x \to \infty$. The results of Lemma \ref{l:qinY} then follow from the relationship between $F$ and $q$ and the definition of the space  $H^1_{r,\gamma_1, \sigma_1+1}(\R^2)$.

To conclude this section, we show that the linear operator $\mathcal{L}_\phi: Y \longrightarrow \tilde{Z}$ is invertible. This is done in Proposition \ref{p:invL}, which relies in Lemma \ref{l:fredphi} and Lemma \ref{l:bordering}, shown next.
We start with the proof  of Lemma \ref{l:fredphi}, which shows that 
the operator $\mathcal{L}_\phi = \partial_S + \frac{1}{S} - \lambda \partial_S \phi_0 $ is Fredholm when its domain is defined appropriately.

\begin{Lemma}\label{l:fredphi}
Let $\gamma$ and $\sigma$ be real numbers, $s$  an integer satisfying $s\geq 1$, and take $\lambda >0$. 
If the function $\phi_0(r) \in L^2_{r,\tilde{\gamma}-1}(\R^2)$, with $\tilde{\gamma} \in (0,1)$,
satisfies:
\begin{itemize}
\item $\partial_r\phi_0 \rightarrow \kappa$ as $r \to \infty$, with $\kappa >0$,
\item $(\partial_r \phi_0 -\kappa) \sim \rmO(1/r)$ as $r\to \infty$, and
\item $(\partial_r \phi_0 -\kappa) \in H^3_{r,\tilde{\gamma}}(\R^2)$,
\end{itemize}
then the operator $\mathcal{L}_\phi: H^s_{r,\gamma,\sigma}(\R^2) \longrightarrow H^{s-1}_{r,\gamma, \sigma+1}(\R^2)$, given by
\[ \mathcal{L}_\phi u = \partial_r u + \frac{1}{r} u - \lambda \partial_r \phi_0 u\]
is Fredholm. If moreover, $\phi_0(r) \to c$ with $c \in \R$, as $r \to 0$, then
\begin{enumerate}[i)]
\item if $\sigma <0$, the operator has index $i =-1$, it is injective and its cokernel is spanned y $e^{-\lambda \phi_0(r)}$; and
\item if $\sigma >0$, it has index $i =0 $ and it is invertible.
\end{enumerate}
\end{Lemma}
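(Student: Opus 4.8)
The plan is to treat $\mathcal{L}_\phi$ as a compact perturbation of the constant--coefficient operator already analyzed in Lemma \ref{l:fredholm2}. Writing $\mathcal{L}_\phi = \mathcal{L}_0 + \mathcal{B}$, where $\mathcal{L}_0 u = \partial_r u + \frac{1}{r} u - \lambda \kappa u$ and $\mathcal{B} u = -\lambda(\partial_r \phi_0 - \kappa)\, u$, the operator $\mathcal{L}_0$ is precisely the map of Lemma \ref{l:fredholm2} with the positive constant $\lambda\kappa$ in place of $\lambda$. Thus $\mathcal{L}_0 : H^s_{r,\gamma,\sigma}(\R^2) \to H^{s-1}_{r,\gamma,\sigma+1}(\R^2)$ is Fredholm, with index $-1$ when $\sigma<0$ and index $0$ (invertible) when $\sigma>0$. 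Since the Fredholm property and the index are stable under compact perturbations, the whole argument reduces to (a) showing that $\mathcal{B}$ is compact between these spaces, and (b) identifying the kernel and cokernel of $\mathcal{L}_\phi$ explicitly.

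For the compactness of $\mathcal{B}$ --- which I expect to be the main technical point --- I would first reduce to $s=1$ and recover the general case by the same induction used at the end of Lemmas \ref{l:fredholm1} and \ref{l:fredholm2}. The multiplier $b(r) := -\lambda(\partial_r\phi_0 - \kappa)$ is, by hypothesis, bounded and continuous (it lies in $H^3_{r,\tilde\gamma}(\R^2)\subset C^1$) and decays at infinity like $\rmO(1/r)$. Compactness then follows from a tail--truncation argument combined with the Rellich--type compact embedding of the weighted spaces: a bounded sequence $\{u_n\}\subset H^s_{r,\gamma,\sigma}(\R^2)$ is precompact in $H^{s-1}_{r,\gamma',\sigma'}(\R^2)$ whenever $\gamma'<\gamma$ and $\sigma'>\sigma$, so on every ball $B_M$ the products $b u_n$ converge along a subsequence, while on $\R^2\setminus B_M$ the bound $\sup_{|x|>M}|b| \to 0$ makes the tails of $\|b u_n\|_{H^{s-1}_{r,\gamma,\sigma+1}}$ uniformly small. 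A diagonal argument then produces a convergent subsequence of $\{b u_n\}$ in $H^{s-1}_{r,\gamma,\sigma+1}(\R^2)$; here the slack $\sigma+1>\sigma$ near the origin and the decay of $b$ in the far field are exactly what is needed to match the target weights.

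Having established that $\mathcal{L}_\phi$ is Fredholm with the same index as $\mathcal{L}_0$, I would pin down its kernel and cokernel by solving the associated first--order ODEs. The homogeneous equation $\mathcal{L}_\phi u = 0$ integrates explicitly via the integrating factor $r\,\rme^{-\lambda\phi_0(r)}$, giving $u(r) = \frac{C}{r}\rme^{\lambda\phi_0(r)}$; since $\partial_r\phi_0\to\kappa>0$ forces $\phi_0\sim\kappa r$ and hence exponential growth of $\rme^{\lambda\phi_0}$, this function lies in no weighted space, so $\mathrm{Ker}\,\mathcal{L}_\phi = \{0\}$. For the cokernel, integration by parts against the duality pairing shows the formal adjoint is $\mathcal{L}_\phi^* v = -(\partial_r v + \lambda\partial_r\phi_0\, v)$, whose kernel is spanned by $\rme^{-\lambda\phi_0(r)}$. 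Using $\phi_0\to c$ near the origin and exponential decay in the far field, Lemma \ref{l:decay_zero} shows $\rme^{-\lambda\phi_0}\in L^2_{r,-\gamma,-(\sigma+1)}(\R^2)$ precisely when $\sigma<0$.

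Combining these facts closes both cases. When $\sigma<0$, the index is $-1$ and the kernel is trivial, so the cokernel is one--dimensional and, by the adjoint computation, spanned by $\rme^{-\lambda\phi_0}$, giving case i). When $\sigma>0$, the index is $0$ and the kernel is trivial, forcing a trivial cokernel as well, so $\mathcal{L}_\phi$ is invertible, giving case ii). The general $s\geq1$ statement then follows by induction, differentiating the relation $\partial_r u = \lambda\partial_r\phi_0\, u - \frac{1}{r}u + f$ and using that $\partial_r\phi_0 - \kappa$ and its derivatives lie in the appropriate weighted spaces, exactly as in the constant--coefficient lemmas. The main obstacle, as noted, is the compactness of $\mathcal{B}$: the bookkeeping that simultaneously reconciles the loss of one derivative, the near--origin weight shift $\sigma\mapsto\sigma+1$, and the far--field decay supplied by $b$ is where the weighted--space structure must be used carefully.
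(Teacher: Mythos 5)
Your proposal is correct and follows essentially the same route as the paper: the identical splitting $\mathcal{L}_\phi = (\partial_r + \tfrac{1}{r} - \lambda\kappa) + \bigl(-\lambda(\partial_r\phi_0-\kappa)\bigr)$, compactness of the multiplication term via far-field decay plus Rellich--Kondrachov on balls, and the same explicit kernel/cokernel identification with $e^{\lambda\phi_0}/r$ and $e^{-\lambda\phi_0}$. The only difference is cosmetic: you phrase compactness as a truncation--diagonal sequential argument, whereas the paper realizes the same idea as an operator-norm limit of the compactly supported multipliers $T_N = \lambda\chi_N(\kappa-\partial_r\phi_0)$.
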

\begin{proof}
Recall first the results from Lemma \ref{l:fredholm2}, which shows that for $\tilde{\lambda}>0$, the operator
 $\mathcal{L}:H^s_{r,\gamma,\sigma}(\R^2) \longrightarrow H^{s-1}_{r,\gamma, \sigma+1}(\R^2)$, defined by $\mathcal{L} =  \partial_r + \frac{1}{r} - \tilde{\lambda}$, is Fredholm. 
In particular, we have that for $\sigma<0$ the operator $\mathcal{L}$ has index $i =-1$,
while for $\sigma >0$ it is invertible.
In what follows we show that $\mathcal{L}_\phi$ is a compact perturbation of $\mathcal{L}$. As a result,  $\mathcal{L}_\phi$ is also Fredholm with the same index as $\mathcal{L}$.

To this end, we let $\tilde{\lambda} = \lambda \kappa$ and   write
\[ \mathcal{L}_\phi = \partial_r + \frac{1}{r} -\tilde{ \lambda} +  \lambda( \kappa - \partial_r \phi_0) = \mathcal{L} + T,\]
where $T$ is the multiplication operator $T = \lambda( \kappa - \partial_r \phi_0)$.
Since $( \kappa - \partial_r \phi_0) \in \rmO(1/r)$ as $r \to \infty$ and $L^2_{r,\gamma,\sigma}(\R^2) \subset L^2_{r,\gamma,\sigma+1}(\R^2)$, it follows 
that $T: H^1_{r,\gamma,\sigma}(\R^2) \longrightarrow L^2_{r,\gamma,\sigma+1}(\R^2)$
is bounded. 
To prove that $T$ is compact, we show that it can be approximated by a sequence of compact operators given by $T_N =\lambda \chi_N ( \kappa - \partial_r \phi_0)$. Here,
$\chi_N$ is a radial function in $C^\infty(\R^2)$ satisfying $\chi_N(r) = 1$ for $r<N$ and $\chi_N(r) = 0$ for $r >N+1$.

First, it is clear from the following inequalities that $T_N \to T$ in the operator norm
\begin{align*}
 \| (T- T_N)u\|^2_{L^2_{r,\gamma, \sigma+1}(\R^2)} 
 & \leq 
 \lambda C \int_0^\infty |1- \chi_N|^2 \left| \frac{1}{r}\right|^2 |u|^2 m(r)^{2(\sigma+1)} \langle r\rangle^{2\gamma} r\;dr\\
 &\leq
  \lambda  C \int_N^{\infty}  \left| \frac{1}{r}\right|^2 |u|^2  \langle r\rangle^{2\gamma} r\;dr\\
  & \leq 
  \frac{ \lambda  C}{N^2} \| u \|^2_{L^2_{r,\gamma, \sigma}(\R^2)}. 
    \end{align*}
    
 Next, to show that the multiplication operators $T_N$ are compact notice that the image, $T_Nu$, is a function with compact support inside the ball $B_{2N}$ of radius $2N$.
In particular, given a sequence $\{ u_n\}  \subset H^1_{r,\gamma,\sigma}(\R^2) \subset  H^1_{r,\gamma,\sigma+1}(\R^2)$, we have that
$T_N u_n(r)\; \langle r\rangle^\gamma m(r)^{\sigma+1} \in H^1(B_{2N}) \subset \subset L^2(B_{2N})$, where this last embedding is compact by the Rellich-Kondrachov theorem.
Thus, there is a subsequence $u_{n_k}$ such that $T_Nu_{n_k} \to \bar{v}$ with $\bar{v}(r) \; \langle r\rangle^\gamma m(r)^{\sigma+1}  \in L^2(\R^2)$, as desired.

To complete the proof of this lemma, we need to show that given any $\sigma \in \R$, the operator has trivial kernel, and that its cokernel is spanned by $e^{-\lambda \phi_0}$ only if $\sigma <0$.
First, it is clear that the only function satisfying $\mathcal{L}_\phi u = 0$ is $u(r) = \frac{e^{\lambda \phi_0(r)}}{r}$. Since $\phi_0(r) \to \kappa r$ with $\kappa >0$, this function grows exponentially and thus, it is not in the domain of the operator no matter what the values of $\gamma$ and $\sigma $ are. 
On the other hand, the adjoint of the operator is 
$\mathcal{L}_\phi^* : L^2_{r,-\gamma+1,-(\sigma +1)}(\R^2) \longrightarrow H^{-1}_{r,-\gamma, -\sigma}(\R^2)$, is given by 
\[ \mathcal{L}_\phi^* = - (\partial_r + \lambda \partial \phi_0).\]
The kernel of this map is then spanned by $e^{-\lambda \phi_0(r)}$.
Since $e^{-\lambda \phi_0(r)}  \to e^{-\lambda c}$ as $r$ goes to zero, it then follows that this function is in the domain of $\mathcal{L}_\phi^*$ provided $\sigma<0$. The results of the lemma then follow.
 \end{proof}

This next result, which is known as a Bordering Lemma, gives us conditions under which 
a Fredholm operator becomes invertible.
\begin{Lemma}\label{l:bordering}[Bordering Lemma]
Let $X$ and $Y$ be Banach spaces, and consider the operator
\[ S = \begin{bmatrix}
A & B\\
C& D
\end{bmatrix} : X \times \R^p \longrightarrow Y \times \R^q,\]
with bounded linear operators $A: X \longrightarrow Y$, $B:\R^p \longrightarrow Y$, $C: X  \longrightarrow \R^q$,
$D: \R^p \longrightarrow \R^q$. If $A$ is Fredholm of index $i$, then $S$ is Fredholm of index $i +p-q$.
\end{Lemma}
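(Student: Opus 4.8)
The plan is to realize $S$ as a finite-rank (hence compact) perturbation of an operator whose Fredholm index can be computed by inspection, and then invoke the invariance of the Fredholm index under compact perturbations. Concretely, I would write $S = S_0 + K$ with
\[
S_0 = \begin{bmatrix} A & 0 \\ 0 & 0 \end{bmatrix}, \qquad
K = \begin{bmatrix} 0 & B \\ C & D \end{bmatrix},
\]
both viewed as bounded maps $X \times \R^p \longrightarrow Y \times \R^q$. The operator $S_0$ acts by $(x,v) \mapsto (Ax, 0)$, so it is essentially $A$ with trivial borders.

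First I would verify that $S_0$ is Fredholm and compute its index directly from the Fredholm data of $A$. Its kernel is $\ker S_0 = \ker A \times \R^p$, of dimension $\dim\ker A + p$. Its range is $\mathrm{range}(A) \times \{0\}$, which is closed in $Y \times \R^q$ precisely because $\mathrm{range}(A)$ is closed (a consequence of $A$ being Fredholm); choosing a complement $W$ of $\mathrm{range}(A)$ in $Y$, a complement of $\mathrm{range}(S_0)$ is $W \times \R^q$, so $\dim\mathrm{coker}(S_0) = \dim\mathrm{coker}(A) + q$. Hence
\[
\mathrm{ind}(S_0) = \big(\dim\ker A + p\big) - \big(\dim\mathrm{coker}(A) + q\big) = i + p - q.
\]
Next I would note that $K(x,v) = (Bv,\, Cx + Dv)$ takes values in $\mathrm{range}(B) \times \R^q$, which is finite-dimensional since $B:\R^p \to Y$ has range of dimension at most $p$ and $\R^q$ is finite-dimensional. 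Therefore $K$ is a bounded operator of finite rank, and in particular compact.

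Finally, the stability of the Fredholm index under compact perturbations yields that $S = S_0 + K$ is Fredholm with $\mathrm{ind}(S) = \mathrm{ind}(S_0) = i + p - q$, as claimed. I do not expect any genuine obstacle here: the argument is an assembly of standard functional-analytic facts (finite rank implies compact, and the index is invariant under compact perturbations). The only step requiring care is the explicit verification that $S_0$ is Fredholm with the stated index, i.e. that closedness of $\mathrm{range}(A)$ lifts to closedness of $\mathrm{range}(S_0)$ and that the kernel and cokernel dimensions add as indicated; this is routine once $A$ is known to be Fredholm.
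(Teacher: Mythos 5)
Your proof is correct and follows essentially the same route as the paper: decompose $S$ into a block operator of index $i+p-q$ plus a compact (finite-rank) perturbation, and invoke stability of the Fredholm index. The only cosmetic difference is that you place $D$ in the compact part (using $\mathrm{diag}(A,0)$ as the base operator) while the paper keeps the block diagonal $\mathrm{diag}(A,D)$ and perturbs by the off-diagonal entries; both base operators have index $i+p-q$, so the arguments coincide, and your explicit verification of the kernel/cokernel count for $S_0$ simply fills in details the paper leaves to the reader.
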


\begin{proof}
One can write $S$ as the sum of a block diagonal operator with the indicated index, $i + p-q$,
and a compact operator  consisting of the off-diagonal elements.
Since compact perturbations do not alter the index of a Fredholm operator, the result then follows.
\end{proof}

This next proposition shows that the linear operator in equation \eqref{e:ordercube_new2}, when considered as a map between $Y$ and $\tilde{Z}$, is invertible. 
\begin{Proposition}\label{p:invL}
Let $\phi_0$ be as in Lemma \ref{l:fredphi}, take $\beta < 0$.
Then, the map
\[
\begin{array}{r c c}
\mathcal{L}_\phi : Y & \longrightarrow & \tilde{Z}\\
u & \longmapsto & \partial_S u + \frac{1}{S} u + 2\beta \partial_S \phi_0 u
\end{array}\]
with $Y = H^1_{r,\gamma_1,\sigma_1+1}(\R^2) \oplus \{ \frac{1}{S} \}  \oplus \R$ and $\tilde{Z} = L^2_{r,\gamma_1,\sigma_1+2}(\R^2) \oplus \R $, is invertible.
\end{Proposition}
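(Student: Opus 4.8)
The plan is to recognize that the restriction of $\mathcal{L}_\phi$ to the core space $H^1_{r,\gamma_1,\sigma_1+1}(\R^2)$ is exactly the operator treated in Lemma \ref{l:fredphi}, hence Fredholm of index $-1$, and then to use the two extra generators of $Y$ together with the Bordering Lemma (Lemma \ref{l:bordering}) to raise the index to $0$ on the full map $Y\to\tilde{Z}$; invertibility will follow from a direct proof of injectivity. To begin, I would set $\lambda=-2\beta>0$ (using $\beta<0$), so that $\mathcal{L}_\phi u=\partial_S u+\frac1S u-\lambda\,\partial_S\phi_0\,u$. By Proposition \ref{p:dacayRphi} the phase $\phi_0$ meets every hypothesis of Lemma \ref{l:fredphi}: $\partial_S\phi_0\to\kappa>0$, $(\partial_S\phi_0-\kappa)\sim\rmO(1/S)$ lies in $H^3_{r,\bar\gamma}(\R^2)$, and $\phi_0\to c$ at the origin. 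Since the domain weight here is $\sigma=\sigma_1+1\in(-1,0)$, i.e. $\sigma<0$, case (i) of Lemma \ref{l:fredphi} applies: $\mathcal{L}_\phi:H^1_{r,\gamma_1,\sigma_1+1}(\R^2)\to L^2_{r,\gamma_1,\sigma_1+2}(\R^2)$ is Fredholm of index $-1$, injective, with one-dimensional cokernel spanned by $\rme^{2\beta\phi_0}$.

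Next I would compute the action of $\mathcal{L}_\phi$ on the two adjoined generators of $Y$. For the generator $1/S$ the singular terms cancel, $\partial_S(1/S)+\frac1S\cdot\frac1S=0$, leaving $\mathcal{L}_\phi(1/S)=2\beta\,\partial_S\phi_0/S$, which decays like $1/S$ at infinity and is $\rmO(1/S)$ at the origin, hence belongs to $L^2_{r,\gamma_1,\sigma_1+2}(\R^2)$ by Lemmas \ref{l:decay} and \ref{l:decay_zero} (using $\gamma_1\in(-1,0)$, $\sigma_1\in(-2,-1)$). For the constant generator $1$ one gets $\mathcal{L}_\phi(1)=\frac1S+2\beta\,\partial_S\phi_0$, whose limit at infinity is the \emph{nonzero} constant $2\beta\kappa=-2\Lambda$, while the remainder $\frac1S+2\beta(\partial_S\phi_0-\kappa)$ lies in $L^2_{r,\gamma_1,\sigma_1+2}(\R^2)$. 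Since any $u_2\in H^1_{r,\gamma_1,\sigma_1+1}(\R^2)$ decays at infinity ($\gamma_1>-1$ forces decay), $\mathcal{L}_\phi u_2$ contributes no constant at infinity and lands entirely in $L^2_{r,\gamma_1,\sigma_1+2}(\R^2)$. Thus, in the decompositions $Y=H^1_{r,\gamma_1,\sigma_1+1}(\R^2)\oplus(\{1/S\}\oplus\R)$ and $\tilde{Z}=L^2_{r,\gamma_1,\sigma_1+2}(\R^2)\oplus\R$, the operator has the block form $\begin{bmatrix} A & B \\ 0 & D\end{bmatrix}$ with $A=\mathcal{L}_\phi|_{H^1}$ of index $-1$, $p=2$ extra domain dimensions, $q=1$ extra range dimension, and $D$ sending the constant generator to $-2\Lambda\neq0$. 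Lemma \ref{l:bordering} then yields that $\mathcal{L}_\phi:Y\to\tilde{Z}$ is Fredholm of index $-1+2-1=0$.

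It remains to establish injectivity, which is the crux. Solving $\mathcal{L}_\phi u=0$ pointwise with integrating factor $S\,\rme^{-\lambda\phi_0}$ gives $u(S)=C\,\rme^{\lambda\phi_0(S)}/S$ for some constant $C$; since $\lambda=-2\beta$ and, by Proposition \ref{p:dacayRphi}, $\phi_0\sim\frac1\beta\log K_0(\Lambda S)\sim\kappa S$ as $S\to\infty$, this solution grows like $\rme^{2\Lambda S}/S$ and cannot lie in $Y$ unless $C=0$. Moreover $\tilde{Z}$ is a genuine direct sum, because a nonzero constant is not in $L^2_{r,\gamma_1,\sigma_1+2}(\R^2)$ when $\gamma_1>-1$ (Lemma \ref{l:decay}); hence vanishing of $\mathcal{L}_\phi u$ in $\tilde{Z}$ is equivalent to its pointwise vanishing. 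Therefore $\ker\mathcal{L}_\phi=\{0\}$, and an injective Fredholm operator of index $0$ has closed range and trivial cokernel, hence is surjective and invertible.

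The main obstacle is essentially careful bookkeeping: one must track which portion of each image falls into the $L^2$ summand and which produces the constant at infinity, and verify the embeddings dictated by the ranges of $\gamma_1$ and $\sigma_1$. Conceptually, the key point is that the single constant direction adjoined to $Y$ maps, through the nonzero entry $D=-2\Lambda$, precisely onto the one-dimensional complement of the range of $A$ in $\tilde{Z}$, compensating for the index $-1$ of the core operator; the explicit exponentially growing homogeneous solution then rules out any kernel and upgrades index $0$ to invertibility.
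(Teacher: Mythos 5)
Your proof is correct, and it rests on the same two pillars as the paper's proof --- case (i) of Lemma \ref{l:fredphi} for the core operator and the Bordering Lemma \ref{l:bordering} --- but the argument is organized differently and the decisive computations are not the ones the paper uses. The paper borders in two stages: it first adjoins only $\{\frac{1}{S}\}$ to the domain and shows by integration by parts that $\langle \mathcal{L}_\phi \frac{1}{S}, \rme^{2\beta \phi_0}\rangle = -\rme^{2\beta\phi_0(0)} \neq 0$, i.e.\ the image of $1/S$ covers exactly the one-dimensional cokernel of the core operator, making $\tilde{\mathcal{L}}_\phi : H^1_{r,\gamma_1,\sigma_1+1}(\R^2)\oplus\{\tfrac{1}{S}\} \longrightarrow L^2_{r,\gamma_1,\sigma_1+2}(\R^2)$ invertible; it then treats the constant directions separately by explicitly solving $\mathcal{L}_\phi u = 1$ with $u = u_1 + \tfrac{1}{2\beta\kappa}$ and feeding the resulting right-hand side back into this invertible extended operator. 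You instead border once with both generators ($p=2$, $q=1$), exploit the upper-triangular block structure --- images of $H^1$ functions carry no constant at infinity, and the constant-to-constant entry is $2\beta\kappa = -2\Lambda \neq 0$ --- to get index $0$ for the full map $Y \to \tilde{Z}$ in one application of Lemma \ref{l:bordering}, and then close with injectivity read off from the explicit homogeneous solution $C\,\rme^{-2\beta\phi_0}/S \sim C\,\rme^{2\Lambda S}/S$, which no element of $Y$ can match. Your route avoids both the duality pairing and the two-stage solve, and your kernel argument is cleaner and more transparent than the paper's corresponding weight-bookkeeping step (which is somewhat garbled as written); what the paper's version buys is more explicit information --- it identifies which adjoined direction compensates the missing cokernel direction and constructs the inverse summand by summand, rather than inferring surjectivity abstractly from injectivity plus index zero.
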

\begin{proof}
We first take a closer look at the operator
\[
\begin{array}{c c c}
\tilde{\mathcal{L}}_\phi: H^1_{r,\gamma_1,\sigma_1+1}(\R^2) 
&\longrightarrow& L^2_{r,\gamma_1,\sigma_1+2}(\R^2)\\
u & \longmapsto & \partial_S u + \frac{1}{S} u + 2\beta \partial_S \phi_0 u.
\end{array}\]
The following calculations show that the function $\mathcal{L}_\phi \frac{1}{S} =(\partial_S + \frac{1}{S}  + 2\beta \partial_S \phi_0 ) \frac{1}{S} $ spans the cokernel of the operator $\tilde{\mathcal{L}}_\phi: H^1_{r,\gamma_1, \sigma_1+1} \longrightarrow L^2_{r,\gamma_1, \sigma_1 +2}$, which is given by $\rme^{2\beta \phi_0(S)}$ .  Since these Hilbert spaces come endowed with the inner product
\[ \langle u,v \rangle = \int_0^\infty u(S)v(S) S \;dS,\]
it suffices to show that 
$\langle \mathcal{L}_\phi \frac{1}{S}, \rme^{2\beta \phi_0(S)} \rangle \neq 0.$  This last result follows easily from an integration by parts:
\begin{align*}
\langle \mathcal{L}_\phi \frac{1}{S}, \rme^{2\beta \phi_0(S)} \rangle & = 
\frac{1}{S} \rme^{2\beta \phi_0(S)} S \Big|_0^\infty + \int_0^\infty \frac{1}{S} \mathcal{L}^*_{\phi} (\rme^{2\beta \phi_0(S)} ) S \; dS\\
& = - \rme^{2\beta  \phi_0(0)} \\
& \neq 0.
\end{align*}
Using the Borderling Lemma  we can therefore conclude that the extended operator
\[ \tilde{\mathcal{L}}_\phi: H^1_{r,\gamma_1, \sigma_1+1}(\R^2) \times \{ \frac{1}{S} \} \longrightarrow L^2_{r,\gamma_1, \sigma_1 +2},\]
is Fredholm and has index $i =0$. One can now check that the only element in the kernel of this operator is the trivial solution. Consequently, the operator is invertible. Indeed, letting $u_0 = u_1 + \frac{a}{S}$ with $ a \in \R$, we find that
the equation $\tilde{\mathcal{L}}_\phi u_0 =0$ can be written as
\[ \mathcal{L}_\phi u_1 = - 2\beta \partial_S \phi_0 \frac{a}{S}. \]
Since $\partial_S \phi_0$ converges to a constant, both near the origin and in the far field, it follows that the right hand side is in $L^2_{r,\gamma, \sigma+2}$ with $\gamma \in (-1,0)$ and $ \sigma + 2 \in (0,1)$, see Lemma \ref{l:decay} and Figure \ref{f:decay}. The results of Lemma \ref{l:fredphi} then imply that $u_1 \in H^1_{r,\gamma, \sigma+1}$, but with $\sigma \in (-2,-1)$. Therefore, this solution $u_0$ is not in the space $Y$, which uses $\sigma_1 \in (-1,0)$ (see definition of $Y$ in \eqref{e:spaces}).

 To complete the proof of this proposition, we need to show that the following operators are bounded,
\begin{equation}\label{e:op_phi}
\begin{array}{c c c}
 \mathcal{L}_\phi: \R & \longrightarrow & \tilde{Z} \\
 \mathcal{L}_\phi^{-1}: \R & \longrightarrow & Y\\
 \end{array}
\end{equation}
Here again, $\mathcal{L}_\phi u = \partial_S u + \frac{1}{S} u + 2 \beta \partial_S \phi_0 u$.
Letting $c \in \R$, we may write
\begin{equation*}
 \mathcal{L}_\phi c = \frac{c}{S} +2\beta c( \partial_S \phi_0 -k) + 2 \beta  ck.  
\end{equation*}
Since $ \left( \frac{c}{S} + 2\beta c( \partial_S \phi_0 -k) \right) \in L^2_{r,\gamma_1, \sigma_1+2}$,
it follows that the right hand side in the above expression is in $\tilde{Z}$. As a result, the first operator defined in \eqref{e:op_phi} is bounded.
On the other hand, notice that proving that the second operator appearing in \eqref{e:op_phi} is bounded, is equivalent to solving the equation $\mathcal{L}_\phi u = 1$ and showing that $u \in Y$.
To that end, let $u = u_1 + c$, with $c = 1/ 2\beta k$ and notice that  $u_1$ then has to solve:
\[ \mathcal{L}_\phi u_1= - \frac{c}{S} - 2\beta c (\partial_S \phi_0 - k). \]
Since the right hand side is an element in $L^2_{r, \gamma_1, \sigma_1+2}(\R^2)$, and since we just showed that the operator
\[ \tilde{\mathcal{L}}_\phi: H^1_{r,\gamma_1, \sigma_1+1}(\R^2) \times \{ \frac{1}{S} \} \longrightarrow L^2_{r,\gamma_1, \sigma_1 +2}\]
is invertible, we then have that $u_1 \in  H^1_{r,\gamma_1, \sigma_1+1}(\R^2) \times \{ \frac{1}{S} \}$. Consequently $u = u_1 + c \in Y$, as desired.

\end{proof}

\subsection{Step 2}

From Proposition \ref{p:phi} we know that there is a function $\partial_S \phi_1(R_1,\delta)$ representing solutions to \eqref{e:ordercube_new2} that bifurcate from zero. We can now insert this function into equation \eqref{e:ordercube_new1} with the goal of proving existence of solutions to the resulting expression.
To do this we precondition equation \eqref{e:ordercube_new1}  by $\mathcal{L}^{-1}(\delta) = (\delta^2 \Delta_{1,S} - 2)^{-1}$. The right hand side then defines an operator $F: X  \times \R \longrightarrow X$, given by
\begin{equation}\label{e:opF}
F(R_1; \delta) = R_1 + \mathcal{L}^{-1}(\delta) \left[ -2 \partial_S \phi_0\; \partial_S \phi_1(R_1,\delta) + \tilde{N}_1(R_1, \partial_S \phi_1(R_1, \delta) ;\delta) \right]. 
\end{equation}
In what follows we show that $F$ satisfies the conditions of the implicit function theorem, thus proving the results of this next proposition.

\begin{Proposition}\label{p:R_existence}
Let $X$ be the  Banach space defined as in \eqref{e:spaces}, and consider the operator $F: X \times \R \longrightarrow X$ defined in \eqref{e:opF}. Then there exist a constant $\delta_0 >0$,
and a $C^1$ function, $R: [0, \delta_0)  \longrightarrow X$, such that $R(0) =0$. Moreover, the function  $R( \delta) $ is a zero of the operator $F$, and thus it is also a solution to equation \eqref{e:ordercube_new1},
whenever $\delta \in  [0, \delta_0) $ and $\partial_S \phi$ is as in Proposition \ref{p:phi}.
\end{Proposition}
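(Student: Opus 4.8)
The plan is to verify that the map $F$ of \eqref{e:opF} satisfies the hypotheses of the implicit function theorem at the base point $(R_1,\delta)=(0,0)$, and then read off the curve $R(\delta)$. Concretely I would check that (a) $F:X\times\R\longrightarrow X$ is a well-defined $C^1$ map on a neighborhood of $(0,0)$, (b) $F(0;0)=0$, and (c) the partial derivative $D_{R_1}F\mid_{(0;0)}:X\longrightarrow X$ is boundedly invertible. Since $F$ was obtained by preconditioning \eqref{e:ordercube_new1} with the \emph{invertible} operator $\mathcal{L}^{-1}(\delta)=(\delta^2\Delta_{1,S}-2)^{-1}$, any zero of $F$ is genuinely a solution of \eqref{e:ordercube_new1}, and the $C^1$ family of zeros furnished by the implicit function theorem, together with $R(0)=0$, is exactly the assertion of Proposition \ref{p:R_existence}.

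The bulk of the work lies in (a). I would first record that, by the very design of the spaces in \eqref{e:spaces}, the operator $\mathcal{L}(\delta)=\delta^2\Delta_{1,S}-2$ restricts to an isomorphism $X\to Z$, with the bordering spans $\{\zeta\}$, $\{1/S^2\}$ and $\R$ accommodating the constant and $1/S^2$ asymptotics that a pure doubly-weighted space with $\sigma_1\in(-2,-1)$ cannot carry. For $\delta>0$ this rests on the invertibility of $\Delta_1-\Id$ from Proposition \ref{p:fredholm1}, applied after the rescaling $\xi=\sqrt2\,S/\delta$ used in Lemma \ref{l:zeta}, which conjugates $\mathcal{L}(\delta)$ into the fixed operator $2(\Delta_{1,\xi}-\Id)$; at $\delta=0$ it reduces to $-2\,\Id$. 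That the bracketed quantity $-2\partial_S\phi_0\,\partial_S\phi_1(R_1,\delta)+\tilde N_1(R_1,\partial_S\phi_1(R_1,\delta);\delta)$ actually lands in $Z$ follows from Proposition \ref{p:nonlinear} together with the regularity of the family $\partial_S\phi_1(R_1,\delta)$ supplied by Proposition \ref{p:phi}. Composing the $C^1$ map $(R_1,\delta)\mapsto\partial_S\phi_1(R_1,\delta)$ with the polynomial (in $R_1$, $\partial_S\phi_1$, $\delta$) dependence of $\tilde N_1$, and with the $C^1$ dependence of $\delta\mapsto\mathcal{L}^{-1}(\delta)$, then yields that $F$ is $C^1$.

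Parts (b) and (c) are clean once one observes that $F(\,\cdot\,;0)=\Id$. Indeed, at $\delta=0$ the coupling of the $\phi_1$-equation \eqref{e:ordercube_new2} to $R_1$ enters only through $-2\beta\delta^2\Delta_{1,S}R_1$ and through $\delta$-weighted nonlinearities, so the equation determining $\partial_S\phi_1$ degenerates to $\mathcal{L}_\phi(\partial_S\phi_1)=0$; by the invertibility of $\mathcal{L}_\phi$ (Proposition \ref{p:invL}) its branch through the origin is $\partial_S\phi_1(R_1,0)=0$ for every $R_1$ near $0$. Likewise every term of $\tilde N_1$ carries an explicit factor of $\delta$ (via $N_1$ in \eqref{e:nonlinear1}) or a factor $(1-\rho_0)$ or $(1-\rho_0^2)$, each of which is $\rmO(\delta^2)$ under the scaling $S=\delta r$, so $\tilde N_1(\,\cdot\,;0)\equiv0$. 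Hence the bracket vanishes identically at $\delta=0$, giving $F(R_1;0)=R_1$, whence $F(0;0)=0$ and $D_{R_1}F\mid_{(0;0)}=\Id$, trivially invertible. The implicit function theorem then supplies $\delta_0>0$ and the $C^1$ curve $R(\delta)$, and the decay behavior ($R_1,\partial_S\phi_1$ of order $\rmO(1)$ as $S\to\infty$) is encoded in the membership $R(\delta)\in X$ and $\partial_S\phi_1\in Y$.

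I expect the genuine obstacle to be the uniform control demanded in (a), not the invertibility in (c): one must confirm that $\mathcal{L}^{-1}(\delta)$ is bounded from $Z$ into $X$ \emph{uniformly} as the principal part $\delta^2\Delta_{1,S}$ degenerates at $\delta=0$, and that this family depends continuously (indeed $C^1$) on $\delta$ down to $\delta=0$, where the operator collapses to $-\tfrac12\Id$ and the target space effectively changes. The rescaling of Lemma \ref{l:zeta} is what makes this tractable, since it renders the operator $\delta$-independent and pushes all $\delta$-dependence into the transformation of the weighted norms; the remaining care is to track the resulting powers of $\delta$ and to check they combine with the $\rmO(\delta)$ smallness of the bracket so that $\mathcal{L}^{-1}(\delta)[\text{bracket}]\to 0$ as $\delta\to0$, consistently with $F(\,\cdot\,;0)=\Id$.
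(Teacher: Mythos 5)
Your proposal is correct and follows essentially the same route as the paper: verify the implicit function theorem hypotheses for $F$, using Lemma \ref{l:bounded_inverse} (invertibility and $C^1$ dependence of $\mathcal{L}^{-1}(\delta)$), Proposition \ref{p:nonlinear} (the bracket lands in $Z$), and Proposition \ref{p:phi} together with Proposition \ref{p:invL} to conclude $\partial_S\phi_1(R_1,0)=0$ and hence $F(\,\cdot\,;0)=\Id$, so that $D_{R_1}F\mid_{(0;0)}=\Id$. You also correctly isolate the only delicate point --- regularity of $\delta\mapsto\mathcal{L}^{-1}(\delta)$ down to $\delta=0$ --- which the paper resolves exactly as you suggest, by combining the $\rmO(\delta)$ smallness of the bracket with boundedness of $\mathcal{L}^{-1}(\delta)$ to get $\|F(R,h)-F(R,0)\|_X\leq C|h|$.
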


The results of Proposition \ref{p:R_existence} rely on:
\begin{itemize}
\item  Lemma \ref{l:bounded_inverse}, which shows that the operator $\mathcal{L}^{-1}(\delta) : Z \longrightarrow X$ is bounded, and $C^1$ with respect to the parameter $\delta$, for $\delta>0$.
\item Proposition \ref{p:nonlinear}, which shows that the nonlinear terms define bounded operators of the form
 $\tilde{N}_i: X \times Y \times \R^2 \longrightarrow Z$ (this also includes the term $2 \partial_S \phi_0\; \partial_S \phi_1$).
 \end{itemize}
 For ease of exposition at this stage we only summarize these results. The proof of Lemma \ref{l:bounded_inverse} is given at the end of this subsection, while the proof of Proposition \ref{p:nonlinear} appears in Subsection \ref{ss:step3}.

 \begin{proof}
The results from Lemma \ref{l:bounded_inverse} and Proposition \ref{p:nonlinear} show that the operator $F$ is well defined. To show that  $F(0;0,0) =0$ notice first that the nonlinear map $\tilde{N}_1$ is order $\rmO(\delta)$. At the same time, Proposition \ref{p:phi}  shows that the function $\partial_S \phi_1(R_1, 0)$ solves  equation \ref{e:F_ift} with $\delta =0$, i.e.
\begin{align*}
0  = &  \Delta_{0,S} \tilde{\phi}_1  + 2\beta \partial_S \phi_0 \partial_S \tilde{\phi}_1 + M(R_1, \partial_S \phi_1; \delta) \\
& + C(R_1) \left[ 2 (q +\frac{a}{S} )   ( \partial_S \phi_n + \partial_S \phi_f) + (\partial_S \phi_n + \partial_S \phi_f)^2 \right].
\end{align*}
Since both, $ M(R_1, \partial_S \phi_1; \delta)$ and $C(R_1)$ are order $\rmO(\delta)$, and because by Proposition \ref{p:invL}
 the operator $ \mathcal{L}_\phi:Y \longrightarrow  \tilde{Z} \subset Z $ defined by
 $$\mathcal{L}_\phi \; \partial_S \tilde{\phi}_1  = (\partial_S + \frac{1}{S} + 2 \beta \partial_S \phi_0) \partial_S \tilde{\phi}_1 =  \Delta_{0,S} \tilde{\phi}_1  + 2\beta \partial_S \phi_0 \partial_S \tilde{\phi}_1,$$
 is invertible, if follows that $\partial_S \phi_1(R_1, 0) =0$. Hence $F(0;0,0) =0$.
Because the norm $\| \mathcal{L}^{-1}(\delta)\|$ is bounded, the above remarks also show that the derivative map, $D_XF\mid_{(0;0,0)} : X \longrightarrow X$, is just the identity and is therefore invertible.

To show that $F$ satisfies the conditions of the implicit function theorem, we are left with checking that the operator defines a $C^1$ map with respect to all its variables.  This easily follows for $R_1 \in X$, and $\partial_S \phi = \partial_S\phi( R_1, \delta) \in Y$ as in Proposition \ref{p:phi}, since all nonlinear terms in $\tilde{N}_1(R_1,\partial_S \phi_1; \delta)$ involve linear or polynomial functions of these variables and because, by Proposition \ref{p:phi}, the function $\partial_S \phi $ is $C^1$ in all its variables.

To check that the operator $F$ is $C^1$ with respect to $\delta$, notice first that from Lemma \ref{l:bounded_inverse} we have that the linear map $\mathcal{L}^{-1}(\delta)$ is $C^1$ in the parameter $\delta$ for values of $\delta>0$. Because the nonlinear terms in $\tilde{N}_1$ are smooth with respect to $\delta$, it then follows that $F$ is also $C^1$ with respect to this parameter when $\delta >0$. 

To establish this same result for $\delta =0$, we first check that $F$ is continuous at this point. This can be done by showing there is a constant $C>0$ such that
\[ \| F(R, h) - F(R, 0)\|_X \leq C|h|.\]
To this end, we determine first that
\begin{align*}
F(R,  h) - F(R,0)  = & \Big ( R + \mathcal{L}^{-1}(h) [  -2 \partial_S \phi_0\; \partial_S \phi_1(R_1,h)+  \tilde{N}_1(R, \partial_S \phi (R, h) ; h) ] \Big )\\
& -  \Big( R + \mathcal{L}^{-1}(0) [ -2 \partial_S \phi_0\; \partial_S \phi_1(R_1,0) + \tilde{N}_1(R,  \partial_S \phi (R, 0) ; 0) \Big) \\
& = \mathcal{L}^{-1}(h) [ -2 \partial_S \phi_0\; \partial_S \phi_1(R_1,h) +\tilde{N}_1(R, \partial_S \phi (R, h);h )].
\end{align*}
Then
\begin{align*}
 \| F(R, h) - F(R,0)\|_X & \leq  h \| \mathcal{L}^{-1}(h) \| \left[ \|  2 \partial_S \phi_0\; \partial_S \phi_1(R_1,h)/h\|_Z+  \|  \tilde{N}_1(R, \phi(R, h) , h) /h \|_Z \right] \\
 \| F(R,  h) - F(R,0)\|_X & \leq h  C(\gamma),
\end{align*}
where  the last inequality holds since both, $\tilde{N}$ and $\partial_S \phi_1(R,\delta)$, are order $\rmO(\delta)$ as $\delta \to 0$.
From the above calculation we also deduce that $D_\delta F\mid_{(R,  0)}$ is bounded.
As a result the map $F$ is $C^1$ for $\delta \in \R_+$, and the results of the proposition then follow.

\end{proof}

We now proceed with the proof of Lemma \ref{l:bounded_inverse} which shows that the operator $\mathcal{L}(\delta):X \longrightarrow Z$ is invertible.

\begin{Lemma}\label{l:bounded_inverse}
Let $\delta>0$, and take $\gamma_1 \in \R$ and $\sigma_1 \in (-2,0)$ in the definition of the spaces $X$ and $Z$ given in \eqref{e:spaces} . Then, the operator
\[
\begin{array}{r c c c}
\mathcal{L}^{-1}(\delta):& Z & \longrightarrow & X\\
&(f + a/S^2 + b) &\longmapsto & (\delta^2 \Delta_{1,S} -2)^{-1}(f + a/S^2 +b)
\end{array}
\]
is bounded and $C^1$ with respect to the parameter $\delta$.
\end{Lemma}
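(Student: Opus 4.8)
The plan is to exploit the block structure of the spaces in \eqref{e:spaces}. Since $Z = L^2_{r,\gamma_1,\sigma_1+2}(\R^2)\oplus\{1/S^2\}\oplus\R$, it is enough to compute $\mathcal{L}^{-1}(\delta)$ on each summand and check that the result lands in $X = H^2_{r,\gamma_1,\sigma_1}(\R^2)\oplus\{\zeta\}\oplus\R$ with the stated bounds. First I would record the elementary identities $\Delta_{1,S}\,1 = -1/S^2$, so that $(\delta^2\Delta_{1,S}-2)\,1 = -\delta^2/S^2 - 2$, together with the defining relation $(\delta^2\Delta_{1,S}-2)\zeta = -1$. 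Solving the resulting linear relations for the action of the inverse on the constant and $1/S^2$ directions yields the explicit formula
\[
\mathcal{L}^{-1}(\delta)\Big(f + \tfrac{a}{S^2} + b\Big) = (\delta^2\Delta_{1,S}-2)^{-1}f + \Big(\tfrac{2a}{\delta^2} - b\Big)\zeta - \tfrac{a}{\delta^2},
\]
which is manifestly an element of $X$. The genuine direct-sum structure of $Z$ (the functions $1/S^2$ and constants fail the far-field decay forced by $\gamma_1>-1$, hence lie outside $L^2_{r,\gamma_1,\sigma_1+2}(\R^2)$) guarantees that this reading is unambiguous and that $\mathcal{L}(\delta):X\to Z$ is a bijection.

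The one analytic ingredient is invertibility and boundedness of $(\delta^2\Delta_{1,S}-2)^{-1}\colon L^2_{r,\gamma_1,\sigma_1+2}(\R^2)\to H^2_{r,\gamma_1,\sigma_1}(\R^2)$. Here I would use the change of variables $\xi = \sqrt{2}\,S/\delta$ already employed in Lemma \ref{l:zeta}, under which $\delta^2\Delta_{1,S}-2$ becomes $2(\Delta_{1,\xi}-\Id)$. Since $\sigma_1\in(-2,0)$, Proposition \ref{p:fredholm1} gives invertibility of $\Delta_1-\Id$ on the doubly-weighted spaces, and for each fixed $\delta>0$ the $S$- and $\xi$-weighted spaces coincide as sets with equivalent norms (the equivalence constants depending on $\delta$), so that the solution operator is bounded into $H^2_{r,\gamma_1,\sigma_1}(\R^2)$. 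Combined with the explicit coefficients above, this yields $\|\mathcal{L}^{-1}(\delta)z\|_X \le C(\delta)\,\|z\|_Z$ for each $\delta>0$; the bound degenerates as $\delta\to0^+$, which is harmless because the hypothesis is $\delta>0$.

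For the $C^1$ dependence I would argue abstractly rather than through the scaling. The finite-dimensional coefficients $\tfrac{2a}{\delta^2}-b$ and $-\tfrac{a}{\delta^2}$ are visibly smooth in $\delta$ on $(0,\infty)$. For the Sobolev block, the family $A(\delta) := \delta^2\Delta_{1,S}-2$ is polynomial, hence $C^\infty$, as a map into the bounded operators $H^2_{r,\gamma_1,\sigma_1}(\R^2)\to L^2_{r,\gamma_1,\sigma_1+2}(\R^2)$: indeed $\Delta_{1,S}$ is bounded between these spaces, since in applying it to $R\in H^2_{r,\gamma_1,\sigma_1}(\R^2)$ each of $\partial_{SS}R$, $\tfrac1S\partial_S R$ and $\tfrac1{S^2}R$ lies in $L^2_{r,\gamma_1,\sigma_1+2}(\R^2)$, the two powers of $1/S$ being exactly compensated by the two extra units of $m$-weight in the norm. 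As $A(\delta)$ is invertible for every $\delta>0$ and inversion is a smooth map on the open set of invertible operators, $\delta\mapsto A(\delta)^{-1}$ is $C^1$ on $(0,\infty)$ with derivative $-A(\delta)^{-1}(2\delta\,\Delta_{1,S})A(\delta)^{-1}$. Reading $X$ and $Z$ in the fixed coordinates $H^2\oplus\R\oplus\R$ and $L^2\oplus\R\oplus\R$, all three components of $\mathcal{L}^{-1}(\delta)$ are then $C^1$ in $\delta$.

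The main obstacle is bookkeeping rather than estimation: the basis element $\zeta$ of $X$ itself depends on $\delta$ through $(\delta^2\Delta_{1,S}-2)\zeta=-1$, and the rescaling $\xi=\sqrt2\,S/\delta$ mixes the near-origin weight $m^{\sigma_1}$ with the far-field weight $\langle\cdot\rangle^{\gamma_1}$, which scale by different powers of $\delta$. Both issues are sidestepped by working in the fixed coordinate decompositions of $X$, $Y$, $Z$ and by invoking the abstract smoothness-of-inversion statement for the $L^2$ block, so that no $\delta$-uniform comparison of the mixed weighted norms is required.
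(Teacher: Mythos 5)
Your proposal is correct, and its boundedness half is essentially the paper's own Step A: both of you rescale via $\xi=\sqrt{2}\,S/\delta$ so that Proposition \ref{p:fredholm1} gives the inverse on the Sobolev block, and both compute the action on $a/S^2+b$ explicitly — the paper's splitting into $f_2=a/S^2+2a/\delta^2$ and $f_3=b-2a/\delta^2$ is exactly your $2\times 2$ linear system in disguise, and the coefficients $-a/\delta^2$ and $2a/\delta^2-b$ agree. Where you genuinely diverge is the $C^1$ claim. The paper argues by a difference quotient and the resolvent identity $\mathcal{L}^{-1}(\delta+h)-\mathcal{L}^{-1}(\delta)=-\big((\delta+h)^2-\delta^2\big)\,\mathcal{L}^{-1}(\delta)\,\Delta_{1,S}\,\mathcal{L}^{-1}(\delta+h)$, which forces it to prove that $\Delta_{1,S}$ is bounded from \emph{all} of $X$ into $Z$ — in particular on the $\zeta$-direction, which is where Lemma \ref{l:zeta} and the decomposition $\zeta=\zeta_1+\zeta_f$ are needed. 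You instead treat the two finite-dimensional directions through the explicit rational coefficients $2a/\delta^2-b$ and $-a/\delta^2$, and the Sobolev block through smoothness of operator inversion applied to the polynomial family $A(\delta)=\delta^2\Delta_{1,S}-2$; this needs only the elementary boundedness of $\Delta_{1,S}$ between the Sobolev blocks, dispenses with Lemma \ref{l:zeta} in this step, yields $C^\infty$ rather than merely $C^1$, and reproduces on the $L^2$ block the same derivative $-A(\delta)^{-1}\,(2\delta\Delta_{1,S})\,A(\delta)^{-1}$ that the paper obtains. Your fixed-coordinate reading of $X$ is also the more careful handling of a point the paper passes over: since $\zeta$ solves $(\delta^2\Delta_{1,S}-2)\zeta=-1$, the space $X$ itself moves with $\delta$, and the paper's difference $u(\delta)-u(\delta+h)$ silently mixes expansions relative to $\zeta_\delta$ and $\zeta_{\delta+h}$; declaring that $C^1$ means $C^1$ of the three coordinate maps into $H^2_{r,\gamma_1,\sigma_1}(\R^2)\oplus\R\oplus\R$ is what makes the statement, and its use in Proposition \ref{p:R_existence}, unambiguous. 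The only thing the paper's route buys in exchange is the derivative as one closed formula $-2\delta\,\mathcal{L}^{-1}(\delta)\Delta_{1,S}\mathcal{L}^{-1}(\delta)$ valid on all of $Z$, rather than blockwise.
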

\begin{proof}
We split the proof of this lemma into two steps. In Step A, we show that the operator is bounded, and in Step B we show that it is $C^1$ with respect to $\delta.$

{\bf Step A:} 
Suppose
 \[ (f + a/S^2 + b) \in \; Z = L^2_{r,\gamma_1, \sigma_1 +2}(\R^2) \oplus \left\{ \frac{1}{S^2} \right\} \oplus \R.\]
 In what follows we show that 
 \[ (\delta^2 \Delta_{1,S} - 2)^{-1} (f + a/S^2 + b) \in \; X = H^2_{r,\gamma_1, \sigma_1 }(\R^2) \oplus \left\{ \zeta(S) \right\} \oplus \R.\]
 
 First, since $\delta>0$ by Proposition \ref{p:fredholm1}  we know that the operator $(\Delta_1 - \Id): H^2_{r,\gamma_1, \sigma_1}(\R^2) \longrightarrow L^2_{r,\gamma_1, \sigma_1+2} (\R^2)$ has a bounded inverse provided $\sigma_1 \in (-2,0)$. Therefore, by a suitable re-scaling, this same lemma implies that
 \[ (\delta^2 \Delta_{1,S} - 2)^{-1} f \in H^2_{r,\gamma_1,\sigma_1}(\R^2).\]
Next, taking 
$$ ( a/S^2 +b) = \underbrace{ (a/S^2 + 2a/\delta^2)}_{f_2} + \underbrace{(b - 2a/\delta^2)}_{f_3}$$
we show that $(\delta^2 \Delta_{1,S} - 2)^{-1}f_2 \in \R$ and $(\delta^2 \Delta_{1,S} - 2)^{-1}f_3 \in \{ \zeta(S) \}$.

First, notice that the first term
\[ f_2 := \frac{a}{\delta^2}( \delta^2/ S^2 + 2) \in L^2_{r,\bar{\gamma}, \bar{\sigma}+2}(\R^2)\] 
for $\bar{\gamma} \in (-2, -1)$ and $\bar{\sigma} + 2 \in(1,2)$.
By Proposition \ref{p:fredholm1}, it again follows that the function 
\[ (\delta^2 \Delta_{1,S} - 2)^{-1} f_2 =: R_f \; \in H^2_{r,\bar{\gamma}, \bar{\sigma}}(\R^2)\]
exists and is unique.
Since
\[  (\delta^2 \Delta_{1,S} - 2)1 = - ( \frac{\delta^2}{S^2} + 2)  = - \frac{\delta^2}{a} f_2\]
we then obtain that $R_f = - a/ \delta^2 \in \R$.

Finally, because $f_3 := (b - 2a/\delta^2) \in \R$, then
\[ (\delta^2 \Delta_{1,S} - 2)^{-1} f_3 =  f_3 (\delta^2 \Delta_{1,S} - 2)^{-1} 1 \;\in \{ \zeta(S) \} \]
where, we recall that the function $\zeta$ is defined as satisfying  $ - \zeta= (\delta^2 \Delta_{1,S} - 2)^{-1} 1$, see \eqref{e:spaces}.

{\bf Step B:}
We start by showing the operator  $\mathcal{L}^{-1}(\delta)$ is continuous with respect to $\delta$. 
This is equivalent to showing that there exists a constant $C>0$ such that
\[ \sup_{\| f\|_Z=1} \| \left( \mathcal{L}^{-1}(\delta+h) - \mathcal{L}^{-1}(\delta)\right ) f \|_X \leq |h| C.\]

Suppose then that $f \in Z$ with $\| f\|_Z =1$, and let $u(\delta) = \mathcal{L}^{-1}(\delta) f$. Notice that
\begin{align*}
\left( \mathcal{L}^{-1}(\delta+h) - \mathcal{L}^{-1}(\delta) \right) f & =
- \left( u(\delta) - u(\delta +h) \right)\\
& = - \mathcal{L}^{-1}(\delta) \left[  \mathcal{L}(\delta+h) -  \mathcal{L}(\delta) \right] u(\delta + h) \\
& = -(2 \delta h - h^2) \mathcal{L}^{-1}(\delta)   \Delta_{1,S} \mathcal{L}^{-1} (\delta +h) f.
\end{align*}
If we show that the operator 
$ \Delta_{1,S} : X \longrightarrow Z$ is bounded, we then obtain
\begin{align} \label{e:hint_derivative}
 \| \left( \mathcal{L}^{-1}(\delta+h) - \mathcal{L}^{-1}(\delta)\right ) f \|_X &\leq 
 | 2h \delta + h^2| \| \mathcal{L}^{-1}(\delta) \| \| \Delta_{1,S}\| \| \mathcal{L}^{-1}(\delta + h) \| \| f\|_Z
\\
\nonumber
& \leq |h| C,
\end{align}
as desired.

To check that the map $\Delta_{1,S}: X \longrightarrow Z$ is indeed well defined, let $f  = f_1 + a \zeta + b \in X$.
Because $f_1 \in H^2_{r,\gamma_1, \sigma_1}(\R^2)$, it then follows from the definition
of this space that $\Delta_{1,S} f_1 \in L^2_{r,\gamma_1, \sigma_1+2}(\R^2)$.
From Lemma \ref{l:zeta}  we know that the function $\zeta$ can be decomposed as $ \zeta= \zeta_1 + \zeta_f$, where
$\zeta_1 \in H^2_{r,\gamma}(\R^2)$ with $\gamma \in (0,1)$, and $\zeta_f \in \R$.
The definition of this last weighted space then implies that the term $\Delta_{1,S}\; \zeta_1$ is in $L^2_{r,\gamma}(\R^2) \subset L^2_{r,\gamma_1,\sigma_1+2}(\R^2) $. This last embedding holds since $\gamma> \gamma_1$ and $\sigma_1 + 2 >0$ .
Lastly, combining $\zeta_f + b \in \R$ we obtain that $\Delta_{1,S} (\zeta_f +b) = -(\zeta_f+b)/S^2$, showing that $\Delta_{1,S} f  \in Z$.

To complete the proof of this Lemma, notice that from inequality \eqref{e:hint_derivative}  we are able to deduce that the derivative of the operator $\mathcal{L}^{-1}(\delta)$ with respect to $\delta $ is given by
\[ -2 \delta \mathcal{L}^{-1}(\delta) \Delta_{1,S} \mathcal{L}^{-1} (\delta): Z \longrightarrow X.\]
Since this map is the composition of bounded operators, it is well defined. We can therefore conclude that the operator $\mathcal{L}^{-1}(\delta): Z \longrightarrow X$ is $C^1$ with respect to $\delta$, for $\delta>0$.
\end{proof}

\subsection{Step 3}\label{ss:step3}
In this section we consider the nonlinear terms, 
\begin{align*}
 \tilde{N}_1(R_1, \partial_S \phi_1;\delta) &= N_1(R_1, \phi_1;\delta,\delta^4) +(1-\rho_0)[ 2 \partial_S \phi_0 \partial_S \phi_1]+ 3(1- \rho_0^2)(R_0 + \delta R_1) / \delta\\
 \tilde{N}_2(R_1,  \partial_S \phi_1; \delta) &=N_2(R_1, \phi_1; \delta,\delta^4) -(1-\rho_0)[   \Delta_{0,S} \phi_1 ] + 3 \beta (1- \rho_0^2)(R_0 + \delta R_1)/ \delta,
 \end{align*}
 where $N_1$ and $N_2$ are as in expressions  \eqref{e:nonlinear1} and \eqref{e:nonlinear2}, respectively, with $\eps = \delta^4$.
We show that these nonlinearities define bounded operators between appropriate weighted Sobolev spaces. 
We start with two preliminary results followed by two lemmas regarding terms of the form $(\partial_S \phi_0 + \delta \partial_S \phi_1)$ and $(R_0 + \delta R_1)$. We then move on to prove the main result of this subsection in Proposition \ref{p:nonlinear}.

{\bf Notation:} Throughout this section $\phi_0$ and $R_0$ are as in Proposition \ref{p:dacayRphi}. We also let 
\begin{align}\label{e:phi}
\partial_S \phi_1(S) & = \partial_S \phi_n(S) +  \frac{a}{S} +  \partial_S \phi_f \in Y,\\
\label{e:R}
R_1(S) & = R_n(S) +a \zeta(S)+  R_f \in X, \\
\end{align}
where the spaces $X = H^2_{r,\gamma_1,\sigma_1}(\R^2) \oplus \left\{\zeta(S) \right \} \oplus \R $ and $Y= H^1_{r,\gamma_1,\sigma_1+1}(\R^2) \oplus \{ \frac{1}{S} \}  \oplus \R $ are defined in \eqref{e:spaces}. (Recall that we assume $\gamma \in (-1,0)$, while $\sigma \in (-2,-1)$).

\begin{Lemma}\label{l:embedding}
Let $k$ be an integer satisfying $k \leq 3$, and take $\gamma\geq \gamma_1 >-1$, 
and $\sigma >-1$. Then, the embedding
 $$H^3_{r,\gamma}(\R^2) \subset H^k_{r,\gamma_1, \sigma}(\R^2)$$ holds.
\end{Lemma}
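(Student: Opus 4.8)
The plan is to establish the continuous inclusion by proving the norm estimate $\|u\|_{H^k_{r,\gamma_1,\sigma}(\R^2)} \leq C\,\|u\|_{H^3_{r,\gamma}(\R^2)}$ for every radial $u \in C_0^\infty(\R^2,\C)$, and then to extend it to all of $H^3_{r,\gamma}(\R^2)$ by density, both spaces being completions of $C_0^\infty$. To organize the estimate I would split $\R^2 = B_1 \cup (\R^2\setminus B_1)$ and bound the doubly-weighted norm on each region separately, using the subadditivity $\|\cdot\|_{H^k_{\gamma_1,\sigma}(\R^2)} \le \|\cdot\|_{H^k_{\gamma_1,\sigma}(B_1)} + \|\cdot\|_{H^k_{\gamma_1,\sigma}(\R^2\setminus B_1)}$ already employed throughout Section \ref{s:fredholm}.

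On the far field $\R^2\setminus B_1$ the weight $m(|x|)$ plays no role, so that $\|\cdot\|_{H^k_{\gamma_1,\sigma}(\R^2\setminus B_1)} = \|\cdot\|_{H^k_{\gamma_1}(\R^2\setminus B_1)}$, exactly as recorded in the proof of Proposition \ref{p:fredholm1}. Hence on this region the claim reduces to the two elementary monotonicities of the ordinary weighted spaces stated in Subsection \ref{ss:spaces}: since $k \le 3$ we have $H^3_{\gamma} \subset H^k_{\gamma}$, and since $\gamma \ge \gamma_1$ (so that $\langle x\rangle^{\gamma_1} \le \langle x\rangle^{\gamma}$ because $\langle x\rangle \ge 1$) we have $H^k_{\gamma} \subset H^k_{\gamma_1}$. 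Chaining these gives $\|u\|_{H^k_{\gamma_1}(\R^2\setminus B_1)} \le \|u\|_{H^3_{\gamma}(\R^2\setminus B_1)} \le \|u\|_{H^3_{\gamma}(\R^2)}$.

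On the unit ball I would use $m(|x|) = |x|$ and $\langle x\rangle \sim 1$, so that up to constants $\|u\|_{H^k_{\gamma_1,\sigma}(B_1)} \sim \sum_{|\alpha|\le k} \| |x|^{\sigma+|\alpha|} D^\alpha u\|_{L^2(B_1)}$. For every term with $|\alpha| \ge 1$ the exponent satisfies $\sigma + |\alpha| > -1 + 1 = 0$, hence $|x|^{\sigma+|\alpha|} \le 1$ on $B_1$ and the term is dominated by $\|D^\alpha u\|_{L^2(B_1)} \le C\|u\|_{H^3_\gamma(\R^2)}$, the weight $\langle x\rangle^\gamma$ being bounded above and below by positive constants on $B_1$. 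The only delicate term is $|\alpha| = 0$ together with $\sigma \in (-1,0)$, where $|x|^\sigma$ is singular at the origin; this is the main obstacle. Here I would invoke the two-dimensional Sobolev embedding $H^3(B_1) \hookrightarrow L^\infty(B_1)$ (in $\R^2$ already $H^2$ suffices, so the regularity is ample) to write
\begin{equation*}
\| |x|^{\sigma} u\|_{L^2(B_1)}^2 \le \|u\|_{L^\infty(B_1)}^2 \int_{B_1} |x|^{2\sigma}\,dx = 2\pi\,\|u\|_{L^\infty(B_1)}^2 \int_0^1 r^{2\sigma+1}\,dr,
\end{equation*}
and the radial integral is finite precisely because $2\sigma + 1 > -1$, i.e. because $\sigma > -1$. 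Since $\|u\|_{L^\infty(B_1)} \le C\|u\|_{H^3(B_1)} \le C'\|u\|_{H^3_\gamma(\R^2)}$, this closes the near-origin estimate.

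Collecting the far-field and near-origin bounds yields the desired inequality. The hypotheses enter transparently: $\sigma > -1$ is forced by integrability of $|x|^{2\sigma+1}$ at the origin, while $k \le 3$ is needed so that every derivative appearing in the target norm is among those controlled by $H^3_\gamma$. I expect no further difficulty beyond the singular $|\alpha|=0$ term, since all other contributions reduce to the two monotonicity embeddings already established for the unweighted-at-origin spaces.
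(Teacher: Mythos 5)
Your proposal is correct and follows essentially the same route as the paper's own proof: a split into $B_1$ and $\R^2\setminus B_1$, weight monotonicity ($\langle x\rangle^{\gamma_1}\le\langle x\rangle^{\gamma}$, $k\le 3$) in the far field, the bound $|x|^{\sigma+|\alpha|}\le 1$ for $|\alpha|\ge 1$ near the origin, and the Sobolev embedding into $L^\infty(B_1)$ together with integrability of $r^{2\sigma+1}$ (i.e.\ $\sigma>-1$) for the singular $|\alpha|=0$ term. The only cosmetic differences are that you make the density extension explicit and quote $H^3(B_1)\hookrightarrow L^\infty(B_1)$ directly where the paper passes through $C^1(B_1)$.
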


\begin{proof}
Let $f \in H^3_{r,\gamma}(\R^2)$. We first study the behavior of this function in the far field $\R^2 \setminus B_1$, where $B_1$ is the unit ball. Since $ \gamma_1\leq \gamma$, we have that the inequality $\langle x \rangle^{\gamma_1}< \langle x \rangle^\gamma$ holds. It  then follows that  for all multi-index $\alpha$ with $0\leq | \alpha |\leq k$, the function $D^\alpha f \in L^2_{r,\gamma_1,\sigma+ |\alpha|}(\R^2 \setminus B_1)$. 

To show that $f$ is in $H^k_{r,\gamma_1, \sigma}(B_1)$, notice first that we have the following relations,
\[ H^3_{r,\gamma}(B_1) \equiv H^3(B_1) \subset C^1(B_1),\] 
Here, the first equivalence is a consequence of the weight $\langle x \rangle$ being bounded near the origin, and the last
inclusion follows from standard Sobolev embeddings.
As a result, we can bound the norm
\[ \| f \|_{L^2_{r,\gamma_1, \sigma}(B_1) } \leq \|f\|_{L^\infty(B_1)}^2 \int_0^1 r^{2 \sigma } r \;dr <\infty,\]
where the last inequality follows from our assumption that $\sigma >-1$.
On the other hand, for any derivative of order $1\leq s\leq k$, which for simplicity we represent as $D^sf$, we have
that the norm
\[ \| D^s f\|_{L^2_{r,\gamma_1, \sigma +s }(B_1)} \leq C \int_0^1 |D^s f|^2 r^{2(\sigma +s)} \;r \;dr \leq C \int_0^1 |D^s f|^2 \;r \;dr <\infty.\]
In this case, the second inequality is a consequence of $\sigma +s >0$, while the last inequality follows from our
assumption that $f \in H^3_{r,\gamma}(\R^2)$. The results of the Lemma then follow.
\end{proof}

\begin{Lemma} \label{l:product2}
Let  $\sigma \in \R$, and take $\gamma_1, \gamma_2>-1$.
Then, given $f \in H^3_{r,\gamma_1}(\R^2)$ and $g \in H^2_{r,\gamma_2, \sigma}(\R^2)$, their product
\[ fg \in H^2_{r,\gamma, \sigma}(\R^2),\]
where $\gamma = \min\{ \gamma_1, \gamma_2\}$.
\end{Lemma}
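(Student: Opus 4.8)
The plan is to follow the template already used for Lemma \ref{l:banach_alg} and Lemma \ref{l:product_rule}: split $\R^2$ into the far field $\R^2\setminus B_1$, where the $m$-weight is trivial and the doubly-weighted norm reduces to $\|\cdot\|_{L^2_{\gamma}}$, and the unit ball $B_1$, where $\langle x\rangle\sim 1$ and $m(|x|)\sim|x|=r$. Since $\gamma_1,\gamma_2>-1$, Lemma \ref{l:decay} guarantees that both $f$ and $g$ decay at infinity, and the Sobolev embedding $H^3(\R^2)\subset C^1(\R^2)$ shows that $f$ and $\nabla f$ are bounded and continuous. Because $H^3_{\gamma_1}(B_1)=H^3(B_1)$ and $H^2_{\gamma_2,\sigma}(B_1)=H^2_{\sigma}(B_1)$ (the $\langle x\rangle$-weight is equivalent to $1$ near the origin), it then suffices to verify, via the Leibniz rule $D^\alpha(fg)=\sum_{\beta\le\alpha}\binom{\alpha}{\beta}D^\beta f\,D^{\alpha-\beta}g$, that each term lies in $L^2_{\gamma,\sigma+|\alpha|}$ for every multi-index with $|\alpha|\le 2$.

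In the far field I would use that $g$ is bounded there (it decays by Lemma \ref{l:decay} and is locally $H^2\subset C^0$), together with the boundedness of $f$ and $\nabla f$. For $|\beta|\le 1$ the factor $D^\beta f$ is bounded and the remaining factor $D^{\alpha-\beta}g$ lies in $L^2_{\gamma_2}(\R^2\setminus B_1)\subset L^2_{\gamma}(\R^2\setminus B_1)$, using $\gamma=\min\{\gamma_1,\gamma_2\}\le\gamma_2$; for $|\beta|=2$ I instead pair the $L^2_{\gamma_1}$-factor $D^2 f$ with the bounded factor $g$, again landing in $L^2_{\gamma}$. This settles all of $\R^2\setminus B_1$.

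Near the origin the terms with $|\beta|\le 1$ are immediate: the factor $D^\beta f$ is bounded, the factor $D^{\alpha-\beta}g$ lies in $L^2_{\sigma+|\alpha|-|\beta|}(B_1)$, and the elementary embedding $L^2_{\sigma+|\alpha|-1}(B_1)\subset L^2_{\sigma+|\alpha|}(B_1)$ (valid because $r\le 1$ on $B_1$, so the larger weight-exponent gives the smaller integrand) absorbs the missing weight. The one step I expect to be the genuine obstacle is the top term $D^2 f\cdot g$, arising when $|\alpha|=|\beta|=2$, because in two dimensions $D^2 f$ only belongs to $H^1(B_1)$ and is therefore not pointwise bounded; the "multiply two pointwise bounds" scheme of Lemma \ref{l:product_rule} fails for this factor.

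The idea for this critical term is to keep $D^2 f$ in $L^2$ and instead extract a pointwise bound on $g$ from Lemma \ref{l:decay_zero}: since $g\in L^2_{\sigma}(B_1)$ there is $a_g>-\sigma-1$ with $|g(x)|\le C|x|^{a_g}$ near the origin. Then
\[
\int_{B_1}|D^2 f|^2|g|^2\, r^{2(\sigma+2)}\,dx \;\le\; C\int_{B_1}|D^2 f|^2\, r^{\,2a_g+2\sigma+4}\,dx,
\]
and because $2a_g+2\sigma+4>2$ the radial weight is bounded by $1$ on $B_1$, so the integral is controlled by $\|D^2 f\|_{L^2(B_1)}^2<\infty$. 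Note that this estimate uses no sign condition on $\sigma$, which is exactly what is needed for the hypothesis $\sigma\in\R$. Assembling the far-field and near-origin bounds over all $|\alpha|\le 2$ then gives $D^\alpha(fg)\in L^2_{\gamma,\sigma+|\alpha|}(\R^2)$ and hence $fg\in H^2_{r,\gamma,\sigma}(\R^2)$, completing the proof.
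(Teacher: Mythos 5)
Your proposal is correct and follows the same scaffolding as the paper's proof: split into the far field $\R^2\setminus B_1$, where decay of both factors (from $\gamma_1,\gamma_2>-1$) reduces everything to an $L^2_{\gamma}$ estimate with $\gamma=\min\{\gamma_1,\gamma_2\}$, and the ball $B_1$, where the Leibniz expansion is analyzed term by term; you also correctly single out $D^2f\cdot g$ as the only genuinely delicate term, since $D^2 f$ is merely $H^1(B_1)$ and hence not pointwise bounded in two dimensions. Where you diverge is in the device used for that critical term. The paper proves that the \emph{weighted} function $g\,r^{\sigma+2}$ belongs to $H^2(B_1)$ (each derivative of $g\,r^{\sigma+2}$ is dominated by a term of the doubly-weighted norm of $g$, because lowering the power of $r$ only strengthens the weight on $B_1$), and then uses the Sobolev embedding $H^2(B_1)\subset C^0(B_1)$ to write $D^2f\cdot g\,r^{\sigma+2}$ as a bounded function times an $L^2(B_1)$ function. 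You instead invoke the pointwise characterization of Lemma \ref{l:decay_zero} to get $|g|\le C r^{a_g}$ with $a_g>-\sigma-1$ and absorb the weight, reaching the same conclusion; as you note, neither route needs a sign condition on $\sigma$. The trade-off is that the paper's argument rests only on the definition of the norm plus a standard Sobolev embedding, whereas yours inherits the ``membership implies pointwise bound'' direction of Lemma \ref{l:decay_zero}, whose proof is the least airtight part of Section \ref{ss:spaces}; since the paper itself leans on that same direction in the proofs of Lemmas \ref{l:banach_alg} and \ref{l:product_rule}, your argument is on equal footing within the paper's conventions, but the paper's treatment of this particular lemma is the more self-contained of the two.
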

\begin{proof}
Because the value of $\gamma_1,\gamma_2>-1$ functions in the spaces $H^3_{r,\gamma_1}(\R^2)$ and  $H^2_{r,\gamma_2,\sigma}(\R^2)$, as well as their derivatives, decay at infinity. As a result, given an index $\alpha$ with $0\leq | \alpha| \leq k$, the expressions $D^\alpha(fg)$ are bounded by an
$L^2_{r,\gamma}(\R^2\setminus B_1)$ function, where $\gamma =\min\{ \gamma_1,\gamma_2\}$. We may thus conclude that the product $fg$ is in  
 $H^k_{r,\gamma,\sigma}(\R^2 \setminus B_1)$. 
 
To show that the function $fg$ has the desired properties near the origin we use the following inclusions,
\[ f \in H^3_{r,\gamma_1}(B_1) \subset H^3(B_1) \subset C^1(B_1)\]
\[g r^{\sigma +2} \in H^k(B_1) \subset C^{k-2}(B_1).\]
These follow from Sobolev embeddings and the 
inequality $|g| r^{\sigma +s} <|g| r^\sigma$,
which is valid for values of $s>0$ and $r<1$.
A short computation then shows that any 
function of the form $D^\alpha (fg) r^{\sigma +|\alpha|} $, where $0\leq | \alpha | \leq k$, 
can be written as the product of a $C(B_1)$ function
and an element of $L^2(B_1)$. For example, letting $D^2(fg)$ denote a derivative of order $2$, 
we can use the product rule to write
\[ D^3(fg) r^{\sigma+2} = (D^2 f ) g r^{\sigma +2} +  (Df)(Dg r^{\sigma+2}) + f (D^2 g r^{\sigma+2} ).\]
In this case, the functions $ g r^{\sigma +2} , (Df)$ and $f$ are bounded, while the remaining terms are in $L^2(B_1)$.
 The results of the Lemma then immediately follow.
\end{proof}

With the help of the above Lemmas, and properties of the spaces $H^k_{r,\gamma, \sigma}(\R^2)$ summarized
in Subsection \ref{ss:spaces}, we now bound the terms $(\partial_S \phi_0 + \delta \partial_S \phi_1)^2 $
and $(R_0 + \delta R_1)^m$ in appropriate weighted spaces. These lemmas will then be used to prove the main result of this
subsection, Proposition \ref{p:nonlinear}. When looking at the proofs of these results, it is worth keeping in mind Figure \ref{f:decay}, which summarizes decay properties of the spaces, $H^k_{r,\gamma, \sigma}(\R^2)$ and $H^k_{r,\gamma}(\R^2)$.

\begin{Lemma} \label{l:phi_0}
Let $\gamma_1 \in (-1,0)$, $\sigma_1 \in (-2,-1)$, define $Y = H^1_{r,\gamma_1, \sigma_1+1}(\R^2) \oplus \left \{ \frac{1}{S}\right\} \oplus \R$, and take
\[ \partial_S \phi_1 = \partial_S \phi_n + \frac{a}{S} + \partial_S \phi_f \in Y.\] 
Then, the expression
\[ (\partial_S \phi_0 + \delta \partial_S \phi_1)^2 \in L^{2}_{r,\gamma_1,\sigma_1+2}(\R^2) \oplus \left\{ \frac{1}{S^2} \right\} \oplus \R.\]
\end{Lemma}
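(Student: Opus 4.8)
The plan is to write $\partial_S \phi_0 = p_0 + \kappa$, where $p_0 := \partial_S\phi_0 - \kappa$, and then expand the square, sorting the resulting terms into the three summands of the target space. From Proposition \ref{p:dacayRphi} we know $\partial_S\phi_0 \to \kappa$ with $\kappa = -\Lambda/\beta > 0$, that $(\partial_S\phi_0-\kappa)\sim \rmO(1/S)$ in the far field, and (by the Remark following that proposition) that $\partial_S\phi_0 \sim \rmO(1)$ near the origin; hence by item iii) of that proposition, taking $\bar{\gamma}=\gamma_1\in(-1,0)$, we have $p_0 \in H^3_{r,\gamma_1}(\R^2)$. I would then group the pieces by setting $A = p_0 + \delta\partial_S\phi_n$, $B = \delta a/S$, and $C = \kappa + \delta\partial_S\phi_f$, so that $(\partial_S\phi_0+\delta\partial_S\phi_1)^2 = A^2 + 2AB + 2AC + 2BC + B^2 + C^2$. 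The constant $C^2 = (\kappa+\delta\partial_S\phi_f)^2$ is the $\R$-component, and $B^2 = \delta^2 a^2/S^2$ is the $\{1/S^2\}$-component; the remaining four terms must be shown to lie in $L^2_{r,\gamma_1,\sigma_1+2}(\R^2)$.

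First I would establish that $A\in H^1_{r,\gamma_1,\sigma_1+1}(\R^2)$. Indeed $\partial_S\phi_n$ lies in this space by the definition of $Y$, while Lemma \ref{l:embedding} (with $\gamma = \gamma_1$ and $\sigma = \sigma_1+1 > -1$, using $\sigma_1 > -2$) gives $p_0 \in H^3_{r,\gamma_1}(\R^2) \subset H^2_{r,\gamma_1,\sigma_1+1}(\R^2) \subset H^1_{r,\gamma_1,\sigma_1+1}(\R^2)$. For $A^2$ I would apply Lemma \ref{l:product_rule} with $f=g=A$, which yields $A^2 \in H^1_{r,\gamma_1,2\sigma_1+3}(\R^2)$; since $\sigma_1 < -1$ forces $2\sigma_1+3 < \sigma_1+2$, the doubly-weighted embedding then gives $A^2 \in L^2_{r,\gamma_1,\sigma_1+2}(\R^2)$, exactly as in the treatment of the term $h\,\partial_S\phi_n$ in the proof of Proposition \ref{p:phi}.

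For the three cross terms I would argue directly from the definition of the norm. The term $2AC$ is a constant multiple of $A \in L^2_{r,\gamma_1,\sigma_1+1}(\R^2) \subset L^2_{r,\gamma_1,\sigma_1+2}(\R^2)$. For $2AB = 2\delta a\,(A/S)$, the factor $1/S \sim m(|x|)^{-1}$ near the origin raises the admissible $m$-weight by one, so $A \in L^2_{r,\gamma_1,\sigma_1+1}(\R^2)$ implies $A/S \in L^2_{r,\gamma_1,\sigma_1+2}(\R^2)$ (and $1/S$ only helps in the far field). Finally $2BC$ is a constant multiple of $1/S$, and a direct computation of $\|1/S\|_{L^2_{r,\gamma_1,\sigma_1+2}(\R^2)}$ shows convergence near the origin because $2\sigma_1+3 > -1$ (i.e.\ $\sigma_1 > -2$) and in the far field because $\gamma_1 < 0$. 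Collecting $A^2 + 2AB + 2AC + 2BC \in L^2_{r,\gamma_1,\sigma_1+2}(\R^2)$, together with the isolated $B^2$ and $C^2$, produces the asserted decomposition.

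The main obstacle is bookkeeping the weights correctly for $A^2$: one must invoke Lemma \ref{l:product_rule} to land in $H^1_{r,\gamma_1,2\sigma_1+3}(\R^2)$ and then use the fact that $\sigma_1 \in (-2,-1)$ (so $\sigma_1 < -1$) to embed into the target $L^2_{r,\gamma_1,\sigma_1+2}(\R^2)$; this is the only place where the strict inequality $\sigma_1<-1$, rather than merely $\sigma_1<0$, is essential. Everything else reduces to the explicit near-origin and far-field integrability computations already used repeatedly in Section \ref{s:fredholm}.
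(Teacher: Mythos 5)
Your proposal is correct and follows essentially the same route as the paper: the identical decomposition $h = (\partial_S\phi_0-\kappa)+\delta\partial_S\phi_n$, $\delta a/S$, $c=\kappa+\delta\partial_S\phi_f$, the same use of Lemma \ref{l:embedding} to place $(\partial_S\phi_0-\kappa)$ in $H^1_{r,\gamma_1,\sigma_1+1}(\R^2)$, and the same application of Lemma \ref{l:product_rule} plus the embedding forced by $2\sigma_1+3<\sigma_1+2$ (i.e.\ $\sigma_1<-1$) to control the square term. Your treatment of the cross terms is just a slightly more explicit version of the paper's weight bookkeeping, so there is nothing to correct.
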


\begin{proof}
From Proposition \ref{p:dacayRphi} we know that the function $\partial_S \phi_0 \to \kappa$ as $S \to \infty$, with $\kappa \in \R$; while  the function $(\partial_S \phi_0 -\kappa ) \in H^3_{r,\gamma_1}(\R^2)$.
We can then expand
\begin{align*}
(\partial_S \phi_0 + \delta \partial_S \phi_1)  = 
& (\partial_S \phi_0 - \kappa ) + \delta \partial_S \phi_n + \delta  \frac{a}{S} + (\kappa + \delta \partial_S \phi_f).
\end{align*}
Because $\sigma_1+1 > -1$, Lemma \ref{l:embedding} shows that $H^3_{r,\gamma_1}(\R^2) \subset H^1_{r,\gamma_1, \sigma_1 +1}(\R^2)$. We can therefore write
\begin{align*}
(\partial_S \phi_0 + \delta \partial_S \phi_1)^2 & = ( h + \delta \frac{a}{S} + c)^2\\
& = [ h^2 + 2\delta h \frac{a}{S} + 2c h + 2 \delta c \frac{a}{S}] + \delta^2 \frac{a^2}{S^2} + c^2
\end{align*}
where $h = (\partial_S \phi_0 - \kappa ) + \delta \partial_S \phi_n  \in H^1_{r,\gamma_1, \sigma_1 +1}(\R^2)$ and $c=(\kappa + \delta \partial_S \phi_f) \in \R$.
To finish the proof of this lemma we must show that the terms in the brackets are in $L^2_{r,\gamma_1, \sigma_1 + 2}(\R^2)$.

Notice first that $ \frac{a}{S} \in L^2_{r,\gamma_1, \sigma_1 +2}(\R^2)$, while the embedding $H^1_{r,\gamma_1, \sigma_1 +1}(\R^2) \subset L^2_{r,\gamma_1,\sigma_1+2}(\R^2)$, which holds since $\sigma_1+1< \sigma_1+2$, shows that $h$ is the correct space.
Recalling the definition of our weighted Sobolev spaces, we also find that $  \frac{a}{S} h \in L^2_{r,\gamma_1, \sigma_1+2}(\R^2)$. Finally, Lemma \ref{l:product_rule} shows that the product $h^2$ is in the space  $H^1_{r,\gamma_1, 2\sigma_1+3}(\R^2) \subset L^2_{r,\gamma_1,\sigma_1+2}(\R^2)$. Notice that this last embedding follows from the
inequality $ 2 \sigma_1 + 3< \sigma_1 +2$, which holds thanks to the assumption 
that $\sigma_1\in (-2,-1)$.
\end{proof}

In this next lemma, the function $\zeta(S)$ is as in \eqref{e:spaces}.

\begin{Lemma} \label{l:R_0}
Let $\gamma_1 \in (-1,0)$, $\sigma_1 \in (-2,-1)$, define $X = H^2_{r,\gamma_1, \sigma_1}(\R^2) \oplus \left\{ \zeta(S) \right \} \oplus \R$, and take
\[ R_1 = R_n + a \zeta(S) + R_f \in X.\]
Then, for any $m \in \N \cup \{0\}$, the expression
\[ (R_0 + \delta R_1)^m \in  H^3_{r,\gamma_1}(\R^2) \oplus H^{2}_{r,\gamma_1,\sigma_1}(\R^2)  \ \oplus \R.\]
\end{Lemma}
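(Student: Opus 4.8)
The plan is to reduce the claim to a multinomial expansion, after first splitting $R_0+\delta R_1$ into a real constant, a piece living in the single-weighted space $H^3_{r,\gamma_1}(\R^2)$, and a piece living in the doubly-weighted space $H^2_{r,\gamma_1,\sigma_1}(\R^2)$. To set this up I would record the decomposition of the two ingredients. By Proposition \ref{p:dacayRphi} (items ii and iii) I may write $R_0=\tfrac12\kappa^2+\tilde R_0$, where $\tilde R_0=R_0-\tfrac12\kappa^2\in H^3_{r,\gamma_1}(\R^2)$ (valid since $\gamma_1\in(-1,0)$) and $\tilde R_0\sim\rmO(1/S)$ at infinity. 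By Lemma \ref{l:zeta}, $\zeta=\zeta_1+\zeta_f$ with $\zeta_1\in H^3_{r,\gamma}(\R^2)$, $\gamma\in(0,1)$, and $\zeta_f\in\R$; the embedding $H^3_{r,\gamma}(\R^2)\subset H^3_{r,\gamma_1}(\R^2)$ (which holds because $\gamma>\gamma_1$) places $\zeta_1$ in $H^3_{r,\gamma_1}(\R^2)$. Writing the generic element $R_1=R_n+a\zeta+R_f\in X$ and collecting terms, I obtain
\[ R_0+\delta R_1 = c + p + q, \]
with $c=\tfrac12\kappa^2+\delta a\zeta_f+\delta R_f\in\R$, $p=\tilde R_0+\delta a\zeta_1\in H^3_{r,\gamma_1}(\R^2)$, and $q=\delta R_n\in H^2_{r,\gamma_1,\sigma_1}(\R^2)$. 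I would also note that $p$ inherits genuine decay, $p\sim\rmO(1/S)$ at infinity, from $\tilde R_0$ and $\zeta_1$.

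Next I would expand $(c+p+q)^m$ by the multinomial theorem and sort the terms $c^ip^jq^k$ by the power $k$ of the doubly-weighted factor. The single term with $j=k=0$ is $c^m\in\R$. The terms with $k=0$, $j\geq1$ assemble into $(c+p)^m-c^m=\sum_{j=1}^m\binom{m}{j}c^{m-j}p^j$, which I claim lies in $H^3_{r,\gamma_1}(\R^2)$. Every remaining term carries at least one factor of $q$: for these I would first invoke Lemma \ref{l:banach_alg} (applicable since $\gamma_1>-1$ and $\sigma_1<-1$) to get $q^k\in H^2_{r,\gamma_1,\sigma_1}(\R^2)$, and then multiply by $p$ repeatedly, applying Lemma \ref{l:product2} at each step with the $H^3$-factor $p$ and the $H^2$-factor $p^{\ell}q^k$, to conclude $p^jq^k\in H^2_{r,\gamma_1,\sigma_1}(\R^2)$; the scalar factors $c^i$ are harmless. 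Summing the three groups gives exactly the claimed decomposition into $H^3_{r,\gamma_1}(\R^2)\oplus H^2_{r,\gamma_1,\sigma_1}(\R^2)\oplus\R$, and the case $m=0$ is the trivial constant $1$.

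The main obstacle is the pure-power claim $p^j\in H^3_{r,\gamma_1}(\R^2)$. The single-weighted space $H^3_{r,\gamma_1}(\R^2)$ with negative weight $\gamma_1\in(-1,0)$ is \emph{not} a Banach algebra in general, since the Sobolev embedding only controls its elements up to slow algebraic growth of order $\langle x\rangle^{|\gamma_1|}$, so I cannot argue abstractly as I do for the $q$-factors. I would instead exploit the concrete structure of $p$: together with its derivatives up to order three, $p$ decays at infinity (as $p\sim\rmO(1/S)$, with derivatives decaying at least as fast), while near the origin $p$ is bounded and $C^3$. Hence for $j\geq2$ the power $p^j$ decays like $\rmO(1/S^j)$ with derivatives controlled termwise by the Leibniz rule, so that, checking each Leibniz term against Lemma \ref{l:decay}, one finds $p^j\in H^3_{r,\gamma}(\R^2)$ for every $\gamma<1$ and in particular $p^j\in H^3_{r,\gamma_1}(\R^2)$; the case $j=1$ is immediate from the decomposition of $p$ above. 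With this point settled, everything else is routine bookkeeping with the cited embeddings and product rules.
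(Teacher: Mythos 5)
Your proposal is correct in outline and, for most of its length, coincides with the paper's own proof: the same three-way splitting of $R_0+\delta R_1$ into a real constant, a singly-weighted piece $p=\tilde R_0+\delta a\zeta_1\in H^3_{r,\gamma_1}(\R^2)$, and a doubly-weighted piece $q=\delta R_n\in H^2_{r,\gamma_1,\sigma_1}(\R^2)$ (the paper writes this as $f+g+d$, with a slip that drops the factors $\delta a$ multiplying $\zeta_1$ and $\zeta_f$, which you fix), the same binomial/multinomial bookkeeping, and the same pairing of Lemma \ref{l:banach_alg} with Lemma \ref{l:product2} to place every term containing at least one factor of $q$ in $H^2_{r,\gamma_1,\sigma_1}(\R^2)$. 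The genuine divergence is in the pure powers $p^j$. The paper disposes of them by asserting that $H^3_{r,\gamma_1}(\R^2)\subset C^1(\R^2)$, hence that $H^3_{r,\gamma_1}(\R^2)$ is a Banach algebra; you reject this and argue instead from the concrete decay of $p$. Your skepticism is well founded, and this is the more robust route: for negative weights the Sobolev argument (applied to $p\,\langle x\rangle^{\gamma_1}\in H^3(\R^2)$) only yields $|f|\lesssim\langle x\rangle^{-\gamma_1}$, and for $\gamma_1\in(-1,-\tfrac12)$ the space $H^3_{r,\gamma_1}(\R^2)$ actually contains unbounded radial functions — e.g.\ smooth bumps of unit width and height $r_n^{-\gamma_1-1/2}/n$ centered at radii $r_n=2^n$ — whose squares fail to lie even in $L^2_{r,\gamma_1}(\R^2)$. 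So the Banach-algebra assertion is false in exactly the generality the lemma permits, and the paper's proof has a gap precisely where you placed your extra work; what the paper's shortcut buys is brevity, what your argument buys is validity for all $\gamma_1\in(-1,0)$, at the cost of invoking the specific structure of $p$.

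One small repair to your own step: Proposition \ref{p:dacayRphi} and Lemma \ref{l:zeta} give pointwise decay of $p$ and of $\partial_S p$ (through the explicit Bessel quotient and the positive-weight memberships of $\partial_S\tilde\phi_2$ and $\zeta_1$), but only weighted-$L^2$ control of $D^2p$ and $D^3p$, so you cannot literally check "each Leibniz term against Lemma \ref{l:decay}". This is harmless because in $D^\alpha(p^j)$, $|\alpha|\le 3$, the higher derivatives occur linearly: the terms have the form $p^{j-1}D^3p$, $p^{j-2}\,Dp\,D^2p$, and $p^{j-3}(Dp)^3$. Boundedness of $p$ and $Dp$, together with $D^2p,\,D^3p$ lying in the weighted space $L^2_{r,\gamma_1}(\R^2)$ (which holds since $p\in H^3_{r,\gamma_1}(\R^2)$), already gives $D^\alpha(p^j)\,\langle x\rangle^{\gamma_1}\in L^2(\R^2)$ for every $|\alpha|\le 3$, which is all that membership in $H^3_{r,\gamma_1}(\R^2)$ requires.
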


\begin{proof}
From Proposition \ref{p:dacayRphi} we know that the function $R_0  \to -\frac{1}{2} \kappa^2$ as $S \to \infty$, with $\kappa \in \R$; while $(R_0 -\frac{1}{2} \kappa^2 ) \in H^3_{r,\gamma_1}(\R^2)$.
Thus, we may expand,
\[ (R_0 + \delta R_1) = (R_0 - \frac{1}{2} \kappa^2)  + \delta R_n + \delta a \zeta(S) + ( \frac{1}{2} \kappa^2 + \delta R_f).\]
Then from Lemma \ref{l:zeta} we know that $\zeta(S) = \zeta_1 + \zeta_f$ where $\zeta_1 \in H^3_{r,\gamma}(\R^2)$ with $\gamma \in (0,1)$, and $\zeta_f$ is a constant. We can therefore write
\[ (R_0 + \delta R_1) = f+ g + d\]
with $f = (R_0 - \frac{1}{2} \kappa^2)  + \zeta_1 \in H^3_{r,\gamma_1}(\R^2)$, $ g = \delta R_n \in H^2_{r,\gamma_1,\sigma_1}(\R^2)$, and $ d = ( \frac{1}{2} \kappa^2 + \delta R_f) \in \R$.

Now, looking at the product,
\[ (f+g)^m = \sum_{\ell =0}^m {m \choose \ell} f^\ell g^{m-\ell},\]
and using Lemma \ref{l:product2}, one can show that all terms  of the form $f^\ell g^{m-\ell}$ are in $H^2_{r,\gamma_1,\sigma_1}(\R^2)$, whenever
$1< \ell , m-\ell <m$. Indeed, since  $H^3_{r,\gamma_1}(\R^2) \subset C^1(\R^2)$, it follows that this space is a Banach algebra and therefore $f^p \in H^3_{r,\gamma_1}(\R^2) $ for any integer $p$.
Similarly, because $\sigma_1 <-1$, it follows from Lemma \ref{l:banach_alg} that the space $H^2_{r,\gamma_1, \sigma_1}(\R^2)$ is also closed under multiplication, so that $g^p$ is in $H^2_{r,\gamma_1, \sigma_1}(\R^2)$, again for any integer $p$. Therefore, $f^\ell g^{m-\ell}$ is always the product of a bounded function, $f^\ell$, times a function in $g^{m-\ell} \in H^2_{r,\gamma_1,\sigma_1}(\R^2)$. If we now include terms with $\ell=0, m$, we may conclude that $(f+g)^m \in  H^3_{r,\gamma_1}(\R^2) \oplus H^{2}_{r,\gamma_1,\sigma_1}(\R^2)$.
If we then write
\[ (f + g + c)^m = \sum_{\ell =0}^m {m \choose \ell} ( f+g)^\ell c^{m-\ell}\]
we immediately see that the results of the lemma hold.

\end{proof}
We now state and prove the main result of this section.

\begin{Proposition}\label{p:nonlinear}
Let $X,Y,Z$ be Banach spaces defined as in 
\eqref{e:spaces}.
Then, the maps $\tilde{N}_i: X \times Y \times \R \longrightarrow Z$, for $i =1,2$, defined by
\begin{align*}
 \tilde{N}_1(R_1, \partial_S \phi_1;\delta) &= N_1(R_1, \phi_1;\delta,\delta^4) +(1-\rho_0)[ 2 \partial_S \phi_0 \partial_S \phi_1]+ 3(1- \rho_0^2)(R_0 + \delta R_1) / \delta\\
 \tilde{N}_2(R_1,\partial_S \phi_1; \delta) &=N_2(R_1, \phi_1; \delta,\delta^4) -(1-\rho_0)[   \Delta_{0,S} \phi_1 ] + 3 \beta (1- \rho_0^2)(R_0 + \delta R_1)/ \delta,
 \end{align*}
 where $N_1$ and $N_2$ are as in expressions  \eqref{e:nonlinear1} and \eqref{e:nonlinear2}, respectively,
 are bounded.
 \end{Proposition}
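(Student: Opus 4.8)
The plan is to establish boundedness by a term-by-term inspection of the expressions \eqref{e:nonlinear1} and \eqref{e:nonlinear2}, together with the three extra terms appearing in $\tilde N_1,\tilde N_2$, showing that each summand defines a bounded map into $Z=L^2_{r,\gamma_1,\sigma_1+2}(\R^2)\oplus\{1/S^2\}\oplus\R$. The first step is to insert the decompositions $R_1=R_n+a\zeta+R_f$ and $\partial_S\phi_1=\partial_S\phi_n+a/S+\partial_S\phi_f$ coming from the definition \eqref{e:spaces} of $X$ and $Y$, and to record the coefficient functions multiplying the unknowns: by Proposition \ref{p:decayg} the factor $\rho_0$ is bounded with $(1-\rho_0^2)\in H^3_{r,\gamma}(\R^2)$, $\gamma\in(0,1)$, and by Proposition \ref{p:dacayRphi} the shifted quantities $(\partial_S\phi_0-\kappa)$ and $(R_0-\tfrac12\kappa^2)$ lie in $H^3_{r,\gamma_1}(\R^2)$. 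Because every monomial in $N_1,N_2$ is built from the two recurring blocks $(\partial_S\phi_0+\delta\partial_S\phi_1)^2$ and $(R_0+\delta R_1)^m$, the bulk of the work is already packaged in Lemma \ref{l:phi_0} and Lemma \ref{l:R_0}, which place these blocks in $L^2_{r,\gamma_1,\sigma_1+2}(\R^2)\oplus\{1/S^2\}\oplus\R$ and in $H^3_{r,\gamma_1}(\R^2)\oplus H^2_{r,\gamma_1,\sigma_1}(\R^2)\oplus\R$, respectively.

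With these blocks in hand, the second step multiplies them against the coefficient functions and sorts the output into the three slots of $Z$. Products of a function in $H^3_{r,\gamma_1}(\R^2)$ (for instance the coefficients $(1-\rho_0^2)$ or $(\partial_S\phi_0-\kappa)$, after splitting off their constant far-field limits) with a function in $H^2_{r,\gamma_1,\sigma_1}(\R^2)$ are controlled by Lemma \ref{l:product2}, while products of two factors that are only in $H^1_{r,\gamma_1,\sigma_1+1}(\R^2)$ are controlled by Lemma \ref{l:product_rule}: they land in $H^1_{r,\gamma_1,2\sigma_1+3}(\R^2)$, and the embedding into the target $L^2_{r,\gamma_1,\sigma_1+2}(\R^2)$ is exactly the inequality $2\sigma_1+3<\sigma_1+2$, i.e. $\sigma_1<-1$. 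The Banach algebra property of $H^2_{r,\gamma_1,\sigma_1}(\R^2)$ (Lemma \ref{l:banach_alg}, valid since $\sigma_1<-1$) lets one take arbitrary integer powers without degrading the weight, while Lemma \ref{l:embedding} upgrades the well-behaved coefficient functions in $H^3_{r,\gamma}(\R^2)$ into the doubly-weighted scale needed for multiplication. Constant multiples of $1/S$ and $1/S^2$ produced along the way are collected into the $\{1/S^2\}$ and $\R$ summands of $Z$; the factor $1/\delta$ in the terms $3(1-\rho_0^2)(R_0+\delta R_1)/\delta$ is harmless because, by Definition \ref{d:g}, $(1-\rho_0^2)=\delta^2 g(S)$ with $g\in H^3_{r,\gamma}(\R^2)$, so the quotient equals $\delta g(S)(R_0+\delta R_1)$ and is in fact of order $\rmO(\delta)$.

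The main obstacle is the diffusive terms, in particular $\delta\,\Delta_{1,S}R_0$ and $\Delta_{0,S}\phi_1$, whose operator $\Delta_{1,S}=\Delta_{0,S}-1/S^2$ introduces a factor $1/S^2$ that threatens the behaviour near the origin. Here the near-origin weight $\sigma_1$ does the work: the profile $R_0=-\tfrac12\rho_0(\partial_S\phi_0)^2$ inherits from $\rho_0\sim\rmO(r)$ (Proposition \ref{p:decayg}, item ii) a linear vanishing at the origin, so $R_0/S^2\sim\rmO(1/S)$ there, and a direct computation shows $1/S\in L^2_{r,\gamma_1,\sigma_1+2}(B_1)$ precisely because $\sigma_1>-2$; the upper constraint $\sigma_1<-1$ and the lower constraint $\sigma_1>-2$ together pin the admissible range $\sigma_1\in(-2,-1)$. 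The remaining diffusive contributions reduce, via the definition of the doubly-weighted norm (Definition \ref{d:doubly-weighted}) and the decomposition $\zeta=\zeta_1+\zeta_f$ of Lemma \ref{l:zeta}, to the action of $\Delta_{1,S}$ on $H^2_{r,\gamma_1,\sigma_1}(\R^2)$ and on constants, the latter producing exactly a $1/S^2$ term. Collecting the norm estimates from all summands yields a bound of the form $\|\tilde N_i(R_1,\partial_S\phi_1;\delta)\|_Z\le C\big(\|R_1\|_X,\|\partial_S\phi_1\|_Y,|\delta|\big)$, which is the asserted boundedness.
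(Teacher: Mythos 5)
Your handling of the polynomial and diffusive terms is correct and follows essentially the same route as the paper: the blocks $(\partial_S\phi_0+\delta\partial_S\phi_1)^2$ and $(R_0+\delta R_1)^m$ are controlled by Lemmas \ref{l:phi_0} and \ref{l:R_0}, products are sorted into $Z$ via Lemmas \ref{l:product_rule}, \ref{l:product2}, \ref{l:banach_alg} and \ref{l:embedding}, the window $\sigma_1\in(-2,-1)$ reconciles the quadratic products and the $1/S^2$ singularities, and the $1/\delta$ factors are absorbed using Definition \ref{d:g}.

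There is, however, a genuine gap: you never treat the terms $\mathrm{Re}\left[\tilde{N}(w;\eps)\rme^{-\rmi\phi}\right]$ and $\mathrm{Im}\left[\tilde{N}(w;\eps)\rme^{-\rmi\phi}\right]$, which enter $N_1$ and $N_2$ through \eqref{e:nonlinear1} and \eqref{e:nonlinear2}, where
\[
\tilde{N}(w;\eps)=\frac{\eps^2 D}{d_R}(1+\rmi\beta)\,\Delta_1|w|^2w+\Bigl(1-\frac{\eps^2D}{d_R}\Delta_1\Bigr)N(w;\eps).
\]
These contributions are not monomials in your two recurring blocks: they depend on the full complex field $w=\rho\,\rme^{\rmi\phi}$, with $\rho=\rho_0+\delta^2(R_0+\delta R_1)$ and $\phi=\phi_0+\delta\phi_1$, and in particular involve $\Delta_1$ acting on $|w|^2w$ and on $N(w;\eps)$. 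So your key claim that ``every monomial in $N_1,N_2$ is built from the two recurring blocks'' fails precisely here, and with it the reduction to Lemmas \ref{l:phi_0} and \ref{l:R_0}. The paper devotes roughly half of its proof (Steps A--C) to these terms: writing $|w|^2w=W\rme^{\rmi\phi}$ with $W=\rho^3$ and expanding, as in \eqref{e:deltaW1},
\[
\bigl(\Delta_1 (W \rme^{\rmi \phi})\bigr) \rme^{-\rmi \phi}
= \Delta_1 W + \rmi\, W \Delta_0 \phi  - 2\rmi\, \partial_r \phi\, \partial_r W - W ( \partial_r \phi)^2,
\]
then verifying each of the four resulting pieces lies in $Z$ (using \eqref{e:W_space}, \eqref{e:phi_space} and the product lemmas); next observing that $|w|^{2n}w\,\rme^{-\rmi\phi}=\rho^{2n+1}$ and showing $\rho^p\in Z$ for odd $p\ge 3$ through the Banach-algebra structure; and finally handling $[\Delta_1(\rho^p\rme^{\rmi\phi})]\rme^{-\rmi\phi}$ by the same expansion. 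The mixed terms $W\Delta_0\phi$, $\partial_r\phi\,\partial_r W$ and $W(\partial_r\phi)^2$ produced by this expansion are absent from your list of building blocks, so without this step your proof of boundedness is incomplete.
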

\begin{proof}
To prove the proposition  one establishes decay properties at infinity and near the origin for each the nonlinear terms in $\tilde{N}_i$, $i=1,2$.
This analysis is very similar to the one done in the proofs of Lemmas \ref{l:phi_0} and \ref{l:R_0}. We therefore omit most of the details and just state the results. We will only show that terms $\tilde{N}(w;\eps)$ appearing in the expressions $N_i$, $i=1,2$ are in the space $Z$. Here $\eps = \delta^4$, but for convenience we omit this detail in the proof and just write $\eps$.

\begin{itemize}

\setlength \itemsep{2ex}
\item Elements in $L^2_{r,\gamma_1, \sigma_1+2}(\R^2)$:\\

\begin{itemize}

\setlength \itemsep{2ex}
\item[\ding{118}] $( 1- \rho_0) ( \Delta_{0,S} \phi_1 + 2 \partial_S \phi_0 \partial_S \phi_1)$
\item[\ding{118}] $( \rho_0 + R_0 + \delta R_1) ( \Delta_{0,S} \phi_0 + \delta \Delta_{0,S} \phi_1)$
\item[\ding{118}] $ (\partial_S R_0 + \delta \partial_S R_1) (\partial_S \phi_0 + \delta \partial_S \phi_1)$
\end{itemize}

\item Elements in $L^2_{r,\gamma_1, \sigma_1 + 2 }(\R^2) \oplus \left \{ \frac{1}{S^2} \right \} \oplus \R$:\\

\begin{itemize}
\setlength \itemsep{2ex}
\item[\ding{118}] $( R_0 + \delta R_1)( \partial_S \phi_0 + \delta \partial_S \phi_1)^2$
\end{itemize}

\item Elements in $L^2_{r,\gamma_1, \sigma_1+1}(\R^2)  \oplus \R$:\\

\begin{itemize}
\setlength \itemsep{2ex}
\item[\ding{118}] $ \rho_0 ( R_0 + \delta R_1)^2$
\item[\ding{118}] $(R_0 + \delta R_1)^3$
\end{itemize}

\item Elements in $H^2_{r,\gamma_1, \sigma_1+1}(\R^2)$:\\

\begin{itemize}
\item[\ding{118}] $ (1- \rho_0) (R_0 + \delta R_1)$
\end{itemize}

\item Elements in $L^2_{r,\gamma_1, \sigma_1+2}(\R^2) \oplus \left\{ \frac{1}{S^2} \right \}$:\\

\begin{itemize}
\item[\ding{118}] $ \Delta_{1,S} ( R_0 + \delta R_1)$
\end{itemize}

\item Elements in $L^2_{r,\gamma_1, \sigma_1 +2 }(\R^2) \oplus \R$:\\

\begin{itemize}
\item[\ding{118}] $ \rho_0( \partial_S \phi_0 + \delta \partial_S \phi_1)^2$
\end{itemize}

\item Since $\rho_0 \in H^3_{r,\gamma}(\R^2) $ we then have that  $\partial_r \rho_0 \in H^2_{r,\gamma}(\R^2)$ and
$\Delta_1 \rho_0 \in H^1_{r,\gamma}(\R^2)$ and $\partial_r \rho \partial_S \phi_0 \in H^2_{r,\gamma}(\R^2)$.

\end{itemize}

The statement of the proposition then follows from the above results, the embeddings $H^s_{r,\gamma_1,\sigma_1}(\R^2)  \subset H^k_{r,\gamma_1, \nu}(\R^2)$, which hold whenever $\sigma_1 <\nu$ and $s>k$,
and the inclusion $H^3_{r,\gamma}(\R^2) \subset H^2_{r,\gamma_1,\sigma_1+2}(\R^2)$ which follows from Lemma \ref{l:embedding}.

We conclude by showing that $\tilde{N}(w; \eps) \rme^{-\rmi \phi} \in Z$. 

First, recall that
\[ \tilde{N}(w;\eps) =   \frac{ \eps^2 D}{d_R} (1 + \rmi \beta) \Delta_{1} |w|^2 w + (1- \frac{\eps^2 D}{d_R} \Delta_{1}) N(w;\eps).\]
where $N(w;\eps)$ is given as in Hypothesis (H1), and is thus composed of 
elements of the form $|w|^{2n} w$ with $n \in \{ 1,2,3, \cdots\}$.
We show that $\tilde{N}(w; \eps) \rme^{-\rmi \phi} \in Z$ in three steps.

{\bf Step A.} We consider first the term  $ (\Delta_{1} |w|^2 w ) \rme^{-\rmi \phi}$. Writing $|w|^2w = \rho^3 \rme^{\rmi \phi}$ and letting $W = \rho^3$, we can then expand
\begin{equation}\label{e:deltaW1}
 ( \Delta_1 W \rme^{\rmi \phi} ) \rme^{-\rmi \phi}
= \Delta_1 W + \rmi W \Delta_0 \phi  - 2\rmi \partial_r \phi \partial_r W - W ( \partial_r \phi)^2.
\end{equation}
In what follows we check that each element  is in the space $Z = L^2_{r,\gamma_1, \sigma_1+2}(\R^2) \oplus \left\{ \frac{1}{S^2} \right\} \oplus  \R$,
keeping in mind that $S = \delta r$. 

We start by noticing that 
\begin{equation}\label{e:W_space}
 W(r) := \rho^3 = ( \rho_0(r) + \delta^2 (R_0 + \delta R_1))^3 \in H^3_{r,\gamma}(\R^2)  \oplus H^2_{r,\gamma_1, \sigma_1}(\R^2) \oplus \R. 
 \end{equation}
This follows from the fact that $(\rho_0-1) \in H^3_{r,\gamma}(\R^2)$ and the results of Lemma \ref{l:R_0}, which show that the space
$H^3_{r,\gamma}(\R^2) \oplus H^2_{r,\gamma_1, \sigma_1}(\R^2) \oplus \R$ is a Banach algebra.
Similarly, in the proof of Lemma \ref{l:phi_0} it is shown that  
\begin{equation}\label{e:phi_space}
\partial_S \phi = \partial_S \phi_0 + \partial_S \phi_1 \in H^1_{r,\gamma_1, \sigma_1+1}(\R^2) \oplus \left \{ \frac{1}{S} \right \} \oplus \R.
\end{equation}
We will use these simplifications in the analysis for each of the terms appearing in expression \eqref{e:deltaW1}.

To prove that $ \Delta_1 W \in Z$, we  use \eqref{e:W_space} and write
\[ \Delta_1 W = \Delta_1 ( h_1 + h_2 + c) = \Delta_1 h_1 + \Delta_1 h_2 + \frac{c}{r^2} \]
where $c \in \R$ and the functions, $h_1$ and $h_2$ are in $H^3_{r,\gamma}(\R^2)$ and $H^2_{r,\gamma_1, \sigma_1}(\R^2)$, respectively. The result then follows from the definition of these spaces, and the embedding $H^1_{r,\gamma}(\R^2) \subset L^2_{r,\gamma_1, \sigma_1+2}(\R^2)$, given by Lemma \ref{l:embedding}.

Next, because the function $W$ is uniformly bounded, to show $W \Delta_0 \phi \in Z$, we only need  to show that $ \Delta_0 \phi \in Z$. To do this, we use \eqref{e:phi_space} and write
\[  \Delta_0 \phi =    \partial_r ( h + \frac{a}{r} + c) + \frac{1}{r} ( h + \frac{a}{r} + c) = \partial_r h + \frac{1}{r} h + \frac{c}{r} \]
where $h \in H^1_{r,\gamma_1, \sigma_1 + 1}(\R^2)$ and $a, c\in \R$. The result again follows from the definition of these spaces, and the fact that the function $\frac{1}{r} \in L^2_{r,\gamma_1, \sigma_1 +2}(\R^2)$ (see Figure \ref{f:decay} in Section \ref{s:fredholm}).

To study the term $\partial_r W \partial_r \phi$, we consider \eqref{e:phi_space} and expand,
\[ \partial_r W \partial_r \phi  = \partial_r W( h + \frac{a}{r} + c) = \partial_r W h  + \frac{a}{r} \partial_r W + c \partial_r W\]
with $h \in H^1_{r,\gamma_1, \sigma_1+1}(\R^2)$ and $a,c \in \R$.
Because 
$$\partial_r W \in H^2_{r,\gamma}(\R^2) \oplus H^1_{r,\gamma_1, \sigma_1 +1}(\R^2) \subset H^1_{r,\gamma_1, \sigma_1 +1}(\R^2)$$ 
(see Lemma \ref{l:embedding}), then $\frac{a}{r} \partial_r W \in L^2_{r,\gamma_1,\sigma_1+2}(\R^2)$.
Similarly, we have that
$\partial_r W \in H^1_{r,\gamma_1, \sigma_1 +1}(\R^2) \subset L^2_{r,\gamma_1, \sigma_1 +2}(\R^2)$, which follows  from the definition of these doubly-weighted Sobolev spaces. 
Lastly, using Lemma \ref{l:product_rule} we see that term $\partial_r W h \in L^2_{r,\gamma_1, 2 \sigma_1 +3}(\R^2) \subset L^2_{r,\gamma_1,\sigma_1+2}(\R^2) $, where this last embedding follows from the inequality $2\sigma_1 +3 \leq \sigma_1 +2$.

Finally, because $W$ is uniformly bounded, the results from Lemma \ref{l:phi_0} allow us to conclude that,
$W(\partial_r \phi)^2 \in L^2_{r,\gamma_1,\sigma_1 +2}(\R^2)\oplus \left \{ \frac{1}{S^2} \right\} \subset Z$.

{\bf Step B.}
We now look at the nonlinear terms included in the expression $N(w;\eps) \rme^{-\rmi \phi}$, which as mentioned above, are of the form
$|w|^{2n}w \rme^{-\rmi \phi} $ with $n \in \{ 1, 2,3, \cdots\}$. Given that $w = \rho \rme^{\rmi \phi}$, we may write,
\[ |w|^{2n}w \rme^{-\rmi \phi}  = \rho^{2n+1}.\]
Thus, it suffices to show that $\rho^p \in Z$ for all odd integers $p \geq 3$. Notice that this easily follows from the definition of $\rho$,
\[ \rho = \rho_0(r) + \delta^2 (R_0 + \delta R_1))^3 \in H^3_{r,\gamma}(\R^2)  \oplus H^2_{r,\gamma_1, \sigma_1}(\R^2) \oplus \R,\]
the fact that this last space is closed under multiplication, and the embeddings
\[ H^3_{r,\gamma}(\R^2)  \oplus H^2_{r,\gamma_1, \sigma_1}(\R^2) \subset H^2_{r,\gamma_1, \sigma_1}(\R^2)  \subset L^2_{r,\gamma_1, \sigma_1 +2}(\R^2),\]
which follow from Lemma \ref{l:embedding} and the definition of these spaces.

{\bf Step C.} 
To show $[\Delta_1 N(w;\eps) ] \rme^{-\rmi \phi}$, or equivalently $[\Delta_1 \rho^p \rme^{\rmi \phi}] \rme^{-\rmi \phi}$, is in the space $Z$, we use the results from Step 2 to write $\rho^p \rme^{\rmi \phi} = V(r) \rme^{\rmi \phi}$, for some function $V(r) \in H^2_{r,\gamma_1,\sigma_1}(\R^2) \oplus \R$.
Then,
\[ [\Delta_1  V(r) \rme^{\rmi \phi} ]   \rme^{-\rmi \phi} =  [ \Delta_1 (V \rme^{\rmi s \phi}) ] \rme^{-\rmi \phi} 
=  \Delta_1 V + i V \Delta_0 \phi + 2 i  \partial_r \phi \partial_r V  -  V( \partial_r \phi)^2 .   \]
We can then use a similar argument as in Step A, to show that these terms  are in the space $Z$.

\end{proof}

\appendix

\section{}\label{a:nonlinear}
In this appendix we justify hypothesis (H1), reproduced below.

\begin{Hypothesis*}[H1]
The nonlinear term $N(w; \eps)$ is of order $\rmO(\eps |w|^4w)$, and every term in this expression is of the form
$c |w|^{2n} w$, with $c\in \C$ and $n \in \{1,2,3, \cdots\}$.
\end{Hypothesis*}

To justify Hypothesis (H1) we `briefly' review the results from \cite{jaramillo2022rotating}, where the nonlocal complex Ginzburg-Landau equation 
\eqref{e:main} is derived as an amplitude equation for systems that undergo a Hopf bifurcation and that include nonlocal diffusion.

As in the case of reaction diffusion equations, the approach from \cite{jaramillo2022rotating} takes advantage of the separation in scales between the frequency of time oscillations that emerge from the Hopf bifurcation, $t$, and the long spatial, $R = \eps |x|$, and time scales, $T = \eps^2 t$, that dictate changes in the amplitude of 
these oscillation. To derive a reduce equation for spiral waves it is then assumed that these patterns bifurcate from the zero solution and have an amplitude that is proportional to the small parameter $\eps$. 
In polar coordinates $(r, \vartheta)$, these patterned solutions are also assumed to take the form $U(r,\theta,R; \eps, \mu) = U(r, R; \vartheta + \omega t + \eps^2 \mu t)$, where $\omega$ and $\mu$ are constants related to the rotational speed of the spiral wave and where $\theta = \vartheta + \omega t + \eps^2 \mu t$.

With these two assumptions one can then write the solution as a regular perturbation in $\eps$, i.e.
\[ U(r, \theta, R;  \eps,\mu) = \eps U_1(\theta, R; \eps, \mu) + \eps^2 U_2(\theta, R; \eps, \mu) + \eps^3 U_3(r,\theta; \eps,\mu).\]
  Inserting this ansatz into the original nonlocal system and separating terms of equal powers in $\eps$ one obtains a sequence of equations.
 Assuming that the first order correction term is of the form
 \[ U_1(\theta, R;\eps,\mu) = W_1 w(R;\eps,\mu) \rme^{\rmi \theta} + \bar{W}_1 \bar{w}(R;\eps,\mu) \rme^{-\rmi \theta},\]
 with $(\rmi \omega, W_1)$ the eigenvalue-eigenvector pair associated with the Hopf bifurcation, one can then show that $U_1$ solves the 
  $\rmO( \eps)$ equation, with $w(R)$ an arbitrary complex valued function. Moreover, it is then possible to prove that the $\rmO(\eps^2)$ equation has a solution, $U_2\sim \rmO(U_1^2)$, which depends smoothly on $U_1$.

Finally, the nonlocal Ginzburg-Landau equation is derived after gathering all terms of order $\rmO(\eps^3)$ and higher into a single equation, which for ease of exposition in this summary we call the `main equation'.
The results from \cite{jaramillo2022rotating}  then show that there exist a Banach space, $X$, and projections, $P: X \longrightarrow X_\parallel$ and $(I-P): X \longrightarrow X_\perp$, such that the linear part of this main equation, $L$, can be split into an invertible operator $L_\perp: X_\perp \longrightarrow X_\perp$ and a bounded operator $L_\parallel : X_\parallel \longrightarrow X_\parallel$. 
Using these projections and Lyapunov-Schmidt reduction, the main equation can then be split into an invertible equation for the variable $U_3$, and a reduced equation for the unknown amplitude $w(r)$. 

The invertible equation can then be written as
\begin{equation}\label{e:Ginv}
 0 = G(U_3; U_1, \eps, \mu)
 \end{equation}
with $G: X_\parallel \times X_\perp \times \R^2 \longrightarrow X_\perp$ a smooth well defined map  with a derivative operator $D_{U_3} G= L_\perp: X_\perp \longrightarrow X_\perp$ that is invertible. 
The implicit function theorem then proves the existence of a smooth map $\Psi: \mathcal{U} \subset X_\parallel \times \mathcal{V} \subset \R^2 \longrightarrow X_\perp$, such that $U_3= \Psi(U_1, \eps, \mu)$ solves equation \eqref{e:Ginv}, and satisfies $\Psi(0, \eps,\mu) =0$ and $D_{U_1} \Psi(0;\eps,\mu)$ for $(\eps,\mu) \in \mathcal{V}$. Inserting this map $\Psi$ into the reduced equation and projecting onto the space $X_\parallel$  then results in the nonlocal complex Ginzburg-Landau equation \eqref{e:main}.

A couple of remarks are in order:
 
 \begin{itemize}
 \item Notice that unlike a formal multiple scales approach, where this reduced equation comes from applying a solvability condition to the $\rmO(\eps^3)$ equation,
the nonlocal  Ginzburg-Landau equation \eqref{e:main} derived in \cite{jaramillo2022rotating} includes terms of order $\rmO(\eps^3)$ and higher. In equation \eqref{e:main} the higher order terms are then summarized in the expression $N(w;\eps)$, which therefore includes powers of the function $U_3 = \Psi(U_3, \eps, \mu) \sim \rmO(U_1^2)$.
\item The projection $P: X \longrightarrow X_\parallel$ takes the form
\[ P u = \int_0^{2 \pi} \langle u, W_1 \rangle \rme^{\rmi \theta} \;d \theta + \int_0^{2 \pi} \langle u, \bar{W}_1 \rangle \rme^{-\rmi \theta} \;d \theta \]
but  because we are interested in real solutions, it suffices to consider only the projection onto the span of $\{ \rme^{\rmi \theta}\}$.

\item The terms in the expression $N(w, \eps)$ are generated from  the nonlinearities that appear in the original nonlocal equation. 
We assume that these terms are polynomial functions of the unknown variable.

\end{itemize}

The above discussion then implies that elements in $N(w,\eps)$ come from terms of the form $U_2^2 \sim U_1^4$ or  $U^p_1U^q_3$ with $p,q \geq 1$. 
Looking first at $U_2$ and using the definition of $U_1$ we find
\begin{equation}\label{e:series1}
 U_2 \sim U_1^4 = \sum_{s=0}^4 {4 \choose s} w^s \bar{w}^{4-s} \rme^{\rmi \theta( 4-2s)}.
 \end{equation}
On the other hand, since $U_3 = \Psi(U_1,\eps,\mu) \sim \rmO(U_1^2)$ 
is a smooth function, we can Taylor expand it about the point $U_1 = 0$. As a result, we may write 
\[
U_1^p U_3^q =  U_1^p \Psi^q = \sum_{s=0}^ \infty a_s U_1^{2q+p+s},
\]
where, again from the definition of $U_1$, we have that
\begin{equation}\label{e:series2} 
U_1^{m}  = \sum_{k=0}^m {m \choose k} w^k\bar{w}^{m-k} \rme^{\rmi \theta( 2k -m)}
\end{equation}
for $ m \geq 2q+p \geq 3$.
Notice then that projecting terms of the form \eqref{e:series1} or \eqref{e:series2} into the span of $\{ \rme^{i \theta}\}$, gives
\begin{align*}
 P U_1^{m} & = \sum_{k=0}^m {m \choose k} P w^k\bar{w}^{m-k} \rme^{\rmi \theta( 2k -m)} \\
 & = \sum_{\substack{ k\geq (1+m)/2, \\ 2k - m =1} }^m {m \choose k} w^k\bar{w}^{k-1} \rme^{\rmi \theta} \\
 &= \sum_{\substack{ k\geq (1+m)/2, \\ 2k - m =1} }^m {m \choose k} |w|^{2(k-1)} w \rme^{\rmi \theta}.
 \end{align*}
 Because we have the restrictions $2k -m = 1$, $k\geq (1+m)/2$, and $m \geq 3$, we may conclude $k -1 \geq 1$. Therefore,
 elements in $N(w;\eps)$ are given by $|w|^{2n} w$ with $ n = \{ 1, 2, 3, \cdots\}$.

\section{}\label{a:Fredholm}

To prove Lemma \ref{l:Delta-c} will use a result by Kato \cite{kato}, which we state next. 
\begin{Proposition}[Kato, p.370]\label{p:kato}
Let $T(\gamma)$ be a family of compact operators in a Banach space $X$ which are holomorphic for all $ \gamma \in \C$. Call $\gamma$ a singular point if 1 is an eigenvalue of $T(\gamma)$. Then either all $\gamma \in D$ are singular points or there are only finitely many singular points in each compact subset of $D$.
\end{Proposition}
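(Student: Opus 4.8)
The plan is to prove this via the classical analytic Fredholm argument: locally reduce $I - T(\gamma)$ to a finite-rank perturbation of the identity, extract a scalar holomorphic determinant whose zero set detects exactly the singular points, and then combine the identity theorem with the connectedness of the domain $D$ (which I take to be open and connected) to obtain the stated dichotomy.

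First I would fix $\gamma_0 \in D$ and exploit analyticity together with compactness. Since $T(\gamma_0)$ is compact on the Hilbert space $X$ at hand, I can choose a finite-rank operator $F = \sum_{j=1}^{N} \langle \cdot, \psi_j \rangle e_j$ with $\|T(\gamma_0) - F\| < 1/3$. By continuity of $\gamma \mapsto T(\gamma)$ there is a disk $D_0 \subset D$ centered at $\gamma_0$ on which $\|T(\gamma) - F\| < 1/2$, so that $A(\gamma) := I - (T(\gamma) - F)$ is invertible via a Neumann series and $\gamma \mapsto A(\gamma)^{-1}$ is holomorphic on $D_0$. Next I would factor $I - T(\gamma) = A(\gamma)\,(I - A(\gamma)^{-1} F)$. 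Because $A(\gamma)$ is invertible, $1$ is an eigenvalue of $T(\gamma)$ precisely when $I - A(\gamma)^{-1}F$ fails to be injective. Writing $B(\gamma) := A(\gamma)^{-1}F = \sum_{j} \langle \cdot, \psi_j\rangle A(\gamma)^{-1} e_j$, a finite-rank family, the standard determinant identity shows that $I - B(\gamma)$ is invertible if and only if
\[ d(\gamma) := \det\!\big( \delta_{ij} - \langle A(\gamma)^{-1} e_j,\, \psi_i\rangle \big)_{i,j=1}^{N} \neq 0. \]
Each matrix entry is holomorphic on $D_0$, hence $d$ is a scalar holomorphic function there, and $\gamma \in D_0$ is singular exactly when $d(\gamma) = 0$.

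By the identity theorem, either $d \equiv 0$ on $D_0$ — so every point of $D_0$ is singular — or the zeros of $d$ are isolated in $D_0$. To globalize I would let $\Sigma$ denote the set of singular points and set $U = \{\gamma \in D : \Sigma \text{ contains a neighborhood of } \gamma\}$. The local analysis shows $U$ is open; it is also closed in $D$, since a boundary point $\gamma^*$ of $U$ would be an accumulation point of zeros of the local determinant built at $\gamma^*$, forcing that determinant to vanish identically and hence placing a whole neighborhood of $\gamma^*$ in $\Sigma$. Connectedness of $D$ then gives either $U = D$, so $\Sigma = D$, or $U = \varnothing$, in which case $\Sigma$ has no accumulation point in $D$ and therefore meets every compact subset of $D$ in a finite set.

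The main obstacle I anticipate is the patching step that upgrades the purely local dichotomy — obtained from a single holomorphic determinant on one disk — to the global statement, since both the local determinant $d$ and the finite-rank approximant $F$ depend on the chosen base point $\gamma_0$; the openness/closedness argument for $U$ is precisely what reconciles the two alternatives across overlapping disks. A secondary point is the reliance on finite-rank approximation of compact operators, which is immediate in the Hilbert-space setting relevant to Lemma \ref{l:Delta-c} but, for Kato's full Banach-space formulation, requires the more delicate argument that does not presuppose the approximation property.
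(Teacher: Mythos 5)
You cannot be compared against ``the paper's own proof'' here, because the paper does not contain one: Proposition \ref{p:kato} is quoted verbatim from Kato's book (that is what the label ``[Kato, p.~370]'' indicates) and is then used as a black box in Appendix \ref{a:Fredholm} to prove Lemma \ref{l:Delta-c}. So the only thing to assess is your argument on its own merits. As written, it is the classical analytic Fredholm theorem proof (the one in Reed--Simon), and it is correct in the Hilbert-space setting: the factorization $I - T(\gamma) = A(\gamma)\,(I - A(\gamma)^{-1}F)$ with $A(\gamma)$ invertible by a Neumann series, the equivalence between non-injectivity of $I - A(\gamma)^{-1}F$ and the vanishing of the $N\times N$ holomorphic determinant $d(\gamma)$, and the globalization of the local dichotomy by showing that the set $U$ of points having a full neighborhood of singular points is both open and closed in a connected domain $D$, are all sound. (Minor remark: the statement as transcribed in the paper says ``holomorphic for all $\gamma\in\C$'' but then refers to a domain $D$; in Kato's original, $D$ is the domain of holomorphy, and your reading of $D$ as open and connected is the intended one.)

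The one genuine gap is exactly the one you flag yourself: the proposition is stated for a Banach space $X$, and your first step, norm-approximation of the compact operator $T(\gamma_0)$ by a finite-rank operator $F$, is only available when $X$ has the approximation property, which a general Banach space need not have. The standard repair, and essentially Kato's own route, replaces norm-approximation by Riesz theory: since $T(\gamma_0)$ is compact, either $1$ lies in the resolvent set of $T(\gamma_0)$, in which case upper semicontinuity of the spectrum rules out singular points in a neighborhood of $\gamma_0$, or $1$ is an isolated eigenvalue whose Riesz projection $P(\gamma_0) = \frac{1}{2\pi\rmi}\oint_\Gamma (z - T(\gamma_0))^{-1}\,dz$ has finite rank automatically. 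For $\gamma$ near $\gamma_0$ the contour $\Gamma$ stays in the resolvent set of $T(\gamma)$, so $P(\gamma)$ is holomorphic with constant finite rank, and after conjugating by Kato's holomorphic transformation function one reduces the question of whether $1$ is an eigenvalue of $T(\gamma)$ to the vanishing of a holomorphic determinant on the fixed finite-dimensional space $\mathrm{ran}\,P(\gamma_0)$. From that point on, your identity-theorem and connectedness argument goes through unchanged. For the purpose this proposition serves in the paper, your version already suffices: the spaces $H^s_\gamma(\R^d)$ appearing in Lemma \ref{l:Delta-c} are Hilbert spaces, so the finite-rank approximation you use is legitimate there.
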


With the above result in hand, we prove that $\Delta -\mathrm{Id}$ is invertible when posed over weighted 
and doubly-weighted Sobolev spaces.

\begin{Lemma*}\ref{l:Delta-c}
Let $s\geq 2$ and suppose $\gamma \in \R$. Then, the operator
\[
  \Delta-\mathrm{Id}:  H^s_\gamma(\R^2) \longrightarrow H^{s-2}_\gamma(\R^2)\]
 is invertible.
\end{Lemma*}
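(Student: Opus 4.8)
The plan is to reduce the weighted problem to the unweighted one, where invertibility of $\Delta - \Id : H^s(\R^2) \to H^{s-2}(\R^2)$ is already known (its Fourier symbol $-|\xi|^2 - 1$ is bounded away from zero). First I would introduce the multiplication operator $M_\gamma : u \mapsto \langle x \rangle^\gamma u$, which maps $H^s_\gamma(\R^2)$ boundedly and with bounded inverse onto $H^s(\R^2)$, since differentiating the weight only produces lower-order factors decaying like $\langle x\rangle^{-1}$. Thus $\Delta - \Id$ is invertible on the weighted scale if and only if the conjugated operator $L(\gamma) := M_\gamma (\Delta - \Id) M_\gamma^{-1} : H^s(\R^2) \to H^{s-2}(\R^2)$ is invertible.

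A direct computation gives $L(\gamma) = (\Delta - \Id) + B(\gamma)$, where $B(\gamma)$ collects the commutator terms $2\langle x\rangle^\gamma \nabla(\langle x\rangle^{-\gamma})\cdot \nabla + \langle x\rangle^\gamma \Delta(\langle x\rangle^{-\gamma})$; this is a first-order operator whose coefficients are polynomial in $\gamma$ and decay like $\langle x \rangle^{-1}$ (first-order part) and $\langle x \rangle^{-2}$ (zeroth-order part). Since $\Delta - \Id$ is invertible on the unweighted spaces, I would factor $L(\gamma) = (\Delta-\Id)\bigl(\Id + T(\gamma)\bigr)$ with $T(\gamma) := (\Delta-\Id)^{-1} B(\gamma): H^s(\R^2) \to H^s(\R^2)$. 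The operator $T(\gamma)$ both gains a derivative (from the elliptic inverse) and inherits coefficients decaying at infinity, so combining interior compactness (Rellich--Kondrachov on large balls) with the decay that controls the tail shows $T(\gamma)$ is compact; moreover $\gamma \mapsto T(\gamma)$ is polynomial in $\gamma$ and hence extends to a holomorphic family of compact operators on $\C$.

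At this point Proposition \ref{p:kato} applies to $-T(\gamma)$: either $1$ is an eigenvalue of $-T(\gamma)$ for every $\gamma$, or the singular points form a discrete set. In particular $\Id + T(\gamma)$ is Fredholm of index zero for every $\gamma$, so $L(\gamma)$ is Fredholm of index zero as a composition with the invertible operator $\Delta - \Id$. It then remains to prove injectivity of $L(\gamma)$, equivalently of $\Delta - \Id$ on $H^s_\gamma(\R^2)$, for every real $\gamma$. I would do this by a Fourier argument: any $u \in H^s_\gamma(\R^2)$ is a tempered distribution, so $(\Delta - \Id)u = 0$ forces $(-|\xi|^2 - 1)\hat u = 0$, and since $|\xi|^2 + 1$ never vanishes we get $\hat u \equiv 0$ and $u = 0$ (equivalently, the only radial solutions of the corresponding modified Bessel equation are $I_0$, which grows exponentially, and $K_0$, which fails to lie in the domain and is not a genuine solution on all of $\R^2$). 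A Fredholm index-zero operator with trivial kernel is invertible, giving the claim for every real $\gamma$.

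The main obstacle I anticipate is the careful verification of compactness of $T(\gamma)$ on the unbounded domain $\R^2$: the elliptic regularity gain alone does not suffice, and one must exploit the decay of the coefficients of $B(\gamma)$ to render the tail contribution uniformly small, so that bounded sequences in $H^s$ are mapped to precompact ones. The holomorphy of the family and the bookkeeping of the commutator coefficients in $B(\gamma)$ are routine once this compactness and the clean factorization are in place.
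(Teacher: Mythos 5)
Your proposal is correct, and it reaches the conclusion by a route that differs from the paper's in the decisive step. The Fredholm part is essentially the same as the paper's: conjugating $\Delta-\Id$ by the weight $\langle x\rangle^{\gamma}$, identifying the commutator $B(\gamma)$ as a first-order operator with coefficients decaying like $\langle x\rangle^{-1}$ and $\langle x\rangle^{-2}$, and concluding that the conjugated operator is a compact perturbation of the unweighted invertible operator (the paper works with $T(\gamma)=B(\gamma)$ as a compact map $H^s\to H^{s-2}$, you factor through $(\Delta-\Id)^{-1}$ to get a compact operator on $H^s$; the two are equivalent, and in both cases the compactness rests on the same approximation-by-compactly-supported-coefficients plus Rellich--Kondrachov argument that you correctly flag as the real work). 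Where you genuinely diverge is injectivity. The paper proves triviality of the kernel by contradiction: a kernel element at weight $\gamma^*$ persists for all $\gamma<\gamma^*$ by the embedding $H^s_{\gamma^*}\subset H^s_\gamma$, so by Kato's analytic Fredholm alternative (Proposition \ref{p:kato}) the singular set would be all of $\C$, contradicting invertibility at $\gamma=0$. You instead argue directly: every element of $H^s_\gamma(\R^2)$, for any real $\gamma$, is a tempered distribution, $(\Delta-\Id)u=0$ gives $(-|\xi|^2-1)\hat u=0$, and since $(|\xi|^2+1)^{-1}$ is a legitimate multiplier on $\mathcal{S}'$ this forces $u=0$. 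This is valid and handles all weights at once.

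Two remarks. First, in your proof Kato's proposition is actually superfluous: the statement ``in particular $\Id+T(\gamma)$ is Fredholm of index zero'' follows from compactness of $T(\gamma)$ alone, not from the eigenvalue dichotomy, and once injectivity is settled by the Fourier argument you never use the dichotomy at all, so you could delete that paragraph entirely. Second, as to what each approach buys: your argument is shorter and more elementary, but it leans on the operator having constant coefficients so that the kernel can be read off on the Fourier side; the paper's Kato argument only needs invertibility to be known at a single weight ($\gamma=0$) plus analyticity of the conjugated family in $\gamma$, so it transfers to variable-coefficient or otherwise less explicit operators where no symbol calculation is available. For the lemma as stated, both are complete proofs.
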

\begin{proof}
We work with the commutative diagram,
\begin{center}
\begin{tikzcd}[column sep = large, row sep= huge]
H^s_{\gamma}(\R^d) \arrow[r, "(\Delta-\Id)"] \arrow[d, "\langle x \rangle^\gamma "] 
& H^{s-2}_{\gamma}(\R^d) \arrow[d, "\langle x \rangle^\gamma"] \\
H^{s}(\R^d) \arrow[r, "\mathcal{L}(\gamma) "] & H^{s-2}(\R^d)
\end{tikzcd}
\end{center}
where the operator $\mathcal{L}$ is given by $\mathcal{L}(\gamma)u = (\Delta-\Id) u + T(\gamma) u$ with
\[T(\gamma)u = 2  \gamma \langle x \rangle^{-2} \; \nabla u \cdot x  +  ( \gamma( \gamma-1) |x|^2 \langle x \rangle^{-4} + \gamma \langle x\rangle^{-2} ) \; u.\]
 Each term in the map $T(\gamma)$ involves a multiplication operator that can be approximated by a function with compact support. It then follows, by 
 Sobolev embeddings,  that $T(\gamma)$ is a compact perturbation of the invertible operator $(\Delta-\Id): H^{s}(\R^d) \rightarrow H^{s-2}(\R^d)$. Consequently, $(\Delta-\Id): H^{s}_{\gamma}(\R^d) \rightarrow H^{s-2}_{\gamma}(\R^d)$ is Fredholm of index zero.

To check that the operator has a trivial kernel, we proceed by contradiction.
Suppose for the moment that there is a number $\gamma^* \in \R$
  and a function $u \in H^{s}_{\gamma^*}(\R^d)$ such that $\mathcal{L}(\gamma^*)u =0$. Then,  the commutative diagram and the embedding $H^{s}_{\gamma^*}(\R^d) \subset H^{s}_{\gamma}(\R^d)$, with $\gamma < \gamma^*$, imply that $\mathcal{L}(\sigma)u = 0$ for all  $\gamma < \gamma^*$.  

On the other hand, the operator 
\[
\begin{array}{c c c}
H^{s}(\R^d) & \longrightarrow &H^{s-2}(\R^d)\\
u &\longmapsto  & (\Delta- \Id)^{-1} T(\gamma) u,
\end{array}\]
which is  compact and analytic for all $\gamma \in \C$, has $\lambda =1$ as an eigenvalue if and only if  $\mathcal{L}(\gamma)$ has a non trivial kernel. It then follows, by our previous argument, that $\lambda=1$ is an eigenvalue of $(\Id-\Delta)^{-1} T(\gamma)$ for all $\gamma< \gamma^*$. Using Proposition \ref{p:kato} we may then conclude that $\lambda =1$ is an eigenvalue 
for all $\gamma \in \C$. In particular, this result holds for $\gamma=0$, which is a contradiction since $(\Delta-\Id)^{-1}T(0) =(\Id-\Delta)^{-1} $ is invertible. As a result, $\ker \mathcal{L}(\gamma) =\{0\}$ for all $\gamma \in \R$ and the map $(\Delta-\Id): H^{s}_{\gamma}(\R^d) \rightarrow H^{s-2}_{\gamma}(\R^d)$ is therefore an isomorphism.

\end{proof}


\section{}\label{a:nonlinear_split}

In this section we work on re-grouping the nonlinear terms
\begin{align*}
\mathcal{N}(R_1,  \partial_S \phi_1; \delta) = & -2 \beta \delta^2 \Delta_{1,S} R_1 + \tilde{N}_2(R_1, \phi_1; \delta) - \beta \tilde{N}_1(R_1, \phi_1;\delta ),\\
 = &  -2 \beta \delta^2 \Delta_{1,S} R_1  -(1-\rho_0)[ \Delta_{0,S} \phi_1 + 2\beta \partial_S \phi_0 \partial_S \phi_1)\\
&  +  N_2(R_1, \phi_1; \delta, \delta^4) - \beta N_1(R_1, \phi_1; \delta, \delta^4),
\end{align*}
to arrive at equation \eqref{e:newnon},
\begin{equation*}
 \mathcal{N} (R_1, \partial_S \phi_1; \delta) = C(R_1)( \partial_S \phi_1)^2 + D(R_1, \partial_S \phi_1) \frac{1}{S^2} + M(R_1, \partial_S \phi_1; \delta).
 \end{equation*}
We recall that here, $N_1$ and $N_2$ are given as in expressions \eqref{e:nonlinear1} and \eqref{e:nonlinear2}, respectively, with $\eps = \delta^4$, and
\begin{itemize}
\setlength \itemsep{2ex}
\item $C(R_1) \in \R$ with
\begin{equation}\label{e:C}
C(R_1) = \beta \delta - \beta \delta^3 ( \frac{1}{2} k^2 + \delta \zeta_f +  \delta R_f) -\delta^4 \alpha( 1 + \delta^2( \frac{1}{2}k^2 + \delta \zeta_f + \delta R_f)) + \rmO(\eps = \delta^4) \quad \mbox{as} \quad \delta \to 0.
\end{equation}
\item $D(R_1,\partial_S  \phi_1) \in \R$ with,
\begin{equation}\label{e:D}
D(R_1,\partial_S  \phi_1) = (- \beta \delta - \beta \delta^2 a^2 + \alpha \delta^2 - \delta^6 \alpha a^2 ) [ \delta R_f +  \delta \zeta_f + \frac{1}{2}k^2] + \rmO(\eps= \delta^4) \quad \mbox{as} \quad \delta \to 0.
\end{equation}
\item $M: X \times Y \times \R \longrightarrow \tilde{Z} = L^2_{r,\gamma_1, \sigma_1 + 2}(\R^2) \oplus \R$.
\end{itemize}

As mentioned in Section \ref{s:existence}, this is the result of extracting multiples of $(\partial_S \phi_1)^2$ and $\frac{1}{S^2}$ from $\mathcal{N}$. We start with the constant $C(R_1)$.

Using the expressions for $N_1$,  given in \eqref{e:nonlinear1}, and $N_2$, given in \eqref{e:nonlinear2}, we see that there are three terms in  $N_2 - \beta N_1$,  which involve the function $(\partial_S \phi_1)^2$ (ignoring the order $\rmO(\eps = \delta^3)$ terms, since these are already higher order). These are,
\begin{itemize}
\setlength \itemsep{2ex}
\item $- \delta^2 \alpha ( \rho_0 + \delta^2( R_0 + \delta R_1)) (\delta \partial_S \phi_1)^2$,
\item $- \beta \delta( R_0 + \delta R_1)(\delta \partial_S \phi_1)^2$
\item $ \beta \delta \rho_0 ( \partial_S \phi_1)^2$
\end{itemize}

Since $\rho_0$ and $R_0$ are given as in Propositions \ref{p:decayg} and \ref{p:dacayRphi}, we may re-arrange,
\begin{align*}
 ( \rho_0 + \delta^2( R_0 + \delta R_1)) = & (\rho_0 -1) + 1 + \delta^2[ (R_0 - \frac{1}{2} \kappa^2) + \delta(R_n +\zeta(S) + R_f) + \frac{1}{2} \kappa^2 ] \\
 = & (\rho_0 -1) + \delta^2( R_0 - \frac{1}{2}\kappa^2) + \delta^3 (\zeta_1(S) + R_n) +  \left[ 1 + \delta^2 \left ( \frac{1}{2} \kappa^2 + \delta \zeta_f + \delta R_f \right ) \right ]
 \end{align*}
where $\kappa \in \R$ is given in Proposition \ref{p:dacayRphi},  and we have also used the notation $R_1 = R_n +  \zeta(S) + R_f \in X$. In addition, $\zeta(S) = \zeta_1(S) + \zeta_f$ is as in Lemma \ref{l:zeta}.
As a result 
\[ - \delta^2 \alpha ( \rho_0 + \delta^2( R_0 + \delta R_1))  = h  - \delta^2 \alpha \left[ 1 + \delta^2 \left ( \frac{1}{2} \kappa^2 + \delta \zeta_f + \delta R_f \right ) \right ],
\]
with $h \in \tilde{Z}$ and the remaining terms in $\R$.

A similar analysis then shows that
\begin{itemize}
\setlength \itemsep{2ex}
\item $- \beta \delta( R_0 + \delta R_1)= h - \beta \delta (\frac{1}{2} \kappa + \delta ( \zeta_f + R_f) ) $
\item $ \beta \delta \rho_0 = h + \beta \delta $
\end{itemize}
where again we use $h$ as a generic function in $\tilde{Z}$. The results for $C(R_1)$ then follow.

Next, we gather all terms that are multiples of $\frac{1}{S^2}$. These are,
\begin{itemize}
\setlength \itemsep{2ex}
\item $ (\alpha \delta^2 - \beta \delta) [ \Delta_{1,S} R_0 + \delta \Delta_{1,S} R_1]$
\item $ - ( \alpha \delta^4 + \beta \delta) [ (R_0 + \delta R_1) ( \partial_S \phi_0 + \delta \partial_S \phi_1)^2]$
\end{itemize}
To see why,  using Proposition \ref{p:dacayRphi}  and Lemma \ref{l:zeta}, we first expand 
\begin{align*}
 (R_0 + \delta R_1) = & (R_0 - \frac{1}{2} \kappa^2) + \delta(R_n +\zeta_1(S) + \zeta_f + R_f) + \frac{1}{2} \kappa^2 \\
= &\underbrace{ (R_0 - \frac{1}{2} \kappa^2) + \delta \zeta_1(S) + \delta R_n}_{h}  + \left[ \frac{1}{2} \kappa^2 + \delta (\zeta_f + R_f) \right]
 \end{align*}
 with $h \in H^3_{r,\gamma}(\R^2) \oplus H^2_{r,\gamma_1, \sigma_1}(\R^2)$, $\gamma \in (0,1), \gamma_1 \in (-1,0)$ and $\sigma_1 \in (-2,-1)$. It then follows that
 \[ \Delta_{1,S} (R_0 + \delta R_1) = \Delta_{1,S} h -  \left[ \frac{1}{2} \kappa^2 + \delta (\zeta_f + R_f) \right] \frac{1}{S^2} \]
 where, from the definition of these spaces, we may conclude that $ \Delta_{1,S} h \in H^1_{r,\gamma}(\R^2) \oplus L^2_{r,\gamma_1, \sigma_1 + 2}(\R^2) \subset \tilde{Z}$.

To analyze the second expression, $[ (R_0 + \delta R_1) ( \partial_S \phi_0 + \delta \partial_S \phi_1)^2]$, we first write
\[ (R_0 + \delta R_1)  = h_1 +  \underbrace{\left[ \frac{1}{2} \kappa^2 + \delta (\zeta_f + R_f) \right] }_{d}
 \]
with $h_1 =h $ as above and $ d\in \R$. Similarly, using the notation $\partial_S \phi_1 = \partial_S \phi_n + \frac{a}{S} + \partial_S \phi_f \in Y$ introduced in Section \ref{s:existence}, together with Lemma \ref{l:phi_0} stated in Subsection \ref{ss:step3}, we may write
\[ (\partial_S \phi_0 + \delta \partial_S \phi_1)^2 = h_2 + \delta^2 \frac{a^2}{S^2} + c\]
with $h_2 \in L^2_{r,\gamma_1,\sigma_1+2}(\R^2)$ and $a, c \in \R$. We then have
\begin{align*}
 (R_0 + \delta R_1) ( \partial_S \phi_0 + \delta \partial_S \phi_1)^2 = & (h_1 +d) ( h_2  \delta^2 \frac{a^2}{S^2} + c)\\
 & = (h_1 + d) h_2 + (h_1+d) c + \delta^2 \frac{ a^2}{S^2} h_1 + \delta^2 \frac{d a^2}{S^2}
 \end{align*}
where, thanks to Proposition \ref{p:nonlinear} proved in Subsection \ref{ss:step3}, we may conclude that the first three terms are in $\tilde{Z}$. The results for $D(R_1, \partial_S \phi_1)$ then follow.


\section{}\label{a:ode}

\begin{Lemma*}
Let $a, b_\infty, c,d $ be positive constants and take $b(x)$ to be a function in $C^1([0,\infty),\R)$ such that
$b(x) \sim -b_\infty + \rmO(1/x)$ as $x \to \infty$. Then, there exists a solution, $q(x)$, to the ordinary differential equation,
\[q' + \frac{a}{x} q + b(x) q - c q^2 + \frac{d}{x^2} =0  \quad x\geq 0\]
satisfying,
\[ q(x) \sim \rmO(1/x) \quad \mbox{as} \quad x \to 0, \qquad q(x) \sim q_\infty + \rmO(1/x) \quad \mbox{as} \quad x \to \infty \]
for some $q_\infty \in \R$. Moreover, there is a constant $ \nu $ and a function $q_2 \in H^1_{r,\gamma_1, \sigma_1+1}(\R^2)$, with $\gamma_1\in (-1,0)$ and $\sigma_1 \in (-2,-1)$, such that
\[ q(x) = q_2(x) + \frac{\nu }{x} +q_\infty \in Y.\]
\end{Lemma*}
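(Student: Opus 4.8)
The plan is to linearise the Riccati equation through the Hopf--Cole substitution $q = \alpha\, y'/y$ with $\alpha = -1/c$, which (after the $(y')^2/y$ terms cancel) converts the problem into the linear, second--order equation
\[ y'' + \frac{a}{x}\, y' + b(x)\, y' - \frac{cd}{x^2}\, y = 0 . \]
This equation has a regular singular point at $x=0$ and, since $b(x)\to -b_\infty$, an essentially constant--coefficient structure $y''-b_\infty y'\approx 0$ at $x=\infty$. I would construct a single solution $y$ that is recessive at the origin, show it is positive on all of $(0,\infty)$ so that $q=\alpha y'/y$ is globally defined and smooth, and then transcribe the asymptotics of $q$ from those of $y$.

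Near the origin I would insert $y=x^{r}$ into the leading part to obtain the indicial equation $r^2+(a-1)r-cd=0$, whose roots satisfy $r_+ r_- = -cd<0$; hence there is exactly one positive root $r_+>0$. Writing $y=x^{r_+}v(x)$ cancels the $x^{-2}$ coefficient and leaves a regular--singular equation for $v$ with indicial exponent $0$, so that $v(0)=1$ and $v'$ stays bounded near $0$ (the dominant balance forces $v'\to$ const). Consequently $y'/y = r_+/x + \rmO(1)$ and $q = \nu/x + \rmO(1)$ near $0$, with $\nu=\alpha r_+ = -r_+/c$, which gives the claimed $\rmO(1/x)$ singularity. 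For global positivity I would argue by continuation: near $0$ one has $y>0$ and $y'>0$, and at any first zero $x_*$ of $y'$ the equation yields $y''(x_*)=(cd/x_*^2)\,y(x_*)>0$, so $y'$ would attain a strict local minimum equal to $0$ there, contradicting $y'>0$ immediately to the left; thus $y'>0$ and $y>0$ on all of $(0,\infty)$, and $q=\alpha y'/y$ never escapes to $\pm\infty$.

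For the behaviour at infinity I would study $L:=y'/y=(\log y)'$, which satisfies the first--order equation
\[ L' = -L^2 - \Bigl(\frac{a}{x}+b(x)\Bigr)L + \frac{cd}{x^2}. \]
As $x\to\infty$ this limits to the autonomous equation $L'=-L(L-b_\infty)$, whose equilibria are $L=0$ and $L=b_\infty$. Since our solution has $L>0$ throughout and $L=b_\infty$ is the unique equilibrium attracting from the region $L>0$ (while $L=0$ repels), a standard asymptotic (barrier/comparison) argument shows $L\to b_\infty$; linearising about $b_\infty$, where the non--autonomous perturbations $a/x$, $b(x)+b_\infty$ and $cd/x^2$ are each $\rmO(1/x)$, gives $L-b_\infty=\rmO(1/x)$. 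Hence $q=\alpha L\to q_\infty=-b_\infty/c$ with $q-q_\infty=\rmO(1/x)$, as required.

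Finally I would assemble the decomposition $q = q_2 + \nu/x + q_\infty$ with $q_2 := q - \nu/x - q_\infty$ and verify $q_2\in H^1_{r,\gamma_1,\sigma_1+1}(\R^2)$. Near the origin the $\nu/x$ term removes the only singular part, so $q_2$ and $q_2'$ are bounded there; with $\sigma_1+1\in(-1,0)$ and $\sigma_1+2\in(0,1)$ these are square--integrable against the weights $m^{\sigma_1+1}$ and $m^{\sigma_1+2}$ by Lemma~\ref{l:decay_zero}. At infinity $q_2=\rmO(1/x)$ and $q_2'=\rmO(1/x^2)$, which lie in $L^2_{\gamma_1}$ precisely because $\gamma_1<0$, by Lemma~\ref{l:decay}. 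Therefore $q_2\in H^1_{r,\gamma_1,\sigma_1+1}(\R^2)$, and since $\nu/x\in\{1/S\}$ and $q_\infty\in\R$, we conclude $q\in Y$ as defined in \eqref{e:spaces}. The main obstacle I anticipate is the global qualitative step of guaranteeing $y>0$ on all of $(0,\infty)$ together with pinning the $\rmO(1/x)$ convergence rate at infinity so that the remainder lands in the correct weighted space; these asymptotic arguments, rather than the algebra of the transform, are the crux.
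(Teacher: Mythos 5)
Your proposal is correct in substance, and its skeleton --- the Hopf--Cole substitution $q=\alpha y'/y$ with $\alpha=-1/c$ (the paper's appendix writes $\alpha=1/c$, a typo: the cancellation of the $(y'/y)^2$ terms forces $\alpha=-1/c$, which is what the main text and you use), followed by the indicial equation $r^2+(a-1)r-cd=0$ at the regular singular point $x=0$ --- coincides with the paper's. Where you genuinely depart from the paper is at infinity and in the global definedness of $q$. The paper passes to $\xi=1/x$ and performs a formal irregular-singular-point expansion $y=\rme^{A/\xi}Y(\xi)$, plus a second pure power-series solution, explicitly conceding that the series ``might not converge''; you instead treat $L=y'/y$ as a solution of an asymptotically autonomous Riccati equation and get convergence to an equilibrium of the limit equation, with the $\rmO(1/x)$ rate, by linearization. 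Your route is the more rigorous of the two, and it also fills a genuine gap in the paper's argument: the paper never checks that the chosen $y$ is zero-free on $(0,\infty)$, without which $q=\alpha y'/y$ is not globally defined; your positivity argument ($y>0$, $y'>0$, using $y''(x_*)=(cd/x_*^2)\,y(x_*)>0$ at a putative first zero of $y'$) settles exactly this point. Observing that the indicial roots have product $-cd<0$ and working with the solution attached to the positive (larger) root also sidesteps the paper's case distinction on whether the root difference is an integer, since that Frobenius solution never carries a logarithm. What the paper's formal expansion buys in exchange is finer information: full asymptotic series for both solutions at infinity, rather than just the leading constant and an $\rmO(1/x)$ remainder.

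One claim in your proposal is overstated, though harmlessly so. You assert that $L=b_\infty$ ``attracts from the region $L>0$'' and hence $L\to b_\infty$. That is the generic behavior, but the linear equation possesses a solution that is asymptotically constant at infinity, and for that solution $L\to 0$; nothing in your construction rules out that the recessive-at-the-origin solution is, by coincidence, this exceptional one. The correct statement is that your positive, eventually bounded $L$ must converge to one of the two equilibria $\{0,\,b_\infty\}$ of the limiting equation, and in either case $q=\alpha L$ tends to a finite limit $q_\infty$ (namely $-b_\infty/c$ or $0$) with remainder $\rmO(1/x)$, which is all the lemma requires --- note the paper likewise computes both asymptotic behaviors and does not decide between them. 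With that rephrasing your argument is complete, and your final weighted-space bookkeeping (boundedness near the origin needs $\sigma_1>-2$ via Lemma~\ref{l:decay_zero}, decay $\rmO(1/x)$ at infinity needs $\gamma_1<0$ via Lemma~\ref{l:decay}) matches the paper's conclusion that $q=q_2+\nu/x+q_\infty\in Y$.
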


\begin{proof}
We start by using the Hopf-Cole Transform, $ q = \alpha \dfrac{y'}{y}$, with $ \alpha = 1/c$, to arrive at the linear equation
\[ y'' + \frac{a}{x} y' +b(x) y' - \frac{dc}{x^2} y =0.\]
We first use the Frobenius Method \cite{boyce2021elementary} to determine the behavior of solutions near the origin.

It is straight forward to check that $x =0$ is a regular singular point of the equation. We can therefore look for solutions of the form
$ y(x) = x^r \sum_{k=0}^\infty a_k x^k.$
Multiplying the equation by $x^2$ and inserting this guess, we obtain,
\[ 0 = \sum_{k=0}^\infty a_k (r+k)(r+k-1) x^{r+k} + a_k a (r+k) x^{r+k} + a_k b(x) (r+k) x^{r+k+1} - a_k cd  x^{k+1}, \]
and one finds that the indicial equation is
\[ r^2 +(a-1) r -cd =0.\]
Since the discriminant $ (a-1)^2+ 4 cd >0$, it follows that the roots, $r_1,r_2$, of this equation are real. We then have two cases, $r_1 - r_2$ is or is not an integer. If the difference between the roots is not an integer, we then have the two linearly independent solutions of the form
\[ y_1(x) = x^{r_1} \sum_{k=0}^\infty a_k x^k, \qquad y_2(x) = x^{r_2} \sum_{k=0}^\infty b_k x^k.\]
If, on the other hand, $r_1-r_2 $ is an integer then the two linearly independent solutions are
\[ y_1(x) = x^{r_1} \sum_{k=0}^\infty a_k x^k, \qquad y_2(x) = x^{r_2}  \sum_{k=0}^\infty b_k x^k + C \log(x) y_1(x), \]
for some constant $C$. In either case, we see that
$ q(x) = \alpha \frac{y'}{y} \sim \rmO(1/x) \quad \mbox{as} \quad x \to 0.$

Next, to determine the behavior of solutions at infinity, we use the change of coordinates $\xi = 1/x$, which leads to the equation
\begin{equation}\label{e:y_infty}
 \frac{d^2 y}{d \xi^2} + \frac{(2-a)}{\xi} \frac{d y}{d \xi} - \frac{\tilde{b}(\xi)}{\xi^2} \frac{d y}{d\xi} - \frac{dc}{\xi^2} y =0.
 \end{equation}
The point $\xi = 0$ is now an irregular singular point, so we first look for solutions of the form
\[y(\xi) = \rme^{A / \xi} Y(\xi) \quad \mbox{with} \quad Y(\xi) = \xi^r \sum_{k =0}^\infty a_k \xi^{k}.\]
This leads to the following equation for $Y(\xi)$
\[ Y'' + \left( \frac{(2-a)}{\xi} - \frac{(\tilde{b}(\xi) +2A)}{\xi^2} \right) Y' + \left(\frac{A(\tilde{b}(\xi) +A)}{\xi^4} + \frac{a A}{\xi^3} - \frac{cd}{\xi^2} \right) Y =0. \]
Multiplying the equation by $\xi^2$ and inserting the power series for $Y(\xi)$, we obtain
\begin{align*}
 0 =  \sum_{k=0}^\infty &\; a_k (r+k)(r+k-1) \xi^{r+k} + a_k (2- a) (r+k) \xi^{r+k} - a_k (\tilde{b}(\xi) +2A)(r+k) \xi^{r+k-1} \\
 & + a_k \frac{A( \tilde{b}(\xi) +A)}{\xi} \xi^{r+k-1} + a_k a A \xi^{r+k-1} - a_k cd x^{r+k}.
 \end{align*}
Letting $A=- b_\infty$, we find that the term $\frac{A(\tilde{b}(\xi) +A)}{\xi} \sim \rmO(1)$ as $\xi \to 0$. Therefore, the indicial equation, which is determined by the expression
\[  - (\tilde{b}(\xi) +2A)r + \frac{A( \tilde{b}(\xi) +A)}{\xi} + a A =0,\]
gives us $r = a + \rmO(1) >0$. 

Although, the sum describing $Y(\xi)$ might not converge, it still provides us with an approximation for $y(\xi)$ when $\xi \sim 0$. Keeping in mind that $\xi = 1/x$, we conclude that
\[ y(x) \sim \rme^{-b_\infty x} x^r \sum_{k =0}^\infty  \frac{a_k}{x^k} \qquad x\to \infty .\]
It then follows that $q(x) = \alpha \frac{y'}{y} \sim - \alpha b_\infty + \rmO(1/x)$ as $x \to \infty$.

Lastly, to find a second solution to equation \eqref{e:y_infty} that is linearly independent from the solution found above, we assume
$ y(\xi) = \sum_{k=0}^\infty b_k \xi^{r+k}$. Multiplying the equation by $\xi^2$ and inserting this guess, gives us
\[ \sum_{k=0}^\infty b_k (r+k)(r+k-1) \xi^{r+k} + b_k (2-a)  (r+k) \xi^{r+k} - b_k \tilde{b}(\xi) \xi^{r+k-1}
- b_k cd \xi^{r+k}.\]
The lowest term in the above expression must satisfy $ \tilde{b}(\xi) r =0$, which shows that $ r=0$.
We conclude that this second solution to the equation satisfies
\[ y(x) \sim \sum_{k =0}^\infty b_k \frac{1}{x^k} \quad \mbox{as} \quad x \to \infty\]
Consequently, 
$q(x) = \alpha \frac{y'}{y} \sim  \rmO(1/x)$ as $x \to \infty$.

Because the function $q(x)$ solves a first order differential equation, it is a function in $C^1((0,\infty),\R)$. The asymptotic expansions found above, also show that it is possible to find constants $q_\infty, \nu \in R$ such that
\[ q(x) = q_2(x) +\frac{\nu}{x} + q_\infty\]
with $q_2(x) \sim \rmO(1)$ near the origin and $q_2(x) \sim \rmO(1/x)$ at infinity. It then follows that $q_2 \in H^1_{r,\gamma_1, \sigma_1+1}(\R^2)$ with $\gamma_1 \in (-1,0)$ and $\sigma_1 \in (-2,0)$. 

\end{proof}


\noindent {\bf E-mail:} gabriela@math.uh.edu
\bibliographystyle{plain}
\bibliography{spirals}

\end{document}